\DeclareMathAlphabet{\mathpzc}{OT1}{pzc}{m}{it}
\setlist{nosep}
\renewcommand{\int}{\operatorname{int}}
\newcommand{\shadj}{\operatorname{\bf Adj}}
\newcommand{\supp}{\operatorname{supp}}
\newcommand{\im}{\operatorname{Im}}
\newcommand{\wtphi}{\wt{\Phi}}
\newcommand{\bfs}{\mathbf{s}}
\newcommand{\sw}{\wt{\bigvee}}
\newcommand{\hX}{\widehat{X}}
\newcommand{\ds}{\displaystyle}
\newcommand{\ui}{I}
\newcommand{\bfx}{\mathbf{x}}
\newcommand{\tx}{\tilde{x}}
\newcommand{\tX}{\widetilde{X}}
\newcommand{\wt}{\widetilde}
\newcommand{\scrw}{\mathscr{W}}
\newcommand{\pionex}{\pi_{1}(X,x_0)}
\newcommand{\mca}{\mathcal{A}}
\newcommand{\mcz}{\mathcal{Z}}
\newcommand{\mcd}{\mathcal{D}}
\newcommand{\scra}{\mathscr{A}}
\newcommand{\scrb}{\mathscr{B}}
\newcommand{\scrc}{\mathscr{C}}
\newcommand{\scrd}{\mathscr{D}}
\newcommand{\scrf}{\mathscr{F}}
\newcommand{\scrg}{\mathscr{G}}
\newcommand{\scru}{\mathscr{U}}
\newcommand{\scrr}{\mathscr{R}}
\newcommand{\scrs}{\mathscr{S}}
\newcommand{\mcy}{\mathcal{Y}}
\newcommand{\bbh}{\mathbb{H}}
\newcommand{\bbn}{\mathbb{N}}
\newcommand{\bbr}{\mathbb{R}}
\newcommand{\bbz}{\mathbb{Z}}
\newcommand{\ov}{\overline}
\newtheorem{theorem}{Theorem}[section]
\newtheorem{lemma}[theorem]{Lemma}
\newtheorem{proposition}[theorem]{Proposition}
\newtheorem{corollary}[theorem]{Corollary}
\theoremstyle{definition}\newtheorem{definition}[theorem]{Definition}
\newtheorem{example}[theorem]{Example}
\newtheorem{remark}[theorem]{Remark}
\newtheorem{terminology}[theorem]{Local Terminology}
\begin{document}
\title{Sequential $n$-connectedness and infinite factorization in higher homotopy groups}

\author{Jeremy Brazas}


\date{\today}

\maketitle

\begin{abstract}
A space $X$ is ``sequentially $n$-connected" at $x\in X$ if for every $0\leq k\leq n$ and sequence of maps $f_1,f_2,f_3,\dots:S^k\to X$ that converges toward a point $x\in X$, the maps $f_m$ contract by a sequence of null-homotopies that converge toward $x$. We use this property, in conjunction with the Whitney Covering Lemma, as a foundation for developing new methods for characterizing higher homotopy groups of finite dimensional Peano continua. Among many new computations, a culminating result of this paper is: if $Y$ is a space obtained by attaching an infinite shrinking sequence $A_1,A_2,A_3,\dots$ of sequentially $(n-1)$-connected CW-complexes to a one-dimensional Peano continuum $X$ along a sequence of points in $X$, then there is a canonical injection $\Phi:\pi_n(Y)\to \prod_{j\in\bbn}\bigoplus_{\pi_1(X)}\pi_n(A_j)$. Moreover, we characterize the image of $\Phi$ using generalized covering space theory. As a case of particular interest, this provides a characterization of $\pi_n(\bbh_1\vee \bbh_n)$ where $\bbh_n$ denotes the $n$-dimensional Hawaiian earring.
\end{abstract}

\tableofcontents

\section{Introduction}

In the past two decades, the homotopy theory of Peano continua and the algebraic theory of natural infinitary operations that arise in their homotopy groups has witnessed incredible progress, led by the pioneering work of Katsuya Eda. However, this progress has almost exclusively centered around fundamental groups and homotopy types of low-dimensional spaces. For instance, Eda's remarkable classification theorem \cite{Edaonedim} asserts that homotopy types of one-dimensional Peano continua are completely determined by the isomorphism type of their fundamental groups \cite{Edaonedim}. With the exception of the asphericity of one-dimensional \cite{CFhigher} and planar spaces \cite{CCZ} and the work of Eda-Kawamura \cite{EK00higher} on shrinking wedges of spaces like the $n$-dimensional Hawaiian earring $\bbh_n$, essentially no significant progress has been made in the effort to further characterize higher homotopy groups that admit natural infinite product operations. This stagnation is primarily due to the lack of techniques for suitably constructing homotopies on higher dimensional domains. In this paper, we develop new techniques that allow us to make new computations and, which, are expected to support a renewed effort in developing the infinitary algebraic topology of Peano continua. Three key new insights include the following:
\begin{enumerate}
\item We introduce and study the \textit{sequentially $n$-connected property} (Definition \ref{scndefinition}), which is closed under many important constructions (including shrinking wedges and infinite direct products) and provides a robust framework for extending methods from classical homotopy theory to the infinitary setting.
\item We introduce a process for constructing intricate homotopies by applying the Whitney Covering lemma \cite{Whitney}, a classical result in analysis which ensures the existence of convenient cubical coverings of arbitrary open sets in $\bbr^n$.
\item We apply the generalized universal covering spaces introduced in \cite{FZ07} in the same way that classical covering spaces are used to characterize higher homotopy groups of non-simply connected spaces.
\end{enumerate}
The primary obstruction to understanding higher homotopy groups of locally complicated spaces is the difficulty in uniquely characterizing the homotopy class of an arbitrary $n$-loop, i.e. a map $f:S^n\to X$. One must be able to replace $f$ with a homotopic map $g:S^n\to X$, which is structurally much simpler than $f$ or which satisfies some kind of special form. For instance, in classical homotopy theory, if $X$ is a CW-complex, then $f$ may be replaced by a cellular map or if $X=X_1\vee X_2$ is a wedge of $(n-1)$-connected spaces, then $f$ is homotopic to a product $g_1\cdot g_2$ where $g_i:S^n\to X_i$ is a based map.

In this paper, we overcome the lack of classical homological methods by constructing required homotopies by ``brute force." Indeed, Eda and Kawamura's work \cite{EK00higher} has also suggested that there is little hope of circumventing such technicalities. These homotopies will typically deform infinitely many parts of a map simultaneously and so the primary difficulty is identifying a construction that is actually continuous. Although several of our proofs are quite technical, the first two key insights mentioned above make our methods possible and intuitive. In particular, ``sequential $n$-connectedness" is a convenient sequential analogue of a space being both $n$-connected and locally $n$-connected. We find that this property is precisely what is required to employ an inductive process for constructing homotopies that realize infinite factorizations of arbitrary homotopy classes.

To illustrate a concrete application of our methods, we focus on a particular construction: we begin with a Peano continuum $X$, a (possibly repeating) sequence $\{x_j\}_{j\in\bbn}$ in $X$, and a sequence of based spaces $\{(A_j,a_j)\}_{j\in\bbn}$. We construct a space $Y$ by attaching each $A_j$ to $X$ by identifying $a_j\sim x_j$ and giving $Y$ a topology so that whenever a subsequence $\{x_{j_i}\}$ converges to $x$, the corresponding attachment spaces $A_{j_i}$ ``shrink" toward $x$. We call the resulting space $Y$ a \textit{shrinking adjunction space} and we often write $Y=\shadj(X,x_j,A_j,a_j)$ to distinguish this decomposition of $Y$. For example, if $X=[0,1]$, we can construct a shrinking adjunction space $Y$ by attaching an $n$-sphere of diameter $\frac{1}{2^m}$ at each dyadic rational $\frac{j}{2^m}$, $m\in\bbn$, $0<j<2^m-1$ (see Figure \ref{fig1}). One should beware that (a) collapsing the arc $X$ will not result in a weakly homotopy equivalent quotient space $Y/X$ and (b) $Y$ does not satisfy the local contractability condition used in \cite{EK00higher}. Hence, completely new methods are required to address what, deceivingly, appears to be only a small modification of the $n$-dimensional Hawaiian earring $\bbh_n$.

\begin{figure}[h]
\centering \includegraphics[height=1.3in]{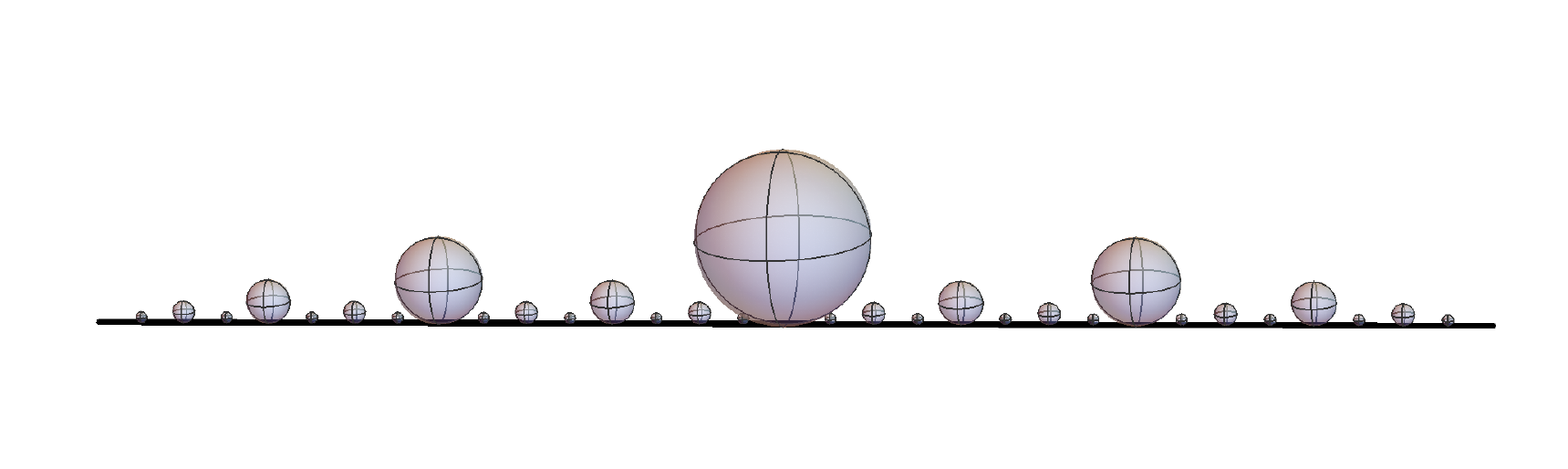}
\caption{\label{fig1}An example of a shrinking adjunction space constructed by attaching $2$-spheres attached to an arc along a dense set of points.}
\end{figure}

The culminating result of this paper is the following theorem, which allows one to understand $\pi_n(Y)$ as a subgroup of a product of the homotopy groups of the attachment spaces $A_j$. In this result, $X$ may be the Hawaiian earring, Sierpinski carpet, or Menger curve, all of which fail to have traditional universal covering spaces.

\begin{theorem}\label{mainthm}
Let $n\geq 2$ and $Y=\shadj(X,x_j,A_j,a_j)$ be the shrinking adjunction space with basepoint $y_0\in X$ where $X$ is a one-dimensional Peano continuum and each $(A_j,a_j)$ is an $(n-1)$-connected CW-complex. Then there is an injection
\[\Phi:\pi_n(Y)\to \prod_{j\in\bbn}\prod_{\pi_1(X)}\pi_n(A_j),\]
which is canonical after making a choice of a sequence of paths $\beta_j:(I,0,1)\to (X,y_0,x_j)$, $j\in\bbn$.
\end{theorem}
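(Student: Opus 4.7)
The plan is to build $\Phi$ using a generalized universal cover of $Y$. Since each $A_j$ is $(n-1)$-connected with $n\geq 2$, a van Kampen argument (with care about the shrinking topology) gives $\pi_1(Y,y_0)\cong \pi_1(X,y_0)$. Because $X$ is a one-dimensional Peano continuum, the generalized universal covering $p:\tX\to X$ of \cite{FZ07} exists and $\tX$ is one-dimensional, hence aspherical with $\pi_n(\tX)=0$ by \cite{CFhigher}. I extend $p$ to a generalized universal covering $\tilde p:\widetilde Y\to Y$ by attaching a copy of $A_j$ at each lift $\tilde x\in p^{-1}(x_j)$; that this produces a genuine generalized covering requires one to check that the shrinking topology of $Y$ lifts consistently, so that copies of $A_j$ in $\widetilde Y$ shrink wherever sequences of lifts of $x_j$ accumulate. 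After fixing $\tilde y_0\in p^{-1}(y_0)$, each $\beta_j$ lifts to a unique $\widetilde\beta_j$ starting at $\tilde y_0$, and the correspondence $\alpha\mapsto \alpha\cdot\widetilde\beta_j(1)$ puts the copies of $A_j$ in bijection with $\pi_1(X,y_0)$.

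To define $\Phi$, given $[f]\in\pi_n(Y,y_0)$, lift $f:S^n\to Y$ to $\tilde f:S^n\to\widetilde Y$ (unique since $S^n$ is simply connected for $n\geq 2$). For each $(j,\alpha)\in\bbn\times\pi_1(X,y_0)$, let $(A_j)_\alpha\subset\widetilde Y$ denote the copy of $A_j$ attached at the point $\tilde a_{j,\alpha}:=\alpha\cdot\widetilde\beta_j(1)$, and set $U_{j,\alpha}:=\tilde f^{-1}\bigl((A_j)_\alpha\setminus\{\tilde a_{j,\alpha}\}\bigr)$. Each $U_{j,\alpha}$ is open in $S^n$ and the frontier of every connected component maps into the single point $\tilde a_{j,\alpha}$, so collapsing the complement of $U_{j,\alpha}$ yields a well-defined map $S^n\to A_j$; its homotopy class is $\Phi([f])_{j,\alpha}\in\pi_n(A_j)$. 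Homotopy invariance and additivity then follow from homotopy lifting for $\tilde p$ together with the fact that $(n-1)$-connectedness permits small connected components of $U_{j,\alpha}$ to be contracted consistently.

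The substantive content is injectivity. Suppose $\Phi([f])=0$; then for each $(j,\alpha)$ the restriction $\tilde f|_{\ov{U_{j,\alpha}}}$ is null-homotopic in $(A_j)_\alpha$ rel its frontier. I wish to deform $\tilde f$ simultaneously so that every such piece retracts onto its attaching point, producing a map $S^n\to\tX$; since $\pi_n(\tX)=0$ this deformed map, and hence $f$, is null-homotopic. The main obstacle, and the reason the two main new tools are indispensable, is that there are infinitely many components to collapse, the copies of $A_j$ shrink toward accumulation points in $\tX$, and the individual null-homotopies must assemble into a single continuous homotopy of $S^n$. I would apply the Whitney Covering Lemma to the open set $\tilde f^{-1}\bigl(\widetilde Y\setminus\tX\bigr)\subset S^n$ to obtain a locally finite cubical decomposition whose cells have diameter comparable to their distance from the boundary, providing a combinatorial scaffold on which to interpolate the chosen null-homotopies across overlapping regions. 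The sequentially $(n-1)$-connected property, inherited by the shrinking family $\{(A_j)_\alpha\}$, then guarantees that the chosen null-homotopies can be selected to shrink toward the same limit points as their targets, yielding a continuous global null-homotopy into $\tX$ and completing the proof.
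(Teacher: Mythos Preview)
Your overall architecture matches the paper's: pass to the generalized universal cover $\widetilde Y$, observe that $\widetilde Y$ is built from the $\bbr$-tree $\widetilde X$ by attaching a copy $(A_j)_\alpha$ of $A_j$ at each point of $p^{-1}(x_j)$, define $\Phi$ via the retractions onto these copies, and for injectivity try to push a map with $\Phi([f])=0$ into $\widetilde X$, which is aspherical. However, the injectivity argument has a genuine gap at the step where you write ``the restriction $\tilde f|_{\ov{U_{j,\alpha}}}$ is null-homotopic in $(A_j)_\alpha$ rel its frontier'' and then propose to contract these pieces simultaneously.

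The problem is that $U_{j,\alpha}$ typically has infinitely many connected components, and $\Phi([f])_{j,\alpha}=0$ only tells you that the \emph{sum} of the homotopy classes carried by these components vanishes in $\pi_n(A_j)$, not that each component is individually null-homotopic. Your proposed Whitney cover of $\tilde f^{-1}(\widetilde Y\setminus\widetilde X)$, together with sequential $(n-1)$-connectedness of the $A_j$, does let you homotope $\tilde f$ so that each Whitney cube $C$ maps into a single $(A_j)_\alpha$ with $\tilde f(\partial C)=[\alpha]$; this is exactly the paper's ``factored form'' (Lemma~\ref{factoredformsequencelemma1}). But you are still left with many cubes landing in the same $(A_j)_\alpha$, and there is no obvious continuous way to cancel them against one another while respecting the shrinking topology. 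The paper's resolution is the entirety of Section~\ref{sectiondendritecore}: one first observes that $\im(\tilde f)$ is a Peano continuum, hence a genuine shrinking adjunction space with \emph{dendrite} core $\im(\tilde f)\cap\widetilde X$, and then proves by a delicate recursive argument over the arc-structure of the dendrite (Theorems~\ref{arctheorem} and~\ref{dendritetheorem}) that $\tilde f$ can be homotoped into \emph{single-factor form}, where each $U_{j,\alpha}$ is a single cube. Only then does $\Phi([f])_{j,\alpha}=0$ say that the one remaining factor is null-homotopic.

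There is a second, smaller gap: once in single-factor form, for fixed $j$ there are still countably many $\alpha$ with nonempty $U_{j,\alpha}$, and the null-homotopies you choose in $A_j$ must cluster at $a_j$ for the assembled homotopy to be continuous. Sequential $(n-1)$-connectedness does not provide this; what is needed is that $A_j$ be \emph{$\pi_n$-residual} at $a_j$ (a convergent sequence of null-homotopic $n$-loops admits null-homotopies that converge). CW-complexes have this property, but you should invoke it explicitly rather than the $(n-1)$-connected condition.
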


Moreover, we characterize the image of $\Phi$ in terms of the ``whisker topology" on generalized covering spaces. Theorem \ref{mainthm} was motivated, in part, to provide a solution to the open problem of characterizing $\pi_n(\bbh_1\vee \bbh_n)$ for $n\geq 2$ (see Figure \ref{figx2}). The primary difficulty embedded in this problem is the effect of the Hawaiian earring group $\pi_1(\bbh_1\vee \bbh_n)=\pi_1(\bbh_1)$ on $\pi_n(\bbh_1\vee \bbh_n)$. Due to the infinitary nature of $\pi_1(\bbh_1)$, this effect is substantially more complicated than the usual $\pi_1$-action. Our analysis shows that $\pi_n(\bbh_1\vee \bbh_n)$ embeds canonically into the group $\bbz^{\bbn\times\pi_1(\bbh_1)}$. In particular, $\pi_n(\bbh_1\vee \bbh_n)$ is isomorphic to the group of functions $g:\bbn\times \pi_1(\bbh_1)\to \bbz$, which have finite support in the second variable (and thus countable support overall) and for which $\{[\alpha]\in\pi_1(\bbh_1)\mid g(j,[\alpha])\neq 0\}$ has compact closure in $\pi_1(\bbh_1)$ with the whisker topology. See Example \ref{h1hn} for more details. This characterization of $\pi_n(\bbh_1\vee \bbh_n)$ is canonical and suffices for any computations or applications the author can imagine. Since it follows that $\pi_n(\bbh_1\vee \bbh_n)$ is cotorsion-free, a Fuchs-type decomposition \cite{Fuchs} may provide a non-canonical description of the isomorphism type of $\pi_n(\bbh_1\vee \bbh_n)$ as an abelian group.
\begin{figure}[h]
\centering \includegraphics[height=2.5in]{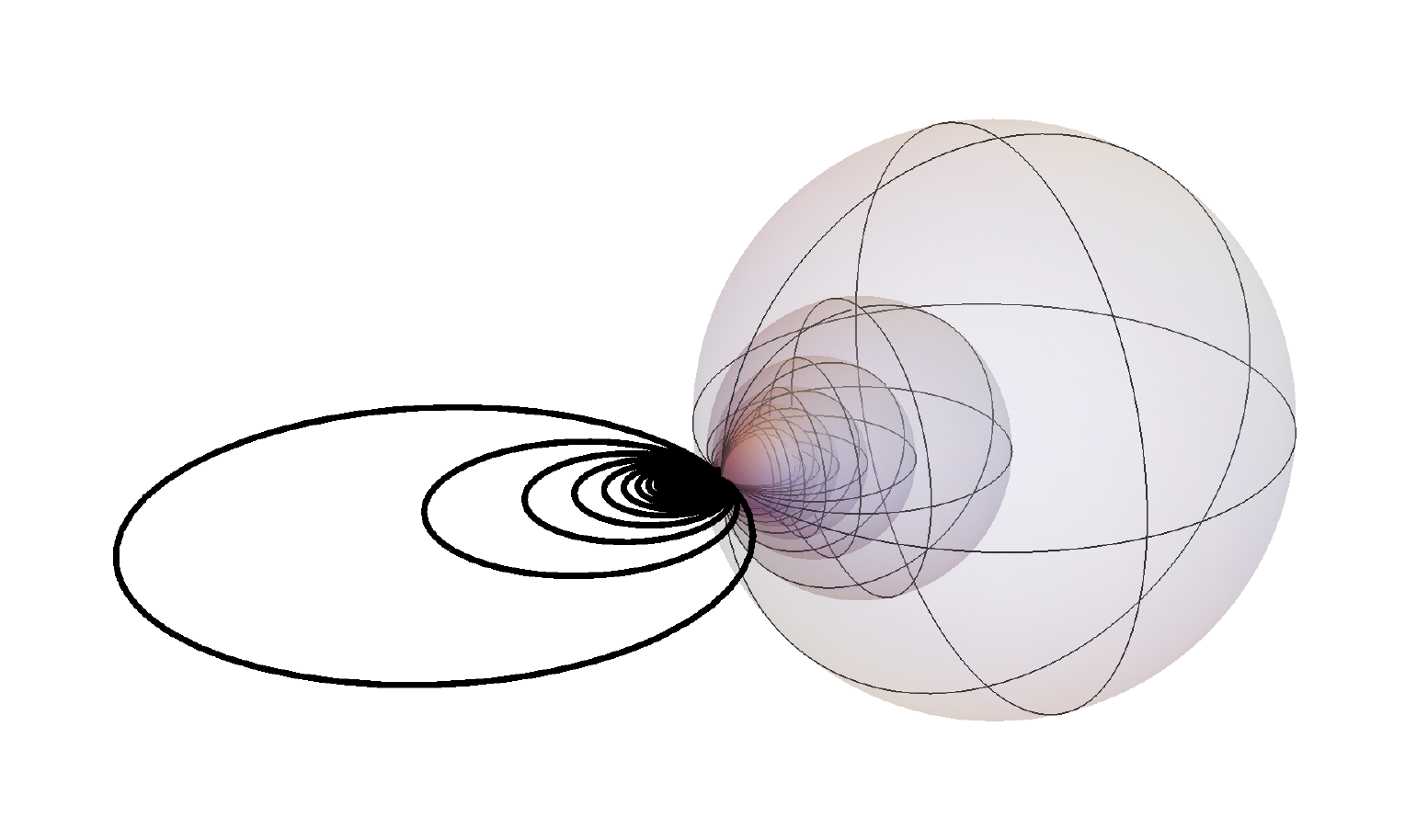}
\caption{\label{figx2}The space $\bbh_1\vee\bbh_2$.}
\end{figure}

Theorem \ref{mainthm} is proved as a special case of Theorem \ref{maintheorem}, which is the strongest result in this paper. We avoid the statement of Theorem \ref{maintheorem} here since it involves some terminology to be developed later on. The new techniques leading up to the proof of Theorem \ref{maintheorem}, particularly the factorization methods in Sections \ref{sectionwhitneycover} and \ref{sectiondendritecore} are expected to provide new methods for addressing several other fundamental open problems in the field.

Finally, we remark that the length of this paper is a result of the substantial technical hurdles that must be overcome to prove Theorem \ref{maintheorem} (and consequently Theorem \ref{mainthm}). Sections \ref{sectionprelim} and \ref{sectionshrinkingadjspaces} are dedicated to establishing required notation, terminology, and constructions. With the exception of scattered remarks and examples intended to provide intuition and context for the reader, Sections \ref{sectionwhitneycover}, \ref{sectiondendritecore}, and \ref{sectiongencovers} develop a step-by-step approach to proving Theorem \ref{maintheorem}.\\

\noindent\textbf{Acknowledgments:} The author thanks Katsuya Eda for conversations and questions that helped motivate this project.

\section{Infinite products in homotopy groups}\label{sectionprelim}

All topological spaces in this paper are assumed to be Hausdorff. Throughout, $I$ denotes the unit interval $[0,1]$ and $S^n\subseteq \bbr^{n+1}$ denotes the unit $n$-sphere with basepoint $e_n=(1,0,0,\dots,0)$. If $X$ and $Y$ are spaces, then $Y^X$ will denote the space of continuous functions $X\to Y$ with the compact-open topology. Generally, $c_y\in Y^X$ will denote the constant map at $y\in Y$. If $A\subseteq X$ and $B\subseteq Y$, then $(Y,B)^{(X,A)}$ will denote the subspace of $Y^X$ consisting of maps $f:X\to Y$ satisfying $f(A)\subseteq B$. For a based space $(X,x)$, we let $\Omega^{n}(X,x)$ denote the relative mapping space $(X,x)^{(I^n,\partial I^n)}$ so that $\pi_n(X,x)=\pi_0(\Omega^{n}(X,x))$ is the $n$-th homotopy group. We refer to elements $f\in \Omega^n(X,x)$ as \textit{$n$-loops}, which may also be viewed as based maps $(S^n,e_n)\to (X,x)$ using a fixed choice of homeomorphism $I^n/\partial I^n\cong S^n$. More generally, $[(X,x),(Y,y)]$ will denote the set of pointed-homotopy classes of maps $(X,x)\to (Y,y)$. We say that a homotopy $H:X\times I\to Y$ is \textit{constant on} $A\subseteq X$ (or is \textit{relative to }$A$) if for all $\bfx\in A$, $H(\bfx,t)$ is constant as $t$ varies.

For $0\leq m\leq n$, the term \textit{$m$-cube} will refer to subsets of $I^n$ of the form $R=\prod_{i=1}^{n}C_i$ where $C_i$ is a closed interval for $m$-values of $i$ and a single point for the other $n-m$ values of $i$. A subset of $R$ of the form $F=\prod_{i=1}^{n}D_i$ where $D_i$ is either $C_i$ or one of the two boundary points of $C_i$ is called a \textit{face} of $R$. Specifically, $F$ is a \textit{$p$-face of} $R$ if $F$ is a $p$-cube. Generally, we will use the term boundary of a subset $A\subseteq I^n$ to mean the topological boundary $\partial A$ in $I^n$. The \textit{face-boundary} of a $k$-cube $R$, denoted $bd(R)$, is the union of the $(k-1)$-faces of $R$. Certainly, $\partial R=bd(R)$ for any $n$-cube $R\subseteq I^n$ and $\partial R=\emptyset$ for all $k$-cubes with $k<n$.

Given two $n$-cubes $R=\prod_{i=1}^{n}C_i$ and $R'=\prod_{i=1}^{n}C_i'$ in $I^n$ and maps $f:R\to X$ and $g:R'\to X$, we write $f\equiv g$ if $f=g\circ h$ for a homeomorphism $h:R\to R'$, which
\begin{itemize}
\item maps each $m$-face of $R$ onto an $m$-face of $R'$, 
\item maps each $1$-face $\prod_{i=1}^{n}D_i$ with non-degenerate component $D_j$ to the $1$-face $\prod_{i=1}^{n}D_i'$ of $R'$ with non-degenerate component $D_j'$ by a map whose $j$-th component $D_j\to D_j'$ is increasing.
\end{itemize}
Such a map $h$ need not be affine but the second condition ensures that $h$ maps $0$-faces to $0$-faces and never rotates or changes the orientation of any $p$-face. For example, let $L_{R,R'}:R\to R'$ denote the canonical map, which is increasing and linear in each component. If $f=g\circ L_{R,R'}$, then $f\equiv g$. If $f:(R,\partial R)\to (X,x)$ is a map, and if it does not cause confusion, we will identify $f$ with the $n$-loop $f\circ L_{I^n,R}\in \Omega^n(X,x)$ and still refer to $f$ as an $n$-loop.

Given maps $f_1,f_2,\dots, f_m\in \Omega^{n}(X,x)$, the \textit{$m$-fold concatenation} $\prod_{i=1}^{m}f_i=f_1\cdot f_2 \cdots f_m$ is the map $(I^n,\partial I^n)\to (X,x)$ whose restriction to $R_{i}=\left[\frac{i-1}{m},\frac{i}{m}\right]\times I^{n-1}$ is $f_i\circ L_{R_i,I^n}$. If $f\in X^{I^n}$, then the map $f^{-}\in X^{I^n}$ defined by $f^{-}(t_1,t_2,\dots,t_n)=f(1-t_1,t_2,\dots,t_n)$ is called the \textit{reverse} of $f$.

\begin{remark}\label{retractionremark}
If $f\in \Omega^{n}(X,y)$ and $\alpha:I\to X$ is a path from $x$ to $y$, then there is a canonical map $\alpha\ast f\in \Omega^n(X,x)$ that corresponds to the action of the fundamental groupoid on $\coprod_{a\in X}\pi_n(X,a)$. In particular, $(\alpha\ast f)|_{[1/3,2/3]^n}\equiv f$ and $(\alpha\ast f)(\partial ([s,1-s]^n))=\alpha(3s)$ for $s\in [0,1/3]$. We refer to $\alpha\ast f$ as the \textit{path-conjugate} of $f$ by $\alpha$.

We will later be confronted with the situation where an $n$-loop $f$ and path $\alpha$ appear together as a map $g:I^n\times \{1\}\cup (\partial I^n)\times I\to X$ where $g(\bfx,1)=f(\bfx)$ and $g(\bfx,t)=\alpha(t)$ if $(\bfx,t)\in (\partial I^n)\times I$. When this occurs, we note that there is a canonical piecewise-linear map $r:I^n\times I\to I^n\times\{0\}\cup (\partial I^n)\times I$ that retracts the $(n+1)$-cube down onto the ``hollow box without a top," specifically, so that $r(\bfx,0)=(\alpha\ast f)(\bfx)$.
\end{remark}

 A \textit{Peano continuum} is a connected locally path-connected compact metric space. The Hahn-Mazurkiewicz Theorem \cite[Theorem 8.14]{Nadler} implies that a Hausdorff space is a Peano continuum if and only if there exists a continuous surjection $\ui\to X$. Given any path-connected Hausdorff space $X$ and $x_0\in X$, the $n$-th homotopy group $\pi_n(X,x_0)$ is isomorphic to the canonical directed limit $\varinjlim_{A\in\mathcal{C}}\pi_n(A,x_0)$ where $\mathcal{C}$ is the directed set of Peano continua in $X$ containing $x_0$ (and directed by inclusion). This fact justifies our primary interest in Peano continua, although many of our results will apply far more generally.

\subsection{Infinite products over $n$-domains}

\begin{definition}
An \textit{$n$-domain} is a set $\mathscr{R}$ of $n$-cubes in $I^n$ whose interiors are pairwise disjoint, i.e. $\int(R)\cap \int(R')=\emptyset$ if $R\neq R'$ in $\scrr$.
\end{definition}

Notice that $n$-domains are always countable. Sometimes, we may have need to index $\scrr$ as $\{R_k\mid k\in K\}$ where $K$ is well-ordered.

\begin{definition}
Let $\scrr$ be an $n$-domain. We say that $\scrr$ is a \textit{locally finite cover} of a subset $A\subseteq I^n$ if $\bigcup \scrr=A$ and for fixed $R\in\scrr$, $R\cap R'\neq\emptyset$ for at most finitely many $R'\in \scrr$.
\end{definition}

\begin{definition}\label{convergesdef}
If $\{f_k\}_{k\in K}\subseteq Y^X$ is a collection of maps indexed by a countable set $K$, we say that $\{f_k\}_{k\in K}$ \textit{clusters at a point} $y\in Y$ if for every neighborhood $U$ of $y$, we have $\im(f_k)\subseteq U$ for all but finitely many $k\in K$. If $K$ is a well-ordered set, we may say that $\{f_k\}_{k\in K}$ \textit{converges to the point} $y$ since $\{f_k\}_{k\in K}$ clusters at $y$ if and only if $\{f_k\}_{k\in K}$ converges to the constant map $c_y$ in $Y^X$.
\end{definition}

Since $Y$ is assumed to be Hausdorff, a set of based maps $\{f_k\}_{k\in K} \subseteq (Y,y_0)^{(X,x_0)}$ can only cluster at the basepoint $y_0$.

\begin{definition}\label{rconcatenationdef}
Let $\mathscr{R}$ be an $n$-domain and consider a set $\{f_R\}_{R\in\scrr}\subseteq \Omega^n(X,x_0)$ that clusters at $x_0$. The \textit{$\scrr$-concatenation} of $\{f_R\}_{R\in\scrr}$ is the map $\prod_{\scrr}f_R\in \Omega^n(X,x_0)$ whose restriction to $R\in\scrr$ is $f_R\circ L_{R,I^n}$ and which maps $I^n\backslash \bigcup \scrr$ to $x_0$.
\end{definition}

If $\scrr$ is infinite, then the assumption that $\{f_R\}_{R\in\scrr}$ clusters at $x_0$ is required for the $\scrr$-concatenation to be continuous. The following theorem is an infinite commutativity result based on the work of Eda-Kawamura \cite{EK00higher}, which was refined and extended in \cite{Brazasncubeshuffle}; it will play a crucial supporting role in the current paper.

\begin{theorem}[Infinitary $n$-Cube Shuffle]\label{shuffletheorem} Suppose $\scrr$ and $\scrs$ are $n$-domains and $\{f_R\}_{R\in\scrr}$ and $\{g_S\}_{S\in\scrs}$ are subsets of $\Omega^n(X,x_0)$ that cluster at $x_0$. If there is a bijection $\phi:\scrr\to\scrs$ such that $f_{R}\equiv g_{\phi(R)}$ for all $R\in\scrr$, then we have $\prod_{\scrr}f_R\simeq\prod_{\scrs}g_S$ by a homotopy (rel. $\partial I^n$) with image in $\bigcup_{R\in\scrr}\im(f_R)$. 
\end{theorem}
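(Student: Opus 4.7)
The plan is to construct the homotopy by transporting $\prod_\scrr f_R$ along an ambient isotopy of $I^n$ that carries each $R\in\scrr$ onto $\phi(R)\in\scrs$ via the canonical face-preserving homeomorphism $h_R:R\to\phi(R)$ associated to the relation $f_R\equiv g_{\phi(R)}$. Concretely, if I can produce $\Psi:I^n\times I\to I^n$ with $\Psi_0=\mathrm{id}$, $\Psi_t|_{\partial I^n}=\mathrm{id}$ for all $t$, and $\Psi_1|_R=h_R$ for every $R\in\scrr$, then $H(\bfx,t)=\bigl(\prod_\scrs g_S\bigr)(\Psi_t(\bfx))$ is the required homotopy rel $\partial I^n$. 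Its image lies in $\bigcup_R\im(f_R)$ because $\prod_\scrs g_S$ takes only values in $\{x_0\}\cup\bigcup_S\im(g_S)$, and $\im(g_{\phi(R)})=\im(f_R)$ by the $\equiv$ hypothesis.

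First I would establish a finite swap lemma: given finitely many pairwise disjoint $n$-cubes $R_1,\ldots,R_m\subseteq I^n$, disjoint target cubes $S_1,\ldots,S_m\subseteq I^n$, and face-preserving homeomorphisms $h_i:R_i\to S_i$, there is an ambient isotopy of $I^n$ rel $\partial I^n$ with $\Psi_1|_{R_i}=h_i$. This can be built by iterated two-cube swaps through thin connecting tubes; the support of each swap is contained in an arbitrarily small neighborhood of the tube together with the two cubes, and the tube may be chosen disjoint from any pre-specified finite set of other cubes. Applying this to the corresponding finite concatenation yields a homotopy whose image is contained in $\bigcup_{i}\im(f_{R_i})$, since both concatenations are constant at $x_0$ on the transit tubes.

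For the infinite case I would promote the finite construction to shrinking stages. Fix a decreasing neighborhood basis $\{U_m\}$ of $x_0$. By clustering there exist integers $N_1<N_2<\cdots$ such that $\im(f_R),\im(g_S)\subseteq U_m$ for all $R,S$ outside the first $N_m$ entries of a suitable enumeration $\scrr=\{R_k\}$, with $\phi(R_k)$ enumerated in parallel. Perform stage $k$ on the time slot $[1-2^{-k},1-2^{-(k+1)}]$: starting from the configuration produced at time $1-2^{-k}$, apply the finite swap lemma to move $R_k$ onto $\phi(R_k)$, choosing a thin transit tube that is disjoint both from the cubes already placed in their final positions and from the still-unprocessed cubes.

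The main obstacle is continuity of the limit $H$ at $t=1$ (and, by extension, at the cluster points of $\bigcup\scrr$ and $\bigcup\scrs$). This is exactly where the clustering hypothesis pays off: for $k\geq N_m$ every loop touched by stage $k$ has image contained in $U_m$, so $H(I^n\times[1-2^{-N_m},1])\subseteq U_m$. Since $\bigcap_m U_m=\{x_0\}$, the map $H$ extends continuously at time $1$ with $H_1=\prod_\scrs g_S$, as each cube is ultimately placed in its $\scrs$-position by some finite stage. The same control yields the desired image bound $\im(H)\subseteq\bigcup_R\im(f_R)$.
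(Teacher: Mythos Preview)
The paper does not supply its own proof of this theorem; it is quoted from \cite{Brazasncubeshuffle}, so there is nothing in-paper to compare against. Evaluating your proposal on its own merits, there are two concrete gaps.

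First, the staged construction is not well-defined as written. You want the configuration after stage $k$ to carry $g_{\phi(R_j)}$ on $\phi(R_j)$ for $j\le k$ and $f_{R_j}$ on $R_j$ for $j>k$, but nothing guarantees that the target $\phi(R_k)\in\scrs$ is disjoint from an unprocessed $R_j\in\scrr$, or that the source $R_k$ is disjoint from an already-placed $\phi(R_j)$; the two $n$-domains are unrelated. A cube $R\in\scrr$ may also meet $\partial I^n$ while $\phi(R)$ does not, which blocks any isotopy rel $\partial I^n$ from carrying one onto the other. Both issues are repaired by first invoking Lemma~\ref{shrinkingcubelemma} to shrink every $R$ and every $S$ to a small sub-cube so that the combined family $\{R_k'\}\cup\{\phi(R_k)'\}$ is pairwise disjoint and lies in $(0,1)^n$, but you never do this.

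Second, your continuity claim at $t=1$ is false as stated: the inclusion $H(I^n\times[1-2^{-N_m},1])\subseteq U_m$ fails because the already-placed cubes $\phi(R_1),\dots,\phi(R_{N_m-1})$ sit in that slab and carry loops whose images need not lie in $U_m$. The correct argument is local rather than global: near a point $(\bfx_0,1)$ with $H_1(\bfx_0)=x_0$, one first discards the finitely many large target cubes (which miss a neighborhood of $\bfx_0$) and then observes that all remaining content---late-stage tubes, late placed cubes, late unprocessed cubes---has image in $U_m$ by clustering. Note finally that the global ambient isotopy $\Psi$ promised in your opening paragraph cannot exist in general: the limit $\Psi_1$ would be a homeomorphism of $I^n$ carrying the accumulation locus of $\scrr$ onto that of $\scrs$, and these can be topologically distinct (e.g.\ a single point versus an arc). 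The homotopy $H$ must be built directly, not as a post-composition with a domain reparametrization.
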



\begin{definition}\label{concatenationdef}
The \textit{standard $n$-domain} is the $n$-domain $\scrr=\{R_k\mid k\in\bbn\}$ where $R_k=\left[\frac{k-1}{k},\frac{k}{k+1}\right]\times I^n$. The \textit{infinite concatenation} of a sequence $\{f_k\}_{k\in \bbn}$ in $\Omega^{n}(X,x)$ that converges to $x$ is the $\scrr$-concatenation $\prod_{\scrr}f_k$ where $\scrr$ is the standard $n$-domain. We will typically denote this $n$-loop as $\prod_{k=1}^{\infty}f_k$.
\end{definition}

The following ``$n$-domain shrinking" result is a modest generalization of \cite[Lemma 2.3]{Brazasncubeshuffle}. Since the proof is straightforward and essentially the same, we omit it.

\begin{lemma}[$n$-domain shrinking]\label{shrinkingcubelemma}
Let $f:I^n\to X$ be a map and $\scrr=\{R_k\mid k\in K\}$ be an $n$-domain such that for every $k\in K$, there is a point $x_k\in X$ such that $f(\partial R_k)=x_k$. If $\mathscr{S}=\{S_k\mid k\in K\}$ is a sub-$n$-domain of $\scrr$, i.e. $S_k\subseteq R_k$ for all $k\in K$, then there is a map $g:I^n\to X$ such that
\begin{itemize}
\item $g$ agrees with $f$ on $I^n\backslash \bigcup_{k\in K}\int(R_k)$,
\item for all $k\in K$, $g(R_k\backslash \int(S_k))=x_k$ and $g|_{S_k}\equiv f|_{R_k}$,
\item $f$ is homotopic to $g$ by a homotopy that is constant on $I^n\backslash \bigcup_{k\in K}\int(R_k)$ and which maps $R_k\times I$ into $f(R_k)$.
\end{itemize}
\end{lemma}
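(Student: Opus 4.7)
The strategy is to define $g$ on each cube $R_k$ by precomposing $f|_{R_k}$ with a canonical collapse map $\phi_k\colon R_k\to R_k$ that linearly stretches $S_k$ back onto all of $R_k$ while squashing the outer shell $R_k\setminus\int(S_k)$ onto $\partial R_k$. Writing $R_k=\prod_{i=1}^n[a_i,b_i]$ and $S_k=\prod_{i=1}^n[c_i,d_i]$ with $a_i\leq c_i\leq d_i\leq b_i$, define $\phi_k$ coordinatewise: in the $i$-th coordinate, send $[c_i,d_i]$ linearly and increasingly onto $[a_i,b_i]$, collapse $[a_i,c_i]$ to $a_i$, and collapse $[d_i,b_i]$ to $b_i$. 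Then $\phi_k|_{S_k}=L_{S_k,R_k}$, $\phi_k(R_k\setminus\int(S_k))\subseteq\partial R_k$, and every coordinate equal to $a_i$ or $b_i$ is fixed, so $\phi_k(\partial R_k)\subseteq\partial R_k$. Declare $g(\mathbf{x})=f(\phi_k(\mathbf{x}))$ on each $R_k$ and $g(\mathbf{x})=f(\mathbf{x})$ on $I^n\setminus\bigcup_k\int(R_k)$. The two definitions agree on every $\partial R_k$ (both equal $x_k$), and the three listed properties follow directly from the construction, with $g|_{S_k}=f|_{R_k}\circ L_{S_k,R_k}$ witnessing $g|_{S_k}\equiv f|_{R_k}$.

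For the homotopy, use componentwise straight-line interpolation on each cube: set $H_k(\mathbf{x},t)=(1-t)\mathbf{x}+t\phi_k(\mathbf{x})$, which lies in $R_k$ by convexity and preserves $\partial R_k$ setwise (any coordinate equal to $a_i$ or $b_i$ is fixed by $\phi_k$, hence also by the interpolation). Then $H(\mathbf{x},t):=f(H_k(\mathbf{x},t))$ for $\mathbf{x}\in R_k$ and $H(\mathbf{x},t):=f(\mathbf{x})$ elsewhere assembles into a homotopy from $f$ to $g$ that is constant in $t$ off $\bigcup_k\int(R_k)$ and satisfies $H(R_k\times I)\subseteq f(R_k)$.

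The main obstacle is continuity of $g$ (and $H$) at a point $\mathbf{y}\in I^n\setminus\bigcup_k\int(R_k)$ which is an accumulation point of distinct cubes $R_{k_m}$; here $\scrr$ need not be locally finite and the $R_{k_m}$ may have diameters bounded below. The argument runs as follows. Pairwise-disjoint interiors force $\sum_m\operatorname{vol}(R_{k_m})\leq 1$, so $\operatorname{vol}(R_{k_m})\to 0$, and since all side lengths are at most $1$, the minimum side length of $R_{k_m}$ tends to $0$; hence every $\mathbf{a}_m\in R_{k_m}$ has a point $\mathbf{p}_m\in\partial R_{k_m}$ with $|\mathbf{p}_m-\mathbf{a}_m|\to 0$. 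Given a sequence $\mathbf{y}_m\in R_{k_m}$ with $\mathbf{y}_m\to\mathbf{y}$, pass to a subsequence along which the $R_{k_m}$ Hausdorff-converge to some compact set $A\ni\mathbf{y}$. For any $\mathbf{a}\in A$, choose $\mathbf{a}_m\in R_{k_m}$ with $\mathbf{a}_m\to\mathbf{a}$ and nearby $\mathbf{p}_m\in\partial R_{k_m}$; then $x_{k_m}=f(\mathbf{p}_m)\to f(\mathbf{a})$, so $f$ must be constant on $A$ with value $f(\mathbf{y})$. Continuity of $f$ on the compact $I^n$ then implies that $f(R_{k_m})$ Hausdorff-converges to $f(A)=\{f(\mathbf{y})\}$, and since $g(\mathbf{y}_m)\in f(R_{k_m})$ we obtain $g(\mathbf{y}_m)\to f(\mathbf{y})=g(\mathbf{y})$. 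The same reasoning gives continuity of $H$; the remainder of the verification is mechanical, which is why the author defers the details to the essentially identical argument of \cite[Lemma 2.3]{Brazasncubeshuffle}.
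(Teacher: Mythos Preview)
Your argument is correct and is precisely the ``straightforward'' construction the paper alludes to when it omits the proof and refers to \cite[Lemma 2.3]{Brazasncubeshuffle}: build the coordinatewise collapse $\phi_k:R_k\to R_k$, set $g=f\circ\phi_k$ on each cube, interpolate linearly for the homotopy, and handle global continuity via the volume/minimum-side-length argument. One small point of language: when you say $f(R_{k_m})$ ``Hausdorff-converges'' to $\{f(\mathbf{y})\}$, recall that $X$ is only assumed Hausdorff, not metric; what you actually use (and what suffices) is that for any open $U\ni f(\mathbf{y})$ the compact set $A\subseteq f^{-1}(U)$ has an $\epsilon$-neighborhood in $I^n$ contained in $f^{-1}(U)$, so $R_{k_m}\subseteq f^{-1}(U)$ eventually.
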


\subsection{Sequential homotopy}

\begin{definition}
Let $\{f_m\}_{m\in\bbn}$, $\{g_m\}_{m\in\bbn}$ be sequences of maps $X\to Y$ both of which converge to $y_0\in Y$. We say that $\{f_m\}_{m\in\bbn}$ and $\{g_m\}_{m\in\bbn}$ are \textit{sequentially homotopic} (and write $\{f_m\}_{m\in\bbn}\simeq\{g_m\}_{m\in\bbn}$) if there exists a sequence of free homotopies $H_m:X\times I\to Y$, $m\in\bbn$ from $f_m$ to $g_m$ such that $\{H_m\}_{m\in\bbn}$ converges to $x$. We refer to the sequence $\{H_m\}_{m\in\bbn}$ as a \textit{sequential homotopy.} We also define the following terms:
\begin{itemize}
\item If $\{f_m\}_{m\in\bbn}$ is sequentially homotopic to a constant sequence $\{c_{y_0}\}_{m\in\bbn}$ of constant maps, then we say that $\{f_m\}_{m\in\bbn}$ is \textit{sequentially null-homotopic}.
\item If $A\subseteq X$ and $\{f_m\}_{m\in\bbn}\simeq\{g_m\}_{m\in\bbn}$ by a sequential homotopy $\{H_m\}_{m\in\bbn}$ such that, for all $m\in\bbn$, $H_m$ is the constant homotopy on $A$, then we say that $\{f_m\}_{m\in\bbn}$ and $\{g_m\}_{m\in\bbn}$ are \textit{sequentially homotopic rel. $A$.} In particular, if $A=\{x_0\}$ and $f_m,g_m\in (Y,y_0)^{(X,x_0)}$ for all $m\in\bbn$, then we say $\{f_m\}_{m\in\bbn}$ and $\{g_m\}_{m\in\bbn}$ are \textit{sequentially homotopic rel. basepoint}.
\item If a sequence $\{f_m\}_{m\in\bbn}$ in $(Y,y_0)^{(X,x_0)}$ is sequentially homotopic rel. basepoint to the constant sequence $\{c_{y_0}\}_{m\in\bbn}$, we say $\{f_m\}_{m\in\bbn}$ is \textit{sequentially null-homotopic rel. basepoint}.
\end{itemize}
\end{definition}

The various notions of sequential homotopy define equivalence relations on the relevant sets of convergent sequences of maps. The set of pointed-sequential homotopy classes of maps $(X,x_0)\to (Y,y_0)$ will inherit natural group structure whenever the usual set of pointed-homotopy classes does.

\begin{remark}[Infinite horizontal concatenation]
If $\{H_m\}_{m\in\bbn}$ is a sequential homotopy of sequences $\{f_m\}_{m\in\bbn}$ and $\{g_m\}_{m\in\bbn}$ in $\Omega^n(X,x)$ that converge to $x$, then we may define a \textit{infinite horizontal concatenation} of $\{H_m\}_{m\in\bbn}$ as follows: $\prod_{m=1}^{\infty}H_m$ is the map $I^n\times I\to X$ whose restriction to $R_m=\left[\frac{m-1}{m},\frac{m}{m+1}\right]\times I^{n-1}\times I$ is $H_m\circ L_{R_m,I^{n+1}}$ and which maps $\{1\}\times I^{n-1}\times I$ to $x$. Then $\prod_{m=1}^{\infty}H_m$ is a homotopy rel. $\partial I^n$ between $\prod_{m=1}^{\infty}f_m$ and $\prod_{m=1}^{\infty}g_m$ (See Figure \ref{fighor}).
\end{remark}

\begin{figure}[h]
\centering \includegraphics[height=2.3in]{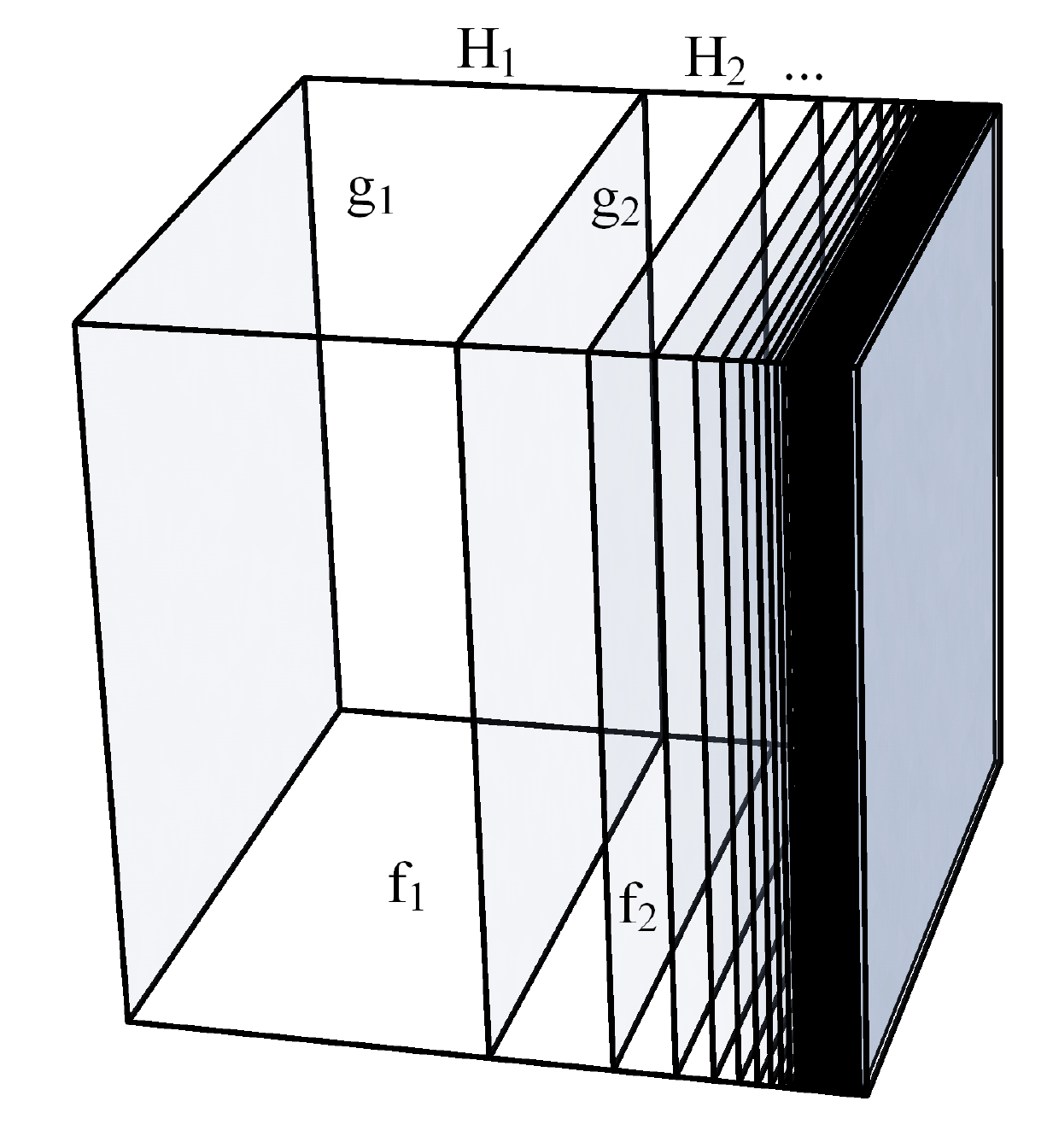}
\caption{\label{fighor}The domain of an infinite horizontal concatenation of homotopies $H_1,H_2,H_3,\dots$ of $2$-loops.}
\end{figure}

\begin{remark}[Infinite vertical concatenation]\label{verticalinfconcatremark}
In some of our more technical results, we will be required to form infinite ``vertical composition" of homotopies. Suppose that $\{g_p\}_{p\in\bbn}\to g_{\infty}$ in $\Omega^n(Y,y_0)$ (with respect to the compact-open topology). Suppose that $G_p:I^n\times I\to Y$ is a homotopy rel. $\partial I^n$ from $g_{p}$ to $g_{p+1}$. Exponential laws of mapping spaces imply that $G_p$ corresponds uniquely a path $\alpha_p:I\to \Omega^n(Y,y_0)$ from $g_p$ to $g_{p+1}$. If $\{\alpha_p\}_{p\in\bbn}$ converges to $g_{\infty}$ in the sense of Definition \ref{convergesdef}, then we may form the infinite path concatenation $\alpha_{\infty}=\prod_{p\in\bbn}\alpha_p:I\to \Omega^n(Y,y_0)$, which satisfies $\alpha_{\infty}(\frac{p-1}{p})=g_p$ and $\alpha_{\infty}(1)=g_{\infty}$. Now $\alpha_{\infty}$ corresponds uniquely to a homotopy $G_{\infty}:I^n\times I\to Y$ from $g_1$ to $g_{\infty}$. Moreover, $G_{\infty}(\bfx,\frac{p-1}{p})=g_{p}(\bfx)$ for all $p\in\bbn$ and $G_{\infty}(\bfx,1)=g_{\infty}(\bfx)$. When $G_{\infty}$ exists and is continuous, we refer to it as the \textit{infinite vertical concatenation} of the homotopies $\{G_p\}_{p\in\bbn}$ (See Figure \ref{figver}).

In practice, vertical infinite compositions are more difficult to construct than their horizontal counterparts. Generally, we will construct non-trivial infinite vertical concatenations in the following scenario: Consider a sequence $\{g_p\}_{p\in\bbn}$ in $\Omega^n(Y,y_0)$ constructed so that there a nested sequence of closed sets $C_1\supseteq C_2\supseteq C_{3}\supseteq \cdots$ such that \[C=\bigcap_{p\in\bbn}C_p=\bigcap_{p\in\bbn}\int(C_p)\neq \emptyset\] and such that $g_{p+1}$ agrees with $g_p$ on $I^n\backslash \int(C_{p})$. Suppose we have constructed a function $g_{\infty}:(I^n,\partial I^n)\to (Y,y_0)$ and a continuous homotopy $G_p:I^n\times I\to Y$ from $g_p$ to $g_{p+1}$ such that:
\begin{enumerate}
\item for every $p\in\bbn$, $G_p$ is the constant homotopy on $I^n\backslash \int(C_{p})$,
\item if $\bfx\in I^n\backslash  C$, then $g_{\infty}(\bfx)$ is the value of the eventually constant sequence $\{g_p(\bfx)\}_{p\in\bbn}$,
\item $(g_{\infty})|_{C}: C\to Y$ is continuous,
\item if $\{\bfx_p\}_{p\in\bbn}\to \bfx$ with $\bfx_p\in C_p$, then every neighborhood of $g_{\infty}(\bfx)$ must contain $G_p(\{\bfx_p\}\times I)$ for all but finitely many $p\in\bbn$.
\end{enumerate}
Under these four conditions, the vertical infinite concatenation $G_{\infty}$ of the sequence $\{G_{p}\}_{p\in\bbn}$ (with $G_{\infty}(\bfx,1)=g_{\infty}(\bfx)$) is well-defined and continuous. Since we will refer to this situation multiple times and since verifying continuity of $G_{\infty}$ is not entirely trivial, we give a proof below.
\end{remark}
\begin{figure}[h]
\centering \includegraphics[height=2.3in]{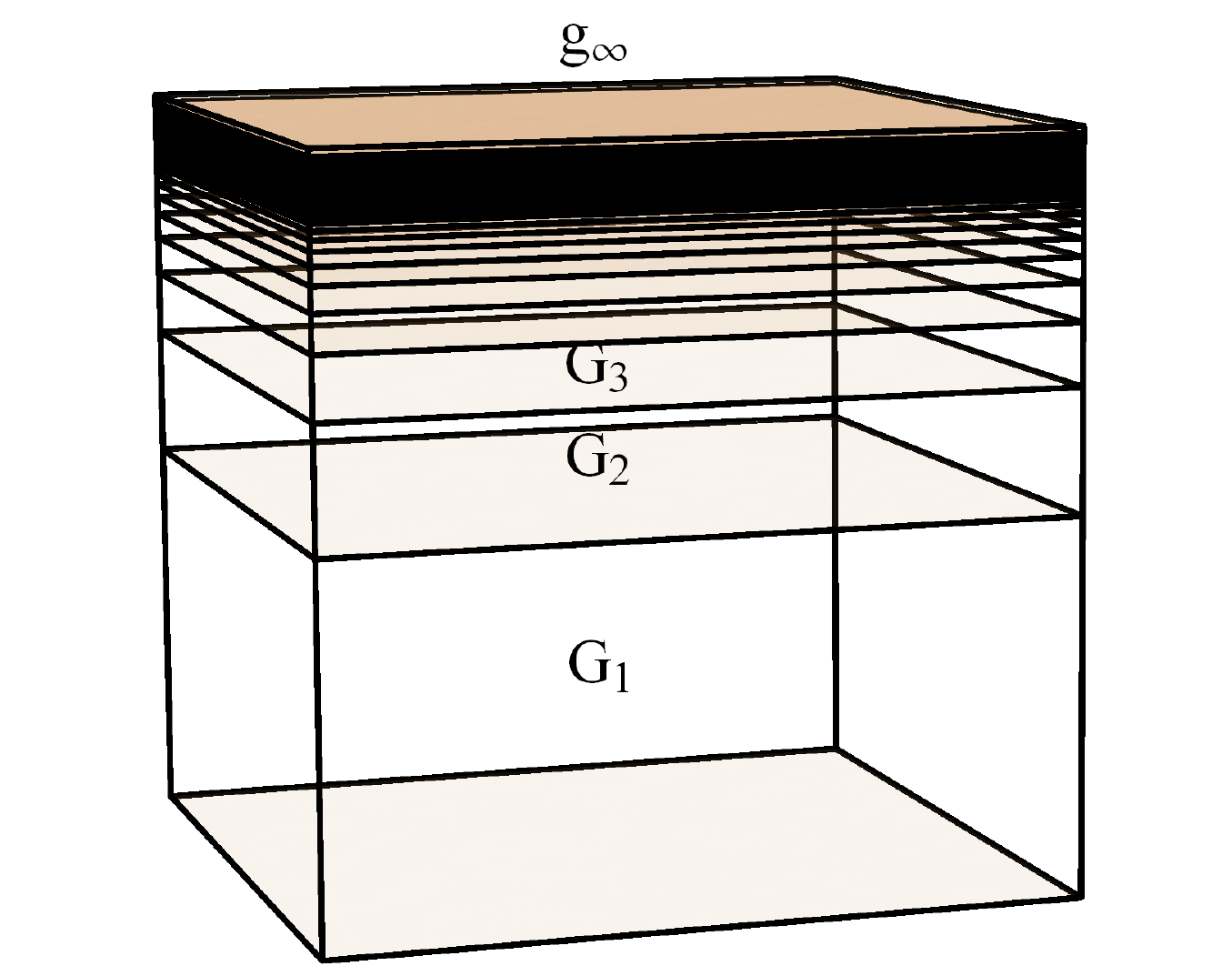}
\caption{\label{figver}The domain of an infinite vertical concatenation of homotopies $G_1,G_2,G_3,\dots$ that converge to $2$-loop $g_{\infty}$}
\end{figure}

\begin{lemma}
Consider a sequence of $n$-loops $\{g_{p}\}_{p\in\bbn}$ in $\Omega^n(X,x_0)$ and homotopies $G_{p}$ from $g_{p}$ to $ g_{p+1}$ satisfying Conditions (1)-(4) in Remark \ref{verticalinfconcatremark}. Then the map $g_{\infty}$ and the infinite vertical concatenation $G_{\infty}:I^n\times I\to Y$ of the sequence $\{G_{p}\}_{p\in\bbn}$ are continuous.
\end{lemma}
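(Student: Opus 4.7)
The strategy is to establish continuity of $g_{\infty}$ first and then leverage it, together with condition (4), to verify continuity of $G_{\infty}$ at the top face $I^{n}\times\{1\}$. On $I^n\times[0,1)$, continuity of $G_{\infty}$ is automatic: on each slab $I^n\times\bigl[\tfrac{p-1}{p},\tfrac{p}{p+1}\bigr]$, $G_{\infty}$ is a reparameterization of the continuous homotopy $G_p$, and adjacent slabs match because $G_p(\bfx,1)=g_{p+1}(\bfx)=G_{p+1}(\bfx,0)$.

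For continuity of $g_{\infty}$, I would distinguish two cases. If $\bfx\in I^n\setminus C$, pick $p_0$ with $\bfx\notin C_{p_0}$; since $C_{p_0}$ is closed and the family $\{C_p\}$ is nested, there is a neighborhood $U$ of $\bfx$ disjoint from every $C_p$ with $p\geq p_0$, and the hypothesis that $g_{p+1}$ agrees with $g_p$ off $\int(C_p)$ forces $g_{\infty}|_U=g_{p_0}|_U$, which is continuous. For $\bfx\in C$, I argue by contradiction: suppose $\bfx_k\to\bfx$ but $g_{\infty}(\bfx_k)\notin V$ for some open neighborhood $V$ of $g_{\infty}(\bfx)$. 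By condition (3), the offending $\bfx_k$'s may be assumed to lie outside $C$, so define $p_k$ to be maximal with $\bfx_k\in C_{p_k}$ (taking $p_k=0$ if no such index exists). Since $C=\bigcap_{p}\int(C_p)$, each $\int(C_p)$ is a neighborhood of $\bfx$, forcing $p_k\to\infty$. After thinning so that the $p_k$ are distinct, define an auxiliary sequence by $\bfx^{(p)}=\bfx_k$ when $p=p_k$ and $\bfx^{(p)}=\bfx$ otherwise; then $\bfx^{(p)}\in C_p$ and $\bfx^{(p)}\to\bfx$. Condition (4) now places $G_p(\{\bfx^{(p)}\}\times I)\subseteq V$ for $p$ large, contradicting $g_{\infty}(\bfx_k)=g_{p_k+1}(\bfx_k)=G_{p_k}(\bfx_k,1)\notin V$.

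For continuity of $G_{\infty}$ at $(\bfx,1)$, I again split on whether $\bfx\in C$. If $\bfx\notin C$, a neighborhood $U$ of $\bfx$ avoids all $C_p$ for $p\geq p_0$, so condition (1) makes every $G_p$ with $p\geq p_0$ constant in the $I$-factor on $U$; hence $G_{\infty}(\bfx',t)=g_{\infty}(\bfx')$ on $U\times\bigl[\tfrac{p_0-1}{p_0},1\bigr]$, and continuity of $g_{\infty}$ finishes the job. If $\bfx\in C$ and $(\bfx_k,t_k)\to(\bfx,1)$ with $t_k<1$, locate $p_k$ with $t_k\in\bigl[\tfrac{p_k-1}{p_k},\tfrac{p_k}{p_k+1}\bigr]$, so $p_k\to\infty$. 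Those $\bfx_k$ lying outside $C_{p_k}$ satisfy $G_{\infty}(\bfx_k,t_k)=g_{p_k}(\bfx_k)=g_{\infty}(\bfx_k)$ by condition (1), so continuity of $g_{\infty}$ disposes of them. For the remaining $\bfx_k\in C_{p_k}$, I build the auxiliary sequence $\bfx^{(p)}\in C_p$ exactly as before and invoke condition (4) to conclude $G_{p_k}(\bfx_k,s)\in V$ for every $s\in I$ once $k$ is large; the case $t_k=1$ is handled directly by continuity of $g_{\infty}$.

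The main obstacle is the rigid form of condition (4): it demands sequences $\bfx^{(p)}\in C_p$, one point per level $p$, whereas the sequences encountered in both continuity arguments only satisfy $\bfx_k\in C_{p_k}$ with the levels $p_k$ growing at some uncontrolled rate. The reindexing device, which fills in the ``missing'' levels with the limit point $\bfx$ (legitimate because $\bfx\in C\subseteq C_p$), is the key technical move that converts the diagonal data produced by the $\bfx_k$'s into the uniform form demanded by (4). Everything else reduces to straightforward bookkeeping with the nested closed sets and the identity $C=\bigcap_p\int(C_p)$.
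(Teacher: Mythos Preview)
Your proof is correct and follows essentially the same approach as the paper's, including the same reindexing device of filling in missing levels with the limit point $\bfx\in C$ in order to convert a diagonal sequence $\bfx_k\in C_{p_k}$ into a sequence $\bfx^{(p)}\in C_p$ to which condition (4) applies. The case organization differs only cosmetically: where the paper first reduces to $\bfx_i\notin C$ and then locates the level $q_i$ with $\bfx_i\in C_{q_i}\setminus C_{q_i+1}$, you split directly on whether $\bfx_k\in C_{p_k}$, which is an equivalent bookkeeping choice.
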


\begin{proof}
First, we check that $g_{\infty}$ is continuous. Since $g_p$ is continuous, Condition (2) implies that $g_{\infty}$ is continuous on the open set $I^n\backslash C$. By Condition (3), $g_{\infty}|_{C}:C\to Y$ is continuous. Therefore, it suffices to consider the following situation. Suppose $\{\bfx_i\}_{i\in\bbn}\to\bfx$ where $\bfx\in C$ and $\bfx_i\in I^n\backslash C$ for all $i\in\bbn$. Let $U$ be an open neighborhood of $g_{\infty}(\bfx)$ in $Y$. Since $\bfx\in C=\bigcap_{p\in\bbn}\int(C_p)$, we may replace $\{\bfx_i\}_{i\in\bbn}$ with a cofinal subsequence in $\int(C_1)$. Find $p_i\in \bbn$ such that $\bfx_i\in C_{p_i}\backslash C_{p_i+1}$. Then $G_{p_i}(\bfx_i,1)=g_{p_i+1}(\bfx_i)=g_{\infty}(\bfx_i)$. If $\{p_i\}_{i\in\bbn}$ was bounded above by $q\in\bbn$, then $\bfx\in \int(C_{q+1})$ but $\bfx_i\notin \int(C_{q+1})$ for all $i$; a contradiction of $\{\bfx_i\}_{i\in\bbn}\to \bfx$. Therefore, $\{p_i\}_{i\in\bbn}\to\infty$. If $G_{p_i}(\bfx_i,1)\notin U$ for infinitely many $i$, then, we replace $\{p_i\}_{i\in\bbn}$ by an increasing subsequence. Assuming $p_1<p_2<p_2<\cdots$, we can define $\bfx_p=\bfx$ whenever $p\notin\{p_1,p_2,p_3,\dots\}$. This gives a sequence $\{\bfx_p\}_{p\in\bbn}\to \bfx$ with $\bfx_p\in C_p$ and $G_{p}(\bfx_{p},1)\notin U$ for infinitely many $p$; a violation of Condition (4) Therefore, we must have $g_{\infty}(\bfx_i)=G_{p_i}(\bfx_i,1)\notin U$ for all but finitely many $i\in\bbn$. This gives $\{g_{\infty}(\bfx_i)\}_{i\in\bbn}\to g_{\infty}(\bfx)$, completing the proof of the continuity of $g_{\infty}$.

The vertical concatenation $G_{\infty}$ is well-defined by assumption. Conditions (1) and (2) imply that the only non-trivial case to check is the continuity of $G_{\infty}$ at points in $C\times \{1\}$. In particular, since we have already established the continuity of $g_{\infty}$, we only need to consider a sequence $\{(\bfx_i,t_i)\}_{i\in\bbn}\to (\bfx,1)$ with $\bfx\in C$ and such that $G_{\infty}(\bfx_i,t_i)\neq g_{\infty}(\bfx_i)$ for all $i\in\bbn$. In particular, we may assume $t_i<1$ for all $i$. Condition (4) allows us to reduce to the case where $\bfx_i\notin C$ for all $i\in\bbn$. Since $\bfx\in \int(C_1)$, we may replace $\{(\bfx_i,t_i)\}_{i\in\bbn}$ with a cofinal subsequence to find $q_i\in\bbn$ such that $\bfx_i\notin C_{q_i}\backslash C_{q_i+1}$ and $\{q_i\}_{i\in\bbn}\to\infty$. Let $U$ be a neighborhood of $G_{\infty}(\bfx,1)=g_{\infty}(\bfx)$ in $Y$. We will show that $G_{\infty}(\bfx_i,t_i)\in U$ for all but finitely many $i\in\bbn$.

Find $p_i\in\bbn$ such that $t_i$ lies in the domain of $G_{p_i}$, i.e. $t_i\in \left[\frac{p_i-1}{p_i},\frac{p_i}{p_i+1}\right]$. Write $s_i=L_{I^n,\left[\frac{p_i-1}{p_i},\frac{p_i}{p_i+1}\right]}(t_i)$ so that $G_{p_i}(\bfx_i,s_i)=G_{\infty}(\bfx_i,t_i)$. Since $G_{p_i}(\bfx_i,s_i)\neq g_{\infty}(\bfx_i)$, Conditions (1) and (2) ensure that $p_i\leq q_i$ and $\bfx_i\in C_{p_i}\backslash C_{q_i+1}$. Since $\{t_i\}_{i\in\bbn}\to 1$, we have $\{p_i\}_{i\in\bbn}\to\infty$. Suppose, to obtain a contradiction, that $G_{\infty}(\bfx_i,t_i)\in U$ for infinitely many $i$. Replacing $\{(\bfx_i,t_i)\}_{i\in\bbn}$ with a subsequence, if necessary, we may assume that $p_1<p_2<p_3<\cdots$. Now $G_{\infty}(\bfx_i,t_i)=G_{p_i}(\bfx_i,s_i)$ where $\bfx_i\in C_{p_i}$. Define $\mathbf{a}_p=\bfx_i$ when $p=p_i$ and $\mathbf{a}_p=\bfx$ otherwise. Similarly, define $u_p=s_i$ when $p=p_i$ and $u_p=1$ otherwise. Now we have a sequence $\mathbf{a}_p\in C_p$ such that $\{\mathbf{a}_p\}_{p\in\bbn}\to\bfx$ but for which $G_{p}(\mathbf{a}_p,u_p)\notin U$ for infinitely many $p$; a violation of Condition (4) We conclude that $G_{\infty}$ is continuous.
\end{proof}

Typically, we will apply the above situation when each $C_p$ is a disjoint union of finitely many $n$-cubes.

\subsection{Shrinking wedges}\label{subsectionshrinkingwedge}

To streamline our application of sequential homotopies, we employ the well-known notion of a shrinking wedge of based spaces.

\begin{definition}
The \textit{shrinking wedge} of countable set $\{(A_j,a_j)\}_{j\in J}$ of based spaces is the space $\sw_{j\in J}(A_j,a_j)$ whose underlying set is the usual one-point union $\bigvee_{j\in J}(A_j,a_j)$ with canonical basepoint $b_0$. A set $U$ is open in $\sw_{j\in J}A_j$ if
\begin{itemize}
\item $U\cap X_j$ is open in $A_j$ for all $j\in J$,
\item and whenever $b_0\in U$, we have $A_j\subseteq U$ for all but finitely many $j\in J$.
\end{itemize}
When the basepoints and/or indexing set are clear from context, we may write the shrinking wedge as $\sw_{J}A_j$. The special case $\bbh_n=\sw_{\bbn}S^n$ is called the \textit{$n$-dimensional Hawaiian earring} (see Figure \ref{fig2}).
\end{definition}

\begin{figure}[H]
\centering \includegraphics[height=2.3in]{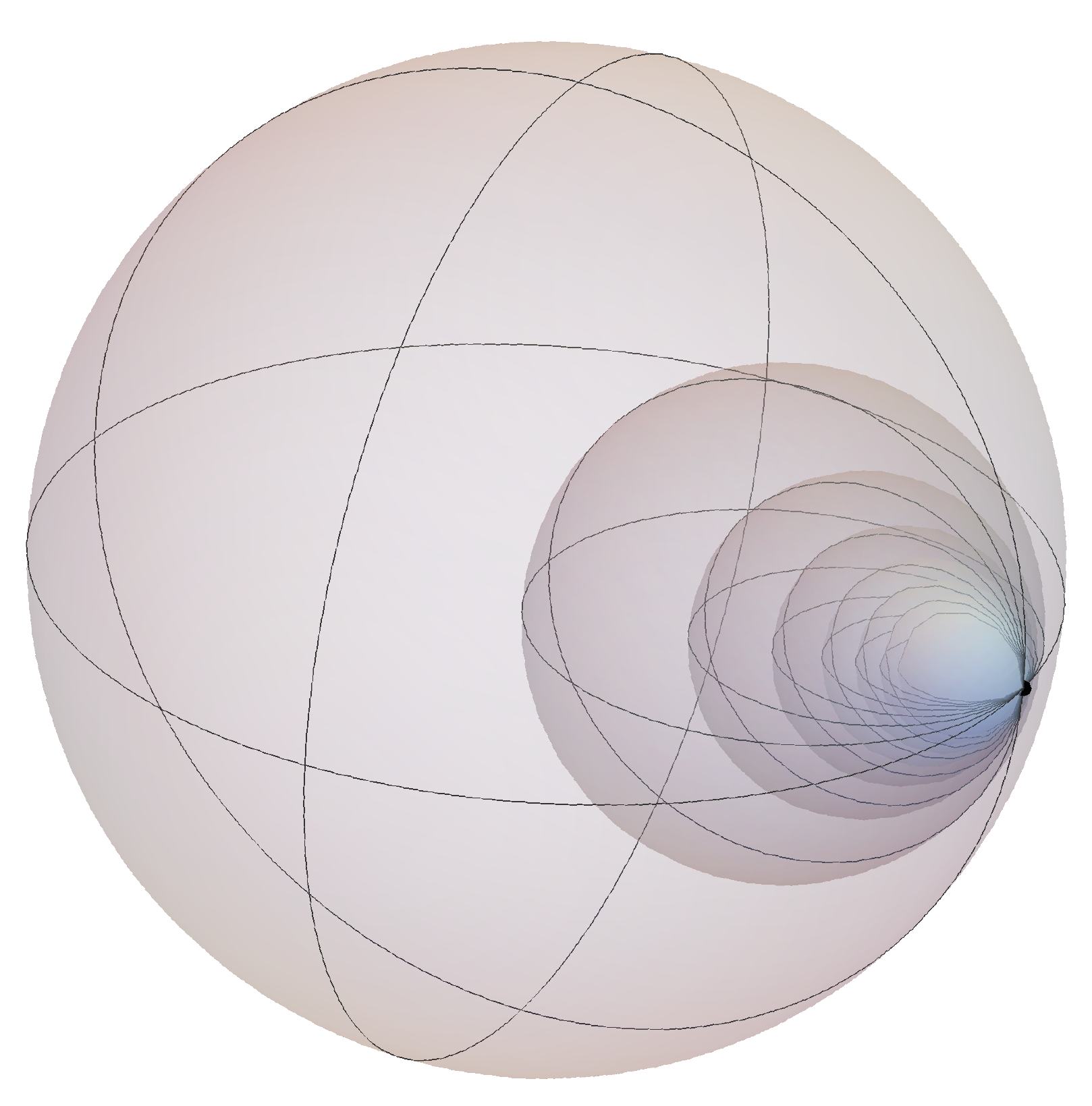}
\caption{\label{fig2}The $2$-dimensional Hawaiian earring $\bbh_2$, which appears in the work of Barratt-Milnor \cite{BarrattMilnor}.}
\end{figure}

Typically, we will identify $\bbh_0=\sw_{\bbn}(S^0,1)$ with the space $\{1,1/2,1/3,\dots,0\}$ consisting of a single convergent sequence and basepoint $0$. Since $\Sigma \bbh_{n-1}\cong \bbh_n$ for all $n\in\bbn$, $[(\bbh_1,b_0),(X,x)]$ is a group and $[(\bbh_n,b_0),(X,x)]$ is an abelian group for all $n\geq 2$. Some authors have referred to these groups as ``Hawaiian groups" \cite{BMM,krhawaiiangroups,krnoncontractible}. If $\ell_j:S^n\to \bbh_n$ denotes the inclusion of the $j$-th sphere, then there is a canonical function $\Theta_n:[(\bbh_n,b_0),(X,x)]\to \pi_n(X,x)^{\bbn}$ given by $\Theta_n([f])=([f\circ \ell_1],[f\circ \ell_2],[f\circ \ell_3],\dots)$, which is a homomorphism for $n\geq 1$.


\begin{remark}\label{mappingspaceremark}
If $(X,x)$ is a based space and $f:(\sw_{\bbn}X,b_0)\to (Y,y)$ is a based map, then the restriction $f_j:X\to Y$ to the $j$-th summand in the shrinking wedge forms a sequence $\{f_j\}_{j\in \bbn}$ that converges to $y$. Conversely, every sequence $\{f_j\}_{j\in\bbn}$ in $(Y,y)^{(X,x)}$ that converges to $y$ uniquely determines a map $f:\sw_{\bbn}X\to Y$, which is defined as $f_j$ on the $j$-th summand. If $X$ is locally compact Hausdorff, then exponential laws for mapping spaces give the existence of a canonical homeomorphism $(Y,y)^{(\sw_{\bbn}X,b_0)}\cong ((Y,y)^{(X,x)},c_{y})^{(\bbh_0,0)}$ of mapping spaces. In the case $X=S^n$, this means $(Y,y)^{(\bbh_n,b_0)}\cong (\Omega^n(Y,y),c_{y})^{(\bbh_0,0)}$
\end{remark}

\subsection{Sequential connectedness properties}\label{subsectionsequentialprops}

Recall the $k$-dimensional Hawaiian earring $\bbh_k$ and function $\Theta_n:[(\bbh_n,b_0),(X,x)]\to \pi_k(X,x)^{\bbn}$ from the previous section.

\begin{definition}\label{scndefinition}
Let $X$ be a topological space and $n\geq 0$.
\begin{enumerate}
\item We say a space $X$ is \textit{sequentially $n$-connected at }$x\in X$ if $[(\bbh_k,b_0),(X,x)]$ is trivial for all $0\leq k\leq n$.
\item We say a space $X$ is \textit{$\pi_n$-residual at} $x\in X$ if $\Theta_n:[(\bbh_n,b_0),(X,x)]\to \pi_k(X,x)^{\bbn}$ is injective.
\item We say a space $X$ is \textit{$\pi_n$-finitary at} $x\in X$ if $\Theta_n([(\bbh_n,b_0),(X,x)])$ lies in the weak direct product $\pi_n(X,x)^{(\bbn)}$ consisting of finitely supported functions $\bbn\to\pi_n(X,x)$.
\item We say a space $X$ is \textit{$n$-tame at} $x\in X$ if, for all $0\leq k\leq n$, $X$ is $\pi_n$-residual and $\pi_n$-finitary at $x$.
\end{enumerate}
Let $\mathscr{P}$ be one of the four pointed properties above. We say a based space $(X,x)$ has property $\mathscr{P}$ if and only if $X$ has property $\mathscr{P}$ at $x$. We say an unbased space $X$ has property $\mathscr{P}$ if and only if $X$ has property $\mathscr{P}$ at all of its points.
\end{definition}

Certainly, we have the following implications.

\[\xymatrix{
&& \pi_k\text{-residual, }0\leq k\leq n\\
\text{sequentially }n\text{-connected} \ar@/^1.5pc/@{=>}[r] & n\text{-tame}  \ar@/^1.5pc/@{=>}[l]^-{+n\text{-connected}} \ar@{=>}[ur] \ar@{=>}[dr]\\
&& \pi_k\text{-finitary},\, 0\leq k\leq n
}\]

All four of the above properties are sequential analogues of known properties. As properties of \textit{based} spaces, each one is an invariant of pointed-homotopy type but not of (free) homotopy type. Frequently, we will understand the these properties in terms of convergent sequences of $n$-loops. We take a moment to explain this connection for each property.

\begin{remark}[Sequential $n$-connectedness]\label{equivremark1}
A space $X$ is sequentially $n$-connected at $x$ if and only if every sequence of based maps $f_j:S^k\to X$, $j\in\bbn$ that converges to $x\in X$ is sequentially null-homotopic rel. basepoint. By considering eventually constant sequences, it is clear that if $X$ is path connected and sequentially $n$-connected at some point, then $X$ is $n$-connected. The sequentially $n$-connected property is similar, but not equivalent, to the conjoined property ``locally $n$-connected and $n$-connected."
\end{remark}

\begin{remark}[$\pi_n$-residual spaces]
A space $X$ is $\pi_n$-residual at $x$ if and only if every sequence $\{f_j\}_{j\in\bbn}$ of $n$-loops in $\Omega^n(X,x)$ that converges to $x$ and where each $f_j$ is null-homotopic in $X$ is sequentially null-homotopic rel. basepoint. This property is the sequential version of the $n$-dimensional analogue of the ``$1$-$UV_0$ property" introduced in \cite{BFTestMap}. This property has little to do with the groups $\pi_n(X,x)$ and more to do with the existence of ``enough small null-homotopies" near $x$. 
\end{remark}

\begin{remark}[$\pi_n$-finitary spaces]
A space $X$ is $\pi_n$-finitary at $x$ if and only if for every sequence of $n$-loops $f_j:S^n\to X$, $j\in\bbn$ that converges to $x\in X$, there exists $j_0\in\bbn$ such that $\{f_j\}_{j\geq j_0}$ is sequentially null-homotopic rel. basepoint. This property is the sequential analogue of the $n$-dimensional version of the semilocally simply connected property. Intuitively, $X$ is $\pi_n$-finitary at $x$ if there are no non-trivial infinite products in $\pi_n(X,x)$.
\end{remark}

\begin{remark}[$n$-tame spaces]
A space $X$ is $n$-tame at $x$ if and only if for every for all $0\leq k\leq n$ and every sequence of maps $f_j:S^k\to X$, $j\in\bbn$ that converges to $x\in X$, there exists $j_0\in\bbn$ such that $\{f_j\}_{j\geq j_0}$ is sequentially null-homotopic rel. basepoint. Intuitively, if $X$ is $n$-tame, then the groups $\pi_k(X,x)$, $0\leq k\leq n$ are completely ``tame" and the groups $[(\bbh_k,b_0),(X,x)]$, $0\leq k\leq n$ detect nothing more than the usual homotopy groups in the sense that $\Theta_k$ maps $[(\bbh_k,b_0),(X,x)]$ isomorphically onto $\bigoplus_{j\in\bbn}\pi_k(X,x)$. Certainly, manifolds and CW-complexes are $n$-tame at all of their points (for all $n\geq 0$).
\end{remark}

In the next proposition, we identify the local analogues of the $\pi_n$-residual and $\pi_n$-finitary properties. The proof is straightforward so we omit it.

\begin{proposition}
Suppose $X$ is first countable at $x\in X$. Then
\begin{enumerate}
\item $X$ is $\pi_n$-residual at $x$ if and only if for every neighborhood $U$ of $x$, there exist a neighborhood $V$ of $x$ such that \[\ker(\pi_n(V,x)\to \pi_n(X,x))\leq \ker(\pi_n(V,x)\to \pi_n(U,x))\] where the homomorphisms are those induced by inclusion,
\item $X$ is $\pi_n$-finitary at $x$ if and only if there exists a neighborhood $U$ of $x$ such that the homomorphism $\pi_n(U,x)\to \pi_n(X,x)$ induced by inclusion is trivial.
\end{enumerate}
\end{proposition}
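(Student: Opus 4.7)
The approach is to use first countability as a bridge between the sequential reformulations given in the remarks following Definition \ref{scndefinition} and the stated local kernel conditions. Fix a countable decreasing neighborhood base $U_1\supseteq U_2\supseteq\cdots$ of $x$. Any sequence $\{f_j\}_{j\in\bbn}\subseteq \Omega^n(X,x)$ with $\im(f_j)\subseteq U_j$ converges to $x$ and therefore assembles into a based map $g:(\bbh_n,b_0)\to (X,x)$ whose restriction to the $j$-th sphere is $f_j$, and conversely any convergent sequence can be re-indexed so that this containment holds. This shrinking-wedge/convergent-sequence correspondence, together with the behavior of based homotopies under it, makes both equivalences almost formal.

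For part (1), I would prove the forward direction by contrapositive. If some neighborhood $U$ witnesses the failure of the kernel containment for every $V$, then choosing $V_j=U_j\cap U$ produces $[f_j]\in \pi_n(V_j,x)$ with $[f_j]=0$ in $\pi_n(X,x)$ but $[f_j]\neq 0$ in $\pi_n(U,x)$. The resulting map $g:\bbh_n\to X$ satisfies $\Theta_n([g])=0$, so $\pi_n$-residuality gives a based null-homotopy $H:\bbh_n\times I\to X$ of $g$. Continuity of $H$ at each $(b_0,t)$ combined with compactness of $I$ produces an open neighborhood $W$ of $b_0$ with $H(W\times I)\subseteq U$; such a $W$ omits only finitely many summands, so $H$ restricted to the remaining $S^n_j\times I$ gives null-homotopies of the corresponding $f_j$ inside $U$, a contradiction. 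The reverse direction is a straightforward assembly: given a convergent sequence $\{f_j\}\to x$ with each $[f_j]=0$ in $\pi_n(X,x)$, refine the base so that each $V_m$ satisfies the kernel condition with respect to $U_m$, then for $j$ with $\im(f_j)\subseteq V_m$ produce a null-homotopy $H_j$ of $f_j$ with $\im(H_j)\subseteq U_m$. The sequence $\{H_j\}$ converges to $x$, so it constitutes a sequential null-homotopy rel. basepoint, establishing injectivity of $\Theta_n$ via the remark following Definition \ref{scndefinition}.

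Part (2) is simpler and bypasses sequential homotopies almost entirely. For the forward direction, contrapositive: if every neighborhood admits an $n$-loop with nontrivial class in $\pi_n(X,x)$, picking such an $f_j$ with image in $U_j$ assembles a map $g:\bbh_n\to X$ with $\Theta_n([g])$ of infinite support, contradicting the $\pi_n$-finitary assumption. For the reverse, if $\pi_n(U,x)\to \pi_n(X,x)$ is trivial and $g:\bbh_n\to X$ is based, continuity of $g$ at $b_0$ forces $g\circ \ell_j$ to land in $U$ for all but finitely many $j$, so $\Theta_n([g])$ has finite support.

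The main obstacle is the continuity verification for the assembled null-homotopy $H:\bbh_n\times I\to X$ in the reverse direction of (1): continuity must be checked at all points $(b_0,t)$ with $t\in I$, not just at $(b_0,0)$ and $(b_0,1)$. This reduces to the convergence $\{H_j\}\to c_x$ in the compact-open topology together with continuity of each $H_j$ at $(b_0,t)$, and proceeds in the same spirit as the mapping space identification in Remark \ref{mappingspaceremark}. Everything else amounts to repeated selections from the countable neighborhood base and is routine.
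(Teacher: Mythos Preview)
Your proposal is correct and follows exactly the straightforward argument the paper has in mind; the paper itself omits the proof entirely, stating only that it ``is straightforward.'' One small remark: the ``main obstacle'' you flag at the end is not actually present in your own argument, since in the reverse direction of (1) you explicitly invoke the sequential characterization from the remark after Definition~\ref{scndefinition}, and that reformulation asks only for a convergent \emph{sequence} $\{H_j\}$ of null-homotopies rather than a single assembled map $H:\bbh_n\times I\to X$---so there is no continuity at $(b_0,t)$ to verify.
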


The proofs in the remainder of this paper will make clear why sequential properties are far more practical than their local counterparts. Conveniently, three of these four properties agree in the $0$-dimensional case (in the presence of path-connectivity). The following lemma will provide a foundational case for higher dimensional situations later on.

\begin{lemma}\label{zerolemma}
For any space $X$, the following are equivalent:
\begin{enumerate}
\item $X$ is sequentially $0$-connected at $x\in X$,
\item $X$ is path connected and $0$-tame at $x\in X$,
\item $X$ is path connected and $\pi_0$-residual at $X$,
\item every sequence of (unbased) maps $\{f_{k}\}_{k\in\bbn}\subseteq X^{S^0}$ that converges to $x$ is sequentially null-homotopic,
\item for every convergent sequence $\{x_k\}_{k\in\bbn}\to x$, there exists a path $\alpha:I\to X$ such that $\alpha(1/k)=x_k$ and $\alpha(0)=x$.
\end{enumerate}
\end{lemma}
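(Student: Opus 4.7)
The plan is to establish the cycle $(1) \Rightarrow (2) \Rightarrow (3) \Rightarrow (4) \Rightarrow (5) \Rightarrow (1)$. The block $(1) \Rightarrow (2) \Rightarrow (3)$ requires essentially no work: $(1) \Rightarrow (2)$ is the forward arrow in the implication diagram following Remark \ref{equivremark1} combined with the observation (also in that remark) that sequential $0$-connectedness forces path-connectedness, and $(2) \Rightarrow (3)$ is tautological since $\pi_0$-residual is part of the definition of $0$-tame.

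For $(3) \Rightarrow (4)$, given $\{f_k\}_{k \in \bbn} \subseteq X^{S^0}$ converging to $c_x$, I would extract the two coordinate sequences $y_k = f_k(1)$ and $z_k = f_k(-1)$ in $X$, both tending to $x$. Viewed as two sequences of based $0$-loops (basepoint $e_0 = 1$ sent to $x$, and the other point to $y_k$ or $z_k$), path-connectedness makes each individual term null-homotopic rel.~basepoint, and the $\pi_0$-residual hypothesis then upgrades this to sequential null-homotopies rel.~basepoint. This produces paths $\gamma_k$ from $y_k$ to $x$ and $\delta_k$ from $z_k$ to $x$ whose images cluster at $x$. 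Setting $H_k(1, t) = \gamma_k(t)$ and $H_k(-1, t) = \delta_k(t)$ defines the required sequential free null-homotopy of $\{f_k\}$.

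For $(4) \Rightarrow (5)$, given $\{x_k\} \to x$, define $f_k \in X^{S^0}$ by $f_k(1) = x_k$, $f_k(-1) = x$; condition (4) supplies paths $\beta_k = H_k(1, \cdot)$ from $x_k$ to $x$ whose images cluster at $x$. Build $\alpha: I \to X$ with $\alpha(0) = x$ by concatenating, on each interval $[1/(k+1), 1/k]$, a reparameterization of $\beta_{k+1}^{-}$ (from $x_{k+1}$ to $x$) followed by $\beta_k$ (from $x$ to $x_k$), so that $\alpha(1/(k+1)) = x_{k+1}$ and $\alpha(1/k) = x_k$; continuity at $0$ uses that for any neighborhood $U$ of $x$ one has $\im(\beta_k) \subseteq U$ for all $k \geq N$, hence $\alpha([0, 1/N]) \subseteq U$. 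For $(5) \Rightarrow (1)$, a based map $f: \bbh_0 \to X$ records a sequence $y_k = f(1/k) \to x$; apply (5) to obtain $\alpha$ with $\alpha(1/k) = y_k$, and define $H: \bbh_0 \times I \to X$ by $H(1/k, t) = \alpha((1-t)/k)$ and $H(0, t) = x$. Continuity at $(0, t_0)$ again follows because $\alpha([0, \delta]) \subseteq U$ for $\delta$ small forces $H(V \times I) \subseteq U$ on a basic neighborhood $V$ of $0$ in $\bbh_0$, witnessing triviality of $[(\bbh_0, b_0), (X, x)]$.

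The conceptual crux, and the main technical obstacle, is the bridge between a clustering family of small paths and a single continuous path threading all their endpoints. Both $(4) \Rightarrow (5)$ and $(5) \Rightarrow (1)$ turn on verifying continuity at the accumulation parameter $0$, which is precisely where the cluster/convergence hypotheses are essential; the remaining implications amount to bookkeeping against the definitions and the implication diagram.
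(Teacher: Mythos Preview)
Your proposal is correct and follows the same overall strategy as the paper. A few minor differences worth noting: the paper closes the first block more quickly by observing $(3)\Rightarrow(1)$ directly (since $\prod_j\pi_0(X,x)$ is a one-point set when $X$ is path connected, injectivity of $\Theta_0$ forces $[(\bbh_0,b_0),(X,x)]$ trivial), whereas you thread $(3)\Rightarrow(4)$ through the cycle. For $(4)\Rightarrow(5)$ the paper makes the slicker choice $f_k(-1)=x_k$, $f_k(1)=x_{k+1}$, so the resulting null-homotopies give paths from $x_k$ to $x_{k+1}$ directly and the infinite concatenation $\prod_k\alpha_k$ is immediate, avoiding your there-and-back splice on each $[1/(k+1),1/k]$. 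One small gap in your $(4)\Rightarrow(5)$: a \emph{free} null-homotopy $H_k$ need not terminate at $x$, so your $\beta_k=H_k(1,\cdot)$ ends at some $p_k$ rather than at $x$; you should take $\beta_k=H_k(1,\cdot)\cdot H_k(-1,\cdot)^{-}$ instead (this still clusters at $x$). Also your parenthetical ``$\beta_{k+1}^{-}$ (from $x_{k+1}$ to $x$)'' has the reverse swapped. Neither issue affects the validity of the argument.
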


\begin{proof}
(1) $\Rightarrow$ (2) $\Rightarrow$ (3) is clear and (3) $\Rightarrow$ (1) follows from the fact that $\prod_{j\in\bbn}\pi_0(X,x)$ is a one-point set when $X$ is path-connected. Therefore, we focus on the equivalence of (1) with (4) and (5).

(1) $\Rightarrow$ (4) Let $\{f_{k}\}_{k\in\bbn}\subseteq X^{S^0}$ be a sequence that converges to $x$. Set $x_k=f_m(-1)$ and $y_k=f_m(1)$. By assumption, there is a sequence of paths $\alpha_k:I\to X$ from $x$ to $x_k$ that converges to $x$ and a sequence of paths $\beta_k:I\to X$ from $x$ to $x_k$ that converges to $x$. Now $\{\alpha_{k}^{-}\cdot \beta_k\}_{k\in\bbn}$ is a sequence of paths from $x_k$ to $y_k$ that converges to $x$. Since a (free) null-homotopy of a map $S^0\to X$ is precisely a path connecting the points in the image, $\{\alpha_{k}^{-}\cdot \beta_k\}_{k\in\bbn}$ is a sequentially null-homotopy of $\{f_k\}_{k\in\bbn}$. (4) $\Rightarrow$ (5) Suppose $\{x_k\}_{k\in\bbn}\to x$. Define $f_k:S^0\to X$ by $f_{k}(-1)=x_{k}$ and $f_k(1)=x_{k+1}$. Since $\{f_k\}_{k\in\bbn}$ converges to $x$, it is sequentially null-homotopic. In particular, there is a sequence of paths $\alpha_k$ from $x_k$ to $x_{k+1}$ that converges to $x$. Defining $\alpha$ to be the infinite concatenation $\prod_{k\in\bbn}\alpha_k$ gives the desired path. (5) $\Rightarrow$ (1) Suppose $\{x_k\}_{k\in\bbn}\to x$. By assumption, there is a path $\alpha:I\to X$ such that $\alpha(1/k)=x_k$ and $\alpha(0)=x$. Let $\alpha_k$ be a path with $\alpha_k\equiv\alpha|_{[0,1/k]}$. Then $\{\alpha_k\}_{k\in\bbn}$ is a sequence of paths from $x$ to $x_k$ that converges to $x$. Thus $X$ is sequentially $0$-connected.
\end{proof}

\begin{example}\label{nfinitarynonexamples}
We give examples to show that all four of the properties are distinct when $n\geq 1$. Certainly, $(S^n)^m$ is $k$-tame for all $k,m\geq 0$ but $S^n$ is not sequentially $n$-connected since it is not $n$-connected. It follows directly from results in \cite{EK00higher} that the $n$-dimensional Hawaiian earring $\bbh_n$ is sequentially $(n-1)$-connected and $\pi_n$-residual. However, the identity $\bbh_n\to\bbh_n$ shows that $\bbh_n$ is neither $\pi_n$-finitary nor $n$-tame. Consider the cone $C\bbh_{n}=(\bbh_{n}\times \ui)/(\bbh_{n}\times \{1\})$ over the $n$-dimensional Hawaiian earring and let $x_0$ be the image of $(b_0,0)$ in the quotient. Then $C\bbh_{n}$ is contractible and therefore $k$-connected for all $k\geq 0$. Since $\pi_k(C\bbh_n,x_0)$ is trivial, $\im(\Theta_k)=0=(\pi_k(C\bbh_n,x_0))^{(\bbn)}$. However, the inclusion $\bbh_n\to C\bbh_n$ does not extend to a basepoint-preserving null-homotopy. Thus $[(\bbh_n,b_0),(C\bbh_n,x_0)]\neq 0$ and $\Theta_n$ is not injective. Hence, for all $n\geq 0$, $C\bbh_n$ is $\pi_n$-finitary but not $\pi_n$-residual at $x_0$.
\end{example}

Just as $n$-connectedness plays a prominent role in standard homotopy group computations, sequentially $n$-connectedness will play a prominent role in this paper. We identify some topological constructions that preserve this property.

\begin{example}[Directed Limits]
If $X=\bigcup_{i\in\bbn}X_i$ has the weak topology with respect to subspaces $X_1\subseteq X_2\subseteq X_3\subseteq\cdots$ where each $X_i$ is $n$-tame, then $X$ is $n$-tame. Indeed, if $\{f_m\}_{m\in\bbn}$ is a sequence in $\Omega^k(X,x)$ that converges to $x\in X$, then $\bigcup_{m\in\bbn}\im(f_m)$ is a compact subspace of $X$ and therefore lies in $X_i$ for some $i$. Thus a cofinal subsequence of $\{f_m\}_{m\in\bbn}$ is sequentially null-homotopic in $X_i$. Since a CW-complex has the weak topology with respect to its finite subcomplexes (all of which are first countable), a similar argument shows that all CW-complexes are $n$-tame.
\end{example}

\begin{example}[Infinite Product Spaces]
The $n$-tame property is closed under taking finite direct products but not under infinite direct products, e.g. $S^n$ is $k$-tame for all $k\geq 0$ but $(S^n)^{\bbn}$ is not $n$-tame at any of its points since it is not $\pi_n$-finitary. However, if $\{(X_j,x_j)\mid j\in J\}$ is a family of sequentially $n$-connected based spaces, then the direct product $\prod_{j\in J}X_j$ is sequentially $n$-connected at $(x_j)_{j\in J}$.
\end{example}

\begin{example}[Unreduced Cones and Suspensions]\label{coneexample}
If $X$ is sequentially $n$-connected at $x_0\in X$, then the unreduced cone $CX=X\times \ui/X\times\{1\}$ is sequentially $n$-connected at all points in the arc $\{(x_0,t)\mid t\in\ui\}\subseteq CX$. Since $CX$ contracts by a pointed null-homotopy at the vertex point $v\in CX$, this is clear when $t=1$. If $s_0=(x_0,t)$ for $0\leq t<1$, there is a deformation retraction of $CX\backslash\{v\}$ onto $X\times\{t\}$. Let $r:CX\backslash\{v\}\to X\times \{t\}$ be the corresponding retraction and consider a sequence $\{f_j\}_{j\in\bbn}$ in $\Omega^n(CX,s_0)$ that converges to $s_0$. There exists $j_0\in\bbn$ such that $\im(f_j)\subseteq CX\backslash\{v\}$ for all $j\geq j_0$. If $j<j_0$, then $f_j$ is null-homotopic since $CX$ is contractible. Applying $r$, we see that $\{f_j\}_{j\geq j_0}$ is sequentially homotopic to $\{r\circ f_j\}_{j\geq j_0}$ in $CX$, the later of which is a sequence in $X\times\{t\}\subseteq CX$ that converges to $(x_0,t)$. Hence, if $X$ is sequentially $n$-connected at $x_0$, then $\{r\circ f_j\}_{j\geq j_0}$ is sequentially null-homotopic in $X\times\{t\}$. Thus $\{f_j\}_{j\in\bbn}$ is sequentially null-homotopic in $CX$.

Applying a similar argument, it follows that if $X$ is sequentially $n$-connected at $x_0\in X$, then the unreduced suspension $SX=CX/X\times\{0\}$ is also sequentially $n$-connected at all points in the arc $\{(x_0,t)\mid t\in\ui\}\subseteq SX$.
\end{example}

The next example plays a key role in our main result.

\begin{example}[Dendrites]\label{dendriteexample}
A \textit{dendrite} is a Peano continuum $D$ such that for any distinct points $a,b\in D$, there is a unique arc $A\subseteq D$ with endpoints $\{a,b\}$. Equivalently, a Peano continuum $D$ is a dendrite if and only if it contains no subspaces homeomorphic to $S^1$. There are many other useful characterizations of dendrites from Continuum Theory \cite[Chapter 10]{Nadler}, some of which we will call upon later. Every dendrite is contractible and has Lebesgue covering dimension $1$. Moreover every compact connected subset $D'$ of a dendrite $D$ is itself a dendrite and is a deformation retract of $D$. As a consequence, every map $f:S^n\to D$, $n\geq 1$ contracts by a homotopy with image in $\im(f)$. It follows that every dendrite is sequentially $n$-connected for all $n\geq 0$.
\end{example}

The following proposition illustrates the importance of the sequentially $0$-connected property. In the presence of this property, if one wishes to show that a sequence of maps is null-homotopic rel. basepoint, then it suffices to produce an \textit{unbased} sequential null-homotopy.

\begin{proposition}\label{cofibrationprop}
Suppose $(Y,y_0)$ is sequentially $0$-connected. If $(X,x_0)$ is $x_0$ well-pointed, i.e. if $\{x_0\}\to X$ is a cofibration, then every sequence $\{f_k\}_{k\in\bbn}$ in $ Y^X$ that converges to $y_0$ is sequentially homotopic to a sequence of based maps $\{g_k\}_{k\in\bbn}$ in $(Y,y_0)^{(X,x_0)}$ that converges to $y_0$.
\end{proposition}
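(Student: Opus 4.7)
The plan is to leave each $f_k$ essentially alone and only push the value $f_k(x_0)$ back to $y_0$ along a short path in $Y$, then use the cofibration property of $\{x_0\} \hookrightarrow X$ to extend this adjustment continuously to all of $X$ without blowing up the image.

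First, I would set $z_k = f_k(x_0)$ and observe that $z_k \to y_0$ in $Y$, since $\im(f_k)$ shrinks to $y_0$ in the sense of Definition \ref{convergesdef}. Because $(Y, y_0)$ is sequentially $0$-connected, Lemma \ref{zerolemma}(5) supplies a single path $\alpha : I \to Y$ with $\alpha(0) = y_0$ and $\alpha(1/k) = z_k$. Letting $\alpha_k$ be the reparameterization of $\alpha|_{[0, 1/k]}$, I obtain a sequence of paths from $y_0$ to $z_k$ whose images $\im(\alpha_k) = \alpha([0, 1/k])$ cluster at $y_0$ by continuity of $\alpha$ at $0$.

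Next, since $\{x_0\}$ is closed in the Hausdorff space $X$ and $\{x_0\} \hookrightarrow X$ is a cofibration, the classical characterization of closed cofibrations gives a retraction $r : X \times I \to X \times \{0\} \cup \{x_0\} \times I$. For each $k$, I would define $\Phi_k : X \times \{0\} \cup \{x_0\} \times I \to Y$ piecewise by $\Phi_k(x, 0) = f_k(x)$ and $\Phi_k(x_0, t) = \alpha_k(1 - t)$; the two definitions agree at $(x_0, 0)$ since $\alpha_k(1) = z_k = f_k(x_0)$, so $\Phi_k$ is continuous on its closed domain. Setting $H_k = \Phi_k \circ r$ and $g_k = H_k(\cdot, 1)$ gives a homotopy from $f_k$ to a map $g_k$ with $g_k(x_0) = \Phi_k(x_0, 1) = \alpha_k(0) = y_0$, so $g_k \in (Y, y_0)^{(X, x_0)}$.

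Finally, I would verify convergence of $\{H_k\}$ by noting that $\im(H_k) \subseteq \im(\Phi_k) = \im(f_k) \cup \im(\alpha_k)$. Given any neighborhood $U$ of $y_0$, both $\im(f_k) \subseteq U$ and $\im(\alpha_k) \subseteq U$ hold for all but finitely many $k$, so $\im(H_k) \subseteq U$ eventually and $\{H_k\}$ converges to $y_0$; the sequence $\{g_k\}$ inherits the same property. The only non-routine step is the initial selection of the paths $\alpha_k$, which is precisely what the sequential $0$-connectedness hypothesis is designed to provide; the cofibration does the purely topological work of propagating the adjustment away from $x_0$ while keeping the image of each $H_k$ contained in the union of the images of $f_k$ and $\alpha_k$.
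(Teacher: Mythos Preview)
Your proposal is correct and follows essentially the same approach as the paper: use sequential $0$-connectedness to produce paths $\alpha_k$ from $y_0$ to $f_k(x_0)$ that converge to $y_0$, then apply the retraction $r:X\times I\to X\times\{0\}\cup\{x_0\}\times I$ coming from the cofibration to build homotopies $H_k$ with $\im(H_k)\subseteq\im(f_k)\cup\im(\alpha_k)$. Your explicit invocation of Lemma~\ref{zerolemma}(5) to extract the $\alpha_k$ from a single path $\alpha$ and your careful orientation $\Phi_k(x_0,t)=\alpha_k(1-t)$ are minor presentational differences, not substantive ones.
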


\begin{proof}
Suppose $\{f_k\}_{k\in\bbn}\subseteq Y^X$ converges to $y$. Since $\{f_k(x_0)\}\to y_0$ in $Y$, by Lemma \ref{zerolemma} and $Y$ is sequentially $0$-connected at $y_0$, there is a sequence of paths $\alpha_k:\ui\to Y$ from $y_0$ to $f_k(x_0)$ that converges to $y_0$. Since $(X,x_0)$ is well-pointed, there is a retraction $r:X\times I\to X\times \{0\}\cup \{x_0\}\times I$. Define a map $\beta_k:X\times \{0\}\cup \{x_0\}\times I\to Y$ by $\beta(x,0)=f_k(x)$ and $\beta_k(x_0,t)=\alpha_k(t)$. Now define $g_k:X\to Y$ by $g_k(x)=\beta_k(r(x,1))$. Notice that $g_k(x_0)=y_0$ and that $g_k$ is freely homotopic to $f_k$ by a homotopy with image in $\im(\alpha_k)\cup \im(f_k)$. Therefore, $f_k$ and $g_k$ are sequentially homotopic.
\end{proof}

The following lemma is an immediate consequence.

\begin{lemma}\label{scnequivalencelemma}
Suppose $(X,x)$ is sequentially $0$-connected and $n\geq 1$. Then
\begin{enumerate}
\item $(X,x)$ is sequentially $n$-connected if and only if for all $0\leq k\leq n$, every sequence of unbased maps $f_j:S^k\to X$, $j\in\bbn$ that converges to $x$ is sequentially null-homotopic.
\item $(X,x)$ is $n$-tame at $x\in X$ if and only if for all $0\leq k\leq n$ and every sequence of unbased maps $f_j:S^k\to X$, $j\in\bbn$ that converges to $x$, there exists $j_0\in \bbn$ such that $\{f_j\}_{j\geq j_0}$ is sequentially null-homotopic.
\end{enumerate}
\end{lemma}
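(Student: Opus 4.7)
The plan is to reduce both parts of the lemma to Proposition \ref{cofibrationprop}, with part (2) following part (1) after restricting to tails. For the forward direction of (1), given an unbased sequence $\{f_j\}$ of maps $S^k\to X$ converging to $x$ with $0\leq k\leq n$, apply Proposition \ref{cofibrationprop}---using that $(S^k,e_k)$ is well-pointed and $(X,x)$ is sequentially $0$-connected---to obtain a sequentially homotopic based sequence $\{g_j\}\subseteq(X,x)^{(S^k,e_k)}$ converging to $x$. Sequential $n$-connectedness produces a sequential null-homotopy of $\{g_j\}$ rel.\ basepoint, and concatenating with the previous sequential homotopy yields a free sequential null-homotopy of $\{f_j\}$.

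The backward direction carries the substantive content: upgrading a free sequential null-homotopy to one rel.\ basepoint. Given a based sequence $\{g_j\}$ in $(X,x)^{(S^k,e_k)}$ converging to $x$, the hypothesis applied to $\{g_j\}$ viewed as an unbased sequence yields a free sequential null-homotopy $\{H_j\}$ with $H_j(s,0)=g_j(s)$, $H_j(s,1)=y_j$, and $\{y_j\}\to x$. The paths $\alpha_j(t):=H_j(e_k,t)$ run from $x$ to $y_j$ with image in $\im(H_j)$. The plan is to carry out a cofibration-extension argument analogous to that in the proof of Proposition \ref{cofibrationprop}: the inclusion $A:=S^k\times\partial I\cup\{e_k\}\times I\hookrightarrow S^k\times I$ is a cofibration since $(S^k,e_k)$ is well-pointed, so define $F^0_j\colon A\times I\to X$ on the three pieces of $A$ by holding $g_j$ fixed on $S^k\times\{0\}$, sliding $c_{y_j}$ back to $c_x$ via $\alpha_j^{-}$ on $S^k\times\{1\}$, and contracting $\alpha_j$ to $c_x$ on $\{e_k\}\times I$ (explicitly, $F^0_j((s,1),r)=\alpha_j(1-r)$ and $F^0_j((e_k,t),r)=\alpha_j(t(1-r))$, which agree on the overlaps of $A$). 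Then $F^0_j|_{A\times\{0\}}=H_j|_A$, while $F^0_j|_{A\times\{1\}}$ matches the boundary pattern of a based null-homotopy. Extending $F^0_j$ via the cofibration retraction to $F_j\colon (S^k\times I)\times I\to X$ with $F_j(\cdot,\cdot,0)=H_j$, the map $\tilde H_j:=F_j(\cdot,\cdot,1)$ is a null-homotopy of $g_j$ rel.\ basepoint.

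The main obstacle is ensuring that the upgraded sequence $\{\tilde H_j\}$ still converges to $x$, which is not automatic from the cofibration extension. This is arranged by keeping the modifications on $A\times I$ to take values in $\im(\alpha_j)\cup\im(g_j)\subseteq \im(H_j)$, so that the whole extension satisfies $\im(F_j)\subseteq\im(H_j)$ and hence $\im(\tilde H_j)\subseteq\im(H_j)$; convergence of $\{\tilde H_j\}\to x$ then inherits from $\{H_j\}\to x$. The backward direction of (2) proceeds by the same upgrade argument applied to the tail $\{g_j\}_{j\geq j_0}$ furnished by the hypothesis.
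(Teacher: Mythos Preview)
Your proposal is correct and fleshes out exactly what the paper leaves implicit by calling Lemma \ref{scnequivalencelemma} ``an immediate consequence'' of Proposition \ref{cofibrationprop}. Both directions hinge on the well-pointedness of $S^k$ in the same spirit as the proof of Proposition \ref{cofibrationprop}, and your image-control bound $\im(F_j)\subseteq\im(H_j)$ (via the retraction onto $(S^k\times I)\times\{0\}\cup A\times I$) is the right way to guarantee convergence of the upgraded null-homotopies.

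One minor streamlining for the backward direction: rather than invoking the product-cofibration fact for $A=S^k\times\partial I\cup\{e_k\}\times I\hookrightarrow S^k\times I$, you can pass each free null-homotopy $H_j$ through the quotient $S^k\times I/(S^k\times\{1\})\cong D^{k+1}$ to obtain $\bar H_j:D^{k+1}\to X$ with $\bar H_j|_{S^k}=g_j$ and $\bar H_j(e_k)=x$, then precompose with a fixed based contraction $\phi_t:D^{k+1}\to D^{k+1}$ onto $e_k$. The resulting $\tilde H_j(s,t)=\bar H_j(\phi_t(s))$ is a null-homotopy of $g_j$ rel.\ basepoint with $\im(\tilde H_j)\subseteq\im(H_j)$. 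This is equivalent to your argument but avoids citing the product cofibration lemma.
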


It was mentioned previously that the $n$-tame property is similar to the usual notion of local $n$-connectedness. Certainly, they are closely related. Formally, they are not equivalent even among metric spaces. In light of Lemma \ref{scnequivalencelemma}, it becomes clear that the $n$-tame property is more closely related to the point-wise ``$UV^n$ property" \cite{SakaiGAGT}: a space $X$ is said to be $UV^n$ at $x\in X$ if for every $0\leq k\leq n$ each neighborhood $U$ of $x$ contains a neighborhood $V$ of $x$ such that every map $f:S^k\to V$ is null-homotopic in $U$. 

\begin{proposition}
Suppose $X$ is first countable and locally path-connected at $x\in X$. Then
\begin{enumerate}
\item $X$ is $n$-tame at $x$ if and only if $X$ is $UV^n$ at $x$,
\item  $X$ is sequentially $n$-connected at $x$ if and only if $X$ is $n$-connected and $X$ is $UV^n$ at $x$.
\end{enumerate}
\end{proposition}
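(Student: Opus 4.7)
The plan is to derive (2) from (1) together with Lemma~\ref{scnequivalencelemma}, Lemma~\ref{zerolemma}, and Remark~\ref{equivremark1}, so the main work lies in part (1). Throughout I will invoke the characterizations of $\pi_k$-residual and $\pi_k$-finitary from the preceding proposition, which hold under first countability at $x$, and I will use local path-connectedness to shrink any neighborhood of $x$ to a path-connected one.

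For the forward direction of (1), fix $0\leq k\leq n$. Applying the $UV^k$ condition with target $U=X$ yields a path-connected neighborhood $V$ of $x$ such that $\pi_k(V,x)\to\pi_k(X,x)$ is trivial, giving the $\pi_k$-finitary condition. Applying $UV^k$ to a general neighborhood $U$ yields path-connected $V\subseteq U$ with $\pi_k(V,x)\to\pi_k(U,x)$ trivial, so
\[
\ker\bigl(\pi_k(V,x)\to\pi_k(X,x)\bigr)\subseteq \pi_k(V,x)=\ker\bigl(\pi_k(V,x)\to\pi_k(U,x)\bigr),
\]
giving the $\pi_k$-residual condition.

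For the converse of (1), fix $U$ and $0\leq k\leq n$, and assemble a single witness $V$ for $UV^k$. The $\pi_k$-finitary characterization produces a neighborhood $W$ of $x$ with $\pi_k(W,x)\to\pi_k(X,x)$ trivial, and the $\pi_k$-residual characterization applied to $U$ produces $V'\subseteq U$ with $\ker(\pi_k(V',x)\to\pi_k(X,x))\subseteq\ker(\pi_k(V',x)\to\pi_k(U,x))$. Using local path-connectedness, choose a path-connected neighborhood $V\subseteq W\cap V'$. For $k\geq 1$ and any $f:S^k\to V$, path-connectedness of $V$ gives a free homotopy in $V$ from $f$ to a based map $g:(S^k,e_k)\to (V,x)$ obtained by path-conjugating along a path in $V$ from $f(e_k)$ to $x$ as in Remark~\ref{retractionremark}. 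Since $V\subseteq W$, the image of $[g]$ in $\pi_k(X,x)$ factors through $\pi_k(W,x)$ and vanishes; hence the image of $[g]$ in $\pi_k(V',x)$ lies in $\ker(\pi_k(V',x)\to\pi_k(X,x))\subseteq\ker(\pi_k(V',x)\to\pi_k(U,x))$, so $g$ is null-homotopic in $U$. Composing with the free homotopy in $V\subseteq U$ gives a null-homotopy of $f$ in $U$. The $k=0$ case is immediate from path-connectedness of $V\subseteq U$.

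Part (2) follows easily from (1). If $X$ is sequentially $n$-connected at $x$, it is sequentially $0$-connected there, hence path-connected by Lemma~\ref{zerolemma}, and then $n$-connected by Remark~\ref{equivremark1}; it is also $n$-tame by the implication diagram, so (1) gives $UV^n$. Conversely, if $X$ is $n$-connected and $UV^n$ at $x$, then (1) gives $n$-tame, so $0$-tame together with path-connectedness gives sequential $0$-connectedness by Lemma~\ref{zerolemma}, and Lemma~\ref{scnequivalencelemma}(2) applies: every sequence $f_j:S^k\to X$ converging to $x$ has some tail $\{f_j\}_{j\geq j_0}$ that is sequentially null-homotopic, while $n$-connectedness supplies individual null-homotopies for $j<j_0$; these assemble into a sequential null-homotopy of the full sequence. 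The main obstacle is the converse direction of (1), where the separately-obtained $\pi_k$-finitary and $\pi_k$-residual neighborhoods must be merged into one $V$ realizing $UV^k$ with respect to a prescribed $U$, while the unbased-to-based reduction is handled via local path-connectedness at $x$.
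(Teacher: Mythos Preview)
Your argument is correct. The paper states this proposition without proof, so there is no authorial argument to compare against; your write-up supplies exactly the kind of routine verification the author presumably had in mind, combining the local characterizations of $\pi_k$-residual and $\pi_k$-finitary from the preceding proposition with local path-connectedness to pass between free and based null-homotopies. One small cosmetic point: in the forward direction of (1) you do not actually need $V$ to be path-connected (for $k\geq 1$ a based map that is freely null-homotopic is already based null-homotopic), and in the converse the $\pi_k$-residual witness $V'$ is not a priori contained in $U$, but this is harmless since the kernel inclusion is stable under shrinking $V'$ to $V'\cap U$.
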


It is possible to construct non-first countable spaces that are $n$-tame at a point $x$ but not $UV^n$ at $x$. 

\section{Shrinking Adjunction Spaces}\label{sectionshrinkingadjspaces}

\subsection{Definition and examples}\label{subsectionsasdefsandexamples}

In this section, we focus on a generalization of shrinking wedges that will play a key role in both the motivation for and proof of our main results. 

\begin{definition}[Shrinking adjunction spaces]\label{shrinkingadjunctiondef}
Consider a space $X$ and a sequence $\{x_j\}_{j\in\bbn}$ (of not necessarily distinct elements) in $X$. Let $\{(A_j,a_j)\}_{j\in\bbn}$ be a sequence of based spaces. Let $Y_k$ be the adjunction space obtained by attaching $A_j$ to $X$ by $a_j\sim x_j$ for all $j\leq k$ (with the weak topology). There are retractions $\rho_{k+1,k}:Y_{k+1}\to Y_{k}$ that collapse $A_{k+1}$ to $x_{k+1}$ and are the identity elsewhere. The \textit{shrinking adjunction space obtained by attaching the sequence $\{(A_j,a_j)\}_{j\in\bbn}$ to $X$ along the sequence} $\{x_j\}_{j\in J}$ is the inverse limit space $Y=\varprojlim_{k\in\bbn}(Y_k,\rho_{k+1,k})$ topologized as a subspace of the direct product $\prod_{k\in\bbn}Y_k$. The projection maps will be denoted $\rho_k:Y\to Y_k$. 

We refer to $X$ as the \textit{core} of $Y$, the spaces $A_j$, $j\in\bbn$ as the \textit{attachment spaces}, and the sequence $\{x_j\}_{j\in\bbn}$ as the \textit{attachment points}. When it is necessary to refer to this specific decomposition of $Y$, we may write $Y=\shadj(X,\{x_j\}_{j\in\bbn},\{A_j\}_{j\in\bbn},\{a_j\}_{j\in\bbn})$ or just $Y=\shadj(X,x_j,A_j,a_j)$ when the indexing set is clear from context.
\end{definition}

Shrinking adjunction spaces are ordinary, finite adjunction spaces when $A_j=\{a_j\}$ for all but finitely many $j\in\bbn$. Additionally, a shrinking adjunction spaces is precisely a shrinking wedge $Y=\sw_{j\in\bbn}A_j$ when the core $X$ is a one-point space. Taking $Y$ as a subspace of $\prod_{k\in\bbn}Y_k$, we may identify the underlying set of the space $Y$ in Definition \ref{shrinkingadjunctiondef} with the underlying set of the ordinary adjunction space $\left(X\sqcup \coprod_{j\in\bbn}A_j\right)/\mathord{\sim}$ where $x_j\sim a_j$. For instance, under this identification, $A_j$ corresponds to $\left(\prod_{k=1}^{j-1}\{x_j\}\times A_j\times \prod_{k>j}Y_k\right)\cap Y$ and $X$ corresponds to $\prod_{k\in\bbn}X=\left(X\times \prod_{k>1}Y_k\right)\cap Y $. In the shrinking adjunction space $Y$, we may consider the $A_j$ as being ``shrinking in size." To make this more precise and more intuitive, we give a useful characterization of the inverse limit topology in Lemma \ref{opensetlemma}. Note that if $X$ is separable, then it is possible that $\{x_j\mid j\in\bbn\}$ is dense in $X$. Additionally, if $j_1<j_2<j_3<\cdots $ is such that $\{x_{j_i}\}_{i\in\bbn}$ is the constant sequence at $x\in X$, then this subsequence will result in a copy of the shrinking wedge $\sw_{i\in\bbn}A_{j_i}$ attached at $x$.

\begin{remark}\label{distinctpointsremark}
Identifying a given space $Y$ as a shrinking adjunction space is a choice of decomposition of $Y$ and will not be unique for a given space. For example, one could view $Y=\shadj(X,x_j,A_j,a_j)$ as $Y=\shadj\left(X\cup \bigcup_{i\in\bbn}A_{2i-1},x_{2i},A_{2i},a_{2i}\right)$, i.e. with core $X\cup \bigcup_{i\in\bbn}A_{2i-1}$ and attachment spaces $\{A_{2i}\}_{i\in\bbn}$. Moreover, if the core $X$ is a shrinking adjunction space, then there may be many distinct, relevant choices of decompositions of $Y$.

In general, we allow for the sequence $\{x_j\}_{j\in\bbn}$ to have repetitions and even have finite image. However, if we replace the sequences of attachment spaces $A_j$ with finite or shrinking infinite wedges of these spaces, then without changing the homeomorphism type of $Y$, we may always assume that $\{x_j\}_{j\in\bbn}$ is an infinite sequence of distinct points in $X$. Formally, let $Z=\{x_j\mid j\in\bbn\}$ and note that $Z=\{z_1,z_2,\dots\}$ may be finite or infinite. Define $T_m=\{j\in\bbn\mid x_j=z_m\}$. If $T_m$ is finite, let $B_m=\bigvee_{j\in T_m}(A_j,a_j)$ and if $T_m$ is infinite, let $B_m=\sw_{j\in T_j}(A_j,a_j)$. Take the basepoint $b_m\in B_m$ to be the wedge point in each case. If $Z$ is finite and written as $\{z_1,z_2,\dots z_M\}$, recursively choose any $z_{m}\in X\backslash \{z_1,z_2,\dots ,z_{m-1}\}$ for $m>M$ and, for all such $m$, let $B_m $ be a one-point space. Then the shrinking adjunction space obtained by attaching the sequence $\{(B_m,b_m)\}_{m\in\bbn}$ to the sequence $\{z_m\}_{m\in\bbn}$ of distinct points in $X$ is homeomorphic to the original shrinking adjunction space $Y$. For separable $X$, we may add more one-point attachment spaces to the sequence so that the sequence $\{x_j\}_{j\in J}$ is dense in $X$.
\end{remark}

\begin{remark}[Indexing by countable sets]
For any bijection $\sigma:\bbn\to\bbn$, basic properties of inverse limits give a homeomorphism $\shadj(X,x_j,A_j,a_j)\cong \shadj(X,x_{\sigma(j)},A_{\sigma(j)},a_{\sigma(j)})$ that permutes the coordinates of the inverse limit. Therefore, we are justified in replacing the indexing set $\bbn$ with any countably infinite set. 
\end{remark}

\begin{lemma}\label{opensetlemma}
Let $Y=\shadj(X,x_j,A_j,a_j)$ where $X$ and each $A_j$ is compact. Then a set $U\subseteq Y$ is open if and only if
\begin{enumerate}
\item $X\cap U$ is open in $X$,
\item $A_j\cap U$ is open in $A_j$ for all $j\in \bbn$,
\item whenever $\{x_{j_i}\}_{i\in\bbn}\to x$ in $X$ and $x\in U$, we have $A_{j_i}\subseteq U$ for all but finitely many $i\in\bbn$. 
\end{enumerate}
\end{lemma}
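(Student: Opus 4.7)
The plan is to work directly from the inverse-limit description $Y=\ilim_k(Y_k,\rho_{k+1,k})$ together with the standard fact that, for a sequential inverse system, the sets of the form $\rho_k^{-1}(W)$ with $W$ open in $Y_k$ form a basis for the topology on $Y$. Throughout, I would use that each projection $\rho_k:Y\to Y_k$ is the identity on $X$ and on every $A_j$ with $j\leq k$, and collapses $A_j$ onto $\{x_j\}$ whenever $j>k$.

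For the necessity of (1)--(3), suppose $U\subseteq Y$ is open. Continuity of the natural inclusions $X\hookrightarrow Y$ and $A_j\hookrightarrow Y$ yields (1) and (2) at once. For (3), given $y\in U$ I would pick a basic neighborhood $\rho_k^{-1}(W)\subseteq U$ with $y\in W\subseteq Y_k$ open. If $\{x_{j_i}\}\to y$ in $X$, then for every $i$ with $j_i>k$ one has $\rho_k(A_{j_i})=\{x_{j_i}\}$; openness of $W$ in $Y_k$ together with the convergence $x_{j_i}\to y\in W\cap X$ inside $X\subseteq Y_k$ forces $x_{j_i}\in W$, hence $A_{j_i}\subseteq\rho_k^{-1}(W)\subseteq U$, for all but finitely many $i$.

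For sufficiency, suppose $U$ satisfies (1)--(3) and fix $y\in U$; the task is to produce an open neighborhood of $y$ in $Y$ contained in $U$. If $y\in A_j\setminus\{x_j\}$, then (2) together with Hausdorffness of $A_j$ gives an open $W\subseteq A_j\setminus\{x_j\}$ with $y\in W\subseteq U\cap A_j$; a short check shows $\rho_j^{-1}(W)$ coincides with $W$ as a subset of $Y$ (no point outside $W$ can project into it, since the attachment point of any later $A_l$ lies in $X$). The substantive case is $y\in X$. Let $B=\{j\in\bbn:A_j\not\subseteq U\}$. Applying (3) to arbitrary sequences in $\{x_j:j\in B\}$, together with first countability of the metrizable Peano continuum $X$, shows that $y$ is not an accumulation point of $\{x_j:j\in B\}$; applying (3) to the constant sequence at $y$ shows that $B':=\{j\in B:x_j=y\}$ is finite. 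Combining with (1), I would choose an open $V\subseteq U\cap X$ with $y\in V$ and $V\cap\{x_j:j\in B\}\subseteq\{y\}$. For each $j\in B'$, (2) supplies an open neighborhood $W_j$ of $x_j=y$ in $A_j$ with $W_j\subseteq U\cap A_j$. Setting $k=\max B'$ (or $k=0$ if $B'=\emptyset$), define
\[V_k\ =\ V\ \cup\!\!\bigcup_{\substack{j\leq k,\ x_j\in V\\ j\notin B'}}\!\!A_j\ \cup\ \bigcup_{j\in B'}W_j\ \subseteq\ Y_k.\]
A direct verification using the adjunction-space structure of $Y_k$ shows $V_k$ is open: its intersection with $X$ is $V$, and its intersection with each $A_j$ for $j\leq k$ is one of $\emptyset$, all of $A_j$, or $W_j$. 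Then $\tilde V:=\rho_k^{-1}(V_k)$ is an open neighborhood of $y$ in $Y$, and a case split on whether a point $z\in\tilde V$ lies in $X$, in $A_j$ with $j\leq k$, or in $A_j$ with $j>k$ gives $z\in U$; the crucial last case uses that $\rho_k(z)=x_j\in V$ together with $j>k\geq\max B'$ to force $j\notin B$ (otherwise $j$ would lie in $B'$, contradiction), whence $A_j\subseteq U$.

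The main obstacle is the bookkeeping for indices $j$ with $x_j=y$ but $A_j\not\subseteq U$: these attachment spaces are glued at the basepoint itself and cannot be thrown wholesale into the candidate neighborhood. Condition (3) is precisely the hypothesis guaranteeing that this exceptional set $B'$ is finite, which is what permits the construction to be localized at a single finite stage $Y_k$ of the inverse system while splicing in the small neighborhoods $W_j$.
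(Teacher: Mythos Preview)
Your proof is correct and takes a genuinely different route from the paper's. The paper defines $Y_{sub}$ to be the underlying set of $Y$ with the topology determined by conditions (1)--(3), checks that the identity $Y_{sub}\to Y$ is continuous by verifying that each $\rho_k^{-1}(V)$ satisfies (1)--(3), and then shows this identity is a closed map by proving directly that $Y_{sub}$ is compact (via an open-cover argument) and invoking Hausdorffness of $Y$. Your approach is instead entirely constructive: for each $y\in U$ you manufacture an explicit basic neighborhood $\rho_k^{-1}(V_k)\subseteq U$ in the inverse-limit topology, never appealing to compactness of the total space. The paper's argument is shorter and more conceptual; yours is more hands-on and makes the local structure of open sets in $Y$ completely transparent, which is arguably more useful for the downstream applications.

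One small caveat applies equally to both arguments. You invoke first countability of $X$ (calling it a metrizable Peano continuum, which is not part of the stated hypothesis) to pass from ``$y$ is an accumulation point of $\{x_j:j\in B\}$'' to the existence of a convergent sequence to which (3) applies. The paper, in its compactness argument, analogously extracts a convergent subsequence of $\{x_{j_i}\}$ using sequential compactness of $X$. Neither property follows from compact Hausdorff alone, so strictly speaking both proofs need a mild extra hypothesis on $X$; this is harmless in context since every application in the paper has $X$ a compact metric space.
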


\begin{proof}
Note that the conditions (1)-(3) define a topology on the underlying set of $Y$. Let $Y_{sub}$ denote the resulting topological space and consider the identity function $i:Y_{sub}\to Y$. If $V$ is open in $Y_k$, then $\rho_{k}^{-1}(V)$ is the union of $V\cap X$, $V\cap A_j$ for $j\leq k$ and $\bigcup\{A_j\mid j>k\text{ and }x_j\in V\cap X\}$. Then $\rho_{k}^{-1}(V)$ satisfies (1)-(3) and is therefore open in $Y_{sub}$. Therefore, the projections $\rho_{k}\circ i:Y_{sub}\to Y_k$, $k\in\bbn$ are continuous. Since $Y$ is an inverse limit, it follows that $i:Y_{sub}\to Y$ is continuous.

Since $Y$ is Hausdorff, to ensure $i$ is a closed map, it suffices to check that $Y_{sub}$ is compact. Clearly, $Y_{sub}$ is Hausdorff and since the inclusions $X\to Y_{sub}$ and $A_j\to Y_{sub}$ are continuous, $X$ and each $A_j$ are compact subspaces of $Y_{sub}$. Let $\scru$ be an open cover of $Y_{sub}$. Find finitely many sets $U_1,U_2,\dots,U_M$ in $\scru$ that cover $X$. Additionally, finitely many sets in $\scru$ are required to cover each $A_j$ and so to find a finite subcover of $Y_{sub}$ it suffices to show that $A_j\subseteq \bigcup_{m=1}^{M}U_m$ for all but finitely many $j$. Suppose there are $j_1<j_2<j_3<\cdots$ and points $z_{j_i}\in A_j\backslash \bigcup_{m=1}^{M}U_m$. Now $\{x_{j_i}\}$ is a sequence in $X$. Since $X$ is compact, we may replace $\{x_{j_i}\}$ with a subsequence that converges to a point $x\in X$. Find a set $U_{m_0}$ containing $x$. By Condition (3), we have $A_{j_i}\subseteq U_{m_0}$ for all but finitely many $i$; a contradiction of the assumption that $z_{j_i}\notin \bigcup_{m=1}^{M}U_m$ for all $i\in\bbn$.
\end{proof}

We will refer to the topology on the underlying set of $Y$ determined by Conditions (1)-(3) in Lemma \ref{opensetlemma} as the \textit{subsequence topology} and continue to denote this space as $Y_{sub}$. Typically, we will assume a shrinking adjunction space $Y$ has the inverse limit topology. However, sometimes it will be more convenient to use the subsequence topology. For the remainder of this section, we have need to distinguish the two topologies on $Y$ and so we let $Y_{lim}$ denote $Y$ with the inverse limit topology. The author expects for these two topologies to agree for many non-compact situations which lie outside of the scope of Lemma \ref{opensetlemma}. However, a general investigation into the conditions for which these two topologies agree is a tedious endeavor and so we do not pursue it here. Despite this, we are able to provide a short proof that $Y_{lim}$ and $Y_{sub}$ always share the same Peano continuum subspaces. Therefore, if one is only interested in sets of homotopy classes of maps $K\to X$ from Peano continua $K$, then one may freely use either topology.

\begin{remark}\label{pcremark}
The shrinking adjunction space $Y_{lim}=\shadj(X,x_j,A_j,a_j)$ is a Peano continuum if and only if $X$ and each $A_j$ is a Peano continuum. One direction is clear since $X$ and each $A_j$ are retracts of $Y$. Conversely, if $X$ and each $A_j$ is a Peano continuum, then $Y_{lim}=Y_{sub}$ by Lemma \ref{opensetlemma}. It then suffices to show $Y_{sub}$ is a Peano continuum. This may be done by verifying the individual characterizing properties of a Peano continua or by constructing an onto path $I\to Y_{sub}$ using a choice of onto paths $I\to X$ and $(I,0)\to (A_j,a_j)$, $j\in \bbn$.
\end{remark}

In categorical language, the next proposition states that $Y_{sub}$ and $Y_{lim}$ have homeomorphic $\Delta$-generated coreflections\footnote{The $\Delta$-generated coreflection of $X$ is the space $\Delta(X)$ with the same underlying set as $X$ and with the topology of sets $U\subseteq X$ such that, for every map $f:K\to X$ from a Peano continuum $K$, $f^{-1}(U)$ is open in $K$.}.

\begin{proposition}
If $K$ is Peano continuum then a function $f:K\to Y_{sub}$ is continuous if and only if $f:K\to Y_{lim}$ is continuous.
\end{proposition}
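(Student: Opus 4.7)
The forward implication is immediate from the proof of Lemma \ref{opensetlemma}: every basic open set $\rho_k^{-1}(V)$ of $Y_{lim}$ is explicitly shown there to satisfy conditions (1)--(3), so the identity $i:Y_{sub}\to Y_{lim}$ is continuous, and then $f=i\circ f:K\to Y_{lim}$ is continuous whenever $f:K\to Y_{sub}$ is.

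For the converse, suppose $f:K\to Y_{lim}$ is continuous. Since $K$ is a Peano continuum and therefore metrizable, it suffices to verify sequential continuity. Fix a sequence $k_n\to k_0$ in $K$, set $y_n=f(k_n)$ and $y=f(k_0)$, and let $U$ be a $Y_{sub}$-open neighborhood of $y$; the goal is to show $y_n\in U$ for all but finitely many $n$. Three preliminary facts drive the argument: (i) composing $\rho_1:Y_{lim}\to Y_1$ with the adjunction-space retraction $Y_1\to X$ collapsing $A_1$ to $x_1$ produces a continuous retraction $r:Y_{lim}\to X$ that sends each $A_j$ to $\{x_j\}$; (ii) each inclusion $X\hookrightarrow Y_{lim}$ and $A_j\hookrightarrow Y_{lim}$ is a topological embedding, because $r$ and $\rho_j$ respectively supply continuous left inverses (using that $A_j\hookrightarrow Y_j$ is a closed embedding for ordinary adjunction spaces); (iii) each set $A_j\setminus\{x_j\}$ is open in $Y_{lim}$, being equal to $\rho_j^{-1}(A_j\setminus\{x_j\})$, with $A_j\setminus\{x_j\}$ open in $Y_j$ by a direct check of the three-piece adjunction-space closedness criterion for its complement.

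With these facts the argument splits on $y$. If $y\in A_j\setminus\{x_j\}$ for some $j$, fact (iii) forces $y_n\in A_j\setminus\{x_j\}$ eventually, fact (ii) then yields $y_n\to y$ in $A_j$, and openness of $U\cap A_j$ in $A_j$ gives $y_n\in U$ eventually. If $y\in X$, I first handle the subsequence of $y_n$'s lying in $X$: fact (i) gives $y_n=r(y_n)\to r(y)=y$ in $X$, and openness of $U\cap X$ in $X$ suffices. For the complementary subsequence, write $y_n\in A_{j_n}\setminus\{x_{j_n}\}$. If $\{j_n\}$ is bounded, pass to a constant value $j_n=j$; continuity of $\rho_j$ and fact (ii) then force $y\in A_j\cap X=\{x_j\}$ and $y_n\to y$ in $A_j$, so openness of $U\cap A_j$ closes the case. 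If $\{j_n\}$ is unbounded, pass to a subsequence of pairwise distinct indices and observe $r(y_n)=x_{j_n}\to y$ in $X$; condition (3) in the definition of $Y_{sub}$ applied to $U$ then gives $A_{j_n}\subseteq U$ eventually, whence $y_n\in U$.

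The main technical obstacle is verifying fact (iii): the crucial observation is that $\rho_j$ sends every $A_i$ with $i>j$ to the single point $x_i\in X$, so no such $A_i$ contributes any point to $\rho_j^{-1}(A_j\setminus\{x_j\})$, forcing that preimage to coincide with $A_j\setminus\{x_j\}$ itself. Once this is in hand, the case analysis above is a mechanical matching of the three sub-scenarios to the three clauses defining the subsequence topology.
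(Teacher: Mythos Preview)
Your proof is correct but takes a genuinely different route from the paper's. The paper does not argue sequentially at all; instead it observes that the image $Z=f(K)\subseteq Y_{lim}$ is compact and, using the retractions $r$ and $r_j$, identifies $Z$ itself as a shrinking adjunction space $\shadj(Z\cap X,\{x_j\}_{j\in J},\{Z\cap A_j\}_{j\in J},\{a_j\}_{j\in J})$ with compact core and compact attachment spaces. Lemma~\ref{opensetlemma} then applies to this \emph{compact} shrinking adjunction space to give $Z_{sub}=Z_{lim}$, and since the subsequence topology is hereditary one concludes $f:K\to Z_{sub}\subseteq Y_{sub}$ is continuous.

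Your approach trades this structural reduction for a direct case analysis on where the limit point $y$ sits, matching each sub-case to one of the three clauses defining the subsequence topology. This is more elementary and self-contained: it never invokes Lemma~\ref{opensetlemma} on an auxiliary space and does not need to recognize $f(K)$ as a shrinking adjunction space in its own right. The paper's approach, by contrast, is shorter once Lemma~\ref{opensetlemma} is in hand and has the side benefit of exhibiting the shrinking-adjunction structure of $f(K)$, a fact the paper reuses later (e.g.\ in Proposition~\ref{indeterminateproppeano} and Lemma~\ref{countablelemma}). One minor presentational point: your subsequence passages (``pass to a constant value $j_n=j$'', ``pass to pairwise distinct indices'') implicitly rely on the standard principle that $y_n\in U$ eventually if and only if every subsequence has a further subsequence eventually in $U$; making that framing explicit would tighten the exposition.
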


\begin{proof}
In the proof of Lemma \ref{opensetlemma}, we showed that the identity function $i:Y_{sub}\to Y_{lim}$ is always continuous. The ``only if" direction follows. For the ``if" direction, suppose $f:K\to Y_{lim}$ is continuous. Let $Z=f(K)$ and let $Z_{lim}$ denote $Z$ with the subspace topology inherited from $Y_{lim}$. Then $Z_{lim}$ is a Peano continuum as a subspace of $Y_{lim}$. The canonical maps $r:Y\to X$ and $r_j:Y\to A_j$ are retractions with respect to both of the spaces $Y_{sub}$ and $Y_{lim}$. Hence, each space $Y_k$, $k\in\bbn$ is a retract of both $Y_{lim}$ and $Y_{sub}$. Therefore, if $Z_{lim}$ lies entirely the union of $X$ and finitely many $A_j$, then $f:K\to Y_{sub}$ is continuous. Therefore, we may assume that $Z$ meets $X$ and $J=\{j\in\bbn\mid Z\cap A_{j}\backslash\{a_j\}\neq\emptyset\}$ is infinite. Since $r$ and $r_j$ are retracts, $r(Z_{lim})=Z_{lim}\cap X$ and $r_j(Z_{lim})=Z_{\lim}\cap A_j$, $j\in J$ are Peano continua in $Y_{lim}$. If we let $Z_k=r(Z_{lim})\cup \bigcup\{r_j(Z_{lim})\mid 1\leq j\leq k, j\in J\}$, $k\in J$, then $Z_{lim}=\varprojlim_{k\in J}(Z_k,\rho_{k+1,k}|_{Z_{k+1}})$. Therefore, $Z_{lim}=\shadj(r(Z_{lim}),\{x_j\}_{j\in J},\{r_j(Z_{lim})\}_{j\in J},\{a_j\}_{j\in J})$. By Lemma \ref{opensetlemma}, we have $Z_{sub}=Z_{lim}$. The subspace topology on $Z$ inherited from $Y_{sub}$ agrees with the subsequence topology on $Z$ and according to Remark \ref{pcremark}, $Z_{sub}=Z_{lim}$. Therefore, $f:K\to Z_{sub}\subseteq Y_{sub}$ is continuous.
\end{proof}

\begin{corollary}
For every Peano continuum $X$, $A\subseteq X$, and $y\in Y$, the identity function $i:Y_{sub}\to Y_{lim}$ induces a bijection $[(X,A),(Y_{sub},y)]\to [(X,A),(Y_{lim},y)]$. In particular, the induced function $[(\bbh_n,b_0),(Y_{sub},y)]\to [(\bbh_n,b_0),(Y_{lim},y)]$ is bijective for all $n\geq 1$.
\end{corollary}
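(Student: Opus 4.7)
The plan is to derive both statements of the corollary directly from the preceding proposition, using only the fact that the product of a Peano continuum with $I$ is again a Peano continuum.

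First I would establish surjectivity of $i_{*}:[(X,A),(Y_{sub},y)]\to [(X,A),(Y_{lim},y)]$. Given a map $f:(X,A)\to (Y_{lim},y)$, the space $X$ is a Peano continuum, so the preceding proposition implies that the same underlying function $f:X\to Y_{sub}$ is continuous. Since $f(A)\subseteq \{y\}$ is a condition on underlying sets, $f$ is automatically a map of pairs $(X,A)\to (Y_{sub},y)$, and $i_{*}([f])=[f]$.

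For injectivity, suppose $f_0,f_1:(X,A)\to (Y_{sub},y)$ are maps and that $i\circ f_0\simeq i\circ f_1$ through a homotopy $H:X\times I\to Y_{lim}$ that is constant on $A$. The product $X\times I$ of the Peano continuum $X$ with $I$ is again a Peano continuum (a product of two continuous surjections from $I$ witnesses the Hahn--Mazurkiewicz criterion), so the preceding proposition applies to $H$, giving that $H:X\times I\to Y_{sub}$ is continuous. The basepoint condition $H(A\times I)\subseteq\{y\}$ is again a condition on the underlying set, so $H$ is a homotopy of pairs into $Y_{sub}$ witnessing $f_0\simeq f_1$ in $[(X,A),(Y_{sub},y)]$. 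Hence $i_{*}$ is injective.

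The final assertion about $\bbh_n$ is then just the special case $X=\bbh_n$ and $A=\{b_0\}$, noting that $\bbh_n$ is a Peano continuum. There are no real obstacles here; the content is entirely absorbed by the preceding proposition, and the only thing to keep in mind is the elementary observation that $X\times I$ inherits the Peano continuum property from $X$ so that the proposition applies both to maps and to homotopies.
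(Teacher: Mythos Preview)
Your proposal is correct and is exactly the argument the paper intends: the corollary is stated without proof precisely because it follows immediately from the preceding proposition applied to $X$ (for maps) and to the Peano continuum $X\times I$ (for homotopies), with the pair/basepoint conditions being purely set-theoretic.
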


\begin{remark}\label{opensetremark}
An important feature of $Y_{sub}$ is that whenever $z_i\in A_{j_i}$, $i\in\bbn$ is a sequence that converges in $Y$ to a point $x\in X$, it must also be the case that the corresponding sequence of attachment points $\{x_{j_i}\}_{i\in\bbn}=\{a_{j_i}\}_{i\in\bbn}$ also converges to $x$ in $X$. This follows from the fact that if $U$ is a neighborhood in $Y$ that meets $X$, then $(U\cap X)\cup \bigcup\{A_j\cap U\mid x_j\in U\}$ is also an open set in $Y$.
\end{remark}

In Section \ref{subsectionshadjfactorization}, we will show that $Y$ is sequentially $n$-connected whenever its core and attachment spaces are sequentially $n$-connected. For $n\geq 2$, this proof is substantially more complicated and requires the use of Whitney cubes. We prove the case $n=0$ here.

\begin{lemma}\label{adjunctionlemmazero}
Let $Y=\shadj(X,x_j,A_j,a_j)$ and $y_0\in X$. If $(X,y_0)$ is sequentially $0$-connected and $(A_j,a_j)$ is sequentially $0$-connected for all $j\in \bbn$, then $(Y,y_0)$ is sequentially $0$-connected.
\end{lemma}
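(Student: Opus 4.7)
The plan is to verify condition (5) of Lemma \ref{zerolemma} at $y_0$: given any convergent sequence $\{y_m\}_{m\in\bbn} \to y_0$ in $Y$, I produce a path $\alpha : I \to Y$ with $\alpha(1/m) = y_m$ and $\alpha(0) = y_0$. First I push each $y_m$ down to $z_m \in X$ via the canonical retraction $\rho_0 : Y \to X$ that collapses every $A_j$ to $x_j$ and is the identity on $X$. Continuity of $\rho_0$ gives $\{z_m\} \to y_0$ in $X$, so by sequential $0$-connectedness of $(X,y_0)$ together with Lemma \ref{zerolemma}(5), there is a path $\alpha_X : I \to X$ with $\alpha_X(1/m) = z_m$ and $\alpha_X(0) = y_0$.

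Next, for each index $m$ with $y_m \in A_{j_m} \setminus \{a_{j_m}\}$, I choose a detour path $\delta_m : I \to A_{j_m}$ from $a_{j_m} = z_m$ to $y_m$, which exists since each $A_j$ is path-connected; for the remaining $m$, let $\delta_m$ be the constant path at $z_m = y_m$. The path $\alpha$ is then assembled by concatenation on each subinterval $[1/(m+1),1/m]$ via, in succession, the reverse detour $\delta_{m+1}^{-}$ (from $y_{m+1}$ back to $z_{m+1}$), a reparametrization of $\alpha_X|_{[1/(m+1),1/m]}$ (from $z_{m+1}$ to $z_m$), and $\delta_m$ (from $z_m$ to $y_m$), each on a third of the interval; set $\alpha(0) = y_0$.

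The main obstacle will be continuity of $\alpha$ at $0$, which requires the images of the $\delta_m$ to cluster at $y_0$ in $Y$. To arrange this, I split the indices $m$ with $y_m \notin X$ into two groups. Group A consists of those $m$ for which $y_m \in A_J$ with $J$ a value hit by infinitely many $y_m$'s. Each such $A_J$ is closed in $Y$ as the fixed-point set of the continuous retraction $Y \to Y_J \to A_J$ that collapses everything outside $A_J$ to $a_J$, so the subsequence of $y_m$'s in $A_J$ converges in $A_J$ to $y_0$, forcing $y_0 = x_J = a_J$. Sequential $0$-connectedness of $(A_J,a_J)$ then lets me choose the Group A detours $\delta_m$ targeting $A_J$ inside $A_J$ so that they converge to $a_J = y_0$ within $A_J$. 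Group B consists of the remaining $m$, for which $j_m \to \infty$; for these the choice of $\delta_m$ is immaterial, because any open $U \ni y_0$ in $Y$ contains some basic set $\rho_k^{-1}(V)$ with $V$ open in $Y_k$ and $y_0 \in V$, and then $x_{j_m} \to y_0$ in $X$ combined with $j_m \to \infty$ forces $A_{j_m} \subseteq \rho_k^{-1}(V) \subseteq U$ for all but finitely many Group B indices.

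With these three sources of convergence in place -- the $X$-portion via continuity of $\alpha_X$, the Group A detours via sequential $0$-connectedness of the fat $A_J$'s, and the Group B detours via the inverse-limit topology -- a routine neighborhood check confirms that $\alpha$ is continuous at $0$, completing the construction.
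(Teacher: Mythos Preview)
Your proof is correct and follows essentially the same approach as the paper: retract the sequence to $X$, use sequential $0$-connectedness of $(X,y_0)$ for the core part, build detour paths in the attachment spaces (invoking sequential $0$-connectedness of $(A_J,a_J)$ precisely when infinitely many $y_m$ land in $A_J$), and verify convergence via the inverse-limit topology for the indices with $j_m\to\infty$. The only cosmetic difference is packaging---you assemble a single path via condition (5) of Lemma~\ref{zerolemma} while the paper produces a convergent sequence of paths $\alpha_k\cdot\beta_k$---and your explicit Group A/Group B split is, if anything, a bit more careful than the paper's convergence argument for $\{\beta_k\}$.
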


\begin{proof}
Since $X$ and each $A_j$ is sequentially $0$-connected at at least one point, all of these spaces are path connected. Let $\{y_k\}_{k\in\bbn}\to y_0$ be a convergent sequence in $Y$. Note that $\{r_k(y_k)\}_{k\in\bbn}$ is a sequence in $X$ that converges to $y_0$. Since $(X,y_0)$ is sequentially $0$-connected, there is a sequence of paths $\alpha_k:I\to X$ from $y_0$ to $r_k(y_k)$ that converges to $y_0$. We construct a sequence of paths $\beta_k:I\to Y$ from $y_k$ to $r_k(y_k)$ that converges to $y_0$. Once this is done, the sequence $\{\alpha_k\cdot\beta_k\}_{k\in\bbn}$ allows us to conclude that $(Y,y_0)$ is sequentially $0$-connected.

We define $\beta_k$ as follows. If $y_k\in X$, then $r(y_k)=y_k$ so we let $\beta_k$ be the constant path at $y_k$. For each $j\in \bbn$, let $T_j=\{k\in\bbn\mid y_k\in A_j\backslash \{a_j\}\}$. If $T_j$ is finite, then for every $k\in T_j$, we choose any path $\beta_k:I\to A_j$ from $a_j=r(y_k)$ to $y_k$. If $T_j$ is infinite, then, because $(A_j,a_j)$ is sequentially $0$-connected, we may choose a sequence of paths $\{\beta_k\}_{k\in T_j}$ in $A_j$ from $a_j=r(y_k)$ to $y_k$ that converges to $a_j$.

With $\beta_k$ defined for all $k\in\bbn$ it suffices to show that $\{\beta_k\}_{k\in\bbn}$ converges to $y_0$. Let $U$ be an open neighborhood of $y_0$ in $Y$. Since $\{r(y_k)\}_{k\in\bbn}\to y_0$, there exists $K\in\bbn$ such that $r(y_k)\in U$ for all $k\geq K$. Suppose there exists $K<k_1<k_2<k_3<\cdots$ and points $t_i\in\ui$ such that $\beta_{k_i}(t_i)\notin U$. Considering our choice of the paths $\beta_k$, this only occurs if $k_{i}\in T_{j_i}$ for some $j_i\in \bbn$. However, $\{r(y_{k_i})\}_{i\in\bbn}=\{a_{j_i}\}_{i\in\bbn}$ converges to $y_0$ and therefore $A_{j_i}\subseteq U$ for all but finitely many $i\in\bbn$. However, this contradicts the fact that $\beta_{k_i}(t_i)\in A_{j_i}\backslash U$ for all $i\in\bbn$.
\end{proof}

\begin{corollary}
If $(A_j,a_j)$ is sequentially $0$-connected for all $j\in \bbn$, then $\sw_{j\in \bbn}(A_j,a_j)$ is sequentially $0$-connected at the wedgepoint.
\end{corollary}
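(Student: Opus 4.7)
The plan is to recognize this corollary as an immediate specialization of Lemma \ref{adjunctionlemmazero} and to apply it directly. The paragraph following Definition \ref{shrinkingadjunctiondef} observes that when the core $X$ of a shrinking adjunction space is a single point, the resulting space $\shadj(X,x_j,A_j,a_j)$ is precisely the shrinking wedge $\sw_{j\in\bbn}(A_j,a_j)$. So I would take $X = \{b_0\}$ and let the attachment points $x_j = b_0$ for every $j \in \bbn$; then $a_j \sim b_0$ in $\shadj(X,x_j,A_j,a_j)$, and this inverse-limit space is homeomorphic to $\sw_{j\in\bbn}(A_j,a_j)$ with basepoint $b_0$.

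Next, I need to verify the hypotheses of Lemma \ref{adjunctionlemmazero}. The attachment spaces $(A_j,a_j)$ are sequentially $0$-connected by assumption, so only the core needs to be checked. But any one-point space is trivially sequentially $0$-connected at its unique point: every sequence converging to $b_0$ is eventually (in fact identically) constant, and the constant path at $b_0$ serves as the required convergent sequence of null-homotopies (equivalently, one may cite the path characterization in Lemma \ref{zerolemma}(5), which holds vacuously).

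With both hypotheses verified, Lemma \ref{adjunctionlemmazero} yields that $(\shadj(X,x_j,A_j,a_j), b_0) = (\sw_{j\in\bbn}(A_j,a_j), b_0)$ is sequentially $0$-connected, which is exactly the claim. There is no real obstacle here; the content of the corollary is entirely absorbed into Lemma \ref{adjunctionlemmazero}, and the only subtlety is the identification of the shrinking wedge as a degenerate shrinking adjunction space, which is explicitly recorded in the preceding discussion.
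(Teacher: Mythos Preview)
Your proposal is correct and matches the paper's approach exactly: the corollary is stated immediately after Lemma \ref{adjunctionlemmazero} with no proof, so the intended argument is precisely the specialization to a one-point core $X=\{b_0\}$ that you describe.
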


We will often take a subspace $Y'$ of $Y=\shadj(X,x_j,A_j,a_j)$ and wish to view $Y'$ as a shrinking adjunction space. The proof of the following lemma follows from basic properties of inverse limits.

\begin{lemma}\label{adjunctionsubspaces}
Let $Y=\shadj(X,x_j,A_j,a_j)$, $X'\subseteq X$ be a subspace, and suppose $J=\{j\in\bbn\mid x_j\in X'\}$ is infinite. The subspace topology on $Y'=X'\cup \bigcup_{j\in J}A_j$ agrees with the inverse limit topology on $Y'=\shadj(X',\{x_j\}_{j\in J},\{A_j\}_{j\in J},\{a_j\}_{j\in J})$.
\end{lemma}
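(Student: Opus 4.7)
The plan is to reduce the statement to two ingredients: the analogous fact for finite adjunctions along single points, and the standard categorical principle that inverse limits commute with passage to subspaces and with restriction to a cofinal subsystem. Enumerate $J = \{j_1 < j_2 < j_3 < \cdots\}$, let $Y_k$ denote the $k$-th finite adjunction space used to define $Y$, and let $Y'_k := X' \cup A_{j_1} \cup \cdots \cup A_{j_k}$ denote the $k$-th finite adjunction space used to define $\shadj(X', \{x_j\}_{j \in J}, \{A_j\}_{j \in J}, \{a_j\}_{j \in J}) = \varprojlim_k Y'_k$. The inclusion $X' \hookrightarrow X$ together with the identities on the $A_{j_i}$ induces a canonical continuous bijection $\iota_k \colon Y'_k \to X' \cup A_{j_1} \cup \cdots \cup A_{j_k} \subseteq Y_{j_k}$ for every $k \in \bbn$.

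The first step is to verify that each $\iota_k$ is a topological embedding. Given $W$ open in $Y'_k$, I would choose any open set $U$ in $X$ with $U \cap X' = W \cap X'$ and set
\[V \;:=\; U \;\cup\; \bigcup_{i=1}^{k}(W \cap A_{j_i}) \;\cup\; \bigcup\bigl\{A_r : r \leq j_k,\ r \notin J,\ x_r \in U\bigr\}.\]
Because $A_r$ for $r \notin J$ is disjoint from $\iota_k(Y'_k)$, enlarging $V$ by such pieces leaves $V \cap \iota_k(Y'_k) = \iota_k(W)$ unchanged; a routine check using the definition of the finite adjunction topology shows $V$ is open in $Y_{j_k}$, the attachment identifications being the only non-trivial compatibility to verify. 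Since continuity of $\iota_k$ is immediate from the universal property of finite adjunctions, $\iota_k$ is an embedding.

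The second step is to observe that the $\iota_k$ are compatible with the bonding maps: the iterated retraction $Y_{j_{k+1}} \to Y_{j_k}$ restricts on $\iota_{k+1}(Y'_{k+1})$ to the retraction $Y'_{k+1} \to Y'_k$ that collapses $A_{j_{k+1}}$ to $x_{j_{k+1}}$. Since $\{j_k\}$ is a cofinal subsequence of $\bbn$, the canonical map $\varprojlim_k Y_{j_k} \to \varprojlim_m Y_m = Y$ is a homeomorphism. Realizing both inverse limits concretely as subspaces of $\prod_k Y_{j_k}$, the fact that each $\iota_k$ is a subspace embedding forces the induced map $\varprojlim_k Y'_k \to \varprojlim_k Y_{j_k}$ to be a subspace embedding as well; its image is exactly the underlying set $X' \cup \bigcup_{j \in J} A_j = Y'$. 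This simultaneously identifies the inverse limit topology on $Y'_{lim}$ with the subspace topology on $Y'$ inherited from $Y$.

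The only genuinely technical point is the construction of the witness $V$ verifying that $\iota_k$ is an embedding; once this is in hand, everything else is a formal consequence of standard properties of inverse limits, namely their preservation under passage to subspaces and under restriction to cofinal subsystems, both of which are justified by realizing the limits as subspaces of the ambient product and tracking the product topology.
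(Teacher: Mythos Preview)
Your argument is correct, and it is precisely a careful unpacking of what the paper leaves implicit: the paper omits the proof entirely, stating only that ``The proof of the following lemma follows from basic properties of inverse limits.'' Your two-step approach---first checking that each finite-stage map $\iota_k$ is a subspace embedding, then invoking preservation of embeddings under products/limits together with cofinality of $\{j_k\}$---is exactly the natural way to make that remark rigorous, and your construction of the witness $V$ (including the observation that $x_r\neq x_{j_i}$ whenever $r\notin J$ and $j_i\in J$, so the extra $A_r$ pieces do not interfere) is the only place where any actual checking is needed.
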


\subsection{A Splitting Theorem}\label{subsectionsplittingtheorem}

Here, we begin our analysis of homotopy groups of shrinking adjunction spaces by generalizing a known splitting result for shrinking wedges, namely, \cite[Theorem 2.1]{Kawamurasuspensions}; see also \cite[Lemma 5.3]{Brazasncubeshuffle}. Interestingly, this result holds in all dimensions $n\geq 2$ and does not require application of Whitney covers.

\begin{theorem}\label{splittheorem}
Let $Y=\shadj(X,x_j,A_j,a_j)$ where $X$ is a Peano continuum and $y_0\in X$. Then for all $n\geq 2$, the canonical homomorphism \[\Psi:\pi_n(Y,y_0)\to \pi_n(X,y_0)\oplus\prod_{j\in\bbn}\pi_n(A_j,a_j),\] $\Phi([f])=([r\circ f],[r_1\circ f],[r_2\circ f],\dots)$ is a split epimorphism.
\end{theorem}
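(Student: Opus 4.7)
The plan is to construct an explicit splitting $\Phi:\pi_n(X,y_0)\oplus\prod_{j\in\bbn}\pi_n(A_j,a_j)\to\pi_n(Y,y_0)$ of $\Psi$ by producing a representative $n$-loop $f:I^n\to Y$ for each input $([g],([h_j]))$. First, using path-connectedness of the Peano continuum $X$, I would fix for each $j$ a path $\beta_j:(I,0,1)\to(X,y_0,x_j)$. For $n$-loop representatives $g$ of $[g]$ and $h_j$ of $[h_j]$, the path-conjugate $\wt{h}_j=\beta_j\ast h_j$ from Remark \ref{retractionremark} is an $n$-loop in $Y$ at $y_0$ representing $[h_j]$, with image contained in $\beta_j(I)\cup A_j\subseteq Y$.

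The central obstacle is that the family $\{\wt{h}_j\}_{j\in\bbn}$ need not cluster at $y_0$ in $\Omega^n(Y,y_0)$: the sets $\beta_j(I)\cup A_j$ accumulate in $Y$ only where $\{x_j\}$ accumulates in $X$, which can happen far from $y_0$. Thus a direct infinite concatenation on the standard $n$-domain (Definition \ref{concatenationdef}) is not, in general, continuous. To overcome this, I would adapt the $n$-domain in $I^n$ to the cluster structure of $\{x_j\}$ in $X$. Using compactness of $X$, partition $\bbn$ according to the cluster points $z$ of $\{x_j\}$; for each such $z$, fix a path $\delta_z$ from $y_0$ to $z$ and rewrite $\beta_j=\delta_z\cdot\gamma_{j,z}$ where $\gamma_{j,z}$ is a short path from $z$ to $x_j$ chosen so that $\gamma_{j,z}(I)\cup A_j$ shrinks to $z$ as $j\to\infty$ within the $z$-subsequence. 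Then $\{\gamma_{j,z}\ast h_j\}$ clusters at $z$ in $\Omega^n(Y,z)$, so the corresponding $\scrr_z$-concatenation defines a continuous $n$-loop at $z$; conjugating by $\delta_z$ yields a continuous $n$-loop at $y_0$. Assembling these $z$-pieces together with the representative $g$ on a suitable $n$-domain in $I^n$ whose accumulation structure mirrors that of $\{x_j\}$ in $X$ (iterating when cluster points themselves accumulate) produces $f$.

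The main technical point is verifying continuity of $f$ at accumulation regions in $I^n$, which reduces, via the subsequence topology description of $Y$ (Lemma \ref{opensetlemma}), to the assertion that the convergence $x_{j_i}\to z$ in $X$ is mirrored by the convergence of the corresponding cubes in $I^n$. Finally, I would show $\Phi$ is a well-defined group homomorphism by applying the Infinitary $n$-Cube Shuffle (Theorem \ref{shuffletheorem}) to reorder concatenations freely, which crucially uses $n\geq 2$ so that $\pi_n(Y,y_0)$ is abelian. Verification that $\Psi\circ\Phi=\mathrm{id}$ is by direct computation: the retraction $r:Y\to X$ collapses each $A_j$ to $x_j$, reducing $f$ to the concatenation of $g$ with null-homotopic summands of the form $\delta_z\ast c_z$ and $\gamma_{j,z}\ast c_{x_j}$ that can be absorbed using Lemma \ref{shrinkingcubelemma}; the retraction $r_j:Y\to A_j$ collapses everything outside $A_j$ to $a_j$, reducing $f$ to $h_j$ up to reparametrization.
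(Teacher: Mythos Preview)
Your overall strategy---build an explicit section $\kappa:\prod_j\pi_n(A_j,a_j)\to\pi_n(Y,y_0)$ by assembling path-conjugates of representatives $h_j$ on an $n$-domain whose geometry mirrors the accumulation of $\{x_j\}$ in $X$---is the right one, and you correctly identify the main obstruction (the $\wt h_j$ need not cluster at $y_0$). But the specific construction you propose has a genuine gap at the step ``iterating when cluster points themselves accumulate.'' In a Peano continuum the attachment sequence $\{x_j\}$ may be dense (indeed Remark~\ref{distinctpointsremark} observes one can always arrange this), so the set of cluster points can be all of $X$. Your scheme of partitioning by cluster points $z$, choosing short paths $\gamma_{j,z}$, and then iterating does not terminate: the cluster points have cluster points, and so on, with no finite depth. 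As written, your construction produces no definite map $f:I^n\to Y$ in this case, and it is unclear how to carry out a transfinite or limiting version while maintaining continuity.

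The paper's proof replaces this entire hierarchical analysis with a single stroke: since $X$ is a Peano continuum, the Hahn--Mazurkiewicz theorem gives a surjective path $\alpha:I\to X$ with $\alpha(0)=y_0$. After inserting constant subpaths so that each $x_j$ is the image of some closed interval $[c_j,d_j]$, one defines $\ell\in\Omega^n(X,y_0)$ by sending the boundary of the concentric cube $C_t=[\tfrac{t}{2},\tfrac{1-t}{2}]^n$ to $\alpha(t)$. This $\ell$ is null-homotopic, but the point is that the ``annular'' region $K_j=C_{c_j}\setminus\int(C_{d_j})$ is mapped constantly to $x_j$, so one may insert $h_j$ on an $n$-cube $R_j\subseteq K_j$. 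Continuity of the resulting $f$ is immediate from continuity of the projections $\rho_k\circ f$ to $Y_k$. The space-filling curve does in one step what your iteration attempts to do in unbounded depth: it linearizes the entire accumulation structure of $X$ along a single parameter $t$, so that the domains $R_j$ automatically shrink toward the center of $I^n$ in a way compatible with convergence in $Y$. This use of Hahn--Mazurkiewicz is the key idea missing from your proposal; once you have it, the homomorphism property also follows cleanly via an explicit sliding homotopy rather than requiring a general shuffle.
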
 

\begin{proof}
Since $X$ is a retract of $Y$, the splitting will follow if we show the restricted homomorphism $\psi :\pi_n(Y,y_0)\to \prod_{j\in\bbn}\pi_n(A_j,a_j)$ of $\Phi_n$ splits. For each $j\in\bbn$, pick a map $f_j\in\Omega^n(A_j,a_j)$. We will construct a map $f\in\Omega^n(Y,y_0)$ such that $r_j\circ f\simeq f_j$ for all $j\in\bbn$. 

Let $\alpha:\ui\to X$ be a surjective path so that $\alpha(0)=x_0$ and $\{x_j\mid j\in\bbn\}\subseteq \alpha([0,1))$. By inserting countably many constant subpaths, we may assume that for each $j\in\bbn$, there is an open interval $(c_j,d_j)\subseteq \ui$ such that $\alpha([c_j,d_j])=x_j$. We parameterize concentric $n$-cubes in $I^n$ as follows: for $0\leq t<1$, let $C_t= [\frac{t}{2},\frac{1-t}{2}]^n$ and let $C_1=\{(1/2,1/2,\dots,1/2)\}$. The $n$-loop $\ell\in \Omega^n(X,y_0)$, which maps $\partial C_t$ to $\alpha(t)$ for $0\leq t<1$ and $\ell(C_1)=\alpha(1)$ for all $t\in\ui$ is null-homotopic and so we modify it to include appropriate portions that map to the spaces $A_j$. 

For each $j\in J$, we have a thickened $n$-sphere $K_j=C_{c_j}\backslash \int(C_{d_j})$, which has boundary $\partial K_j=\partial C_{c_j}\cup\partial C_{d_j}$ and such that $g(K_j)=\alpha([c_j,d_j])=x_j=a_j$. For each $j\in \bbn$, pick an $n$-cube $R_j\subseteq K_j$ (see Figure \ref{symfig}). Define $f$ so that \[f(\bfx)=\begin{cases}
\ell(\bfx) & \text{ if }\bfx\notin \bigcup_{j\in\bbn}R_j\\
f_j\circ L_{R_j,I^n}(\bfx) & \text{ if }\bfx\in R_j
\end{cases}\]

\begin{figure}[h]
\centering \includegraphics[height=3.3in]{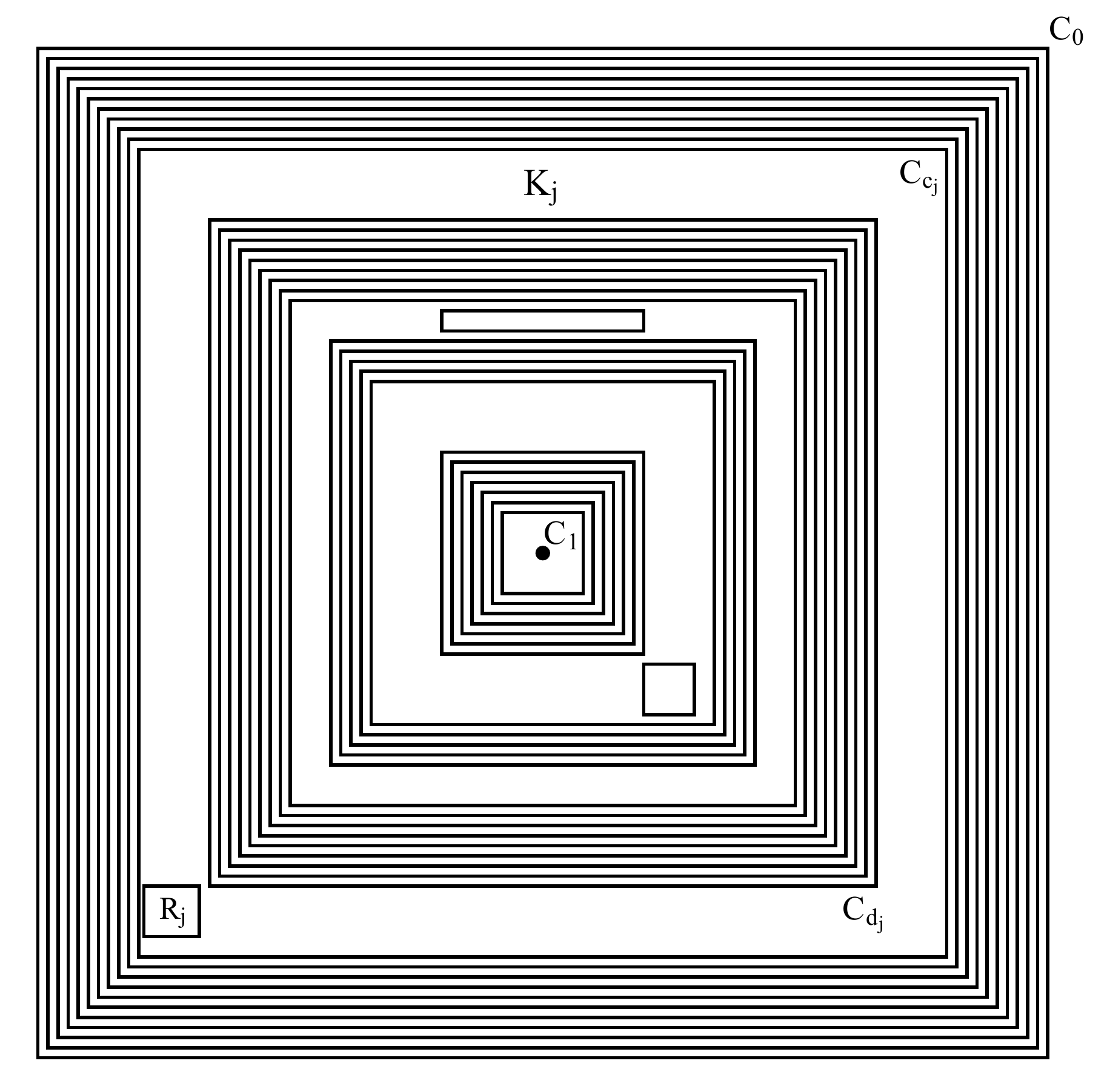}
\caption{\label{symfig} The domain of $f$ in the case $n=2$; $f$ maps $K_j\backslash \int(R_j)$ to $a_j$ and maps $R_j$ into $A_j$ by $f_j$.}
\end{figure}

Certainly, each projection $\rho_{k}\circ f:I^n\to Y_k$ is continuous for each $k\in\bbn$ and therefore $f$ is continuous. Moreover, it is easy to see that $r_j\circ f\simeq  f_j$ for all $j\in\bbn$. While this proves the surjectivity of $\psi$, we must further analyze this construct to ensure that it defines a splitting of $\psi$. In particular, we note that homotopy class of $f$ is independent of our choice of the $n$-cubes $R_j$. Indeed, for each $j\in\bbn$, let $S_j$ be another $n$-cube in $K_j$. Let $f'$ be the map which agrees with $\ell$ on $I^n\backslash \bigcup_{j\in\bbn}S_j$ and $f_{j}'|_{S_j}=f_j\circ L_{S_j,I^n}$. Since $K_j$ is path connected, we may find a continuously parameterized family of $n$-cubes $T_{j}(t)\subseteq K_j$, $t\in\ui$ where $T_j(0)=R_j$ and $T_j(1)=S_j$ (note that the sizes of the cubes may change). Define $H:I^n\times I\to Y$ so that $H(\bfx,t)=\ell(\bfx)$ if $\bfx\notin T_j(t)$ and so that $H(\bfx,t)=f_{j}\circ L_{T_{j}(t),I^n}(\bfx)$ if $\bfx\in T_j(t)$. Again, the finite projections $\rho_k\circ H$ into the spaces $Y_k$ are continuous and therefore $H$ is a continuous homotopy from $f$ to $f'$. In short, the homotopy $H$ simultaneously slides the domain of $f_j$ from $R_j$ to $S_j$ within $K_j$. It is also possible to make the same observation using at most $n$ applications of Lemma \ref{shrinkingcubelemma}.

The function $\kappa(([f_j])_{j\in\bbn})=[f]$ defines a set-theoretic section to $\psi $. We check that $\kappa$ is a homomorphism. Consider another family of $n$-loops $g_j\in \Omega(A_j,a_j)$ and a map $g$ constructed, as above, so that $\kappa(([g_j])_{j\in\bbn})=[g]$. According to the previous paragraph we may choose the domains in a convenient manner: For all $j\in\bbn$, choose $R_j=\left[\frac{c_j}{2},\frac{d_j}{2}\right]^n$ as the domain of $f|_{R_j}\equiv f_j$ and notice that each $R_j$ lies in $[0,1/2]\times I^{n-1}$. Choose $S_{j}=\left[\frac{1-d_j}{2},\frac{1-c_j}{2}\right]\times \left[\frac{c_j}{2},\frac{d_j}{2}\right]^{n-1}$ as the domain of $g|_{S_j}\equiv g_j$ and notice that each $S_j$ lies in $[1/2,1]\times I^{n-1}$. 

Consider the concatenation $f\cdot g$. We apply a homotopy similar to the standard homotopy used to prove the general property $\beta\ast (f\cdot g)\simeq (\beta\ast f)\cdot (\beta\ast g)$ of the $\pi_1$-action on $\pi_n$. In particular, define $G:I^n\times I\to X$ as \[G(s_1,s_2,\dots,s_n,t)=\begin{cases}
f((2-t)s_1,s_2,\dots ,s_n) & \text{ if }s_1\in [0,1/2]\\
g((2-t)s_1+t-1,s_2,\dots ,s_n) & \text{ if }s_1\in [1/2,1]
\end{cases}\]
Since $f$ agrees with $\ell$ on $[1/2,1]\times I^{n-1}$ and $g$ agrees with $\ell^{-}$ on $[0,1/2]\times I^{n-1}$, $G$ is well-defined. Let $h\in\Omega^n(Y,y_0)$ be the $n$-loop, $h(\bfx)=G(\bfx,1)$. Then $h$ agrees with $\ell$ on $I^n\backslash \bigcup_{j\in \bbn}R_j\cup S_j$, and we have $h|_{R_j}\equiv f_j$ and $h|_{S_j}\equiv g_j$ for all $j\in\bbn$.
\begin{figure}[h]
\centering \includegraphics[height=1.6in]{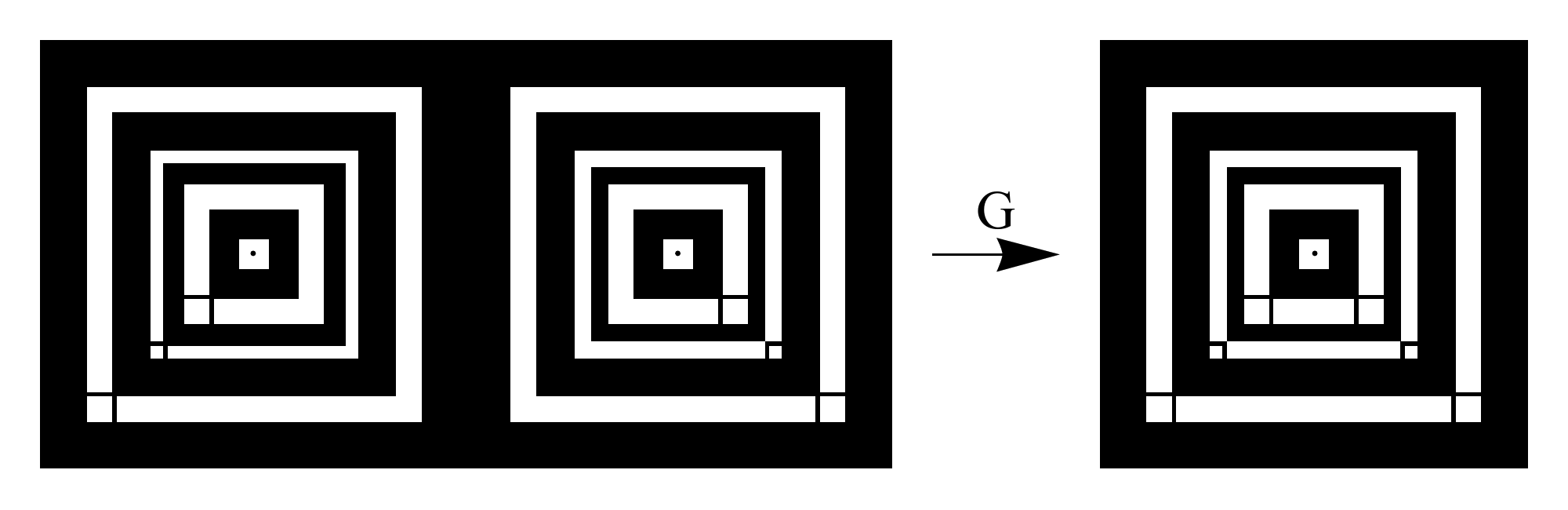}
\caption{\label{symfig2} The homotopy $G$ from $f\cdot g$ to $h$ excises the middle portion of the concatenation $f\cdot g$ (left rectangle). The squares $R_j$ appear along the diagonal of the domain of $f$ and the squares $S_j$ appear along the lower off-diagonal of the domain of $g$. The homotopy $H$, performed after $G$, stretches the left edge of $S_j$ to meet the right edge of $R_j$ to obtain $L$.}
\end{figure}

Now we perform an infinite $n$-cube shuffle homotopy to $h$. Let $S_{j}'=\left[\frac{d_j}{2},\frac{1-c_j}{2}\right]\times \left[\frac{c_j}{2},\frac{d_j}{2}\right]^{n-1}$. Note that $S_j\subseteq S_{j}'\subseteq K_j$ and that $R_j$ and $S_{j}'$ share a face. Here, we may simply take $T_j(t)=\left[(t)\frac{d_j}{2}+(1-t)\frac{1-d_j}{2},\frac{1-c_j}{2}\right]\times \left[\frac{c_j}{2},\frac{d_j}{2}\right]^{n-1}$, $0\leq t\leq 1$ as a parameterized collection of $n$-cubes from $S_j$ to $S_j '$ in $K_j$ which do not meet $\int(R_j)$. Define $H':I^n\times I\to Y$ to be the map which
\begin{itemize}
\item is the constant homotopy (of $h$) on $I^n\backslash \bigcup_{j\in\bbn}\int(K_j)$ and each $R_j$, $j\in\bbn$,
\item maps $T_j(t)\times\{t\}$ to $A_j$ by $g_j$ for all $0\leq t\leq 1$,
\item maps $K_j\times I\backslash \left(\bigcup_{t\in\ui}(T_j(t)\times\{t\})\cup (R_j\times I)\right)$ to $x_j$
\end{itemize}
As before, the projections $\rho_k\circ H'$ are continuous and so $H'$ is continuous. Let $L\in\Omega^n(Y,y_0)$ be the $n$-loop $L(\bfx)=H'(\bfx,1)$. Notice that $L$ agrees with $h$ on $I^n\backslash \bigcup_{j\in \bbn}(R_j\cup S_j)$ and on each $n$-cube $R_j\cup S_j'$, we have $L|_{R_j\cup S_j'}\equiv f_j\cdot g_j$. Hence, $L$ is constructed according to our definition of $\kappa$, that is, $[L]=\kappa(([f_j\cdot g_j])_{j\in \bbn})$. All together, we have:
\begin{eqnarray*}
\kappa(([f_j])_{j\in \bbn})+\kappa(([g_j])_{j\in \bbn}) &=& [f\cdot g]\\
&=& [L]\\
&=& \kappa(([f_j\cdot g_j])_{j\in \bbn})\\
&=&\kappa(([f_j])_{j\in \bbn}+([g_j])_{j\in \bbn})
\end{eqnarray*}
We conclude that $\kappa$ is a group homomorphism, which is a section to $\psi $.
\end{proof}

\begin{corollary}
Suppose $Y=\shadj(X,x_j,A_j,a_j)$ is path-connected and $x_0\in Y$. If $\pi_n(A_j,a_j)\neq 0$ for infinitely many $j\in\bbn$, then $|\pi_n(Y,y_0)|=2^{\aleph_0}$.
\end{corollary}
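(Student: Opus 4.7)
The plan is to derive this as an almost immediate consequence of Theorem \ref{splittheorem}. Since $Y$ is path-connected, the isomorphism type of $\pi_n(Y,y_0)$ is independent of the choice of basepoint, so I would first move $y_0$ to a point in $X$. Applying Theorem \ref{splittheorem} then yields a split epimorphism, and composing with projection onto the product factor produces a surjective homomorphism
\[
\psi:\pi_n(Y,y_0)\twoheadrightarrow\prod_{j\in\bbn}\pi_n(A_j,a_j).
\]

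For the lower bound, I would set $J=\{j\in\bbn\mid \pi_n(A_j,a_j)\neq 0\}$, which is infinite by hypothesis. Each factor $\pi_n(A_j,a_j)$ with $j\in J$ has cardinality at least $2$, so the product $\prod_{j\in\bbn}\pi_n(A_j,a_j)$ contains (via the subset where non-$J$ coordinates are $0$) a copy of $\prod_{j\in J}\pi_n(A_j,a_j)$, whose cardinality is at least $2^{|J|}=2^{\aleph_0}$. Surjectivity of $\psi$ then forces $|\pi_n(Y,y_0)|\geq 2^{\aleph_0}$.

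For the reverse inequality, I would observe that under the Peano continuum hypothesis on $X$ carried over from Theorem \ref{splittheorem} (together with the standing assumption that the attachment spaces are separable and metrizable, as is the case throughout the paper), the finite adjunction spaces $Y_k$ are each separable metric. Hence $Y=\varprojlim Y_k$ embeds in the countable product $\prod_k Y_k$ and is itself separable metric, so $|Y|\leq 2^{\aleph_0}$. A continuous map $S^n\to Y$ is determined by its restriction to a countable dense subset of $S^n$, giving
\[
|\pi_n(Y,y_0)|\leq |Y^{S^n}|\leq |Y|^{\aleph_0}\leq (2^{\aleph_0})^{\aleph_0}=2^{\aleph_0}.
\]

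Combining the two bounds yields the conclusion. There is no real obstacle in this argument: the substantive content is packaged entirely in Theorem \ref{splittheorem}, and the only mildly delicate step is ensuring $Y$ has cardinality at most continuum, which follows from separability of the core and attachment spaces.
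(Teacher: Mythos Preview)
Your argument is essentially the intended one; the paper states this corollary without proof, immediately after Theorem~\ref{splittheorem}, and the lower bound via the split epimorphism together with a routine cardinality count for the upper bound is exactly what is meant.

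One small point deserves care: you invoke a ``standing assumption that the attachment spaces are separable and metrizable, as is the case throughout the paper,'' but the paper's only global hypothesis is that spaces are Hausdorff. The upper bound $|\pi_n(Y,y_0)|\leq 2^{\aleph_0}$ genuinely requires some size constraint on the $A_j$ (otherwise one could take $A_1$ with $|\pi_n(A_1)|>2^{\aleph_0}$ and the equality fails). The corollary inherits from Theorem~\ref{splittheorem} the hypothesis that $X$ is a Peano continuum, and in the paper's applications the $A_j$ are Peano continua as well (cf.\ Remark~\ref{pcremark}, which makes $Y$ itself a Peano continuum), so your bound goes through in that setting. But you should phrase this as an additional hypothesis being invoked, not as a standing convention.
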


\subsection{Indeterminate adjunction spaces}\label{subsectionintermediateadjunctionspaces}

Some relevant spaces, such as generalized covering spaces of shrinking adjunction spaces, will not quite be shrinking adjunction spaces. We define the following general term to provide a context for this situation.

\begin{definition}[indeterminate adjunction space]\label{indeterminatedef}
We say that $Y$ is an \textit{indeterminate adjunction space} with core space $X$, attachment spaces $\{A_j\}_{j\in J}$, and attachment points $\{x_j\}_{j\in J}$ if $X$ is a closed subspace of $Y$ and if $Y\backslash X$ is the disjoint union of open sets $N_j$, $j\in J$ where $A_j=\overline{N_j}$ and $A_j\cap X=\{x_j\}$ for all $j\in \bbn$.
\end{definition}

If $Y$ is an indeterminate adjunction space, then the underlying set of $Y$ is the same as the ordinary adjunction space (with the weak topology). However, the homeomorphism type of $Y$ is not uniquely determined. Despite this ambiguity, it is clear that, given any finite set $F\subseteq J$, there is a retraction $\rho_{F}:Y\to X\cup\bigcup_{j\in F}A_j$ that collapses $A_j$, $j\in J\backslash F$ to $x_j$. Therefore, if $\tau$ is the topology of $Y$, then $\tau$ may be as fine as the weak topology  $\tau_{weak}$ (this is the upper limit) or as coarse as the inverse limit topology $\tau_{lim}$ (this is the lower limit). Indeed, we have $ \tau_{\lim}\subseteq\tau\subseteq\tau_{weak}$, and it is possible that $\tau$ is equal to neither (see Example \ref{coneintermediateexample}). We show that even if we are confronted with an indeterminate adjunction space $Y$, a subspace of $Y$ that is a Peano continuum must be a true shrinking adjunction space.

\begin{proposition} \label{indeterminateproppeano}
Let $Y$ be the indeterminate adjunction space with core space $X$, attachment spaces $\{A_j\}_{j\in \bbn}$, and attachment points $\{x_j\}_{j\in \bbn}$. If $Z\subseteq Y$ is a subspace that is a Peano continuum and meets $X$ and each open set $A_j\backslash\{a_j\}$, then $Z$ is homeomorphic to the shrinking adjunction space obtained by attaching the spaces $\{Z\cap A_j\}_{j\in\bbn}$ to $Z\cap X$ along the points $\{x_j\}_{j\in \bbn}$.
\end{proposition}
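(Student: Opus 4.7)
The plan is to exhibit a continuous bijection $\phi\colon Z\to Y^{\ast}$ where $Y^{\ast}:=\shadj(Z\cap X,x_j,Z\cap A_j,x_j)$, and then invoke the fact that a continuous bijection from a compact space onto a Hausdorff space is a homeomorphism. Write $X':=Z\cap X$ and $A_j':=Z\cap A_j$. The underlying sets of $Z$ and $Y^{\ast}$ coincide once we know $x_j\in Z$ for each $j$, which I verify first: since $N_j:=A_j\setminus\{x_j\}$ is open in $Y$ with $\overline{N_j}\setminus N_j=A_j\cap X=\{x_j\}$, the topological boundary $\partial N_j$ in $Y$ is $\{x_j\}$. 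Any path in the path-connected Peano continuum $Z$ from a point of $Z\cap N_j\neq\emptyset$ to a point of $Z\cap X\neq\emptyset$ must cross this boundary, forcing $x_j\in Z$ and hence $x_j\in X'\cap A_j'$.

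\textbf{Construction of $\phi$.} The retractions $\rho_k\colon Y\to W_k$ (with $W_k:=X\cup\bigcup_{j\le k}A_j$), which collapse $A_j$ to $x_j$ for $j>k$, are continuous by the remarks following Definition \ref{indeterminatedef}. Their restrictions $\phi_k:=\rho_k|_Z\colon Z\to Z_k$ (with $Z_k:=W_k\cap Z=X'\cup\bigcup_{j\le k}A_j'$) are therefore continuous retractions, where I use $x_j\in Z$ to ensure $\phi_k(z)=x_j\in Z$ when $z\in A_j$ with $j>k$. Since each of $X'$, $A_1',\ldots,A_k'$ is closed in the Hausdorff space $Z$ and these pieces meet pairwise only at the appropriate $x_j$, the standard fact about finite adjunctions of closed subsets of a Hausdorff space shows that the subspace topology on $Z_k$ inherited from $Z$ coincides with the finite adjunction topology on $Y^{\ast}_k:=X'\cup\bigcup_{j\le k}A_j'$. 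Compatibility of the $\phi_k$ with the bonding retractions of $Y^{\ast}$ assembles them into a continuous map $\phi=(\phi_k)\colon Z\to\varprojlim Y^{\ast}_k=Y^{\ast}$.

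\textbf{Conclusion and main obstacle.} A direct unwinding of the inverse limit shows $\phi$ is bijective: each compatible sequence in $Y^{\ast}$ is either eventually constant at a point of $X'$ or stabilizes at a point of some $A_j'\setminus\{x_j\}$, and in either case it arises from exactly one element of $Z$. Since $Y^{\ast}$ embeds in the product $\prod_k Y^{\ast}_k$ of Hausdorff spaces it is Hausdorff, while $Z$ is compact, so the continuous bijection $\phi$ is automatically closed, hence a homeomorphism. The main technical obstacle in this plan is the identification $Z_k\cong Y^{\ast}_k$ in the middle step: verifying that the subspace topology inherited from the ambient (possibly \emph{indeterminate}) $Y$ matches the finite adjunction topology, despite the fact that $Y$ itself need not carry the weak topology. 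The argument succeeds because $Z$ is compact Hausdorff and each piece is closed in $Z$, so the identification reduces to the elementary closed-adjunction fact and sidesteps the ambiguity of the topology on $Y$ itself.
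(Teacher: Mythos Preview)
Your proposal is correct and follows essentially the same route as the paper: build a continuous map from $Z$ into the inverse limit $\varprojlim_k Z_k$ by restricting the retractions $\rho_k$, observe that this map is the identity on underlying sets, and conclude via compact-to-Hausdorff. The paper's version is slightly terser; in particular, you are more explicit than the paper in verifying that the subspace topology on $Z_k$ inherited from $Z$ agrees with the finite weak (adjunction) topology on $Y^\ast_k$, which is exactly the point where the indeterminacy of the ambient topology on $Y$ could in principle cause trouble.
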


\begin{proof}
Let $Y_{lim}$ denote the underlying set of $Y$ equipped with the inverse limit (shrinking adjunction) topology and note that the identity $i: Y\to Y_{lim}$ is continuous. Since $Z$ is path-connected and $Z\cap (A_j\backslash\{a_j\})$ for all $j\in\bbn$, we have $x_j=a_j\in Z\cap X$ for all $j\in\bbn$. The collapsing maps $r:Y\to X$ and $r_j:Y\to A_j$ are retractions with respect to the topology of the indeterminate adjunction space so $r(Z)=Z\cap X$ and $r_j(Z)=Z\cap A_j$ are all Peano continua. Let $Z_{lim}$ denote the shrinking adjunction space obtained by attaching the spaces $\{Z\cap A_j\}_{j\in\bbn}$ to $Z\cap X$ along the points $\{x_j\}_{j\in \bbn}$. Even though the topology of $Y$ is indeterminate, the function $\rho_{k}\circ i:Y\to Y_k$, which collapses $A_j$, $j>k$ to $x_j$ is continuous. Therefore, the restriction $(\rho_{k}\circ i)|_{Z}:Z\to Y_{k}\cap Z$ is continuous for all $k\in\bbn$. We conclude that the identity function $i|_{Z}:Z\to Z_{lim}=\varprojlim_{k}(Y_k\cap Z)$ is continuous. Since $Z$ is compact and $Z_{lim}$ is Hausdorff, $i|_{Z}$ is a homeomorphism.
\end{proof}

\section{Whitney Covers and Infinite Homotopy Factorization}\label{sectionwhitneycover}

\subsection{General homotopy factorization methods}\label{subsectiongeneralfactorization}
We employ the classical Whitney Covering Lemma \cite{Whitney}, which guarantees the existence of highly structured locally finite covers of arbitrary open subsets of n-dimensional space. Our statement of the Whitney Covering Lemma below is not the strongest possible version of this well-known result; the weaker statement below is more streamlined for our application. In particular, we do not require all of the geometric features of the construction and we restrict our focus to open sets $U$ in $\bbr^n$ which lie in the open unit cube so that $\partial U\subseteq I^n$. We refer to \cite[Appendix J]{Grafakos} for the classical statement, which implies the following.

\begin{lemma}[Whitney Covering]\label{whitneycoveringlemma}
Let $U$ be a non-empty, proper open subset of $(0,1)^n$. Then there exists a collection $\scrc$ of $n$-cubes such that
\begin{enumerate}
\item $\bigcup \scrc=U$
\item $\int(C)\cap \int(C')$ when $C\neq C'$ in $\scrc$,
\item for given $C\in\scrc$, we have $C\cap C'=\emptyset$ for all but finitely many cubes $C'\in \scrc$.
\item Every neighborhood of the boundary $\partial U$ in $I^n$ contains all but finitely many $C\in\scrc$.
\end{enumerate}
\end{lemma}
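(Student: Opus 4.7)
The plan is to invoke the standard Whitney dyadic-cube construction. For each $k \geq 0$, let $\scrd_k$ denote the family of closed axis-aligned cubes of side $2^{-k}$ with vertices in the lattice $2^{-k}\bbz^n$, so any two dyadic cubes (from possibly different scales) either have disjoint interiors or one is contained in the other. Fix a constant $c > 1$ (the classical choice is $c = 4\sqrt{n}$) and call a cube $Q \in \scrd_k$ \emph{admissible} if $Q \subseteq U$ and
\[
\mathrm{diam}(Q) \leq \mathrm{dist}(Q, U^c) < c \cdot \mathrm{diam}(Q).
\]
Define $\scrc$ to be the collection of \emph{maximal} admissible cubes, that is, admissible cubes not properly contained in any other admissible cube.

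First I would verify (1) and (2). Given $x \in U$, the positive distance $d = \mathrm{dist}(x, U^c)$ lets me choose $k$ so that the dyadic cube at scale $k$ containing $x$ is admissible; passing to the unique maximal admissible cube containing it (well-defined by the nested/disjoint property of dyadic cubes) places $x$ in some $C \in \scrc$. Property (2) is immediate from maximality: two distinct maximal admissible cubes cannot be nested, so the dyadic structure forces disjoint interiors.

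The core of the argument is (3) and (4). For (3), I would show that if $C, C' \in \scrc$ meet then their side lengths are comparable: picking $y \in C \cap C'$, the triangle inequality applied to $\mathrm{dist}(y, U^c)$ combined with the admissibility bounds for $C$ and $C'$ yields $\mathrm{diam}(C') \leq K \cdot \mathrm{diam}(C)$ for a constant $K = K(c,n)$, and symmetrically. Only finitely many dyadic cubes of comparable diameter can meet the bounded set $C$, giving local finiteness. For (4), let $V$ be any open neighborhood of $\partial U$ in $I^n$; its complement $K_V = I^n \setminus V$ is compact, and any $C \in \scrc$ with $C \not\subseteq V$ must meet $K_V \cap \overline{U}$, forcing $\mathrm{dist}(C, U^c) \geq \mathrm{dist}(K_V \cap \overline{U}, \partial U) > 0$. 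The admissibility inequality then gives a uniform lower bound on $\mathrm{diam}(C)$, and only finitely many interior-disjoint cubes with side bounded below fit inside the bounded region $\overline{U}$.

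The main obstacle is purely bookkeeping: one must choose the single constant $c$ (and prove the triangle-inequality estimates with that $c$) so that all four properties follow simultaneously, and one must handle the discrepancy between $\mathrm{dist}(x, U^c)$ and $\mathrm{dist}(Q, U^c)$, which differ by at most $\mathrm{diam}(Q)$, when verifying the covering property. No topological subtleties arise beyond elementary geometry on $\bbr^n$, which is why the paper defers to the standard reference \cite{Grafakos} for the detailed verification.
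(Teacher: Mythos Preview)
Your proposal is correct and follows the standard dyadic Whitney decomposition exactly as in the cited reference \cite{Grafakos}; the paper gives no independent proof of this lemma but simply invokes the classical statement, so there is nothing further to compare.
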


We refer to $\scrc$ from Lemma \ref{whitneycoveringlemma} as a \textit{Whitney cover of} $U$ and the individual cubes $C\in\scrc$ as \textit{Whitney cubes} (See Figure \ref{fig3}). Notice that every Whitney cover of $U$ is an $n$-domain.

\begin{figure}[h]
\centering \includegraphics[height=1.7in]{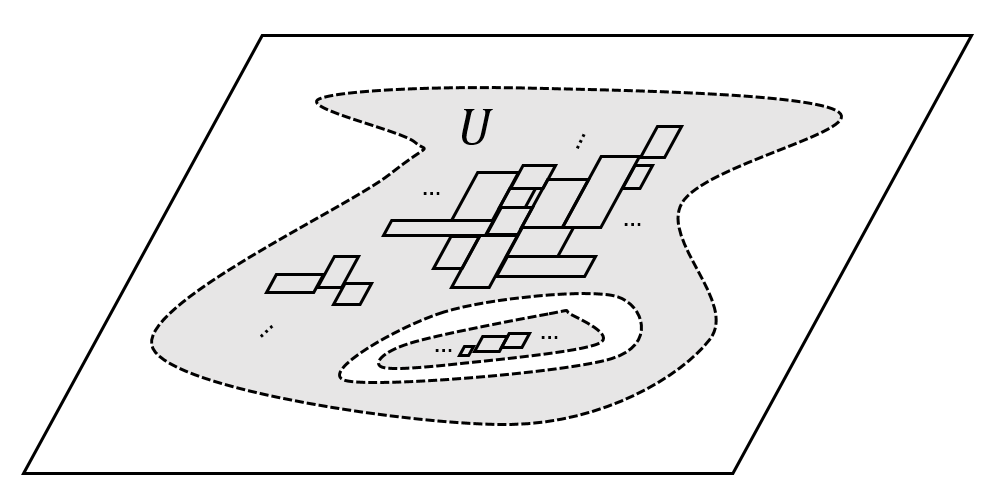}
\caption{\label{fig3}A Whitney covering of an open set $U\subseteq (0,1)^2$, which need not be connected nor simply connected. The diameters of the $2$-cubes shrink to $0$ as they approach any boundary point of $U$.}
\end{figure}

\begin{remark}\label{subdivisionremark}
It is important to realize that a Whitney cover $\scrc$ of $U$ will not be a ``cubical decomposition" of $U$, that is, the intersection of two $n$-cubes of $\scrc$ may not be a face of either cube. While a carefully constructed triangulation of $U$ could overcome this technical issue, we avoid going back and forth between the cubical and simplicial settings by using the following construction, which takes advantage of the fact that the cubes in $\scrc$ are oriented with the standard unit vectors and have pairwise disjoint interiors: Let $\scrc_n=\scrc$. If $\scrc_{k}$ is defined, let $\scrc_{k-1}$ be the set of all $k-1$-cubes which are the intersection of two $k$-cubes from $\scrc_{k}$. Notice that $\bigcup \scrc_{k-1}=\bigcup \{bd(K)\mid K\in\scrc_k\}$ where $bd(K)$ is the face-boundary of $K$, i.e. the union of all $(k-1)$-faces of the $k$-cube $K$. However, $\bigcup \scrc_{k-1}$ will usually be strictly larger than the union of the $(k-1)$-faces of the original $n$-cubes $C\in\scrc$. For instance, $\scrc_0$ contains the vertices ($0$-faces) of the $n$-cubes $C\in\scrc$ but also the $0$-faces of any $k$-cube formed as the intersection of higher dimensional cubes from $\bigcup_{0<m\leq n}\scrc_{m}$.
\end{remark}


\begin{lemma}[$n$-loop factorization]\label{factorizationlemma}
Let $n\geq 1$ and $X$ be sequentially $(n-1)$-connected at $x_0\in X$. Suppose $f:I^n\to X$ is a map and $U$ is an open subset of $(0,1)^n$ such that $f(\partial U)=x_0$. Then for every Whitney cover $\scrc$ of $U$, there exists a map $g:I^n\to X$ such that
\begin{itemize}
\item $f$ is homotopic to $g$ by a homotopy constant on $I^n\backslash U$,
\item for each $C\in\scrc$, $g(\partial C)=x_0$,
\item for each $C\in\scrc$, there is an $n$-cube $K\subseteq \int(C)$ such that $g|_{K}\equiv f|_{C}$.
\end{itemize}
\end{lemma}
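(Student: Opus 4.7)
The proof has two phases: an \emph{insertion phase} that constructs an intermediate map $g_0$ containing a rescaled copy of $f|_C$ on a central sub-cube $K_C\subseteq\int(C)$ of each Whitney cube, and a \emph{skeletal reduction phase} that iteratively deforms $g_0$ along the subdivision $\scrc_0\subseteq\scrc_1\subseteq\cdots\subseteq\scrc_{n-1}$ of Remark \ref{subdivisionremark} so that every $\partial C$ is sent to $x_0$, while leaving the cores $K_C$ untouched. For the insertion phase I fix concentric cubes $K_C\subset C^{*}\subset\int(C)$ (say of side-ratio $\tfrac12$ and $\tfrac34$ relative to $C$), and define $g_0:I^n\to X$ to equal $f$ outside $\bigcup_{C}\int(C^{*})$, to equal $f|_{C}\circ L_{K_C,C}$ on each $K_C$, and on each annulus $C^{*}\setminus\int(K_C)$ to interpolate radially through $f$ between the two prescribed boundary values on $\partial C^{*}$ and $\partial K_C$. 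The map $g_0$ is homotopic to $f$ via a natural rescaling deformation supported in $\bigcup_{C}\int(C^{*})\subseteq U$, and its continuity at $\partial U$ follows from uniform continuity of $f$ and Whitney property (4), since $\im(g_0|_{C})\subseteq f(C)$ shrinks toward $x_0$ as $C\to\partial U$.

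For the reduction phase I inductively produce maps $g_0,g_1,\ldots,g_n$ so that after the $k$-th step $g_{k+1}(|\scrc_k|)=x_0$ and $g_{k+1}|_{K_C}=g_0|_{K_C}$ for every $C$. In the step from $g_k$ to $g_{k+1}$, each $k$-cube $F\in\scrc_k$ satisfies $\partial F\subseteq|\scrc_{k-1}|$ by Remark \ref{subdivisionremark}, so for $k\geq 1$ the map $g_k|_F:(F,\partial F)\to (X,x_0)$ is a based $k$-loop at $x_0$; the base case $k=0$ inserts a chosen path from $x_0$ to $f(v)$ at each vertex, supplied by Lemma \ref{zerolemma}. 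The sequence $\{g_k|_F\}_{F\in\scrc_k}$ clusters at $x_0$—a property maintained inductively because the tubes from earlier stages are localized and their images themselves cluster at $x_0$. Since $X$ is sequentially $(n-1)$-connected at $x_0$ and $k\leq n-1$, Remark \ref{equivremark1} produces null-homotopies $H_F$ of $g_k|_F$ rel $\partial F$ that together cluster at $x_0$. I then extend each $H_F$ into a thin tubular neighborhood of $F$ in $I^n$, chosen pairwise disjoint and disjoint from $\bigcup_{C}K_C$; inside each tube a standard cone-off extension agrees with $g_k$ on the outer boundary and realizes $H_F$ along the central copy of $F$, producing $g_{k+1}$.

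The principal technical hurdle is verifying continuity of $g_{k+1}$ at points of $\partial U$, where infinitely many tube modifications accumulate; this is exactly the scenario analyzed in Remark \ref{verticalinfconcatremark}. Since the null-homotopies cluster at $x_0$ and the tubes shrink in tandem with the Whitney cubes (by Whitney property (4)), arbitrarily small neighborhoods of $x_0$ absorb all but finitely many tube-images, and continuity follows by a direct neighborhood argument. Once continuity is secured at every stage, the final map $g:=g_n$ satisfies $g(\partial C)=x_0$ for every $C\in\scrc$ (since $\partial C\subseteq|\scrc_{n-1}|$) and $g|_{K_C}=g_0|_{K_C}\equiv f|_{C}$ by construction, because no tube ever meets $K_C$. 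The concatenation of the insertion homotopy with the successive reduction homotopies is supported in $U$, hence is constant on $I^n\setminus U$, completing the proof.
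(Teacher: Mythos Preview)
Your two-phase scheme is a genuinely different organization from the paper's, and the insertion phase is fine, but the reduction phase has a real gap that you should not gloss over.

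The paper does not work with tubular neighborhoods in $I^n$ at all. It builds the homotopy $H$ directly on $I^n\times I$: one prescribes $H$ on $I^n\times\{0\}$ (as $f$), on $(I^n\setminus U)\times I$ (constant), and on $\bigcup_{C}\partial C\times\{1\}$ (as $x_0$), and then recursively extends over the cells $K\times I$ for $K\in\scrc_{k}$, $k=0,1,\dots,n-1$. At each stage the boundary $\partial(K\times I)=K\times\{0,1\}\cup bd(K)\times I$ is a $k$-sphere already mapped into $X$, and sequential $k$-connectedness supplies fillings that cluster at $x_0$. The final step retracts each $C\times I$ onto $C\times\{0\}\cup\partial C\times I$ as in Remark~\ref{retractionremark}; this retraction automatically produces the sub-cube $K\subseteq\int(C)$ with $g|_K\equiv f|_C$, so no separate insertion phase is needed.

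The difficulty with your reduction phase is the sentence ``a standard cone-off extension agrees with $g_k$ on the outer boundary and realizes $H_F$ along the central copy of $F$.'' The null-homotopy $H_F$ connects $g_k|_F$ to $x_0$, but the values you must match on the outer boundary of the tube are $g_k|_{\partial N_F}$, which are \emph{not} $g_k|_F$. A naive cone-off therefore fails to be continuous at $\partial N_F$. The standard repair is to precede the null-homotopy with a radial compression sending $g_k|_{\partial N_F}$ to $g_k|_F$ using the actual values of $g_k$ on the tube; but that step requires $N_F$ to carry an honest product structure $F\times D^{n-k}$, and product tubes around adjacent $k$-cells of $\scrc_k$ cannot be pairwise disjoint, since such cells share $(k-1)$-faces. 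One can try to salvage this with lens-shaped tubes that pinch along $bd(F)$, but then the radial compression has to be reconciled with the degeneration and the argument becomes delicate---none of which is indicated in your write-up. The paper's organization sidesteps the whole issue: the role of your ``tubes'' is played by the cells $K\times I\subseteq I^n\times I$, which \emph{do} share faces, and the recursive extension is explicitly arranged to agree on those shared faces. If you want to keep your staged picture, the cleanest fix is to reinterpret each step $g_k\rightsquigarrow g_{k+1}$ as a homotopy defined on $I^n\times I$ whose restriction to $F\times I$ is $H_F$ and which is constant off a neighborhood of $\lvert\scrc_k\rvert$; but once you do that you are essentially reproducing the paper's construction.

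A minor point: your appeal to Remark~\ref{verticalinfconcatremark} for continuity at $\partial U$ is not the right tool---that remark concerns infinite vertical concatenations of homotopies, whereas here you have finitely many stages. The relevant observation (which the paper makes directly) is that for every neighborhood $V$ of $x_0$, all but finitely many of the filled cells $K\times I$ (resp.\ tubes) have image in $V$, because the Whitney cubes cluster at $\partial U$ and the chosen null-homotopies cluster at $x_0$.
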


\begin{proof}
Let $A=I^n\backslash U$ and consider a Whitney cover $\scrc$ of the (not necessarily connected) open set $U$ by $n$-cubes. Recall the definition of $\scrc_k$ for $0\leq k\leq n$ from Remark \ref{subdivisionremark}. We define a homotopy $H:I^n\times I\to X$ by recursively by extending the definition of $f$ on faces of the cubes $K\times I$, $K\in\scrc_k$. By Condition 4. in Lemma \ref{whitneycoveringlemma} and the continuity of $f$, the set $\{f|_{C}\mid C\in\scrc\}$ clusters at $x_0$ (when each $C\in\scrc$ is identified with $I^n$). Hence, $\{f|_{K}\mid K\in\scrc_{k}\}$ clusters at $x_0$ for all $0\leq k\leq n$. First, set $Y_0=I^n\times \{0\}\cup (A\times I)\cup   \left(\bigcup_{C\in\scrc}\partial C\right)\times \{1\}$.
Define $H_0:Y_0\to X$ so that
\begin{itemize}
\item $H_0(\bfx,0)=f(\bfx)$ if $\bfx\in I^n\times\{0\}\cup A\times I$,
\item $H_0(\bfx,1)=x_0$ if $\bfx\in \bigcup_{C\in\scrc}\partial C$.
\end{itemize}
Since $H_0(\partial U\times I)=x_0$, it is clear that $H_0$ is continuous on $Y_0$. Recursively, define $$Y_k=Y_{k-1}\cup \bigcup_{K\in\scrc_{k-1}}K\times I$$ for $1\leq k\leq n$ and notice that $Y_n=Y_0\cup \bigcup_{C\in\scrc}(\partial C)\times I$. We recursively construct maps $H_k:Y_k\to X$ as follows. Recall that any neighborhood of $x_0$ in $X$ contains all but finitely many of the points $f(v)$, $v\in\scrc_0$. Since $X$ is sequentially $0$-connected at $x_0\in X$, there exists a countable system of paths $\alpha_v:I\to X$, $v\in\scrc_0$ from $f(v)$ to $x_0$ that clusters at $x_0$. Define $H_1(v,t)=\alpha_v(t)$ for $v\in\scrc_0$. This extends the definition of $H_0$ to $H_1$ by extending the definition on $\bigcup_{v\in\scrc_0}\{v\}\times I$ in a well-defined fashion. Additionally, the continuity of $H_1$ follows directly from the continuity of $H_0$ and the fact that $\{\alpha_v\}_{v\in\scrc_0}$ clusters at $x_0$. For any edge $K\in\scrc_1$, the map $H_1$ restricts to a $1$-loop $g_K:K\times \{0,1\}\cup bd (K)\times I\to X$. Since the collections $\{f|_{K}\mid K\in \scrc_1\}$ and $\{\alpha_{v}\mid v\in\scrc_0\}$ both cluster at $x_0$, it follows that the collection of $1$-loops $\{g_K\mid K\in\scrc_1\}$ clusters at $x_0$.

Suppose, for $1\leq k\leq n-1$, we have constructed a continuous extension $H_{k}:Y_{k}\to X$ of $H_{k-1}$ with the property that if $g_K$ is the restriction of $H_{k-1}$ to $S_K=K\times \{0,1\}\cup bd( K)\times I$ for $K\in \scrc_k$, then the collection $\{g_K\}_{K\in\scrc_{k}}$ clusters at $x_0$ when we identify each $S_K$ with $\partial I^{k+1}$ in the canonical way. Since $X$ is assumed to be sequentially $k$-connected at $x_0$, there exists a collections of maps $G_K:K\times I\to X$ such that $G_K|_{S_K}=g_K$ and such that $\{G_K\mid K\in\scrc_k\}$ clusters at $x_0$ when each $K\times I$ is identified with $I^{k+1}$ in the canonical way. For each $K\in\scrc_k$, define the extension  $H_{k+1}$ of $H_k$ on $K\times I$ so that $H_{k+1}|_{K\times I}=G_k$ for $K\in\scrc_{k}$. This is clearly well-defined and is continuous based on the continuity of $H_k$ and the fact that $\{G_K\mid K\in\scrc_k\}$ clusters at $x_0$.

We proceed until arriving at a continuous extension $H_n:Y_n\to X$. In particular, we have constructed $H_n$ so that for every neighborhood $V$ of $x_0$, we have $H_n(C\times \{0\}\cup (\partial C)\times I)\subseteq V$ for all but finitely many $n$-cubes $C\in\scrc$. For each $C\in\scrc$, fix a piecewise linear retraction $r_C:(C\times I)\to C\times \{0\}\cup (\partial C)\times I$ as in Remark \ref{retractionremark}. We extend $H_n$ to a homotopy $H:I^n\times I\to X$ by defining $H(\bfx,t)=H_n(r_C(\bfx))$ if $\bfx\in C$. See Figure \ref{fig4} for an illustration of the construction of $H$. Since $H(C\times I)=H_n(C\times \{0\}\cup bd(C)\times I)$ for all $C\in\scrc$, we have that for a given neighborhood $V$ of $x_0$ in $X$, $H$ maps all but finitely many of the $(n+1)$-cubes $C\times I$ into $V$. The continuity of $H$ is an immediate consequence of this observation. Let $g:(I^n,\partial I^n)\to (X,x_0)$ be the map $g(\bfx)=H(\bfx,1)$ and $K\subseteq C$ be the $n$-cube that $r_C$ maps $K\times \{1\}$ homeomorphically onto $C\times\{0\}$. Then $g(\partial C)=x_0$ and $g|_{K}\equiv f|_{C}$ for all $C\in\scrc$.
\end{proof}

\begin{remark}\label{imageremark}
In some applications we will apply the above construction of $H$ in slightly more structured scenarios. For example, suppose there is a sequentially $(n-1)$-connected based subspace $(A,x_0)$ of $(X,x_0)$, and that after some choice of Whitney cover $\scrc$, we know that $f(\partial C)$ lies in $A$ for all $C\in\scrc$. Then we may construct $H$ through the same process but choosing all of the maps $g_K$ and $G_K$ to have image in $A$. The resulting homotopy $H$ then has the additional property that $H(C\times I)\subseteq A\cup f(C)$ for all $C\in\scrc$ and thus $\im(H)\subseteq A\cup \im(f)$.
\end{remark}

\begin{figure}[p]
\centering
\begin{subfigure}{.5\textwidth}
  \centering
  \includegraphics[width=.9\linewidth]{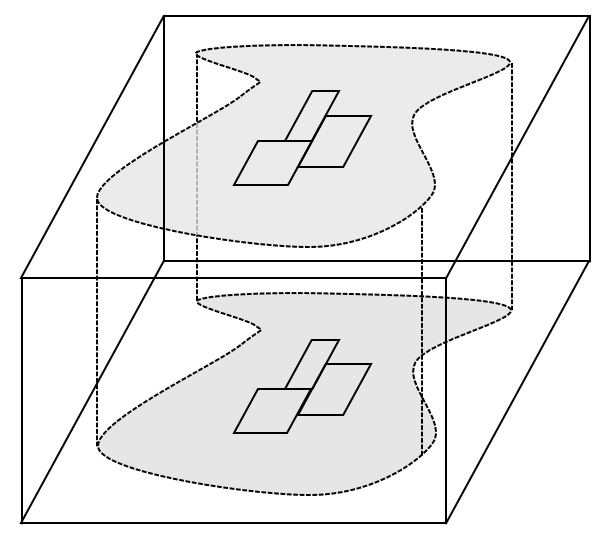}
  \caption{Step 0}
\end{subfigure}%
\begin{subfigure}{.5\textwidth}
  \centering
  \includegraphics[width=.9\linewidth]{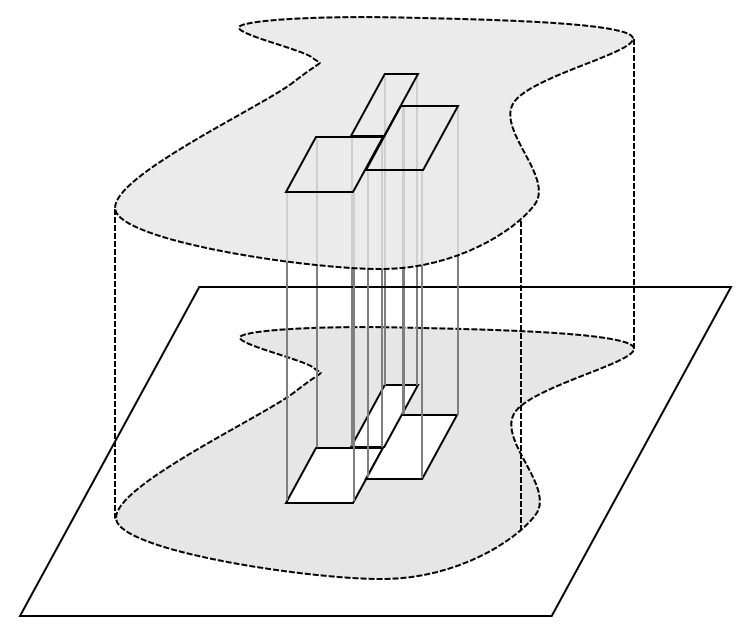}
  \caption{Step 1}
\end{subfigure}

\begin{subfigure}{.5\textwidth}
  \centering
  \includegraphics[width=.9\linewidth]{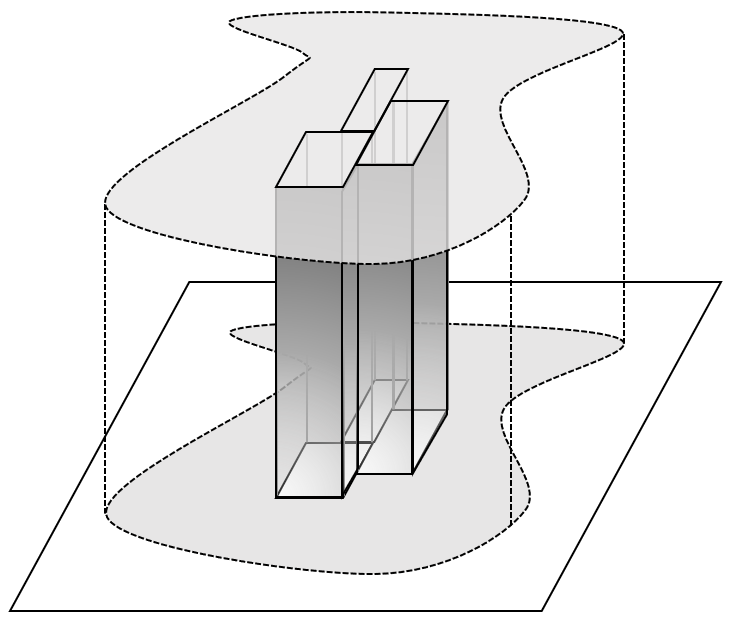}
  \caption{Step 2}
\end{subfigure}%
\begin{subfigure}{.5\textwidth}
  \centering
  \includegraphics[width=.9\linewidth]{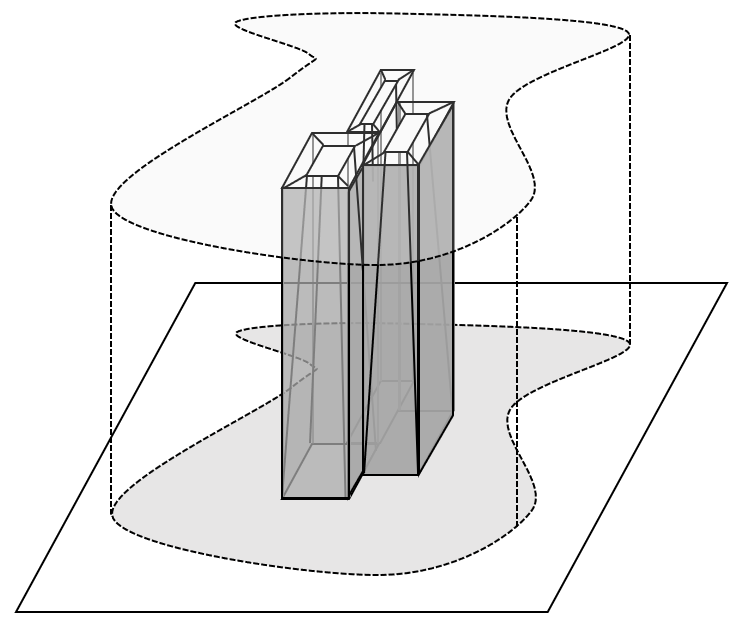}
  \caption{Step 3}
\end{subfigure}
\caption{The four steps in the construction of the homotopy $H$ from the proof of Lemma \ref{factorizationlemma} for $n=2$. In all four sub-figures, the bottom square is the domain of $f$ and the dashed domain is $U\times I$. In Step $0$, the cubes at height $1$ are mapped to $x_0$. In Step $1$, the previous step is extended to the $1$-faces in $\scrc_1$ using the sequentially $0$-connected property. In Step $2$, we use the sequentially $1$-connected property to define $H$ on the rectangles created by the vertical $1$-faces. After Step 2, $H$ is defined on the ``empty box without a top" over each Whitney cube. In Step $3$, $H$ is defined on each $3$-cube $C\times I$ $,C\in\scrc$ by retracting it onto the empty box from Step 2 and applying the value of $H$ from Step 2. }
\label{fig4}
\end{figure}

Often, we will apply Lemma \ref{factorizationlemma} to $f\in \Omega(X,x_0)$ and $U=I^n\backslash f^{-1}(x_0)$, in which case, the conclusion ensures that $f$ is homotopic rel. $\partial I^n$ to a $\scrc$-concatenation $\prod_{C\in\scrc}g_C$ for any Whitney covering $\scrc$ of $U$. The following corollary, which will be applied to shrinking adjunction spaces, is the result of applying Lemma \ref{factorizationlemma} finitely many times and forming the vertical concatenation of the resulting homotopies. 

\begin{corollary}\label{finitefactorizationcorollary}
Let $n\geq 1$ and $X$ be sequentially $(n-1)$-connected at points $x_0,x_1,x_2,\dots, x_k\in X$. Suppose $f:I^n\to X$ is a map and $U_0,U_1,U_2,\dots ,U_k$ are pairwise-disjoint open sets in $(0,1)^n$ such that $f(\partial U_j)=x_j$ for each $1\leq j\leq k$. Then for any Whitney covers $\scrc_j$ of $U_j$, $1\leq j\leq k$, there exists a map $g:I^n\to X$ such that
\begin{itemize}
\item $f$ is homotopic to $g$ by a homotopy that is constant on $\ui^n\backslash \bigcup_{j=1}^{k}U_j$,
\item $g(\partial C)=x_j$ whenever $C\in\scrc_j$.
\item for each $C\in\scrc_j$, there is an $n$-cube $K\subseteq \int(C)$ such that $g|_{K}\equiv f|_{C}$.
\end{itemize}
\end{corollary}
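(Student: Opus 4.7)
The plan is to apply Lemma \ref{factorizationlemma} once for each of the $k$ open sets, using the disjointness of $U_1,\dots,U_k$ to ensure that each application leaves the work of the previous applications intact. First, I would set up an induction: let $f_0=f$, and for $1\leq j\leq k$, apply Lemma \ref{factorizationlemma} to the map $f_{j-1}$, the open set $U_j$, the basepoint $x_j$, and the Whitney cover $\scrc_j$, producing a map $f_j:I^n\to X$ and a homotopy $H_j:I^n\times I\to X$ from $f_{j-1}$ to $f_j$ that is constant on $I^n\backslash U_j$. To invoke Lemma \ref{factorizationlemma} at step $j$, I must verify $f_{j-1}(\partial U_j)=x_j$. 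This follows because $U_j$ is disjoint from each $U_i$ with $i<j$, so $\partial U_j$ lies in $I^n\backslash U_i$ for every $i<j$; since each previous homotopy $H_i$ is constant on $I^n\backslash U_i$, we obtain $f_{j-1}|_{\partial U_j}=f|_{\partial U_j}=x_j$.

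Next, I would set $g=f_k$ and verify the three stated conclusions. For the first, concatenate the $k$ homotopies $H_1,\ldots,H_k$ in the time direction to produce a homotopy $H$ from $f$ to $g$. Since each $H_j$ is constant on $I^n\backslash U_j\supseteq I^n\backslash \bigcup_{i=1}^k U_i$, the concatenation $H$ is also constant on $I^n\backslash \bigcup_{i=1}^k U_i$. For the second and third conclusions, fix $C\in\scrc_j$. At step $j$, Lemma \ref{factorizationlemma} gives $f_j(\partial C)=x_j$ and provides an $n$-cube $K\subseteq \int(C)$ with $f_j|_K\equiv f_{j-1}|_C$. The disjointness of the $U_i$ now ensures these properties are preserved through the remaining steps: for $i>j$, the cube $C$ satisfies $C\subseteq U_j\subseteq I^n\backslash U_i$, so $H_i$ is constant on $C$ and therefore leaves both $f_j(\partial C)$ and $f_j|_K$ unchanged. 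Hence $g(\partial C)=x_j$ and $g|_K\equiv f_{j-1}|_C$. A symmetric argument for $i<j$ — the homotopy $H_i$ is constant on $C\subseteq U_j\subseteq I^n\backslash U_i$ — shows $f_{j-1}|_C=f|_C$, so the equivalence can be upgraded to $g|_K\equiv f|_C$.

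There is no real obstacle here; the argument is essentially pure bookkeeping on top of Lemma \ref{factorizationlemma}. The only subtlety worth flagging is the need to check that the hypothesis $f_{j-1}(\partial U_j)=x_j$ of Lemma \ref{factorizationlemma} survives at each step of the induction and that the Whitney cubes belonging to earlier covers are left undisturbed by later applications. Both points reduce to the single observation that open sets $U_i$ and $U_j$ with $i\neq j$ are disjoint, together with the fact that the homotopy produced by Lemma \ref{factorizationlemma} is constant outside the relevant open set. No infinite vertical concatenation, no Whitney-cover regularity beyond what Lemma \ref{factorizationlemma} already uses, and no additional sequential-connectedness hypotheses (beyond those at the finitely many points $x_1,\dots,x_k$) are required, since the induction is over a finite index set.
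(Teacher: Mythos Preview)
Your proposal is correct and follows exactly the approach the paper indicates: apply Lemma~\ref{factorizationlemma} once for each $U_j$ and vertically concatenate the resulting homotopies, using the pairwise disjointness of the $U_j$ to ensure the hypotheses and conclusions persist at each stage. In fact you supply more detail than the paper itself, which states the corollary without proof beyond that one-sentence description.
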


It is natural to ask if there is a version of Lemma \ref{factorizationlemma} where the sequentially $n$-connected property is replaced with the weaker ``$n$-tame" property. We show that the answer is positive but only if we are willing to replace interiors of the $n$-cubes $C\in \scrc$ with open manifolds constructed as finite unions of such cubes. Indeed, the sequentially $(n-1)$-connected property allows us to recursively construct the maps $H_k$ on \textit{all} faces of cubes in $\scrc$. If we only know that $X$ is $(n-1)$-tame, we may perform the same construction only for a cofinite subset of faces in each dimension $ k\leq n$. We give a slightly shorter proof since most of the construction is the same as in the proof of Lemma \ref{factorizationlemma}. 

\begin{lemma}\label{manifoldsinsidelemma}
Let $n\geq 1$ and $X$ be a path-connected space that is $(n-1)$-tame at $x_0\in X$. Suppose $f\in X^{I^n}$ and $U$ is an open set in $(0,1)^n$ such that $f(\partial U)=x_0$. Then there is a map $g\in \Omega^n(X,x_0)$ and pairwise-disjoint, connected, open $n$-manifolds $M_1,M_2,M_3,\dots,\subseteq U$ such that
\begin{itemize}
\item $f$ is homotopic rel. to $g$ by a homotopy constant on $I^n\backslash U$,
\item there exists $k\in\bbn$ such that $\overline{M_i}$ is a finite union of $n$-cubes for all $1\leq i\leq k$ and $\overline{M_i}$ is a $n$-cube for $i>k$,
\item $U=\bigcup_{i\in\bbn}\overline{M_i}$,
\item $g(\partial M_i)=x_0$ for all $i\in\bbn$.
\end{itemize}
\end{lemma}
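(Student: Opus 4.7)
The plan is to mimic the inductive construction from Lemma \ref{factorizationlemma}, replacing each use of the sequentially $(k-1)$-connected property by an appeal to $\pi_{k-1}$-finitary. Fix a Whitney cover $\scrc$ of $U$ and form $\scrc_0,\ldots,\scrc_n$ as in Remark \ref{subdivisionremark}, initializing $Y_0$ and $H_0$ exactly as in the proof of Lemma \ref{factorizationlemma}. The $k=1$ step (attaching paths $\alpha_v$ from $f(v)$ to $x_0$ at every vertex $v\in\scrc_0$, clustering at $x_0$) goes through with no exceptions, since Lemma \ref{zerolemma} promotes the hypothesis ``path-connected and $0$-tame at $x_0$'' to ``sequentially $0$-connected at $x_0$''. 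For $2\leq k\leq n$ call $K\in\scrc_{k-1}$ \emph{processable} if $H$ has been defined on the entirety of $S_K$; by induction the family of $(k-1)$-loops $\{g_K\}_{K\text{ processable}}$ clusters at $x_0$, so $\pi_{k-1}$-finitary produces a finite subset $B_{k-1}\subseteq\scrc_{k-1}$ outside of which one can choose clustering null-homotopies $G_K$ of the $g_K$. Extend $H$ over each such $K\times I$ using $G_K$ and leave $H$ undefined over $K\times I$ for $K\in B_{k-1}$.

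Set $B=\bigcup_{k=1}^{n-1}B_k$ and $\scrc^{\mathrm{bad}}:=\{C\in\scrc:F\subseteq C\text{ for some }F\in B\}$; this is finite because $B$ is finite and each face lies in only finitely many cubes of $\scrc$ by local finiteness. For every good cube $C\in\scrc\setminus\scrc^{\mathrm{bad}}$ all subfaces of $C$ were processed, so we complete $H$ on $C\times I$ with the piecewise-linear retraction of Remark \ref{retractionremark}, as in the final step of Lemma \ref{factorizationlemma}; this yields $g(\partial C)=x_0$ where $g(\bfx):=H(\bfx,1)$. Let $V_1,\ldots,V_\ell$ be the connected components of $\operatorname{int}(\bigcup\scrc^{\mathrm{bad}})$; each $V_i$ is a connected open $n$-submanifold of $(0,1)^n$ whose closure $\overline{V_i}$ is a finite union of cubes in $\scrc^{\mathrm{bad}}$. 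Observe that $\partial V_i\subseteq\big(\bigcup_{C\text{ good}}\partial C\big)\cup\partial U$, so $H$ is already defined on $\partial V_i\times I$ and satisfies $H(\cdot,1)\equiv x_0$ there.

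Finally, extend $H$ over each $\overline{V_i}\times I$ by choosing a retraction $r_i:\overline{V_i}\times I\to B_i^{\ast}$, where $B_i^{\ast}:=(\overline{V_i}\times\{0\})\cup(\partial V_i\times I)$, and setting $H|_{\overline{V_i}\times I}:=\phi_i\circ r_i$ with $\phi_i:B_i^{\ast}\to X$ the already-defined boundary data. This is the principal technical obstacle: since $\overline{V_i}$ can be topologically nontrivial (e.g.\ a polyhedral annulus when $n=2$), the single-cube retraction of Remark \ref{retractionremark} no longer applies, but $(\overline{V_i}\times I,B_i^{\ast})$ is a CW pair whose inclusion is a homotopy equivalence — both spaces deformation retract onto $\overline{V_i}$ — so standard cofibration theory supplies $r_i$; one may also build it directly using a collar neighborhood of $\partial V_i$ in $\overline{V_i}$. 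Because $r_i|_{B_i^{\ast}}=\mathrm{id}$, the assembled $H$ is continuous across the interface $\partial V_i\times I$, and since $r_i(x,1)=(x,1)$ for $x\in\partial V_i$ we get $g(\partial V_i)=x_0$. Taking $M_i:=V_i$ for $1\leq i\leq\ell$ together with $M_{\ell+j}:=\operatorname{int}(C_j)$ for any enumeration $\{C_j\}_{j\in\bbn}$ of $\scrc\setminus\scrc^{\mathrm{bad}}$ yields the required family.
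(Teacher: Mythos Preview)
Your argument is correct and follows essentially the same approach as the paper: inductively extend the homotopy over cofinitely many faces in each dimension using the tame hypothesis, leave finitely many exceptional faces unfilled, collect the $n$-cubes containing any exceptional face into finitely many manifold regions $V_i$, and finally extend over each $\overline{V_i}\times I$ via a retraction onto $(\overline{V_i}\times\{0\})\cup(\partial V_i\times I)$ coming from the homotopy extension property. The paper's bookkeeping uses nested cofinite sets $\scrd_k\subseteq\scrb_k\subseteq\scrc_k$ rather than your exceptional sets $B_{k-1}$, but the two are equivalent, and your treatment of the final retraction step is actually more detailed than the paper's.

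One small correction of wording: when you write ``$\pi_{k-1}$-finitary produces a finite subset $B_{k-1}$ outside of which one can choose \emph{clustering} null-homotopies,'' the finitary property alone only guarantees that cofinitely many $g_K$ are individually null-homotopic; to obtain null-homotopies that cluster at $x_0$ you also need $\pi_{k-1}$-residual. Both are contained in the hypothesis that $X$ is $(n-1)$-tame at $x_0$, so the argument goes through, but you should invoke the tame property (or the equivalent sequential characterization in the Remark following Definition~\ref{scndefinition}) rather than $\pi_{k-1}$-finitary alone.
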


\begin{proof}
Let $\scrc$ be a Whitney cover of $U$ and set $A=I^n\backslash U$. Define $Y_0=I^n\times \{0\}\cup A\times I\cup \bigcup_{C\in\scrc}\partial C\times \{1\}$. Define $Y_0$ and $H_0$ as in the Proof of Lemma \ref{factorizationlemma}. Note that $\{f(v)\mid v\in\scrc_0\}$ clusters at $x_0$. Since $X$ is path connected and $0$-tame at $x_0$, $X$ is sequentially $0$-connected at $x_0$. Setting $\scrd_0= \scrc_0$, there exists a set of paths $\alpha_v:I\to X$, $v\in\scrd_0$ from $f(v)$ to $x_0$. Let $Y_1=Y_0\cup \bigcup_{v\in\scrd_0} \{v\}\times I$ and define $H_1:Y_1\to X$ as an extension of $H_0$ by defining $H(v,t)=\alpha_v(t)$. Let $\scrb_1$ be the set of $K\in\scrc_1$ whose endpoints lie in $\scrd_0$ and note that $\scrb_1$ is cofinite in $\scrc_1$. For each $K\in \scrb_1$, let $g_K:K\times \{0,1\}\cup bd(K)\times I\to X$ be the restriction of $H_1$ and notice that $\{g_K\mid K\in\scrb_1\}$ clusters at $x_0$.

Suppose, for $1\leq k\leq n-1$, we have constructed cofinite sets $\scrb_k\subseteq \scrc_k$ and a continuous extension $H_{k}:Y_{k}\to X$ of $H_{k-1}$ with the property that if $g_K$ is the restriction of $H_{k-1}$ to $S_K=K\times \{0,1\}\cup bd(K)\times I$ for $K\in \scrb_k$, then the collection $\{g_K\}_{K\in\scrb_{k}}$ clusters at $x_0$ when we identify each $S_k$ with $\partial I^{k+1}$ in the canonical way. Since $X$ is assumed to be $k$-tame at $x_0$, there exists a a cofinite subset $\scrd_{k}\subseteq \scrb_{k}$ and collections of maps $G_K:K\times I\to X$, $K\in\scrd_k$ such that $G_K|_{S_K}=g_K$ and such that $\{G_K\mid K\in\scrd_k\}$ clusters at $x_0$ when each $K\times I$ is identified with $I^{k+1}$ in the canonical way. Let $$Y_{k+1}=Y_{k}\cup \bigcup_{K\in\scrd_{k}}K\times I.$$Define the extension $H_{k+1}$ of $H_k$ on $K\times I$, $K\in\scrd_k$ so that $H_{k+1}|_{K\times I}=G_k$ for $K\in\scrc_{k}$. This is clearly well-defined and is continuous based on the continuity of $H_k$ and the fact that $\{G_K\mid K\in\scrc_k\}$ clusters at $x_0$. Let $\scrb_{k+1}$ be the cofinite subset of $\scrc_{k+1}$ consisting of all the $(k+1)$-cubes $K$ such that $bd(K)$ is the union of elements from $\scrd_{k}$.

We proceed until we arrive at an extension $H_n:Y_n\to X$ and the cofinite subset $\scrb_n$ of $\scrc$ consisting of all $n$-cubes $C\in\scrc$ such that $\partial C$ is the union of $(n-1)$-cubes in $\scrd_{n-1}$. Let $Z=f^{-1}(x_0)\cup \bigcup_{B\in\scrb_n}B$ and define $G:I^n\times \{0\}\cup Z\times I$ so that $G(\bfx,t)=x_0$ if $(\bfx,t)\in f^{-1}(x_0)\times I$ and if $B\in \scrb_n$, then $G|_{B\times I}$ is the composition of the retraction $B\times I\to B\times \{0\}\cup bd (B)\times I$ and the restriction of $H_n$ to $B\times I\to B\times \{0\}\cup bd (B)\times I$. Continuity of $G$ requires the same observations used in the proof of Lemma \ref{factorizationlemma}. Notice that $W=I^n\backslash Z$ is open and has closure equal to the union of the finitely many $n$-cubes in $\scrc\backslash \scrb_n$. Therefore, $W$ is the union of finitely many open $n$-manifolds $M_1,M_2,M_3,\dots, M_k$ and the closure of each is a finite union of $n$-cubes. In particular $(M_i,\partial M_i)$ has the homotopy extension property for all $1\leq i\leq k$. Hence, we may extend $G$ to a map $H:I^n\times I\to X$ by choosing a retraction $r_i:\overline{M_i}\times I\to \overline{M_i}\times \{0\}\times \partial M_i\times I$ and define $H(\bfx,t)=H(r_i(\bfx,t))$ for $(\bfx,t)\in\overline{M_i}\times I$. Let $g\in I^n\to X$ be the map $g(\bfx)=H(\bfx,1)$. If we write $\scrb_n=\{B_{m+1},B_{m+2},B_{m+3},\dots\}$ and set $M_i=\int(B_i)$ for $i>m$, we complete the definition of $M_i$, $i\in\bbn$ in a manner that satisfies the conclusion of the Lemma.
\end{proof}

If we add the $n$-tame condition to Lemma \ref{manifoldsinsidelemma}, then the cofinal sequence of maps $g|_{\overline{M_i}}$ (from the conclusion of the Lemma), which are $n$-loops based at $x_0$ must have a cofinal subsequence that is sequentially null-homotopic.

\begin{corollary}\label{manifoldsinsidecorollary}
Let $n\geq 1$ and $X$ be a path-connected space that is $n$-tame at $x_0\in X$. Suppose $f:I^n\to X$ is a map and $U$ is an open set in $(0,1)^n$ such that $f(\partial U)=x_0$. Then there is a map $g:I^n\to X$ and finitely many pairwise-disjoint, connected, open $n$-manifolds $M_1,M_2,M_3,\dots,M_k\subseteq U$ such that
\begin{itemize}
\item $f$ is homotopic to $g$ by a homotopy constant on $I^n\backslash U$,
\item $\overline{M_i}$ is a finite union of $n$-cubes for all $1\leq i\leq k$,
\item $g(\overline{U}\backslash \bigcup_{i=1}^{k}M_i)=x_0$.
\end{itemize}
\end{corollary}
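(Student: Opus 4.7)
The plan is to apply Lemma \ref{manifoldsinsidelemma} first and then use the $\pi_n$-finitary part of $n$-tameness to collapse the infinite tail of cubes produced by that lemma. Note that since $X$ is path-connected and $n$-tame at $x_0$, applying Lemma \ref{zerolemma} to the $\pi_0$-residual and $\pi_0$-finitary properties shows $X$ is sequentially $0$-connected at $x_0$, so Lemma \ref{scnequivalencelemma}(2) is available.

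First, apply Lemma \ref{manifoldsinsidelemma} to the map $f$ and the open set $U$, using only the $(n-1)$-tame hypothesis (which follows from $n$-tameness). This yields a map $g':I^n\to X$ and a sequence of pairwise-disjoint connected open $n$-manifolds $M_1',M_2',\dots\subseteq U$ such that: $f\simeq g'$ by a homotopy constant on $I^n\setminus U$; there is some $k'$ with $\overline{M_i'}$ a finite union of $n$-cubes for $i\leq k'$ and a single $n$-cube for $i>k'$; $U=\bigcup_{i}\overline{M_i'}$; and $g'(\partial M_i')=x_0$ for all $i$. The inspection of the construction in the proof of Lemma \ref{manifoldsinsidelemma}, combined with Condition (4) of the Whitney cover, shows that the $n$-loops $\{g'|_{\overline{M_i'}}\}_{i>k'}$ (identifying each cube $\overline{M_i'}$ with $I^n$ via $L_{\overline{M_i'},I^n}$) form a sequence in $\Omega^n(X,x_0)$ that converges to $x_0$.

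Second, invoke the $\pi_n$-finitary condition via Lemma \ref{scnequivalencelemma}(2): there exists $j_0>k'$ such that the sequence $\{g'|_{\overline{M_i'}}\}_{i\geq j_0}$ is sequentially null-homotopic rel.\ basepoint. Let $\{H_i:\overline{M_i'}\times I\to X\}_{i\geq j_0}$ be such a sequential null-homotopy (so each $H_i$ is a homotopy from $g'|_{\overline{M_i'}}$ to $c_{x_0}$ that is constant on $\partial M_i'$, and the set $\{H_i\}_{i\geq j_0}$ clusters at $x_0$). Now define a homotopy $H:I^n\times I\to X$ by declaring $H$ to be the constant homotopy of $g'$ on $(I^n\setminus\bigcup_{i\geq j_0}M_i')\times I$ and $H|_{\overline{M_i'}\times I}=H_i$ for $i\geq j_0$. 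Let $g(\bfx)=H(\bfx,1)$; then $g$ agrees with $g'$ off $\bigcup_{i\geq j_0}M_i'$, and equals $x_0$ on each $\overline{M_i'}$ for $i\geq j_0$. Setting $k=j_0-1$ and taking $M_i=M_i'$ for $1\leq i\leq k$ (all of whose closures are finite unions of $n$-cubes) satisfies the three bulleted conditions: the homotopy $f\simeq g'\simeq g$ is constant on $I^n\setminus U$; the manifolds $M_1,\dots,M_k$ have the required form; and the set $\overline{U}\setminus\bigcup_{i=1}^kM_i$ decomposes as the union of $\partial U$, the boundaries $\partial M_i$ for $i\leq k$, and the closures $\overline{M_i'}$ for $i\geq j_0$, each of which is mapped to $x_0$ by $g$.

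The one step requiring care is the continuity of $H$ (and hence of $g$) at points of $\partial U\subseteq I^n$, since the $M_i'$ for $i\geq j_0$ accumulate there. The argument mirrors the continuity verification given earlier in the paper for infinite vertical concatenations: any sequence $(\bfx_m,t_m)\to(\bfx,t)$ with $\bfx\in\partial U$ and $\bfx_m\in M_{i_m}'$ must satisfy $i_m\to\infty$ (only finitely many Whitney cubes meet any compact set disjoint from $\partial U$), and then $H(\bfx_m,t_m)=H_{i_m}(\bfx_m,t_m)\to x_0=g'(\bfx)=H(\bfx,t)$ by the clustering of $\{H_i\}$. This is the main technical step; the rest is bookkeeping against the conclusion of Lemma \ref{manifoldsinsidelemma}.
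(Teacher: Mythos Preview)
Your proposal is correct and follows exactly the approach the paper intends: the paper's ``proof'' is the single sentence preceding the corollary, which says that adding the $n$-tame condition to Lemma \ref{manifoldsinsidelemma} forces a cofinal subsequence of the $n$-loops $g|_{\overline{M_i}}$ to be sequentially null-homotopic. You have simply written out the details of this, including the continuity check at $\partial U$, which the paper omits. One minor remark: you need not pass through Lemma \ref{scnequivalencelemma}(2), since the maps $g'|_{\overline{M_i'}}$ are already \emph{based} $n$-loops and the definition of $n$-tame (see the remark following Definition \ref{scndefinition}) gives the cofinal sequential null-homotopy rel.\ basepoint directly.
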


\subsection{Homotopy factorization in shrinking adjunction spaces}\label{subsectionshadjfactorization}

Here, we apply the methods from the previous section to $n$-loops in shrinking adjunction spaces. Notice that this first definition is intentionally given for the far more general \textit{indeterminate} adjunction spaces.

\begin{definition}
Let $Y$ be an indeterminate adjunction space with core $X$, attachment spaces $\{A_j\}_{j\in J}$, and attachment points $\{x_j\}_{j\in J}$. Let $y_0\in Y$. We say that an $n$-loop $f\in \Omega^n(Y,y_0)$ is in \textit{factored form with respect to this decomposition of }$Y$ if for every $j\in J$, there is a (possibly empty) $n$-domain $\scrc_j$ in $U_j=f^{-1}(A_j\backslash\{a_j\})$ such that $f\left(\ov{U_j}\backslash \bigcup_{C\in\scrc_j}\int(C)\right)=a_j$.

If a choice of decomposition for $Y$ (e.g. $Y=\shadj(X,x_j,A_j,a_j)$ if $Y$ is a shrinking adjunction space) is clear from context, we may simply say that $f$ is in \textit{factored form}. We refer to the collection $\{\scrc_j\mid j\in J\}$ of $n$-domains as a \textit{factorization} of $f$ and to the $n$-loops $f|_C\in \Omega^n(A_j,a_j)$, $C\in\scrc_j$ as the \textit{$A_j$-factors} of $f$.
\end{definition}

An $n$-loop $f\in \Omega^n(Y,y_0)$ is in factored form in a shrinking adjunction space $Y=\shadj(X,x_j,A_j,a_j)$ with factorization $\{\scrc_j\mid j\in\bbn\}$ if and only if for each $j\in\bbn$ either $r_j\circ g$ is constant (in which case $\scrc_j=\emptyset$) or $r_j\circ g=\prod_{C\in\scrc_j}g|_{C}$ is a $\scrc_j$-concatenation.

\begin{lemma}[Existence of factored forms]\label{factoredformsequencelemma1}
Let $n\geq 1$, $Y=\shadj(X,x_j,A_j,a_j)$ where $(A_j,a_j)$ is sequentially $(n-1)$-connected for all $j\in \bbn$, and $y_0\in X$. Every $n$-loop $f\in\Omega^n(Y,y_0)$ is homotopic to some $g\in \Omega^n(Y,y_0)$ that is in factored form. Moreover, the homotopy from $f$ to $g$ may be chosen to be the constant homotopy on $f^{-1}(X)$ and to have image in $\im(f)\cup \bigcup\{A_j\mid \im(f)\cap A_{j}\backslash\{a_j\}\neq\emptyset\}$.
\end{lemma}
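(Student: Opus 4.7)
The plan is to construct $g$ by applying Lemma \ref{factorizationlemma} independently in each attachment space $A_j$ over the open set $U_j := f^{-1}(A_j \setminus \{a_j\})$, and then to glue the resulting homotopies into a single homotopy of $f$. To set up, observe that the $U_j$ are pairwise disjoint open subsets of $(0,1)^n$: disjoint because the sets $A_j \setminus \{a_j\}$ are pairwise disjoint open subsets of $Y$, and contained in $(0,1)^n$ because $f(\partial I^n) = y_0 \in X$. Continuity of $f$ forces $f(\partial U_j) = \{a_j\}$ whenever $U_j \neq \emptyset$, so $f|_{\overline{U_j}}$ has image in $A_j$ and carries $\partial U_j$ to the basepoint $a_j$.

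For each $j$ with $U_j \neq \emptyset$, fix a Whitney cover $\scrc_j$ of $U_j$ and apply Lemma \ref{factorizationlemma} together with Remark \ref{imageremark} (using that $(A_j, a_j)$ is sequentially $(n-1)$-connected). This produces a homotopy $H_j : I^n \times I \to A_j$, constant on $I^n \setminus U_j$, with image in $A_j$, whose terminal map $g_j := H_j(\cdot, 1)$ satisfies $g_j(\partial C) = a_j$ for every $C \in \scrc_j$. For $j$ with $U_j = \emptyset$, take $H_j$ to be the constant homotopy of $r_j \circ f$ and $\scrc_j = \emptyset$. Assemble these into a candidate homotopy $H : I^n \times I \to Y$ by
\[ H(\bfx, t) := \begin{cases} H_j(\bfx, t), & \bfx \in \overline{U_j}, \\ f(\bfx), & \bfx \notin \bigcup_j U_j, \end{cases} \]
viewing the values of $H_j$ in $Y$ via the inclusion $A_j \hookrightarrow Y$. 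The two cases agree on each $\partial U_j$ because $H_j$ is constant there with value $a_j = f|_{\partial U_j}$.

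The main technical obstacle is verifying continuity of $H$, since infinitely many nontrivial local homotopies may have supports accumulating toward the core region $f^{-1}(X)$. For this I would use the inverse limit description $Y = \varprojlim_k Y_k$ and check that each composite $\rho_k \circ H$ is continuous. Because $\rho_k$ collapses every $A_j$ with $j > k$ to $x_j$, the map $\rho_k \circ H$ reduces to only finitely many nontrivial pieces --- it equals $H_j$ on the closed set $\overline{U_j} \times I$ for each of the finitely many $j \leq k$, and equals $\rho_k \circ f$ on the complementary closed set $(I^n \setminus \bigcup_{j \leq k} \int(U_j)) \times I$. These pieces agree on their overlaps (contained in $\bigcup_{j \leq k} \partial U_j \times I$, where both sides equal $a_j$), so the pasting lemma yields continuity of $\rho_k \circ H$, and hence of $H$.

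Finally, set $g(\bfx) := H(\bfx, 1)$. The family $\{\scrc_j\}_{j \in \bbn}$ witnesses the factored form: since by construction $g^{-1}(A_j \setminus \{a_j\}) \subseteq U_j = \bigcup_{C \in \scrc_j} C$, every point of $\overline{g^{-1}(A_j \setminus \{a_j\})} \setminus \bigcup_{C \in \scrc_j} \int(C)$ lies either in $\partial U_j$ (where $g = f = a_j$) or on some face boundary $\partial C$ with $C \in \scrc_j$ (where $g = a_j$ by construction). The ``moreover'' clauses are immediate: $H$ is constant on $f^{-1}(X) = I^n \setminus \bigcup_j U_j$ by definition, and its image lies in $\im(f) \cup \bigcup\{A_j : U_j \neq \emptyset\} = \im(f) \cup \bigcup\{A_j : \im(f) \cap (A_j \setminus \{a_j\}) \neq \emptyset\}$.
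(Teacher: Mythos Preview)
Your proof is correct and follows essentially the same approach as the paper's own argument: apply Lemma \ref{factorizationlemma} (with the image control from Remark \ref{imageremark}) separately on each open set $U_j=f^{-1}(A_j\setminus\{a_j\})$, assemble the resulting homotopies into a single map $H$, and verify continuity by checking that each projection $\rho_k\circ H$ involves only finitely many nontrivial pieces so that the ordinary pasting lemma applies. The paper phrases the continuity step by appealing to Corollary \ref{finitefactorizationcorollary}, whereas you spell out the pasting argument directly; the content is the same.
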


\begin{proof}
Let $U_j=f^{-1}(A_j\backslash \{a_j\})=(r_j\circ f)^{-1}(A_j\backslash \{a_j\})$ and choose a Whitney cover $\scrc_{j}$ of $U_j$. Apply Lemma \ref{factorizationlemma} to $r_j\circ f$ to find a sequence $\{g_{j,C}\in \Omega^n(A_j,a_j)\mid C\in \scrc_j\}$ that clusters at $a_j$ and such that $r_j\circ f\simeq \prod_{C\in\scrc_j}g_{C,j}$ by a homotopy $H_j:I\times I\to A_j$ that is the constant homotopy on $(r_j\circ f)^{-1}(a_j)=I^n\backslash U_j$. Define $g\in \Omega^n(Y,y_0)$ to be the function, which agrees with $f$ on $f^{-1}(X)$ and such that $g|_{C}=g_{C,j}$ for all $j\in\bbn$, $C\in\scrc_j$. Corollary \ref{finitefactorizationcorollary} ensures that this construction may be achieved continuously for any finite number collection of $j\in \bbn$. Recalling the inverse limit definition $Y=\varprojlim_{k}Y_k$, it follows that the projection $\rho_{k}\circ g:I^n\to Y_k$ is continuous for all $k\in\bbn$. Thus $g$ is continuous with respect to the inverse limit topology on $Y$. Similarly, define $H:I^n\times I\to Y$ so that $H(\bfx,t)=f(\bfx)$ when $f(\bfx)\in X$ and $H(\bfx,t)=H_j(\bfx,t)$ if $\bfx\in U_j$. As before, Corollary \ref{finitefactorizationcorollary} ensures the projections $\rho_k\circ H$ are continuous for all $k\in\bbn$. We conclude that $H$ is continuous.
\end{proof}

The next lemma shows that our construction of factored-form homotopy-representatives may be carried out in a way that preserves convergence to a point.

\begin{lemma}[Sequential factorization]\label{factoredformlemma2}
Let $n\geq 1$, $Y=\shadj(X,x_j,A_j,a_j)$ where $(A_j,a_j)$ is sequentially $(n-1)$-connected for all $j\in \bbn$, and $y_0\in X$. Let $\{f_m\}_{m\in\bbn}$ be a sequence in $\Omega^n(Y,y_0)$ that converges to $y_0$. Then $\{f_m\}_{m\in\bbn}$ is sequentially homotopic to a sequence $\{g_m\}_{m\in\bbn}$ in $\Omega^n(Y,y_0)$ such that, for every $m\in\bbn$, $g_m$ is in factored form. Moreover, the sequential homotopy $\{H_m\}_{m\in\bbn}$ from $\{f_m\}_{m\in\bbn}$ to $\{g_m\}_{m\in\bbn}$ may be chosen to have the following properties: for all $m\in\bbn$,
\begin{itemize}
\item $H_m$ is the constant homotopy on $f_{m}^{-1}(X)$,
\item $\im(H_m)\subseteq \im(f_m)\cup \bigcup\{A_j\mid \im(f_m)\cap A_{j}\backslash\{a_j\}\neq\emptyset\}$.
\end{itemize}
\end{lemma}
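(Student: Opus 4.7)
The plan is to apply Lemma \ref{factoredformsequencelemma1} to each $f_m$ individually, but with the choice of null-homotopies coordinated across the entire sequence, and then to verify that the resulting homotopies $\{H_m\}_{m\in\bbn}$ converge to $y_0$ in $Y^{I^n\times I}$. The two stated pointwise properties of $H_m$ (constancy on $f_m^{-1}(X)$ and the image containment) are exactly what Lemma \ref{factoredformsequencelemma1} delivers for each fixed $m$, so the genuine content of the lemma is the clustering of the sequence $\{H_m\}_{m\in\bbn}$ at $y_0$.

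For each $m,j\in\bbn$, set $U_{j,m}=f_m^{-1}(A_j\setminus\{a_j\})$, choose a Whitney cover $\scrc_{j,m}$ of $U_{j,m}$, and form the subdivisions $\scrc_{j,m,\ast}$ as in Remark \ref{subdivisionremark}. For each fixed $j$ and each $0\leq k\leq n-1$, I would gather into a single countable family the boundary maps indexed by all pairs $(m,C)$ where $C$ is a $k$-face appearing in some $\scrc_{j,m,\ast}$. This combined family clusters at $a_j$: for each fixed $m$ the sub-family clusters at $a_j$ because Whitney cubes shrink toward $\partial U_{j,m}$, and as $m\to\infty$ the convergence $\im(f_m)\to y_0$ forces $\im(r_j\circ f_m)\to a_j$, so boundary maps from large $m$ already have small image in $A_j$. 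Applying sequential $(n-1)$-connectedness of $A_j$ at $a_j$ to this combined clustered family produces a coordinated system of null-homotopies whose images themselves cluster at $a_j$. Plugging these into the recursive construction of Lemma \ref{factorizationlemma} for each $(j,m)$ and assembling the pieces as in Lemma \ref{factoredformsequencelemma1} gives $H_m$ and a factored-form $g_m$, with continuity of each $H_m$ following from Corollary \ref{finitefactorizationcorollary} and the inverse-limit description of $Y$.

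It remains to verify $\{H_m\}_{m\in\bbn}\to y_0$. Since $Y=\varprojlim Y_k$, it suffices to show $\{\rho_k\circ H_m\}_{m\in\bbn}\to y_0$ in $Y_k^{I^n\times I}$ for each $k$. After invoking Remark \ref{distinctpointsremark} to reduce to distinct attachment points, fix $k$ and an open neighborhood $V$ of $y_0$ in $Y_k$. On $f_m^{-1}(X)$, $H_m=f_m$ and $\rho_k\circ f_m$ lies in $V$ eventually. For $j>k$, $\rho_k$ collapses $A_j$ to $x_j$, and $\im(f_m)\cap A_j\setminus\{a_j\}\neq\emptyset$ together with $\im(f_m)\subseteq\rho_k^{-1}(V)$ forces $x_j\in V$. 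For $j\leq k$ with $y_0\neq x_j$, the closedness of $A_j$ and $y_0\notin A_j$ imply $\im(f_m)\cap A_j=\emptyset$ for all but finitely many $m$, so $H_{j,m}$ is eventually constant. The principal obstacle, and the reason for the coordinated null-homotopy choice above, is the one remaining possibility: at most one index $j_0\leq k$ with $y_0=x_{j_0}$. Here the image bound of Lemma \ref{factoredformsequencelemma1} alone is too weak, since it permits $\im(H_{j_0,m})$ to equal all of $A_{j_0}$, which is generally not contained in $V$. The coordinated construction handles precisely this case: the clustering of the null-homotopy images at $a_{j_0}=y_0$ inside $A_{j_0}$ guarantees $\im(H_{j_0,m})\subseteq V\cap A_{j_0}$ for all but finitely many $m$, completing the verification of convergence.
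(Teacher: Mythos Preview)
Your argument is correct, but the paper takes a much shorter route. Rather than coordinating null-homotopy choices across all $m$ and then verifying convergence projection-wise, the paper forms the single infinite concatenation $f=\prod_{m\in\bbn}f_m\in\Omega^n(Y,y_0)$, applies Lemma~\ref{factoredformsequencelemma1} \emph{once} to $f$ to obtain a homotopy $H$ (constant on $f^{-1}(X)$) to a factored-form $g$, and then restricts to the blocks $K_m=\left[\frac{m-1}{m},\frac{m}{m+1}\right]\times I^{n-1}$ to define $H_m=H|_{K_m\times I}\circ L_{I^{n+1},K_m\times I}$ and $g_m$. Since $\partial K_m\subseteq f^{-1}(y_0)\subseteq f^{-1}(X)$, each $H_m$ is a based homotopy, and the convergence $\{H_m\}\to y_0$ is immediate from the continuity of the single map $H$ at $\{1\}\times I^{n}$.

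Your coordinated-clustering construction and the ensuing case analysis (in particular the ``principal obstacle'' $y_0=x_{j_0}$) are essentially a manual reconstruction of what the concatenation trick encodes automatically: the clustering of the fills at $a_{j_0}$ that you arrange by hand is exactly continuity of $H$ near $\{1\}\times I^n$, unpacked. Both arguments are valid; the paper's avoids the reduction to distinct attachment points (which you invoke only for the convergence check, but which one must be slightly careful with since it alters the attachment spaces to wedges) and the separate treatment of the $j_0$-case entirely.
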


\begin{proof}
Since $\{f_m\}_{m\in\bbn}$ converges to $y_0$, we may form the infinite concatenation $f=\prod_{m\in\bbn}f_m\in \Omega^n(Y,y_0)$. Let $K_m=\left[\frac{m-1}{m},\frac{m}{m+1}\right]\times I^{n-1}$ and note that $K=\bigcup_{m\in \bbn}\partial K_m\cup \{1\}\times I^n\subseteq f^{-1}(y_0)$. By Lemma \ref{factoredformsequencelemma1}, $f$ is homotopic to an $n$-loop $g\in \Omega^n(Y,y_0)$ in factored form by a homotopy $H$, which is the constant homotopy on $f^{-1}(X)$ and to have image in $\im(f)\cup \bigcup\{A_j\mid \im(f)\cap A_{j}\backslash\{a_j\}\neq\emptyset\}$. Let $H_m=H\circ L_{I^{n+1},K\times I}$. Since $K\subseteq f^{-1}(y_0)\subseteq f^{-1}(X)$, we have $H_m(\partial I^n\times I)=y_0$ for all $m\in\bbn$. Define $g_m\in \Omega^n(Y,y_0)$ by $g_m(\bfx)=H_m(\bfx,1)$ and note that $g=\prod_{m\in\bbn}g_m$. Since $g$ is in factored form, each restriction $g_m$ must also be in factored form. Additionally, for each $m\in\bbn$, $H$ is the constant homotopy on $f^{-1}(X)\cap K_m=f|_{K_m}^{-1}(X)$. Therefore, $H_m$ is the constant homotopy on $f_{m}^{-1}(X)$. Moreover, the construction of $H$ from Lemma \ref{factoredformsequencelemma1} makes it clear that $\im(H_m)\subseteq  \im(f_m)\cup \bigcup\{A_j\mid \im(f_m)\cap A_{j}\backslash\{a_j\}\neq\emptyset\}$.
\end{proof}

\begin{theorem}\label{seqnconnectedtheorem1}
Let $n\geq 0$, $Y=\shadj(X,x_j,A_j,a_j)$ where $(A_j,a_j)$ is sequentially $n$-connected for all $j\in\bbn$, and $y_0\in X$. Every sequence $\{f_m\}_{m\in\bbn}$ in $\Omega^n(Y,y_0)$ that converges to $y_0$ is sequentially homotopic to $\{r\circ f_m\}_{m\in\bbn}$ where $r:Y\to X$ is the canonical retraction. Moreover, if $X$ is sequentially $n$-connected at $y_0$, then $Y$ is sequentially $n$-connected at $y_0$.
\end{theorem}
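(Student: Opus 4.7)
The plan is to package the sequence $\{f_m\}$ into a single $n$-loop via infinite concatenation, apply the factorization machinery (Lemma~\ref{factoredformsequencelemma1}) to replace it by an $n$-loop in factored form, null-homotope every attachment-factor inside its respective $A_j$ using the sequential $n$-connectedness hypothesis, and finally exploit continuity of the resulting homotopy at the accumulation edge $\{(1,\dots,1)\}\times I$ of $I^n\times I$ to extract the desired sequential homotopy.

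Assume first $n\geq 1$. Since $\{f_m\}\to y_0$, the infinite concatenation $f=\prod_{m\in\bbn}f_m$ is a continuous element of $\Omega^n(Y,y_0)$. First I would apply Lemma~\ref{factoredformsequencelemma1} to obtain $g\in\Omega^n(Y,y_0)$ in factored form and a homotopy $H:I^n\times I\to Y$ from $f$ to $g$ that is constant on $f^{-1}(X)$ and has image in $\im(f)\cup\bigcup\{A_j:\im(f)\cap(A_j\setminus\{a_j\})\neq\emptyset\}$. A crucial intermediate observation is that $r\circ H$ is constant in $t$: it fixes values on $f^{-1}(X)$, while on each preimage $U_j=f^{-1}(A_j\setminus\{a_j\})$ the homotopy $H$ stays inside $A_j$, which $r$ collapses to $x_j$. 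Hence $r\circ g=r\circ f$, so it suffices to construct a single homotopy from $g$ to $r\circ g$.

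For each $j$ with nonempty $n$-domain $\scrc_j$ in the factorization of $g$, the restrictions $\{g|_C\}_{C\in\scrc_j}$ are $n$-loops in $(A_j,a_j)$ and, by continuity of $g$ combined with property (4) of the Whitney Covering Lemma~\ref{whitneycoveringlemma}, they cluster at $a_j$. Indexing $\scrc_j$ as a sequence and invoking sequential $n$-connectedness of $(A_j,a_j)$ supplies homotopies $K_{j,C}:I^n\times I\to A_j$ rel $\partial I^n$ from $g|_C$ to $c_{a_j}$ whose collection $\{K_{j,C}\}_{C\in\scrc_j}$ clusters at $a_j$. I would then glue these together into $G:I^n\times I\to Y$ by setting $G(\bfx,t)=g(\bfx)$ if $g(\bfx)\in X$, and $G(\bfx,t)=K_{j,C}(L_{C,I^n}(\bfx),t)$ if $\bfx\in C\in\scrc_j$. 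The continuity of $G$ is the technical heart of the argument: verify that each finite projection $\rho_k\circ G:I^n\times I\to Y_k$ is continuous, using that $\rho_k$ collapses the pieces for $j>k$ to $x_j$ while for $j\leq k$ the clustering of $\{K_{j,C}\}_{C\in\scrc_j}$ at $a_j$ handles continuity at the boundary of $U_j$. By design, $G(\cdot,1)=r\circ g=r\circ f$.

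Vertically concatenating $H$ on $[0,1/2]$ with $G$ on $[1/2,1]$ yields a homotopy $F:I^n\times I\to Y$ from $f$ to $r\circ f$ that is constant at $y_0$ on $\{(1,\dots,1)\}\times I$. Restricting $F$ to each subcube $\left[\tfrac{m-1}{m},\tfrac{m}{m+1}\right]\times I^{n-1}\times I$ and reparameterizing produces homotopies $F_m$ from $f_m$ to $r\circ f_m$; continuity of $F$ at the accumulation edge forces $\im(F_m)\subseteq V$ for any neighborhood $V$ of $y_0$ for all but finitely many $m$, giving the required sequential homotopy. The case $n=0$ is handled directly: for $f_m(-1)\in A_{j_m}$, sequential $0$-connectedness of $A_{j_m}$ yields a path in $A_{j_m}$ from $f_m(-1)$ to $a_{j_m}=r(f_m(-1))$, and the shrinking-adjunction topology forces these paths to cluster at $y_0$. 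For the moreover clause, if $(X,y_0)$ is also sequentially $n$-connected then for each $0\leq k\leq n$ the main statement applied in dimension $k$ shows sequences in $\Omega^k(Y,y_0)$ converging to $y_0$ are sequentially homotopic to their $r$-images in $\Omega^k(X,y_0)$, which are in turn sequentially null-homotopic in $X$ and hence in $Y$. The main obstacle throughout is the continuity verification for $G$, which must simultaneously respect the inverse limit topology on $Y$, the Whitney clustering inside a fixed $A_j$, and the shrinking topology across different $A_j$'s.
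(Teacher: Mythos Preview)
Your proposal is correct and follows essentially the same route as the paper: form the infinite concatenation, pass to factored form (the paper cites Lemma~\ref{factoredformlemma2}, which is just Lemma~\ref{factoredformsequencelemma1} applied to the concatenation exactly as you do), use sequential $n$-connectedness of each $A_j$ to null-homotope the $A_j$-factors by homotopies clustering at $a_j$, and verify continuity via the finite projections $\rho_k$. The only cosmetic differences are that the paper absorbs your observation $r\circ g=r\circ f$ into the reduction step and defers the $n=0$ case to Lemma~\ref{adjunctionlemmazero}; your explicit treatment of the ``moreover'' clause across all $0\leq k\leq n$ is slightly more careful than the paper's, which writes out only the top dimension.
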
 

\begin{proof}
The case $n=0$ is proved in Lemma \ref{adjunctionlemmazero}. Suppose $n\geq 1$ and that $\{f_m\}_{m\in\bbn}$ is a sequence in $\Omega^n(Y,y_0)$ that converges to $y_0$. Since $\{f_m\}_{m\in\bbn}$ converges to $y_0$ and the retraction $r:Y\to X$ is continuous, $\{r\circ f_m\}_{m\in\bbn}$ converges to $y_0$. By Lemma \ref{factoredformlemma2}, we may assume that $f_m$ is in factored form for all $m\in\bbn$. We will construct a sequential homotopy $\{f_m\}_{m\in\bbn}\simeq \{r\circ f_m\}_{m\in\bbn}$ by constructing a single homotopy $H$ from $f=\prod_{m\in\bbn}f_m$ to $g=\prod_{m\in\bbn}r\circ f_m$. Note that $f$ is in factored form since each $f_m$ is. Set $U_j=f^{-1}(A_j\backslash\{a_j\})=\bigcup_{m\in\bbn}f_{m}^{-1}(A_j\backslash\{a_j\})$. Choose a factorization of $f$, i.e. Whitney covers $\scrc_j$ of $U_j$ such that $f(\partial C)=a_j$ for all $C\in\scrc_j$. The continuity of $f$ ensures that $\{f|_{C}\mid C\in\scrc_j\}$ clusters at $a_j$. Therefore, since $A_j$ assumed to be sequentially $n$-connected at $a_j$, there are maps $H_{j,C}:C\times I\to A_j$, $C\in\scrc_{j}$ such that $H_{j,C}(\bfx,0)=f_m(\bfx)$ and $H_{j,C}(\partial C\times I\cup C\times \{1\})=a_j$ and so that the collection $\{H_{j,C}\mid C\in\scrc_{m,j}\}$ clusters at $a_j$.

Define $H:I^n\times I\to Y$ to be the constant homotopy on $f^{-1}(X)$ and so that $H|_{C}=H_{j,C}$ whenever $C\in\scrc_{j}$. Let $s_k:Y\to X\vee A_k$ be the retraction collapsing $A_j$, $j\neq k$ to $a_j$. Notice that $s_k\circ H:I^n\times I\to X\vee A_k$ is continuous for every $k\in\bbn$ since every neighborhood of $a_k$ in $X\vee A_k$ contains $\im(H_{k,C})$ for all but finitely many $C\in\scrc_{m,k}$. It follows that $\rho_k\circ H:I^n\times I\to Y_k$ is continuous for all $k\in\bbn$. Since $Y=\varprojlim_{k}Y_k$, we conclude that $H$ is continuous. By construction, $H$ is a homotopy from $f$ to $g$. Therefore, if $K_m=\left[\frac{m-1}{m},\frac{m}{m+1}\right]\times I^n$, we let $H_m=H|_{K_m\times I}\circ L_{I^n\times I,K_m\times I}$. Now $\{H_m\}_{m\in\bbn}$ is a sequential homotopy from $\{f_m\}_{m\in\bbn}$ to $ \{r\circ f_m\}_{m\in\bbn}$.

For the second statement, we add the assumption that $X$ is sequentially $n$-connected at $y_0$. Now if $\{f_m\}_{m\in\bbn}\subseteq \Omega^n(Y,y_0)$ converges to $y_0$, then by the first part, $\{f_m\}_{m\in\bbn}\simeq \{r\circ f_m\}_{m\in\bbn}$. Since $\{r\circ f_m\}_{m\in\bbn}$ is a sequence of $n$-loops in $X$ that converges to $y_0$, $\{r\circ f_m\}_{m\in\bbn}$ must be sequentially null-homotopic. Thus $\{f_m\}_{m\in\bbn}$ is sequentially null-homotopic.
\end{proof}

\begin{corollary}\label{retractioniso}
Let $n\geq 2$, $Y=\shadj(X,x_j,A_j,a_j)$ where $X$ is path connected and $(A_j,a_j)$ is sequentially $n$-connected for all $j\in\bbn$, and $y_0\in X$. Then the retraction $r:Y\to X$ induces an isomorphism $r_{\#}:\pi_m(Y,y_0)\to \pi_m(X,y_0)$ for all $0\leq m\leq n$.
\end{corollary}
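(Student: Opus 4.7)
The plan is to combine Theorem \ref{seqnconnectedtheorem1} with the obvious splitting afforded by the inclusion $i\colon X\hookrightarrow Y$. Since $r\circ i=\mathrm{id}_X$, we immediately get $r_{\#}\circ i_{\#}=\mathrm{id}$ on every homotopy group, so $r_{\#}$ is surjective and $i_{\#}$ is injective for all $m\geq 0$. The content of the corollary is therefore the injectivity of $r_{\#}$ in degrees $0\leq m\leq n$.

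First I would dispose of the case $m=0$. Each $(A_j,a_j)$ is sequentially $n$-connected, hence sequentially $0$-connected, and Lemma \ref{zerolemma} gives that each $A_j$ is path-connected. Combined with the path-connectedness of $X$ and the fact that $a_j=x_j\in X\cap A_j$, the space $Y$ is path-connected, so both $\pi_0(Y,y_0)$ and $\pi_0(X,y_0)$ are singletons and $r_{\#}$ is a bijection.

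For $1\leq m\leq n$, fix an arbitrary $f\in\Omega^m(Y,y_0)$ and form the infinite sequence $\{f_k\}_{k\in\bbn}\subseteq\Omega^m(Y,y_0)$ given by $f_1=f$ and $f_k=c_{y_0}$ for $k\geq 2$. Since all but one term is the constant map $c_{y_0}$, the sequence $\{f_k\}_{k\in\bbn}$ converges to $y_0$ in the compact-open topology. Because sequential $n$-connectedness of each $A_j$ entails sequential $m$-connectedness in the relevant range, Theorem \ref{seqnconnectedtheorem1} applies and produces a sequential homotopy $\{H_k\}_{k\in\bbn}$ from $\{f_k\}_{k\in\bbn}$ to $\{r\circ f_k\}_{k\in\bbn}$. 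The first term $H_1$ is an a priori free homotopy from $f$ to $r\circ f$.

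The critical (but essentially free) observation is that the homotopy $H$ built in the proof of Theorem \ref{seqnconnectedtheorem1} is defined to be the constant homotopy on $f^{-1}(X)$. Since $\partial I^m\subseteq f^{-1}(y_0)\subseteq f^{-1}(X)$, when we restrict to the piece that gives $H_1$, we obtain an honest based homotopy rel.\ $\partial I^m$ from $f$ to $r\circ f$. Interpreting $r\circ f$ as $i\circ (r\circ f)$, we conclude that $[f]=i_{\#}([r\circ f])=i_{\#}r_{\#}([f])$ in $\pi_m(Y,y_0)$, which combined with $r_{\#}i_{\#}=\mathrm{id}$ forces $r_{\#}$ to be an isomorphism. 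I do not anticipate a genuine obstacle, since all of the delicate Whitney-cover work is already packaged inside Theorem \ref{seqnconnectedtheorem1}; the only point requiring care is verifying that the sequential homotopy supplied there is rel.\ basepoint on its first coordinate, which is immediate from the ``constant on $f^{-1}(X)$'' feature of the construction.
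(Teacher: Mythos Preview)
Your argument is correct and is exactly the intended derivation: the paper places this corollary immediately after Theorem \ref{seqnconnectedtheorem1} without proof, and your eventually-constant-sequence trick together with the observation that the homotopies in the proofs of Lemma \ref{factoredformlemma2} and Theorem \ref{seqnconnectedtheorem1} are constant on $f^{-1}(X)$ (hence rel.\ $\partial I^m$) is precisely what makes it a corollary. The only point worth flagging is that the rel.\ basepoint property is indeed a feature of the \emph{proof} of Theorem \ref{seqnconnectedtheorem1} rather than its statement (which only asserts free sequential homotopy), but you already note this.
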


\begin{definition}
Let $Y=\shadj(X,x_j,A_j,a_j)$, $y_0\in Y$, and $f,g\in\Omega^n(Y,y_0)$ be in factored form. We say that a factorization $\{\scrc_j\}_{j\in\bbn}$ of $f$ and a factorization $\{\scrd_j\}_{j\in\bbn}$ of $g$ are equivalent if for each $j\in\bbn$, there exists bijection $\kappa_j:\scrc_j\to\scrd_j$ such that for all $C\in\scrc_j$, $f|_{C}\equiv g|_{\kappa_j(C)}$.
\end{definition}

\begin{remark}\label{equivalentremark}
One should not jump to the tempting conclusion that distinct $n$-loops that admit equivalent factored forms in $Y=\shadj(X,x_j,A_j,a_j)$ are homotopic in $Y$. Indeed, the open sets $f^{-1}(A_j\backslash\{a_j\})$ and $g^{-1}(A_j\backslash\{a_j\})$ may have nothing in common and they may each have infinitely many components. Such a homotopy is only easy to construct (using Theorem \ref{shuffletheorem}) if $f^{-1}(A_j\backslash\{a_j\})=g^{-1}(A_j\backslash\{a_j\})$ and if this set is an open $n$-cube. Below, we will define $n$-loops enjoying such a simple factored-form to be in ``single-factor form." It requires a substantial effort to identify cases where we may homotope an ordinary factored form into single-factor form. To this end, we will use the next lemmas to carefully maintain control over the size and structure of the images of the $n$-loops involved. 
\end{remark}

\begin{remark}[Iterated shrinking adjunction spaces]\label{iteratedremark}
In the next lemma, we consider iterated shrinking adjunction spaces. In particular, let $Y=\shadj(X,x_j,A_j,a_j)$ and $Z=\shadj(Y,y_i,B_i,b_i)$ be a shrinking adjunction space with core space $Y$ (see Figure \ref{fig8}). By adding one-point spaces to $\{B_i\}_{i\in\bbn}$ (if necessary), we may assume that $T=\{i\in\bbn\mid y_i\in X\}$ is infinite and $T_j=\{i\in\bbn\mid y_i\in A_j\backslash\{a_j\}\}$ is infinite for all $j\in\bbn$, that is, there are infinitely many spaces $B_i$ attached to $X$ and each $A_j$. Moreover, we may then add one-point spaces to $\{A_j\}_{j\in\bbn}$ so that $\{x_j\mid j\in\bbn\}=\{y_i\mid i\in T\}$

Define $A_{j}^{\ast}=A_j\cup \bigcup_{i\in T_j}B_i$. We may identify $A_{j}^{\ast}$ as the shrinking adjunction space \[A_{j}^{\ast}=\shadj\left(A_j,\{y_i\}_{i\in T_j}, \{B_i\}_{i\in T_j},\{b_i\}_{i\in T_j}\right).\]Additionally, we may view $X^{\ast}=X\cup \bigcup_{i\in T}B_i$ as the shrinking adjunction space\[X^{\ast}=\shadj\left(X,\{y_i\}_{i\in T}, \{B_i\}_{i\in T},\{b_i\}_{i\in T}\right).\]
Finally, we point out the existence of the decomposition $Z=\shadj(X^{\ast},x_j,A_{j}^{\ast},a_j)$.
\end{remark}

\begin{figure}[H]
\centering \includegraphics[height=2.5in]{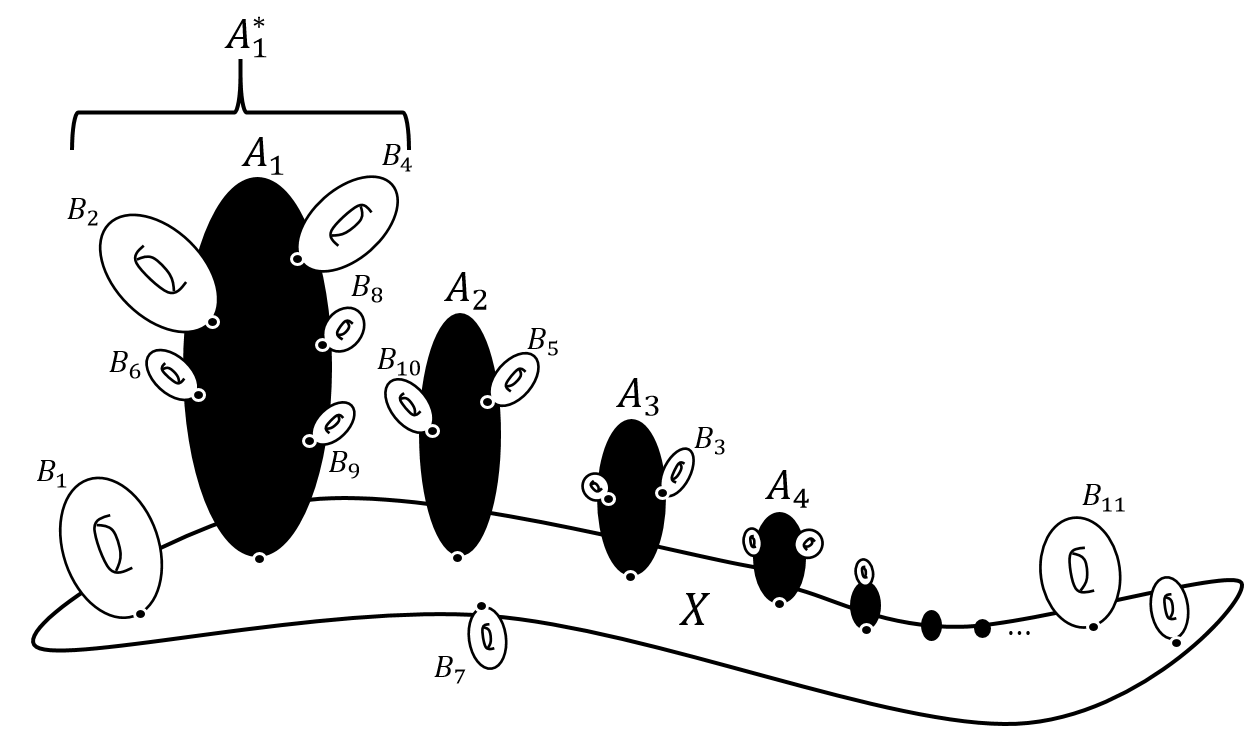}
\caption{\label{fig8} A shrinking adjunction space $Z$ with core space $Y=\shadj(X,x_j,A_j,a_j)$ and attachment spaces $\{B_i\}_{i\in\bbn}$.}
\end{figure}

\begin{lemma}\label{relativelemma}
Consider $Y=\shadj(X,x_j,A_j,a_j)$ and $Z=\shadj(Y,y_i,B_i,b_i)$ and the corresponding shrinking adjunction spaces $X^{\ast}$ and $A_{j}^{\ast}$ as described in Remark \ref{iteratedremark}. Let $z_0\in X$. Suppose $n\geq 1$ and that $(X,z_0)$ is sequentially $(n-1)$-connected and $(A_j,a_j)$ is sequentially $(n-1)$-connected for all $j\in\bbn$. If $f\in \Omega^n(Z,z_0)$ is in factored form with respect to $Z=\shadj(Y,y_i,B_i,b_i)$ with factorization $\{\scrc_i\}_{i\in\bbn}$, then there exists a map $g\in \Omega^n(Z,z_0)$ such that
\begin{itemize}
\item $f$ is homotopic to $g$ by a homotopy that is constant on $f^{-1}(X^{\ast})$ and which has image in $Y\cup \im(f)$,
\item $g$ admits a factorization $\{\scrr_j\}_{j\in\bbn}$ with respect to $Z=\shadj(X^{\ast},x_j,A_{j}^{\ast},a_j)$,
\item $g$ admits a factorization $\{\scrd_i\}_{i\in\bbn}$ with respect to $Z=\shadj(Y,y_i,B_i,b_i)$ that is equivalent to $\{\scrc_i\}_{i\in\bbn}$.
\end{itemize}
Moreover, for every $D\in \scrd_i$ with $i\in T_j$, there exists $R\in\scrc_{j}$ such that $D\subseteq R$.
\end{lemma}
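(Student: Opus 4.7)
My strategy is to apply the factorization technique of Lemma \ref{factorizationlemma} to factor $f$ with respect to the $A_j^*$-structure, while preserving the existing $B_i$-factorization given by $\{\scrc_i\}_{i\in\bbn}$.

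For each $j\in\bbn$, let $V_j=f^{-1}(A_j^*\setminus\{a_j\})$. Equivalently, letting $r':Z\to Y$ denote the retraction collapsing each $B_i$ to $y_i$, we have $V_j=(r'\circ f)^{-1}(A_j\setminus\{a_j\})$. The sets $V_j$ are pairwise disjoint open subsets of $(0,1)^n$, and $f(\partial V_j)=\{a_j\}$ since $A_j^*$ is closed in $Z$. Moreover, if $i\in T_j$ and $C\in\scrc_i$, then $f(C)\subseteq B_i\subseteq A_j^*$ and $f(\partial C)=\{y_i\}\subseteq A_j\setminus\{a_j\}$, so $C\subseteq V_j$. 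Consequently $f^{-1}(X^*)$ is disjoint from $\bigcup_j V_j$, so any homotopy we construct that is constant on the complement of $\bigcup_j V_j$ will automatically be constant on $f^{-1}(X^*)$.

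Next, for each $j$, I will construct an $n$-domain $\scrr_j$ that is a Whitney-type locally finite cover of $V_j$ enjoying the additional property that each $C\in\scrc_i$ with $i\in T_j$ is contained in $\int(R_C)$ for a unique $R_C\in\scrr_j$. This is the main obstacle, since the standard Whitney Covering Lemma does not a priori accommodate pre-existing cubes. One can build $\scrr_j$ by first selecting pairwise disjoint closed \emph{buffer} cubes $R_C\subseteq V_j$ with $C\subseteq\int(R_C)$, whose diameters shrink toward $\partial V_j$ as the diameters of $C$ do; then apply the Whitney Covering Lemma to the remaining open set $V_j\setminus\bigcup_C R_C$ and, if necessary, use Lemma \ref{shrinkingcubelemma} to adjust so that the resulting collection is an $n$-domain. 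Property (4) of Lemma \ref{whitneycoveringlemma} for the buffer cubes follows from the locally finite character of $\{C\}$ inside $V_j$, which in turn follows from the fact that $\{f|_C\}$ clusters at $a_j$.

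With $\scrr_j$ in hand, I adapt the recursive construction in the proof of Lemma \ref{factorizationlemma} to build $g\in\Omega^n(Z,z_0)$ and a homotopy $H:I^n\times I\to Z$ from $f$ to $g$ directly. At each dimension $k\leq n-1$, I use the sequential $(n-1)$-connectedness of $A_j$ at $a_j$ to construct paths and higher homotopies \emph{in $A_j$} that cluster at $a_j$ and push the $f$-values on the relevant $k$-faces down to $a_j$. On each $R\in\scrr_j$, the construction preserves a smaller sub-cube $K_R\subseteq\int(R)$ on which $g|_{K_R}\equiv f|_R$. For $i\in T_j$ and $C\in\scrc_i$ with $C\subseteq\int(R_C)$, the canonical linear rescaling $R_C\to K_{R_C}$ sends $C$ to a cube $D_C\subseteq\int(K_{R_C})$ with $g|_{D_C}\equiv f|_C$; thus the $B_i$-pieces of $f$ are transported rigidly into $K_{R_C}$ rather than collapsed. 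By construction $H$ is constant on $I^n\setminus\bigcup_j V_j$, and the method of Remark \ref{imageremark} ensures $\im(H)\subseteq\bigcup_j A_j\cup\im(f)\subseteq Y\cup\im(f)$.

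Finally, $\{\scrr_j\}_{j\in\bbn}$ is the required factorization of $g$ with respect to $Z=\shadj(X^*,x_j,A_j^*,a_j)$, since $g(\partial R)=a_j$ and $g(R)\subseteq A_j^*$ for all $R\in\scrr_j$. Setting $\scrd_i=\{D_C\mid C\in\scrc_i\}$, the bijection $\kappa_i:\scrc_i\to\scrd_i$, $C\mapsto D_C$, exhibits $\{\scrd_i\}_{i\in\bbn}$ as a factorization of $g$ with respect to $Z=\shadj(Y,y_i,B_i,b_i)$ equivalent to $\{\scrc_i\}_{i\in\bbn}$. The containment $D_C\subseteq K_{R_C}\subseteq R_C\in\scrr_j$ for $i\in T_j$ gives the last statement of the lemma.
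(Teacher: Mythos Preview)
Your overall strategy---apply the Whitney-cover factorization of Lemma \ref{factorizationlemma} to the sets $V_j=f^{-1}(A_j^{\ast}\backslash\{a_j\})$ while keeping track of the $B_i$-cubes---is exactly the plan the paper follows. The difference lies in how you reconcile the pre-existing cubes $C\in\scrc_i$ with the Whitney cover $\scrr_j$, and this is where your argument has a genuine gap.

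You propose to build $\scrr_j$ by first surrounding each $C$ (with $i\in T_j$) by a closed buffer cube $R_C$ with $C\subseteq\int(R_C)$, and then Whitney-covering the remaining open set $V_j\backslash\bigcup_C R_C$. Two problems arise. First, an $n$-domain allows distinct cubes $C,C'$ to meet along a common face; in that case it is impossible to choose closed buffer cubes $R_C,R_{C'}$ with disjoint closures and $C\subseteq\int(R_C)$, $C'\subseteq\int(R_{C'})$. Second, and more seriously, a Whitney cover of $V_j\backslash\bigcup_C R_C$ will have infinitely many cubes accumulating on each face of every $\partial R_C$, so the combined collection $\{R_C\}\cup\scrc'$ is \emph{not} locally finite in $V_j$: each buffer cube meets infinitely many others. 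The recursive construction in Lemma \ref{factorizationlemma} depends on local finiteness (via the sets $\scrc_k$ of Remark \ref{subdivisionremark}), and Lemma \ref{shrinkingcubelemma} cannot repair this since shrinking cubes only creates gaps rather than restoring local finiteness.

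The paper sidesteps this entirely by reversing the direction of the adjustment: it first chooses an \emph{arbitrary} Whitney cover $\scrr_j$ of $V_j$, and then, for each $C\in\scrc_i$ with $i\in T_j$, picks an $n$-cube $C'\subseteq\int(C)\cap\int(R)$ for some $R\in\scrr_j$ (such an intersection is always a nonempty open cube). A single application of the cube-shrinking Lemma \ref{shrinkingcubelemma} replaces $f$ by $f'$ whose $B_i$-factors sit on the smaller cubes $C'$, each already contained in a single Whitney cube; since $f'(\partial R)\subseteq A_j$ for all $R\in\scrr_j$, one then applies Lemma \ref{factorizationlemma} (in the form of Remark \ref{imageremark}) directly to the standard Whitney cover $\scrr_j$. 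Shrinking $C$ to fit inside a Whitney cube is strictly easier than enlarging a Whitney cover to swallow $C$, and it preserves all the structural properties you need for the rest of your argument (the equivalence of factorizations and the final containment $D\subseteq R$).
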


\begin{proof}
Recall that $T=\{i\in\bbn\mid y_i\in X\}$ and each $T_j=\{i\in\bbn\mid y_i\in A_j\backslash\{a_j\}\}$ are infinite. Since $f$ is in factored form with respect to $Z=\shadj(Y,y_i,B_i,b_i)$, for each $i\in \bbn$, there a Whitney cover $\scrc_i$ of $V_i=f^{-1}(B_i\backslash\{b_i\})$ such that $f(\overline{V_i}\backslash\bigcup_{C\in\scrc_i}\int(C))=b_i$. In particular, $f(\partial C)=b_i$ for all $C\in\scrc_i$. Let $U_j=f^{-1}(A_{j}^{\ast}\backslash\{a_j\})$ and note that $V_i\subseteq U_j$ whenever $i\in T_j$. For each $j\in \bbn$, choose a Whitney cover $\scrr_j$ of $U_j$. 

For the moment, fix a pair $(j,i)$ with $i\in T_j$. For every $C\in\scrc_i$, the interior of $C$ meets the interior of some $R\in \scrr_j$ in an open $n$-cube and so we may choose $n$-cube $C'\subseteq \int(C)\cap \int(R)$. Let $\scrc_i'$ denote the sub-$n$-domain $\{C'\mid C\in\scrc_i\}$ of $\scrc_i$. Making these choices for all $i\in T_j$, ensures that given any $C\in \scrc_i$, we have $C'\subseteq \int(R)$ for some $R\in\scrr_j$. 

Define $f'\in \Omega^n(Z,z_0)$ so that $f'$ agrees with $f$ on $f^{-1}(Y\cup \bigcup_{i\in T}B_i)$ and so that for each $C\in\scrc_i$, $f'(C\backslash \int(C'))=a_j$ and $f'|_{C'}=f|_{C}\circ L_{C',C}$. Intuitively, $f'$ is the same as $f$ except that the domain of each of the $A_j$-factors has been made small enough to fit in some element of $\scrr=\bigcup_{j\in\bbn}\scrr_{j}$. In particular $f'$ is in factored form and the factorization $\scrc_i'$ is equivalent to $\scrc_i$. Since $C'\subseteq C$ for all $C\in\scrc_i$, there exists a cube-shrinking homotopy $H:I^n\times I\to Z$ from $f$ to $f'$ as described in Lemma \ref{shrinkingcubelemma}. Specifically, $H$ is the constant homotopy on $f^{-1}(Y\cup \bigcup_{i\in T}B_i)$, whenever $C\in\scrc_i$, $H|_{C\times I}:C\times I\to B_i$ continuously shrinks the domain of $f$ from $C$ to $C'$, and if $K$ is the convex hull of $C\times\{0\}\cup C'\times\{1\}$ in $C\times I$, then $H((C\times I)\backslash \int(K))=a_j$. Hence, $\im(H)=\im(f)$.

Observing the similarities between $f$ and $f'$, we have $U_j=(f')^{-1}(A_{j}^{\ast}\backslash \{a_j\})$. Therefore, $\scrr_j$ is a Whitney cover of $(f')^{-1}(A_{j}^{\ast}\backslash \{a_j\})$ with the property that for all $i\in T_j$ and $C'\in \scrc_i'$, we have $C'\subseteq \int(R)$ for some $R\in\scrr_j$. Additionally, since 
\[(f')^{-1}\left(\bigcup_{i\in T_j}B_i\backslash\{b_i\}\right)=\bigcup_{i\in T_j}\bigcup_{C'\in\scrc_i'}\int(C')\subseteq \bigcup_{R\in\scrr_j}\int(R),\]
it follows that $f'(\partial R)\subseteq A_j$ for all $R\in \scrr_j$. 

Since $A_j$ is sequentially $(n-1)$-connected at $a_j$ and $f'(\partial R)\subseteq A_j$ for all $R\in\scrr_j$, we apply the modification of the construction of the homotopy $H$ from Lemma \ref{factorizationlemma} that is described in Remark \ref{imageremark}. In particular, there is a homotopy $H:I^n\times I\to Z$ from $f'$ to a map $g\in \Omega^n(Z,z_0)$ such that
\begin{itemize}
\item $f'$ is homotopic to $g$ by a homotopy constant on $(f')^{-1}(X^{\ast})=f^{-1}(X^{\ast})$,
\item for each $R\in\scrr_j$, $g(\partial R)=a_j$ and there is a unique $n$-cube $K_R\subseteq \int(R)$ such that $g|_{K_R}\equiv (f')|_{R}$ and $g(R\backslash \int(K_R))\subseteq A_j$.
\end{itemize}
The construction of $H$ is given by considering the retraction $\zeta_j:Z\to A_{j}^{\ast}$ and applying the construction in Lemma \ref{factorizationlemma} to $\zeta_j\circ f'$ to obtain a homotopy $H_j$, which is the constant homotopy on $I^n\backslash U_j$ and which maps $U_j\times I$ into $A_j$. In particular, the recursive steps used to construct $H_j$ may all be carried out in $A_j$ since $f'(\partial R)\subseteq A_j$ for all $R\in\scrr_m$ and $\{f'|_{R}\mid R\in\scrr_j\}$ clusters at $a_j$. Take $H$ to be the constant homotopy on $(f')^{-1}(X^{\ast})$ and $H(\bfx,t)=H_j(\bfx,t)$ when $\bfx\in U_j$. If $s_j:Z\to X^{\ast}\vee A_{j}^{\ast}$ is the canonical retraction, then $s_j\circ H:I^n\times I\to X^{\ast}\vee A_{j}^{\ast}$ is continuous for all $j$. Hence, by viewing $Z$ as the shrinking adjunction space $Z=\shadj(X^{\ast},x_j,A_{j}^{\ast},a_j)$, it follows that $H$ is continuous. 

The composition homotopy $f\simeq f'\simeq g$ has image in $Y\cup \im(f)$. The sets $\{\scrr_j\}_{j\in\bbn}$ form a factorization of $g$ with respect to $Z=\shadj(X^{\ast},x_j,A_{j}^{\ast},a_j)$. By the second bullet point in the description of $g$, for each $C'$, there is a unique $n$-cube $C''\subseteq K_R\subseteq \int(R)$ such that $g|_{C''}\equiv (f')|_{C'}\equiv f|_{C}$. The $n$-domain $\scrd_i=\{C''\mid \scrc_i\}$ forms a factorization of $g$ that is equivalent to the original factorization $\{\scrc_i\}_{i\in\bbn}$ of $f$.
\end{proof}

Perhaps the most important thing to notice about Lemma \ref{relativelemma} is that no assumptions need to be made about the attachment spaces $B_i$. Indeed, the argument uses the assumed factorization of $f$ without calling upon the spaces $B_i$ in any other way. The next lemma is a strengthened version of Lemma \ref{relativelemma} for the case when $Y$ is a shrinking wedge. No assumptions on the spaces $B_i$ are required for this lemma either.

\begin{lemma}\label{bigtimelemma}
Let $X=\{z_0\}$ and $(A_j,a_j)$ be sequentially $(n-1)$-connected for all $j\in\bbn$. Let $Y=\sw_{j\in\bbn}(A_j,a_j)$ be the shrinking wedge with basepoint $z_0$ and consider the shrinking adjunction space $Z=\shadj(Y,y_i,B_i,b_i)$. Define subspaces $X^{\ast}=\sw_{i\in T}B_i$ and $A_{j}^{\ast}$ of $Z$ as in Remark \ref{iteratedremark}. If $f\in\Omega^n(Z,z_0)$ is in factored form with respect to $Z=\shadj(Y,y_i,B_i,b_i)$ (with factorization $\{\scrc_i\}_{i\in\bbn}$), then $f$ is homotopic to an infinite concatenation $\prod_{i\in T}h_i\cdot\prod_{j\in\bbn}g_j$ such that
\begin{itemize}
\item for all $i\in T$, $h_i\in \Omega^n(B_i,b_i)$
\item for all $j\in\bbn$, $g_j\in \Omega(A_{j}^{\ast},a_j)$ is in factored form in \[A_{j}^{\ast}=\shadj(A_j,\{y_i\}_{i\in T_j},\{B_i\}_{i\in T_j},\{b_i\}_{i\in T_j})\] with a factorization equivalent to the factorization $\{\scrc_i\}_{i\in T_j}$ of $r_j\circ f\in \Omega(A_{j}^{\ast},a_j)$.
\end{itemize}
Moreover, we may choose the homotopy $f\simeq \prod_{i\in T}h_i\cdot\prod_{j\in\bbn}g_j$ to be constant on $f^{-1}(z_0)$ and to have image in $Y\cup \im(f)$.
\end{lemma}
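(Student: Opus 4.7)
The strategy is to first invoke Lemma \ref{relativelemma} to replace $f$ by a representative with compatible factored structure, and then to reshape the result into the desired concatenation using the Infinitary $n$-Cube Shuffle Theorem. Since $X = \{z_0\}$ is trivially sequentially $(n-1)$-connected and each $(A_j, a_j)$ is sequentially $(n-1)$-connected by hypothesis, Lemma \ref{relativelemma} applies and supplies $g \in \Omega^n(Z, z_0)$ in factored form with respect to both $Z = \shadj(X^*, x_j, A_j^*, a_j)$ (with factorization $\{\scrr_j\}_{j \in \bbn}$) and $Z = \shadj(Y, y_i, B_i, b_i)$ (with factorization $\{\scrd_i\}_{i \in \bbn}$ equivalent to $\{\scrc_i\}_{i \in \bbn}$), together with a homotopy $f \simeq g$ constant on $f^{-1}(X^*) \supseteq f^{-1}(z_0)$ and with image in $Y \cup \im(f)$, and the nesting property that each $D \in \scrd_i$ with $i \in T_j$ is contained in some $R \in \scrr_j$.

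Next, I define $h_i := r_i \circ g \in \Omega^n(B_i, b_i)$ for $i \in T$ and $g_j := r_j \circ g \in \Omega^n(A_j^*, a_j)$ for $j \in \bbn$, where $r_i$ and $r_j$ are the canonical retractions. Direct inspection of the factored form of $g$ gives the identifications $h_i = \prod_{\scrd_i} g|_D$ and $g_j = \prod_{\scrr_j} g|_R$, so $h_i$ inherits the factorization $\scrd_i \equiv \scrc_i$, and $g_j$ is in factored form in $A_j^*$ with $B_i$-factorization $\{\scrd_i\}_{i \in T_j}$, equivalent to $\{\scrc_i\}_{i \in T_j}$ as claimed. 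Using Lemma \ref{opensetlemma} and the shrinking topology on $Y$, both $\{B_i\}_{i \in T}$ and $\{A_j^*\}_{j \in \bbn}$ cluster at $z_0$ in $Z$, so $\{h_i\}_{i \in T}$ and $\{g_j\}_{j \in \bbn}$ cluster at $z_0$, and the target $f' := \prod_{i \in T} h_i \cdot \prod_{j \in \bbn} g_j$ is a well-defined $n$-loop.

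I then establish $g \simeq f'$ via Theorem \ref{shuffletheorem}. Both $g$ and $f'$ are built from the identical atomic family $\{g|_D : D \in \scrd_i,\ i \in \bbn\}$; in $g$ the atoms occupy their Whitney positions in $I^n$, whereas in $f'$ each $g|_D$ is placed in a determined sub-cube $\tilde D$ inside the left-half slot $L_k$ (when $i = i_k \in T$) or nested inside the $R$-sub-slot of the right-half slot $M_j$ (when $i \in T_j$ and $D \subseteq R \in \scrr_j$). The canonical bijection $D \mapsto \tilde D$ satisfies $g|_D \equiv f'|_{\tilde D}$ and both families cluster at $z_0$, so Theorem \ref{shuffletheorem} delivers a homotopy rel. $\partial I^n$ with image in $\bigcup_D \im(g|_D) \subseteq \im(g) \subseteq Y \cup \im(f)$.

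The main obstacle is ensuring the composite homotopy can be taken constant on $f^{-1}(z_0)$. The first stage is constant there by Lemma \ref{relativelemma}; for the shuffle stage, my plan is to decompose into two localized sub-shuffles, one moving the $A_j^*$-atoms from $\bigcup_j U_j$ into the target sub-cubes inside $\bigcup_j M_j$, the other moving the $B_i$-atoms with $i \in T$ from $\bigcup_{i \in T} V_i$ into the target sub-cubes inside $\bigcup_k L_k$, after first applying Lemma \ref{shrinkingcubelemma} to shrink the source cubes into finite disjoint unions of cubes disjoint from $f^{-1}(z_0)$. Each sub-shuffle can then be performed via Theorem \ref{shuffletheorem} within these localized regions with all intermediate loops equal to $z_0$ on the complement, and stacking with the first-stage homotopy yields the claimed homotopy, which automatically has image in $Y \cup \im(f)$.
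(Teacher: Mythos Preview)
Your first step---applying Lemma \ref{relativelemma}---and your identifications $h_i=\prod_{\scrd_i}g|_D$, $g_j=\prod_{\scrr_j}g|_R$ match the paper. The gap is in the shuffle argument. You claim that ``both $g$ and $f'$ are built from the identical atomic family $\{g|_D : D\in\scrd_i,\ i\in\bbn\}$'' and then invoke Theorem \ref{shuffletheorem} on that family. This is incorrect: Theorem \ref{shuffletheorem} requires both maps to be $n$-domain concatenations, i.e.\ to send the complement of the cubes to the basepoint $z_0$. But for $i\in T_j$, each $D\in\scrd_i$ sits inside some $R\in\scrr_j$, and $g$ maps $R\setminus\bigcup_{D\subseteq R}\int(D)$ into $A_j$, not to $z_0$. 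Thus $g$ is \emph{not} a $\bigl(\bigcup_{i\in\bbn}\scrd_i\bigr)$-concatenation, and neither is $f'$ (the slot for $g_j$ carries nontrivial $A_j$-content between the $D$-cubes). So the shuffle theorem does not apply to this family of atoms, and your proposed ``localized sub-shuffles'' cannot repair this, since the atoms you want to move around are not even based at $z_0$ when $i\in T_j$.

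The paper's fix is to shuffle one level coarser. The crucial observation you are missing is that, because $X=\{z_0\}$, the core $X^\ast=\sw_{i\in T}B_i$ has $z_0$ as its wedgepoint; hence being in factored form with respect to $Z=\shadj(X^\ast,x_j,A_j^\ast,a_j)$ forces
\[
g\Bigl(I^n\setminus\Bigl(\bigcup_{i\in T}\bigcup_{D\in\scrd_i}\int(D)\ \cup\ \bigcup_{j\in\bbn}\bigcup_{R\in\scrr_j}\int(R)\Bigr)\Bigr)=z_0.
\]
In other words $g=\prod_{\scrs}g|_S$ is a genuine $\scrs$-concatenation for $\scrs=\bigcup_{i\in T}\scrd_i\cup\bigcup_{j\in\bbn}\scrr_j$. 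Now Theorem \ref{shuffletheorem} applies directly with atoms $\{g|_D:D\in\scrd_i,\ i\in T\}$ and $\{g|_R:R\in\scrr_j,\ j\in\bbn\}$, rearranging them into $\prod_{i\in T}h_i\cdot\prod_{j\in\bbn}g_j$. The nesting statement from Lemma \ref{relativelemma} (each $D\in\scrd_i$ with $i\in T_j$ lies in some $R\in\scrr_j$) guarantees the $B_i$-factors for $i\in T_j$ are carried along inside $g|_R$, preserving the factored form of $g_j$ in $A_j^\ast$ with factorization equivalent to $\{\scrc_i\}_{i\in T_j}$. With the correct atoms the argument is a single shuffle, and your elaborate plan for sub-shuffles and cube-shrinking is unnecessary.
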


\begin{proof}
Lemma \ref{relativelemma} gives a map $g\in \Omega^n(Z,z_0)$ such that
\begin{enumerate}
\item $f$ is homotopic to $g$ by a homotopy that is constant on $f^{-1}(X^{\ast})$ and with image in $Y\cup \im(f)$,
\item $g$ admits a factorization $\{\scrr_j\}_{j\in\bbn}$ with respect to $Z=\shadj(X^{\ast},x_j,A_{j}^{\ast},a_j)$,
\item $g$ admits a factorization $\{\scrd_i\}_{i\in\bbn}$ with respect to $Z=\shadj(Y,y_i,B_i,b_i)$ that is equivalent to $\{\scrc_i\}_{i\in\bbn}$.
\end{enumerate}
In particular, Condition 2. gives:
\begin{enumerate}[label=(\roman*)]
\item $g|_{D}\in \Omega(B_i,z_0)$ whenever $i\in T$ and $D\in \scrd_i$, 
\item $g|_{R}\in \Omega^{n}(A_{j}^{\ast},a_j)$ for all $R\in \scrr_j$,
\item if $\scrs=\bigcup_{i\in T}\scrd_i\cup\bigcup_{j\in\bbn}\scrr_j$, then $g(I^n\backslash \bigcup\scrs)=z_0$
\end{enumerate}
In other words, $g=\prod_{S\in\scrs}g|_{S}$. The final statement in Lemma \ref{relativelemma} ensures that
\begin{itemize}[label=$(\ast)$]
\item whenever $i\in T_j$ and $D\in\scrd_i$, we have $D\subseteq R$ for some $R\in\scrr_j$.
\end{itemize}
Since $g$ is an $\scrs$-concatenation, we apply an infinite shuffle (Theorem \ref{shuffletheorem}) to collect all $g|_{S}$, $S\in \scrd_i$, $i\in T_j$ into a single $n$-loop $h_i$ and all $g_{S}$, $S\in\scrr_j$ into a single $n$-loop $g_j$. We choose a rearrangement so that $f\simeq \prod_{i\in T}h_i\cdot\prod_{j\in\bbn}g_j$ as in the statement of the lemma. Shuffle homotopies have image within the $n$-loop to which they are applied. In this case, the shuffle will only rearrange the $n$-domains $\scrd_i$, $i\in T_j$ and $\scrr_j$. Therefore, observation $(\ast)$ ensures that there exists a factorization for $ \prod_{i\in T}h_i\cdot\prod_{j\in\bbn}g_j$ with respect to $Z=\shadj(Y,y_i,B_i,b_i)$ that is equivalent to the factorization $\{\scrc_i\}_{i\in\bbn}$ of $f$.
\end{proof}

\begin{remark}\label{bigtimesetupremark}
Recall that since the sequentially $(n-1)$-connected property is preserved by shrinking wedges of based spaces, whenever we assume attachment spaces $(B_i,b_i)$ are sequentially $(n-1)$-connected, we may replace all spaces attached at a fixed point $y\in Y$ with the finite or infinite shrinking wedge $\bigcup\{B_i\mid b_i=y\}$. Hence, we may assume that $i\mapsto b_i$ is injective. Applying this assumption to Lemma \ref{bigtimelemma} would leave at most one space $B_i$ attached at the wedgepoint $z_0$. Assuming $B_1$ is a sole attachment space at $z_0$, the conclusion of the lemma becomes $f\simeq h_1\cdot \prod_{j\in\bbn}g_j$ for a map $h_1\in \Omega^n(B_1,b_1)$.
\end{remark}

Recalling Remark \ref{equivalentremark}, it is desirable to have a version of factored form with a single-factor in each attachment space.

\begin{definition}\label{fullyfactoredef}
Let $Y=\shadj(X,x_j,A_j,a_j)$ and $y_0\in X$. We say that $f\in \Omega^n(Y,y_0)$ is \textit{in single-factor form with respect to the decomposition $\shadj(X,x_j,A_j,a_j)$} if there exists a factorization $\{\scrc_j\}_{j\in\bbn}$ of $f$ such that each $\scrc_j$ contains at most one $n$-cube.
\end{definition}

Note that a map $f$ is in single-factor form if and only if there exists $J\subseteq \bbn$ and $n$-domain $\scrr=\{R_j\mid j\in J\}$ such that $f(I^n\backslash \bigcup\scrr)\subseteq X$ and such that $f(R_j)\subseteq A_j$, $f(\partial R_j)=a_j$ for all $j\in J$.

\begin{lemma}\label{singlefactorlemma}
Let $Y=\shadj(X,x_j,A_j,a_j)$ and $y_0\in X$. If $f\in \Omega^n(Y,y_0)$ is in single-factor form  with factorization $\scrr=\{R_j\mid j\in \bbn\}$ and $f|_{R_j}$ is null-homotopic in $A_j$ for all $j\in \bbn$, then $f$ is homotopic rel. $\partial I^n$ to $r\circ f\in \Omega^n(X,y_0)$
\end{lemma}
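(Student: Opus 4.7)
The plan is to glue together based null-homotopies on each $R_j$ with the constant homotopy on the complement, and to exploit the inverse limit description $Y=\varprojlim_k Y_k$ to handle continuity with no further assumptions on the $A_j$'s.

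First, for each $j\in\bbn$, I will use the hypothesis that $f|_{R_j}$ is null-homotopic in $A_j$ (as an element of $\pi_n(A_j,a_j)$) to pick a homotopy $H_j:R_j\times I\to A_j$ rel.\ $\partial R_j$ from $f|_{R_j}$ to the constant map $c_{a_j}$. Then I define $H:I^n\times I\to Y$ by $H(\bfx,t)=H_j(\bfx,t)$ when $\bfx\in R_j$ and $H(\bfx,t)=f(\bfx)$ otherwise. Well-definedness at points of $\partial R_j$ is immediate since $H_j$ is constant at $a_j=f(\partial R_j)$ there. The endpoint $H(\cdot,0)=f$ is clear, and $H(\cdot,1)=r\circ f$ because both sides equal $a_j=x_j$ on each $R_j$ and equal $f\in X$ elsewhere. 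The rel.\ $\partial I^n$ condition holds since any point of $R_j\cap\partial I^n$ is automatically in $\partial R_j$ (as $R_j$ is a product of intervals and $R_j\cap\partial I^n$ is a union of faces of $R_j$), so $H(\bfx,t)=a_j=y_0$ there, using that such a point forces $a_j=y_0$ via $f(\bfx)\in A_j\cap X=\{a_j\}$ and $f(\bfx)=y_0$.

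The main step is the continuity of $H$, which I would verify by showing that $\rho_k\circ H:I^n\times I\to Y_k$ is continuous for every $k\in\bbn$; the universal property of the inverse limit then delivers continuity of $H$ into $Y$. This is the crucial payoff of the inverse limit topology: the projection $\rho_k$ collapses each $A_j$ with $j>k$ to the attachment point $x_j$, so on $R_j\times I$ with $j>k$ the map $\rho_k\circ H$ takes the constant value $x_j$, which coincides with $\rho_k\circ f\circ\pi_1$ on $R_j$. Hence $\rho_k\circ H$ is the pasting of the \emph{finitely many} continuous maps $H_1,\ldots,H_k$ (into $A_1,\ldots,A_k\subseteq Y_k$) with the continuous map $\rho_k\circ f\circ\pi_1$ on the closed complement, and a standard closed-pasting argument (using the agreement of values on each boundary $\partial R_j\times I$, both equal to $a_j=x_j$) settles continuity.

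The main obstacle to anticipate is precisely what this approach dodges: we are given no uniform control over the null-homotopies $H_j$ (no clustering at $y_0$, no image restriction, no sequential connectivity on the $A_j$), and a naive gluing into a weakly topologized adjunction would fail to be continuous at any limit point of $\bigcup_j R_j$. The inverse limit topology absorbs this issue because from the viewpoint of any finite stage $Y_k$, only finitely many $H_j$ are ``visible,'' while the rest are swept into constant behavior by $\rho_k$. This is exactly the reason the lemma needs no sequentially-connected hypothesis on the attachment spaces beyond the assumed null-homotopy of each individual factor.
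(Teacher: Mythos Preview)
Your proposal is correct and follows essentially the same approach as the paper: choose a null-homotopy $H_j$ on each $R_j$, glue with the constant homotopy on $f^{-1}(X)$, and verify continuity by checking that each projection $\rho_k\circ H$ into $Y_k$ is continuous via finite closed pasting. You have supplied more detail than the paper (the rel.\ $\partial I^n$ check and the explicit observation that $\rho_k$ collapses all but finitely many $H_j$ to constants), but the argument is the same.
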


\begin{proof}
For each $j\in\bbn$, choose a null-homotopy $H_j:R_j\times I\to A_j$ with $H_j(\bfx,0)=f|_{R_j}(\bfx)$ and $H_j(\partial I^n\times I\cup I\times \{1\})=a_j$. Define a homotopy $H:I^n\times I\to Y$ to be constant homotopy on $f^{-1}(X)$ and so that $H(\bfx,t)=H_j(\bfx,t)$ whenever $\bfx\in R_j$. Each projection $\rho_k\circ H:I^n\times I\to Y_k$ is continuous by the usual Pasting Lemma and so $H$ is continuous. By construction, $H$ is a homotopy from $f$ to $r\circ f$.
\end{proof}

Suppose an $n$-loop $f$ in $Y=\shadj(X,x_j,A_j,a_j)$ admits a factorization $\{\scrc_j\}_{j\in\bbn}$ where each $\scrc_j$ consists of two $n$-cubes $R_{j},S_j$. Unless the core $X$ consists of a single point, there may be no obvious way to perform a $n$-cube shuffle or some other kind of \textit{continuous} deformation that merges each $R_j$ and $S_j$ into a single $n$-cube with the goal of arriving at single-factored form. Even when $X$ consists of a single point (so that $Y=\sw_{\bbn}A_j$) we must have Theorem \ref{shuffletheorem} at our disposal. We focus on this important case in the next section.

\subsection{Homotopy groups of shrinking wedges}\label{shrinkingwedgesection}

Recall that a shrinking wedge $\sw_{j\in\bbn}A_j$ of based spaces is a shrinking adjunction space where $X=\{b_0\}$ is a single point space. We treat this special case first since (1) our approach improves the best-known results and (2) since infinite $n$-loop factorization in shrinking wedges will be required in the following sections. The next two corollaries are direct applications of Lemma \ref{factoredformsequencelemma1} and Theorem \ref{seqnconnectedtheorem1} respectively.

\begin{corollary}\label{wedgefactorizatoincorollary}
Let $n\geq 1$, $(A_j,a_j)$ be sequentially $(n-1)$-connected for all $j\in\bbn$, and $Y=\sw_{j\in\bbn}A_j$ with wedge point $y_0$. Every map $f\in \Omega^n(Y,y_0)$ is homotopic to a map $g\in \Omega^n(Y,y_0)$ that is in factored form by a homotopy that is constant on $f^{-1}(y_0)$ and which as image in $\bigcup\{A_j\mid \im(f)\cap A_j\neq \emptyset\}$.
\end{corollary}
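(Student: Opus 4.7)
The plan is to recognize that the shrinking wedge $Y=\sw_{j\in\bbn}A_j$ is the special case of a shrinking adjunction space $\shadj(X,x_j,A_j,a_j)$ in which the core $X=\{y_0\}$ is a one-point space and every attachment point $x_j$ equals $y_0$ (equivalently, the underlying set and topology coincide with the inverse limit description from Section~\ref{sectionshrinkingadjspaces}). Once this identification is made, the conclusion should follow essentially verbatim from Lemma~\ref{factoredformsequencelemma1}.

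Concretely, I would first observe that the sequentially $(n-1)$-connected hypothesis on each $(A_j,a_j)$ is exactly what is required to invoke Lemma~\ref{factoredformsequencelemma1} in this situation. Applying that lemma to $f\in\Omega^n(Y,y_0)$ produces a map $g\in\Omega^n(Y,y_0)$ in factored form together with a homotopy $H\colon I^n\times I\to Y$ that is constant on $f^{-1}(X)$ and whose image lies in $\im(f)\cup\bigcup\{A_j\mid \im(f)\cap A_j\setminus\{a_j\}\neq\emptyset\}$. Since $X=\{y_0\}$, the set $f^{-1}(X)$ is precisely $f^{-1}(y_0)$, so $H$ is the constant homotopy on $f^{-1}(y_0)$ as required.

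For the image bound, I would note that every point of $\im(f)$ lies in some wedge summand $A_j$, so $\im(f)\subseteq\bigcup\{A_j\mid \im(f)\cap A_j\neq\emptyset\}$; moreover, $\im(f)\cap A_j\setminus\{a_j\}\neq\emptyset$ trivially implies $\im(f)\cap A_j\neq\emptyset$. Combining these two inclusions shows that the image bound supplied by Lemma~\ref{factoredformsequencelemma1} is contained in $\bigcup\{A_j\mid \im(f)\cap A_j\neq\emptyset\}$, which is the bound stated in the corollary.

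There is no real obstacle here: the work has all been done in Lemma~\ref{factoredformsequencelemma1}, and the only thing to verify is the straightforward translation of the hypothesis on $f^{-1}(X)$ and the containment of images described above. If any expository care is needed, it is simply in pointing out that a shrinking wedge is a shrinking adjunction space with one-point core, so that the general machinery of Whitney-cover factorization specializes cleanly to the wedge setting.
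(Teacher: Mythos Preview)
Your proposal is correct and matches the paper's approach exactly: the paper states this corollary as a direct application of Lemma~\ref{factoredformsequencelemma1}, and you have supplied precisely the specialization (one-point core, so $f^{-1}(X)=f^{-1}(y_0)$) together with the easy image-containment check.
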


\begin{corollary}\label{swissequentiallynconnected}
If $n\geq 0$ and $\{(A_j,a_j)\mid j\in J\}$ is a countable collection of sequentially $n$-connected based spaces, then $\sw_{j\in\bbn}A_j$ is sequentially $n$-connected at the wedge point.
\end{corollary}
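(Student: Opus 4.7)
The plan is a direct reduction to Theorem \ref{seqnconnectedtheorem1}: a shrinking wedge is precisely the special case of a shrinking adjunction space in which the core is a one-point space. Specifically, I would set $X=\{b_0\}$ and take every attachment point to be $b_0$, so that
\[
\sw_{j\in\bbn}(A_j,a_j) \;=\; \shadj(\{b_0\},b_0,A_j,a_j).
\]
The inverse-limit topology on $\varprojlim_{k}\bigvee_{j\leq k}(A_j,a_j)$ coincides with the shrinking wedge topology recorded in Subsection \ref{subsectionshrinkingwedge}: in both descriptions, a set $U$ containing $b_0$ is open precisely when $U\cap A_j$ is open in $A_j$ for every $j$ and $A_j\subseteq U$ for all but finitely many $j$. (This identification is already used implicitly throughout Section \ref{sectionshrinkingadjspaces}.)

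With this translation in hand, the hypotheses of the second (``moreover'') clause of Theorem \ref{seqnconnectedtheorem1} are verified trivially: the core $X=\{b_0\}$ is sequentially $n$-connected at $b_0$ for every $n\geq 0$, since the only $k$-loop available is the constant $k$-loop at $b_0$ and the constant sequential homotopy witnesses that any sequence of such maps is sequentially null-homotopic rel.\ basepoint. Each $(A_j,a_j)$ is sequentially $n$-connected by hypothesis. Theorem \ref{seqnconnectedtheorem1} then yields exactly the desired conclusion: $\sw_{j\in\bbn}A_j$ is sequentially $n$-connected at the wedge point $b_0$.

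There is no genuine obstacle here — all the substantive work is absorbed into Theorem \ref{seqnconnectedtheorem1} and, behind it, the Whitney-cover factorization Lemmas \ref{factorizationlemma}, \ref{factoredformsequencelemma1}, and \ref{factoredformlemma2}. The only subtlety worth flagging is the topological identification above, and even this could be handled by observing that the one-point case of Lemma \ref{opensetlemma} (when each $A_j$ is compact) literally reproduces the defining neighborhood basis of $\sw_{j\in\bbn}A_j$ at $b_0$; in the non-compact case one checks the same coincidence directly from the subbasic open sets $\rho_{k}^{-1}(V)$.
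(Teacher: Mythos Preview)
Your proposal is correct and is exactly the paper's approach: the paper states that this corollary is a direct application of Theorem \ref{seqnconnectedtheorem1}, which is precisely your reduction to the one-point core case $X=\{b_0\}$.
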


To prove the next lemma, apply an infinitary $n$-cube shuffle (Theorem \ref{shuffletheorem}) to the map $g$ in Corollary \ref{wedgefactorizatoincorollary}. Specifically, if $\scrc$ is a factorization of $g$, then we shuffle the cubes $C\in\scrc$ for which $g|_{C}\in\Omega(A_j,a_j)$ to a disjoint family of $n$-cubes in $\left[\frac{j-1}{j},\frac{j}{j+1}\right]\times I^{n-1}$. Doing so gives a homotopy from $g$ to a true infinite concatenation $\prod_{j\in\bbn}g_j$ where $g_j$ is an $n$-loop in $A_j$ based at $a_j=y_0$. Such a map $\prod_{j\in\bbn}g_j$ is single-factor form with respect to the decomposition $Y=\shadj(\{y_0\},\{y_0\}_{j\in\bbn},A_j,a_j)$. Recall that a shuffle homotopy applied to a $\scrc$-concatenation $g=\prod_{C\in \scrc}g_C$ has image within $\im(g)$.

\begin{lemma}\label{wedgeinfiniteconcatformlemma}
Suppose $n\geq 1$, the spaces $A_j$, $j\in\bbn$ are sequentially $(n-1)$-connected at $a_j\in A_j$, and let $Y=\sw_{j\in\bbn}A_j$ with wedge point $b_0$. Every $n$-loop $f\in \Omega^n(Y,b_0)$ is homotopic to an infinite concatenation $\prod_{j\in\bbn}g_j$ where $g_j\in \Omega^n(A_j,a_j)$ by a homotopy with image in $\bigcup\{A_j\mid \im(f)\cap A_j\neq \emptyset\}$. Moreover, if $f$ was already in factored form, then the homotopy may be chosen to have image in $\im(f)$.
\end{lemma}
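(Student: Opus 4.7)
The plan is to execute precisely the two-step strategy outlined in the paragraph preceding the lemma: first replace $f$ by a factored-form representative using Corollary \ref{wedgefactorizatoincorollary}, then apply the infinitary $n$-cube shuffle (Theorem \ref{shuffletheorem}) to rearrange the cubes of the factorization into the standard positions needed for an infinite concatenation of the form $\prod_{j\in\bbn} g_j$.

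First I would apply Corollary \ref{wedgefactorizatoincorollary} to produce an $n$-loop $g \in \Omega^n(Y,b_0)$ that is in factored form, together with a homotopy $f\simeq g$ that is constant on $f^{-1}(b_0)$ and has image in $W := \bigcup\{A_j \mid \im(f)\cap A_j\neq\emptyset\}$. Writing the factorization as $\{\scrc_j\}_{j\in\bbn}$, each $\scrc_j$ is a countable $n$-domain in $g^{-1}(A_j\backslash\{a_j\})$ and, for every $C\in\scrc_j$, we have $g|_C \in \Omega^n(A_j,a_j)$ and $g\bigl(I^n\backslash \bigcup_{j,C}\int(C)\bigr)=\{b_0\}$. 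In other words, $g$ is precisely the $\scrc$-concatenation $\prod_{C\in\scrc} g|_C$ where $\scrc = \bigsqcup_{j\in\bbn}\scrc_j$.

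Next I would set up the target $n$-domain for the shuffle. Enumerate each $\scrc_j$ as $\{C_{j,k}\}_{k\in\bbn}$ (with the convention that trailing $g|_{C_{j,k}}$'s are taken constant if $\scrc_j$ is finite or empty). Inside the strip $R_j=\left[\tfrac{j-1}{j},\tfrac{j}{j+1}\right]\times I^{n-1}$ place the standard $n$-domain $\scrs_j=\{S_{j,k}\}_{k\in\bbn}$, rescaled to $R_j$, and set $\scrs=\bigsqcup_{j\in\bbn}\scrs_j$. Define a bijection $\phi:\scrc\to\scrs$ by $\phi(C_{j,k})=S_{j,k}$; then trivially $g|_{C_{j,k}}\equiv g|_{C_{j,k}}$ viewed as the map attached to $S_{j,k}$, so the equivalence hypothesis of Theorem \ref{shuffletheorem} is satisfied. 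The one condition to check is that $\{g|_C\}_{C\in\scrc}$ clusters at $b_0$: within each fixed $A_j$ this is the Whitney clustering from Lemma \ref{factorizationlemma}, and a basic neighborhood $U$ of $b_0$ in $Y$ contains all but finitely many summands $A_j$, so all $g|_C$ with $C\in\scrc_j$ lie in $U$ for those $j$. Thus the shuffle applies, producing a homotopy (rel.\ $\partial I^n$) from $g=\prod_{C\in\scrc}g|_C$ to the $\scrs$-concatenation $\prod_{S\in\scrs}g|_{\phi^{-1}(S)}$, and this homotopy has image in $\bigcup_{C\in\scrc}\im(g|_C)=\im(g)\subseteq W$.

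Finally, identify the $\scrs$-concatenation with an infinite concatenation of the required shape. For each $j\in\bbn$, the restriction of the $\scrs$-concatenation to $R_j$, rescaled to $I^n$ by $L_{I^n,R_j}$, is precisely the infinite concatenation $g_j := \prod_{k\in\bbn} g|_{C_{j,k}}\in \Omega^n(A_j,a_j)$, well-defined because $\{g|_{C_{j,k}}\}_{k\in\bbn}$ converges to $a_j$ inside $A_j$. By construction, $\prod_{S\in\scrs}g|_{\phi^{-1}(S)} = \prod_{j\in\bbn}g_j$. Concatenating the two homotopies yields $f\simeq \prod_{j\in\bbn}g_j$ with image contained in $W$, proving the main statement. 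For the moreover clause, if $f$ is already in factored form then the first step is unnecessary, only the shuffle homotopy is used, and its image lies in $\im(f)$.

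The only nontrivial verification is the clustering of $\{g|_C\}_{C\in\scrc}$ at $b_0$ in the ambient wedge $Y$, which is exactly what makes the shuffle applicable across \emph{all} summands simultaneously; everything else is bookkeeping of the already-established factorization and shuffle results.
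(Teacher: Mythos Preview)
Your proposal is correct and follows exactly the approach sketched in the paragraph preceding the lemma: factor via Corollary~\ref{wedgefactorizatoincorollary}, then shuffle via Theorem~\ref{shuffletheorem} into the standard $n$-domain over the strips $\left[\tfrac{j-1}{j},\tfrac{j}{j+1}\right]\times I^{n-1}$. Your explicit verification that $\{g|_C\}_{C\in\scrc}$ clusters at $b_0$ (finitely many $A_j$ outside $U$, plus Whitney clustering within each of those) is the one point the paper leaves implicit, so including it is a genuine improvement.
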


\begin{remark}
Although we do not make use of them in the current paper, there are applications of Lemma \ref{manifoldsinsidelemma} and Corollary \ref{manifoldsinsidecorollary} that apply the same argument used in the proof of Theorem \ref{factoredformsequencelemma1}. For example: Suppose $n\geq 2$ and $Y=\sw_{j\in\bbn}(A_j,a_j)$ where $A_j$ is $n$-tame at $a_j$. Then for every map $f\in \Omega^n(Y,y_0)$, there exists a map $g\in \Omega^n(Y,y_0)$ and connected, open $n$-manifolds $M_{j,1},M_{j,2},\dots,M_{j,k_j}\subseteq (0,1)^n$, $j\in\bbn$ such that
\begin{itemize}
\item $f$ is homotopic rel. $\partial I^n$ to $g$,
\item the collection $\{M_{j,i}\mid j\in\bbn,1\leq i\leq k_j\}$ is pairwise-disjoint,
\item $\overline{M_{j,i}}$ is a finite union of $n$-cubes for all $j\in\bbn$, $1\leq i\leq k_j$,
\item $g(\bigcup_{i=1}^{k_j}\overline{M_{j,i}})\subseteq A_j$ for all $j\in\bbn$,
\item $g(I^n\backslash \bigcup_{j\in\bbn}\bigcup_{i=1}^{k_j}M_{j,i})=y_0$.
\end{itemize}
Moreover, this homotopy may be chosen to be the constant homotopy on $f^{-1}(y_0)$ and to have image in $\bigcup\{A_j\mid \im(f)\cap A_j\neq \emptyset\}$.
\end{remark}

At this point, we can generalize the main result in \cite{EK00higher}. We remind the reader that the utility of this generalization is the fact that, unlike the local contractability conditions imposed on the spaces $A_j$ in \cite{EK00higher}, the sequentially $(n-1)$-connected property is closed under many more constructions (Recall Section \ref{subsectionsequentialprops}).

\begin{theorem}\label{wedgeshapeinjectivitythm}
Let $n\geq 2$ and $Y=\sw_{j\in\bbn}A_j$ be the shrinking wedge of a sequence $(A_j,a_j)$, $j\in\bbn$ of based spaces. Then the canonical homomorphism \[\phi:\pi_n\left(Y,b_0\right)\to \prod_{j\in\bbn}\pi_n(A_j,a_j),\quad \phi([f])=([r_1\circ f],[r_2\circ f],[r_3\circ f],\dots )\] is a split epimorphism. Moreover, if $(A_j,a_j)$ is sequentially $(n-1)$-connected for all $j\in\bbn$, then $\phi$ is an isomorphism.
\end{theorem}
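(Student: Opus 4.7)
The plan is to handle the two claims separately, with the first being an immediate consequence of the Splitting Theorem and the second following from the infinite-concatenation representation made available by the sequentially $(n-1)$-connected hypothesis.

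For the split epimorphism claim, I would apply Theorem \ref{splittheorem} to the decomposition $Y=\shadj(\{b_0\},b_0,A_j,a_j)$, i.e., view the shrinking wedge as a shrinking adjunction space with one-point core. A one-point space is a Peano continuum, and $\pi_n(\{b_0\},b_0)=0$ for $n\geq 2$, so the map $\Psi$ of that theorem reduces exactly to $\phi$, and the section $\kappa$ constructed there serves as a splitting of $\phi$. No hypothesis on the $A_j$ is required for this part.

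For injectivity under the sequentially $(n-1)$-connected assumption, I would start with $[f]\in\ker\phi$ and use Lemma \ref{wedgeinfiniteconcatformlemma} to replace $f$ up to based homotopy by an infinite concatenation $\prod_{j\in\bbn}g_j$ with $g_j\in\Omega^n(A_j,a_j)$. Applying the retraction $r_j\colon Y\to A_j$ (which collapses every other summand to $b_0$) gives $r_j\circ f\simeq g_j$ in $A_j$ after canceling constant pieces; since $[r_j\circ f]=0$, each $g_j$ is null-homotopic in $A_j$, so I may choose based null-homotopies $H_j\colon I^n\times I\to A_j$ from $g_j$ to $c_{a_j}$ that are constant at $a_j$ on $\partial I^n\times I$.

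The concluding step is to observe that the sequence $\{H_j\}_{j\in\bbn}$ automatically clusters at $b_0$: by definition of the shrinking wedge topology, every neighborhood of $b_0$ contains $A_j$ for all but finitely many $j$, and each $H_j$ has image in $A_j$. The infinite horizontal concatenation $\prod_{j\in\bbn}H_j\colon I^n\times I\to Y$ (described in the remark on infinite horizontal concatenation) is therefore a continuous homotopy rel.\ $\partial I^n$ from $\prod_{j\in\bbn}g_j$ to $c_{b_0}$, which combined with $f\simeq\prod_{j\in\bbn}g_j$ shows that $f$ is null-homotopic. The main obstacle is not any new technical device but rather confirming that the $A_j$-factor $g_j$ produced by Lemma \ref{wedgeinfiniteconcatformlemma} indeed represents $[r_j\circ f]$; this is a direct consequence of the fact that $r_j$ collapses $g_k$ for $k\neq j$ to the basepoint, leaving $g_j$ surrounded by constant parts. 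All the substantive work (Whitney-cube factorization, the infinitary $n$-cube shuffle) has already been performed in the preceding sections, so only this bookkeeping is needed here.
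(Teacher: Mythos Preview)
Your proposal is correct and follows essentially the same route as the paper: the split epimorphism is obtained from Theorem \ref{splittheorem} applied to the one-point core, and injectivity is proved by invoking Lemma \ref{wedgeinfiniteconcatformlemma} to write $f\simeq\prod_{j}g_j$, observing $g_j\simeq r_j\circ f$ is null-homotopic, and forming the infinite horizontal concatenation of the null-homotopies $H_j$. Your remark that $\{H_j\}$ clusters at $b_0$ simply because $\im(H_j)\subseteq A_j$ is exactly the continuity point the paper relies on.
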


\begin{proof}
The first statement is a special case of Theorem \ref{splittheorem}; however, this result was is also proved in \cite[Theorem 2.1]{Kawamurasuspensions} and \cite[Lemma 5.3]{Brazasncubeshuffle}. Suppose $(A_j,a_j)$ is sequentially $(n-1)$-connected for all $j\in\bbn$ and $f\in \Omega^n(Y,b_0)$ is a map such that $r_j\circ f$ is null-homotopic for all $j\in\bbn$. By Lemma \ref{wedgeinfiniteconcatformlemma}, we have $f\simeq \prod_{j\in \bbn}g_j$ where $g_j\in \Omega^n(A_j,a_j)$. Since $g_j\simeq r_j\circ g\simeq r_j\circ f$ is null-homotopic, we may choose a null-homotopy $H_j:I^n\times I\to A_j$ of $g_j$, i.e. with $H_j(\bfx,0)=g_j(\bfx)$ and $H_j(\partial I^n\times I\cup I^n\times\{1\})=a_j$. The infinite horizontal concatenation $\prod_{j\in\bbn}H_j$ now gives a null-homotopy of $\prod_{j\in\bbn}g_j$. We conclude that $f$ is null-homotopic in $Y$. This proves the injectivity of $\phi$.
\end{proof}

When all but finitely many $A_j$ are one-point spaces, Corollary \ref{swissequentiallynconnected} and Theorem \ref{wedgeshapeinjectivitythm} apply to finite wedges of based spaces. Since any compact subset of an infinite wedge $\bigvee_{j\in\bbn}A_j$ with the weak topology lies in a finite sub-wedge, we have the following corollary.

\begin{corollary}\label{wedgecomputationcor}
If $n\geq 2$ and the based spaces $(A_j,a_j)$, $j\in \bbn$ are sequentially $(n-1)$-connected, then $\bigvee_{j\in\bbn}A_j$ is sequentially $(n-1)$-connected at the wedge point $b_0$ and $\pi_n\left(\bigvee_{j\in\bbn}A_j,b_0\right)\cong \bigoplus_{j\in\bbn}\pi_n(A_j,a_j)$.
\end{corollary}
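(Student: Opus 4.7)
The plan is to reduce both assertions to the corresponding facts for \emph{finite} wedges, where the weak-wedge topology coincides with the shrinking-wedge topology, and then invoke Corollary \ref{swissequentiallynconnected} and Theorem \ref{wedgeshapeinjectivitythm}. The key topological input, highlighted just before the statement, is that any compact subset of $\bigvee_{j\in\bbn}A_j$ with the weak topology lies in some finite sub-wedge $\bigvee_{j=1}^{N}A_j$.

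For sequential $(n-1)$-connectedness, fix $0\leq k\leq n-1$ and let $\{f_m\}_{m\in\bbn}$ be a sequence of based maps $S^k\to\bigvee_{j\in\bbn}A_j$ converging to $b_0$. First I would verify that the set $K=\{b_0\}\cup\bigcup_{m\in\bbn}\im(f_m)$ is compact: given any open cover $\scru$, pick $U_0\in\scru$ containing $b_0$; convergence forces $\im(f_m)\subseteq U_0$ for all but finitely many $m$, and the remaining finitely many images are each compact, so a finite subcover exists. Hence $K$ lies inside a finite sub-wedge $W=\bigvee_{j=1}^{N}A_j$, which, being finite, coincides with the shrinking wedge $\sw_{j=1}^{N}A_j$. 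Corollary \ref{swissequentiallynconnected} then applies to $W$ and produces a sequential null-homotopy of $\{f_m\}$ rel.\ basepoint inside $W$; post-composing with the inclusion $W\hookrightarrow\bigvee_{j\in\bbn}A_j$ gives the required sequential null-homotopy.

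For the homotopy group identification, observe that every based map $f:(I^n,\partial I^n)\to(\bigvee_{j\in\bbn}A_j,b_0)$ has compact image and therefore factors through some finite sub-wedge, and likewise any based homotopy between two such maps factors through a (possibly larger) finite sub-wedge. This realizes $\pi_n(\bigvee_{j\in\bbn}A_j,b_0)$ as the directed colimit $\varinjlim_N \pi_n(\bigvee_{j=1}^{N}A_j,b_0)$ under the inclusion-induced maps. Since each $A_j$ is sequentially $(n-1)$-connected, Theorem \ref{wedgeshapeinjectivitythm} applied to the finite shrinking wedge $\sw_{j=1}^{N}A_j=\bigvee_{j=1}^{N}A_j$ yields an isomorphism with $\prod_{j=1}^{N}\pi_n(A_j,a_j)=\bigoplus_{j=1}^{N}\pi_n(A_j,a_j)$, and these isomorphisms are compatible with the transition maps, which become the standard direct-sum inclusions. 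Passing to the colimit gives $\pi_n(\bigvee_{j\in\bbn}A_j,b_0)\cong\bigoplus_{j\in\bbn}\pi_n(A_j,a_j)$. The only step requiring genuine care is the compactness argument for $K$; everything else is a mechanical assembly of the preceding results.
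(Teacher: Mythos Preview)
Your proposal is correct and follows exactly the approach the paper indicates in the sentence immediately preceding the corollary: reduce to finite sub-wedges via the compact-sets-lie-in-finite-subwedges property of the weak topology, and then invoke Corollary~\ref{swissequentiallynconnected} and Theorem~\ref{wedgeshapeinjectivitythm}. You have simply supplied the details the paper leaves implicit, including the compactness verification for $K$ and the colimit identification of $\pi_n$.
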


\begin{example}
All of the cases proved in \cite{Edaonedim} are included in Corollary \ref{wedgecomputationcor}. Two important examples (where $n\geq 2$) are the following:
\begin{enumerate}
\item when $A_j=S^n$ for all $j\in\bbn$, i.e. $Y$ is the $n$-dimensional Hawaiian earring, we have
 \[\pi_n(\bbh_n)\cong \prod_{j\in\bbn}\pi_n(S^n)\cong \bbz^{\bbn}.\]
\item when $A_j=C\bbh_n$ for all $j\in\bbn$, $\pi_n\left(\sw_{j\in\bbn}C\bbh_n\right)\cong \prod_{j\in\bbn}\pi_n(C\bbh_n)$ is trivial (recall \ref{coneexample}). We note that the authors of \cite{EK00higher} cannot include this example directly in their main result because $C\bbh_n$ does not meet the required local contractability condition. Rather they call upon singular homology and the Hurewicz theorem. In contrast, our approach follows directly from our use of sequentially $(n-1)$-connected spaces.
\end{enumerate}
\end{example}

\begin{example}
If $\{(A_j,a_j)\}_{j\in\bbn}$ is a sequence of sequentially $(n-1)$-connected and $\pi_n$-residual based spaces, then $\sw_{j\in\bbn}A_j$ is $\pi_n$-residual at the wedge point $x_0$. Indeed, if $\{f_k\}_{k\in\bbn}$ is a sequence of null-homotopic $n$-loops in $\Omega^n(\sw_{j\in\bbn}A_j,x_0)$, we may find $j_1\leq j_2\leq j_3\leq \cdots$ such that $\{j_k\}\to\infty$ and $\im(f_k)\subseteq \sw_{j\geq j_k}A_j$. Applying Lemmas \ref{singlefactorlemma} and \ref{wedgeinfiniteconcatformlemma}, we see that each $f_k$ contracts by a null-homotopy $H_k$ with image in $\sw_{j\geq j_k}A_j$. Since $\{H_k\}_{k\in\bbn}$ converges to $x_0$, we can conclude that $\{f_k\}_{k\in\bbn}$ is sequentially null-homotopic. Hence, $\bbh_n$ is $\pi_n$-residual. Moreover, combined with Corollary \ref{swissequentiallynconnected}, this observation shows that the property conjoined property ``sequentially $(n-1)$-connected and $\pi_n$-residual" of based spaces is closed under forming shrinking wedges.
\end{example}

Generalized universal coverings of shrinking adjunction spaces will generally be indeterminate adjunction spaces with uncountably many attachment spaces. In the next example, we analyze a situation motivated by this phenomenon.

\begin{example}[An intermediate wedge of cones]\label{coneintermediateexample}
Consider the cone $C\bbh_n$ where the basepoint $x_0$ is the image of $(b_0,0)$ in $C\bbh_n$. For each $j\in\bbn$, let $B_j$ be a copy of $\bbh_n$ and $A_j=C B_j$ with basepoint $a_j=x_0\in B_j$. We give the wedge $Y=\bigvee_{j\in\bbn}(A_j,a_j)$ (with wedgepoint $b_0$) a topology that lies strictly between the weak topology and inverse limit topology. If $V\subseteq C\bbh_n$, then we write $V_j$ to denote the corresponding subset of $A_j$. A set $U\subseteq Y$ is open if and only if 
\begin{enumerate}
\item $U\cap A_j$ is open in $A_j$ for all $j$,
\item if $b_0\in U$, then there exists an open neighborhood $V$ of $x_0$ in $C\bbh_n$  such that $\bigcup_{j\in\bbn}V_j\subseteq U$.
\end{enumerate}
If $A_j=C\bbh_n$ has the metric induced from an embedding in $\bbr_{n+2}$, then the topology of $Y$ is induced by the quotient metric inherited from the metric spaces $\{A_j\}_{j\in\bbn}$. It is in this sense that all of the attached cones are ``of the same size." This topology is coarse enough that a compact subset $K\subseteq Y$ can meet $A_{j}\backslash\{a_j\}$ for infinitely many $j$. For instance, let $\mu_{j,k}:S^n\to B_j$ be the inclusion into the $k$-th sphere and define $f:(\bbh_n,b_0)\to (Y,b_0)$ by $f\circ\ell_j=\mu_{j,j}$. Now $f\circ \ell_j$ is null-homotopic in $Y$ for all $j\in\bbn$ since it may be contracted in the cone $A_j$. However, the image of a pointed null-homotopy $H$ of $f$ can only meet the vertex $v_j$ of $A_j$ for finitely many $j$, say $j\in\{1,2,\dots ,k\}$. There is a canonical retraction $R:Y\backslash\{v_1,v_2,\dots ,v_{k}\}\to Z_k$ where $Z_k=\ui\cup\bigcup_{j<k}A_j\cup \bigcup_{j\geq k}B_j$. Thus, $R\circ H$ is a null-homotopy of $R\circ f$ in $Z_k$. However, $R\circ f$ is not null-homotopic since, for all $j$, the $j$-th sphere of $B_j$ is a retract of $Z_k$. This contradiction proves that $Y$ is not $\pi_n$-residual at $b_0$.

Although, it follows from our last observation that $\pi_n(Y,b_0)=\varinjlim_{k}\pi_n(Z_k,b_0)$, further analysis is required to obtain a canonical characterization of $\pi_n(Y,b_0)$ that one might consider a ``computation." We give a brief exposition of this argument and leave the details as an exercise. Let $B_{j,i}=\bbh_n$ denote the $i$-th sphere of $B_j$. If $f \in \Omega^n( Z_k,b_0)$, then using the compactness of $Im(f)$, there exists an unbounded, non-decreasing sequence $s:=m_k\leq m_{k+1}\leq m_{k+2}\leq $ in $\bbn$ and an $n$-loop $g$ such that $f\simeq g$ and \[Im(g)\subseteq W_{k,s}=\bigcup_{j<k}A_j\cup \bigcup_{j\geq k}\bigcup_{i\geq m_j}B_{j,i}.\] The space $W_{k,s}$ is homeomorphic to the sequentially $(n-1)$-connected shrinking wedge $\bigvee_{j<k}C\bbh_n\vee \sw_{j\geq k}\bbh_n$ and therefore, $\pi_n(W_{k,s},b_0)\cong \pi_n(\bbh_n)^{\bbn}\cong \bbz^{\bbn\times\bbn}\cong \bbz^{\bbn}$ is isomorphic to the Baer-Specker group. By defining a natural direct ordering on the set of pairs $(k,s)$ as described above, it follows that $\pi_n(Y,b_0)\cong \varinjlim_{(k,s)}\pi_n(W_{k,s},b_0)$. Hence, $\pi_n(Y,b_0)$ is isomorphic to a direct limit of copies of the Baer-Specker group. In particular, every finitely generated subgroup of $\pi_n(Y,b_0)$ is free abelian.
\end{example}

\section{Shrinking adjunction spaces with dendrite core}\label{sectiondendritecore}

The goal of this section is to prove an analogue of Lemma \ref{wedgeinfiniteconcatformlemma} for shrinking adjunction spaces $Y=\shadj(X,x_j,A_j,a_j)=\varprojlim_{k}Y_k$ where $X$ is a dendrite. Specifically we will seek to show that any $n$-loop is homotopic to another that is in single-factor form. Since generalized covering spaces of shrinking adjunction spaces will generally be indeterminate adjunction spaces, we must pay close attention to the size the homotopies we construct. In the end, we will show that if $f$ is already in factored form, then the homotopy to single-factored form may be constructed within $X\cup \im(f)$. The technical results from Section \ref{subsectionshadjfactorization} will be applied recursively to achieve this. 

As before, the maps $\rho_k:Y\to Y_k$, $r_j:Y\to A_j$, and $r:Y\to X$ will denote the canonical retractions. As noted in Example \ref{dendriteexample}, dendrites are sequentially $(n-1)$-connected at all of their points. Therefore, by Theorem \ref{seqnconnectedtheorem1}, $Y$ is sequentially $(n-1)$-connected at all points. Therefore, we may apply the results of Section \ref{subsectionshadjfactorization} to any shrinking adjunction space of this form. In particular, if $K$ is a compact, connected subspace of $X$, then $K$ is a dendrite and $K\cup \bigcup\{A_j\mid x_j\in K\}$ is a shrinking adjunction space with dendrite core. Hence, we may apply the results of Section \ref{subsectionshadjfactorization} to subspaces of this form as well.

\subsection{Homotopy factorization for an arc core}

In this section, we assume $X=[0,1]$. We briefly recall the notation established in Remark \ref{iteratedremark}. Let $\{x_j\}$ be a sequence of distinct points in $X$, and $A_j$ be a Peano continuum that is sequentially $(n-1)$-connected at $a_j\in A_j$. Set $Y=\shadj([0,1],x_j,A_j,a_j)$. Let $\{y_i\}_{i\in\bbn}$ be a sequence of distinct points in $Y$ and $\{(B_i,b_i)\}_{i\in\bbn}$ be any sequence of based spaces. Set $Z=\shadj(Y,y_i,B_i,b_i)$. Recall that $T=\{i\in\bbn\mid y_i\in [0,1]\}$ and $T_j=\{i\in\bbn\mid y_i\in A_j\backslash\{a_j\}\}$. We also reuse the following notation established in Section \ref{subsectionshadjfactorization} for the respective subspaces of $Z$.
\begin{itemize}
\item $A_{j}^{\ast}=\shadj\left(A_j,\{y_i\}_{i\in T_j},\{B_i\}_{i\in T_j}, \{b_i\}_{i\in T_j}\right)$ is the shrinking adjunction space with core $A_j$ and attachment spaces $B_i$ $i\in T_j$. Note that there is no $B_i$ attached to $a_j$ in $A_{j}^{\ast}$.
\item $X^{\ast}=\shadj\left([0,1],\{y_i\}_{i\in T},\{B_i\}_{i\in T}, \{b_i\}_{i\in T}\right)$ has arc core $[0,1]$ with attached spaces $B_i$, $i\in T$.
\end{itemize}

\begin{remark}[Simplifying the structure of $Z$]\label{simplifyingremark}
By adding one-point spaces to the sequences $\{A_j\}$ and $\{B_i\}$, if necessary, we may assume that
\begin{itemize}
\item $T$ and each $T_j$ are infinite,
\item $\{y_i\mid i\in T_j\}$ is dense in $A_j$,
\item $\{y_i\mid i\in T\}=\{x_j\mid j\in\bbn\}$ is dense in $[0,1]$ and contains $\{0,1\}$.
\end{itemize}
Moreover, let $\scrd=\{\frac{k}{2^n}\mid n\in\bbn,1\leq k\leq 2^{n}-1\}$ denote the set of dyadic rationals in $(0,1)$. Since $\{x_j\mid j\in\bbn\}$ is a countable dense set in $\ui$ containing $0$ and $1$, there is an order-preserving bijection $\phi:\{x_j\mid j\in\bbn\}\to \{0,1\}\cup \scrd$ where both domain and codomain are considered as suborders of $\ui$. Since $\ui$ is the metric completion of both subsets, this bijection extends continuously to a homeomorphism $\ui\to\ui$. Therefore, without changing the homeomorphism type of $Y$ or $Z$, we may assume that $x_1=0$, $x_2=1$, and $\{x_j\mid j\geq 3\}=\scrd$. Since reordering the set $\{x_j\mid j\in\bbn\}$ does not change the homeomorphism type of $Y$, we may assume that $\{x_j\}_{j\geq 3}$ enumerates $\scrd$ in the standard pattern:
\[x_3=1/2,\,x_4=1/4,\,x_5=3/4,\,x_6=1/8,\,x_7=3/8,\,\dots\]
For convenience, we change the indices of our attachment spaces to match the ordering of $\scrd$. For each dyadic rational $d\in\scrd\cup \{0,1\} $, we have $d=x_j=y_i$ for unique $j$ and $i$. We write:
\begin{itemize}
\item $A_d$ for $A_j$ when $d=x_j$,
\item $A_{d}^{\ast}$ for $A_{j}^{\ast}$ when $d=x_j$,
\item $B_d$ for $B_i$ when $d=y_i$.
\end{itemize}
In summary, $Z$ may be regarded as the shrinking adjunction space with core $[0,1]$ and where at each $d\in\scrd\cup\{0,1\}$, there are two spaces attached: the attachment space $B_d$ and the shrinking adjunction space $A_{d}^{\ast}$ (with core $A_d$ and attachment spaces $B_i$, $y_i\in A_d$).
\end{remark}

For $s,t\in\ui$ with $s\neq t$, let $\lambda_{[s,t]}:[0,1]\to [0,1]$ denote the linear path from $s$ to $t$. When $s<t$, we let
\begin{enumerate}
\item[] $\mcy[s,t]=[s,t]\cup \bigcup\{A_d\mid d\in (s,t)\cap\scrd\}$,
\item[] $\mcz[s,t]=[s,t]\cup \bigcup\{A_{d}^{\ast}\vee B_d\mid d\in (s,t)\cap \scrd\}$.
\end{enumerate}
each with the subspace topology inherited from $Z$. Note that $\mcy[0,1]=(Y\backslash (A_0\cup A_1))\cup\{0,1\}$ and $\mcz[0,1]=Z\backslash (A_{0}^{\ast}\cup B_0\cup A_{1}^{\ast}\cup B_1)\cup\{0,1\}$ are precisely the spaces $Y$ and $Z$ with the attachment spaces at the endpoints $0$ and $1$ removed. Similarly, $\mcy[s,t]$ and $\mcz[s,t]$ include $[s,t]$ and all corresponding attachment spaces attached at points in $(s,t)$. 

\begin{remark}
The space $\mcy[s,t]$ is the shrinking adjunction space with core $[s,t]$ and attachment spaces $A_d$, $d\in (s,t)$. The space $\mcz[s,t]$ has many relevant shrinking adjunction space decompositions. However, we focus on the decomposition with core space $\mcy[s,t]$ and attachment spaces $B_i$, $b_i\in \mcy[s,t]$ (recall that infinitely many $B_i$ are attached to each $A_j\backslash\{a_j\})$. The importance of this choice is that for each $u\in\scrd\cap (s,t)$, the core space $\mcy[s,t]$ has a decomposition as a finite wedge $\mcy[s,u]\vee A_u\vee \mcy[u,t]$ and so Lemma \ref{bigtimelemma} applies.
\end{remark}

\begin{terminology}
In what follows, we will use the abbreviated term \textit{factored form} to mean that an $n$-loop $f\in \Omega^n(Z,t)$ based at $t\in[0,1]$ is in factored form with respect to the decomposition $Z=\shadj(Y,y_i,B_i,b_i)$. We will often apply this abbreviated terminology to $n$-loops that have image in proper subspaces of $Z$ such as $\mcz[u,v]$. 
\end{terminology}

We begin by dealing with the endpoints $0$ and $1$ separately. 

\begin{lemma}\label{zeroonelemma}
Suppose $f\in \Omega^{n}(Z,0)$ is in factored form. Then $f\in \Omega^{n}(Z,0)$ is homotopic rel. $\partial I^n$ to a product \[(\alpha_0\cdot\beta_0 ) \cdot(\lambda_{[0,1/2]}\ast g_{1/2})\cdot (\lambda_{[0,1]}\ast (\alpha_1\cdot \beta_1))\] 
where $\alpha_t\in \Omega^n(B_t,t)$, $\beta_t\in \Omega^n(A_{t}^{\ast},t)$ for $t\in\{0,1\}$, and $g_{1/2}\in \Omega^n(\mcz[0,1],1/2)$. Moreover, we may choose $g_{1/2}$ to be in factored form and we may choose the homotopy to have image in $Y \cup \im(f)$.
\end{lemma}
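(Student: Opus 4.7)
The plan is to apply Lemma~\ref{relativelemma} to enrich the factored form of $f$, identify the Whitney cubes attached at the endpoints $\{0,1\}$, and then rearrange them (leaving the middle structure intact) to assemble the target product. First, apply Lemma~\ref{relativelemma} to $f$: the core $[0,1]$ is a dendrite, hence sequentially $(n-1)$-connected, and each $A_j$ is sequentially $(n-1)$-connected by hypothesis, so the lemma produces a map $g$ homotopic to $f$ by a homotopy that is constant on $f^{-1}(X^*)$ and has image in $Y\cup \im(f)$, and which admits an $A_j^*$-factorization $\{\scrr_j\}_{j\in\bbn}$ with respect to $Z=\shadj(X^*,x_j,A_j^*,a_j)$ (so $g(\partial R)=x_j$ for every $R\in\scrr_j$) together with a $B_i$-factorization $\{\scrd_i\}_{i\in\bbn}$ equivalent to the given factorization of $f$, satisfying the nesting property that each $D\in \scrd_i$ with $i\in T_j$ lies inside some $R\in \scrr_j$.

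Under the reindexing of Remark~\ref{simplifyingremark} there is exactly one $B_i$ attached at each endpoint, namely for indices $i_0,i_1\in T$ with $y_{i_0}=0$ and $y_{i_1}=1$. The four \emph{endpoint} families of Whitney cubes are then $\scrd_{i_0}$ (mapping into $B_0$), $\scrr_0$ (into $A_0^*$), $\scrd_{i_1}$ (into $B_1$), and $\scrr_1$ (into $A_1^*$). For $t\in\{0,1\}$, the restrictions $g|_D$ for $D\in\scrd_{i_t}$ and $g|_R$ for $R\in\scrr_t$ cluster at $t$, so the concatenations
\[
\alpha_t=\prod_{D\in \scrd_{i_t}}g|_D\in \Omega^n(B_t,t),\qquad \beta_t=\prod_{R\in \scrr_t}g|_R\in \Omega^n(A_t^*,t)
\]
are well-defined $n$-loops, and each $\beta_t$ inherits a factored form in $A_t^*$ from the $\scrd_i$-cubes nested inside its $\scrr_t$-cubes. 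The remaining Whitney cubes (those of $\scrr_j,\scrd_i$ with $x_j,y_i\in(0,1)\cap\scrd$), together with the ``core'' of $g$ that maps to $[0,1]$, take values in $\mcz[0,1]$; these are packaged into $g_{1/2}\in \Omega^n(\mcz[0,1],1/2)$ in factored form by reparameterizing the middle slab and inserting the linear segment $\lambda_{[0,1/2]}$ as the outer shell of the path-conjugation construction of Remark~\ref{retractionremark}.

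To prove $g\simeq (\alpha_0\cdot\beta_0)\cdot (\lambda_{[0,1/2]}\ast g_{1/2})\cdot (\lambda_{[0,1]}\ast (\alpha_1\cdot\beta_1))$, first apply Lemma~\ref{shrinkingcubelemma} to shrink each endpoint family into small concentric sub-cubes, freeing room for the rearrangement. Then apply Theorem~\ref{shuffletheorem} separately to each of the four endpoint families: every cube of a single family has boundary mapping to the same point $t\in\{0,1\}$, so that family is genuinely an $\scrr$-concatenation based at $t$ and can be shuffled into its prescribed slot of the target product (the two slots for $\alpha_0,\beta_0$ inside $[0,1/3]\times I^{n-1}$ and the two slots for $\alpha_1,\beta_1$ inside the inner $n$-cube of $[2/3,1]\times I^{n-1}$). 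The middle slab $[1/3,2/3]\times I^{n-1}$ is then reparameterized as $\lambda_{[0,1/2]}\ast g_{1/2}$ via the canonical retraction onto a hollow box-without-a-top (Remark~\ref{retractionremark}), and analogously the outer annulus of the third slab traces $\lambda_{[0,1]}$. Every homotopy used has image in $\im(g)\subseteq Y\cup \im(f)$. The main obstacle is that Theorem~\ref{shuffletheorem} cannot be applied to $g$ as a whole, since the complement of the Whitney cubes maps not to a single point but sweeps through the arc $[0,1]$; we circumvent this by shuffling the four endpoint families one family at a time (each internally an $\scrr$-concatenation based at a single endpoint), while the arc-valued portion of $g$ is preserved and repackaged into $g_{1/2}$ via path-conjugation rather than shuffled.
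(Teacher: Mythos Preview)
Your approach has a genuine gap at the shuffling step. After applying Lemma~\ref{relativelemma} with $X=[0,1]$, the resulting map $g$ sends the complement $I^n\setminus\bigcup_j\bigcup_{R\in\scrr_j}\int(R)$ into $X^{\ast}=[0,1]\cup\bigcup_{i\in T}B_i$, \emph{not} to a single point. Consequently $g$ is not an $\scrr$-concatenation in the sense required by Theorem~\ref{shuffletheorem}: that theorem compares two maps each of which is constant outside its family of cubes. Saying that the cubes in $\scrr_0$ ``form an $\scrr$-concatenation based at $0$'' describes the auxiliary map $\prod_{R\in\scrr_0}g|_R$, not $g$ itself; shuffling that auxiliary map gives you $\beta_0$, but it does not produce a homotopy of $g$. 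To move the endpoint cubes into their target slabs you would have to drag them across regions where $g$ takes values along the arc $(0,1)$, and nothing in Theorem~\ref{shuffletheorem} or Lemma~\ref{shrinkingcubelemma} licenses that. Your final paragraph recognizes the obstacle but the proposed fix (``shuffle one family at a time'') does not resolve it.

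The paper avoids this problem by never invoking Lemma~\ref{relativelemma} directly with an arc core. Instead it passes through Lemma~\ref{bigtimelemma}, which is the special case where the core is a single point. Writing $Y=A_0\vee\big((Y\setminus A_0)\cup\{0\}\big)$ with wedgepoint $0$, one applies Lemma~\ref{bigtimelemma} to split off $\alpha_0\in\Omega^n(A_0^{\ast},0)$, $\beta_0\in\Omega^n(B_0,0)$, and a remainder $k_0$ with image in $\mcz[0,1]\cup A_1^{\ast}\cup B_1$; the point is that in the shrinking-wedge setting the complement of the Whitney cubes really does collapse to the wedgepoint, so the shuffle inside the proof of Lemma~\ref{bigtimelemma} is legitimate. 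One then path-conjugates $k_0$ by $\lambda_{[1,0]}$ to base it at $1$, applies Lemma~\ref{bigtimelemma} a second time at that endpoint to split off $\alpha_1,\beta_1$ and a remainder in $\mcz[0,1]$, and finishes with routine path-conjugate algebra. The repackaging of the middle slab that you sketch via Remark~\ref{retractionremark} is then unnecessary: the remainder is already an $n$-loop in $\mcz[0,1]$ and one simply rebases it at $1/2$.
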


\begin{proof}
First, we view $Z$ as the attachment space with finite wedge core $Y=A_0\vee ((Y\backslash A_0)\cup\{0\})$ and attachment spaces $\{B_i\}_{i\in \bbn}$. Note that $B_0$ is the only space attached at the wedgepoint $0$. By Lemma \ref{bigtimelemma}, $f$ is homotopic to a map $\alpha_0\cdot \beta_0\cdot  k_0$ (by a homotopy in $Y\cup \im(f)$) where
\begin{itemize}
\item $\alpha_0\in \Omega(A_{0}^{\ast},0)$ is in factored form,
\item $\beta_0\in \Omega(B_0,0)$,
\item $k_0\in \Omega^n(\mcz[0,1]\cup A_{1}^{\ast}\cup B_1,0)$ is in factored form.
\end{itemize}
The path conjugate $\lambda_{[1,0]}\ast k_0\in \Omega^n(\mcz[0,1]\cup A_{1}^{\ast}\cup B_1,1)$ is based at $1$ and is in factored form.

Next, we view $\mcz[0,1]\cup A_{1}^{\ast}\cup B_1$ as the shrinking adjunction space with core $\mcy[0,1]\vee A_1$ and attachment spaces $B_i$, $b_i\in \mcy[0,1]\vee A_1$. Applying Lemma \ref{bigtimelemma} to $\lambda_{[1,0]}\ast k_0$ gives $\lambda_{[1,0]}\ast k_0\simeq \alpha_1\cdot \beta_1\cdot k_1$ (by a homotopy in $Y\cup \im(k_0)\subseteq Y\cap \im(f)$) where 
\begin{itemize}
\item $\alpha_1\in \Omega(A_{1}^{\ast},1)$ is in factored form,
\item $\beta_1\in \Omega(B_1,1)$,
\item $k_1\in \Omega^n(\mcz[0,1],1)$ is in factored form.
\end{itemize}
Finally, the path-conjugate $g_{1/2}=\lambda_{[1/2,1]}\ast k_1$ is an $n$-loop in $\mcz[0,1]$ that is in factored form. Every homotopy in the following composition has image in $Y\cup \im(f)$, including all commuting homotopies and path-conjugate cancellations involved. Therefore, the composition itself has image in $Y\cup \im(f)$.
\begin{eqnarray*}
f &\simeq & \alpha_0\cdot \beta_0\cdot  k_0\\
&\simeq & \alpha_0\cdot \beta_0\cdot (\lambda_{[0,1]}\ast (\alpha_1\cdot \beta_1\cdot k_1))\\
& \simeq & \alpha_0\cdot \beta_0\cdot (\lambda_{[0,1]}\ast k_1)\cdot (\lambda_{[0,1]}\ast (\alpha_1\cdot \beta_1))\\
& \simeq & \alpha_0\cdot \beta_0\cdot (\lambda_{[0,1]}\ast (\lambda_{[1,1/2]}\ast g_{1/2}))\cdot (\lambda_{[0,1]}\ast (\alpha_1\cdot \beta_1))\\
& \simeq & (\alpha_0\cdot \beta_0)\cdot (\lambda_{[0,1/2]}\ast g_{1/2})\cdot (\lambda_{[0,1]}\ast (\alpha_1\cdot \beta_1)).
\end{eqnarray*}
\end{proof}

In the next lemma we begin with a factored form $n$-loop $g_{1/2}\in \Omega(\mcz[0,1],1/2)$ (as obtained from Lemma \ref{zeroonelemma}) and factor this map ``at $1/2$" in a way that will support recursive application.

\begin{lemma}\label{recursionlemma}
Suppose $s<u<v<w<t$ are dyadic rationals in $[0,1]$ and $g_v\in \Omega^{n}(\mcz[s,t],v)$ is in factored form. Then $g_v$ is homotopic rel. $\partial I^n$ to a concatenation \[(\lambda_{[v,u]}\ast g_{u})\cdot \alpha_v\cdot \beta_v\cdot (\lambda_{[u,w]}\ast g_{w})\]
where $\alpha_v\in \Omega^n(A_{v}^{\ast},v)$, $\beta_v\in \Omega^n(B_v,v)$, $g_{u}\in \Omega^n(\mcz[s,v],u)$, and $g_{w}\in \Omega^n(\mcz[v,t],w)$. Moreover, we may choose $g_{u},g_{w}$ to be in factored form and we may choose the homotopy to have image in $\mcy[s,t]\cup \im(g_v)$.
\end{lemma}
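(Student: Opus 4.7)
The plan is to imitate Lemma~\ref{zeroonelemma}, replacing the endpoint-based factorization by one centered at the interior dyadic point $v$. The key structural input is that the core $\mcy[s,t]$ splits as the finite wedge
\[
\mcy[s,t]\ =\ \mcy[s,v]\vee A_v\vee \mcy[v,t]
\]
with wedgepoint $v$, and accordingly $\mcz[s,t]$ is the shrinking adjunction space whose core is this finite wedge and whose attachments are the $B_i$ with $b_i\in\mcy[s,t]$; the sole $B_i$ attached at the wedgepoint is $B_v$. Each summand of the wedge is sequentially $(n-1)$-connected: $A_v$ by hypothesis, and the two arc-with-attachments pieces by Theorem~\ref{seqnconnectedtheorem1} applied to the dendrite cores $[s,v]$ and $[v,t]$ (cf.\ Example~\ref{dendriteexample}). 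Thus the hypotheses of Lemma~\ref{bigtimelemma} and Remark~\ref{bigtimesetupremark} are in force.

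First I would feed the factored-form $n$-loop $g_v$ into Lemma~\ref{bigtimelemma} with respect to this wedge decomposition. After an infinitary $n$-cube shuffle (Theorem~\ref{shuffletheorem}) to arrange the four factors in the preferred order, this yields a homotopy rel.\ $\partial I^n$
\[
g_v\ \simeq\ h_{[s,v]}\cdot \alpha_v\cdot \beta_v\cdot h_{[v,t]},
\]
where $\beta_v\in\Omega^n(B_v,v)$ is the sole factor at the wedgepoint, $\alpha_v\in\Omega^n(A_v^{\ast},v)$ absorbs the $A_v$-summand, and $h_{[s,v]}\in\Omega^n(\mcz[s,v],v)$, $h_{[v,t]}\in\Omega^n(\mcz[v,t],v)$ are in factored form. (I invoke the natural identification of the shrinking adjunction subspace with core $\mcy[s,v]$ and $B$-attachments $\{B_i : b_i\in\mcy[s,v]\}$ with $\mcz[s,v]$, and similarly for $\mcz[v,t]$.) Lemma~\ref{bigtimelemma} delivers this homotopy with image in $\mcy[s,t]\cup \im(g_v)$, which is the crucial size bound.

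Second, I would re-base the two arc-factors at $u$ and $w$ via the linear paths of the core. Setting
\[
g_u:=\lambda_{[u,v]}\ast h_{[s,v]}\in \Omega^n(\mcz[s,v],u),\qquad g_w:=\lambda_{[w,v]}\ast h_{[v,t]}\in \Omega^n(\mcz[v,t],w),
\]
both remain in factored form (path conjugation by a path in the core leaves the attachment-space factorization intact), and the standard cancellation $(\lambda\cdot\lambda^{-})\ast h\simeq h$ yields $\lambda_{[v,u]}\ast g_u\simeq h_{[s,v]}$ and $\lambda_{[v,w]}\ast g_w\simeq h_{[v,t]}$ rel.\ basepoint, via homotopies supported in $[s,v]\cup\im(h_{[s,v]})$ and $[v,t]\cup\im(h_{[v,t]})$ respectively. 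Composing with the first step produces the desired concatenation.

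The main hurdle is the image bookkeeping: every sub-homotopy in the composite must stay inside $\mcy[s,t]\cup\im(g_v)$. I expect this to fall out transparently because Lemma~\ref{bigtimelemma} already supplies the bound for the main decomposition, shuffles preserve image, and each path-conjugate cancellation only inserts sub-arcs of $[s,t]\subseteq\mcy[s,t]$. This is precisely the bookkeeping style already deployed in Lemma~\ref{zeroonelemma}, just re-indexed around an interior point.
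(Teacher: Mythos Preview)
Your proposal is correct and follows essentially the same route as the paper: decompose $\mcy[s,t]$ as the finite wedge $\mcy[s,v]\vee A_v\vee \mcy[v,t]$, apply Lemma~\ref{bigtimelemma} (with Remark~\ref{bigtimesetupremark}) to split $g_v$ into four factors with the required image bound, and then re-base the two arc-factors at $u$ and $w$ by path-conjugation along sub-arcs of $[s,t]$. Your explicit verification that each wedge summand is sequentially $(n-1)$-connected and your image bookkeeping are exactly what the paper's proof relies on (mostly implicitly).
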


\begin{proof}
View $\mcz[s,t]$ as the shrinking adjunction space with finite wedge core $\mcy[s,t]=\mcy[s,v]\vee A_v\vee \mcy[v,t]$ and attachment spaces $B_i$, $b_i\in \mcy[s,t]$. By Lemma \ref{bigtimelemma}, $g_v$ is homotopic in $\mcy[s,t]\cup \im(g_v)$ to a concatenation $g_{[s,v]}\cdot \alpha_{v}\cdot\beta_{v}\cdot g_{[v,t]}$ where
\begin{itemize}
\item $\alpha_v\in \Omega(A_{v}^{\ast},v)$ is in factored form,
\item $\beta_v\in \Omega(B_v,v)$,
\item $g_{[s,v]}\in \Omega^n(\mcz[s,v],v)$ is in factored form,
\item $g_{[v,t]}\in \Omega^n(\mcz[v,t],v)$ is in factored form.
\end{itemize}
The path-conjugates $g_{v}=\lambda_{[u,v]}\ast g_{[s,v]}$ and $g_{w}=\lambda_{[w,v]}\ast g_{[v,t]}$ also are in factored form. Since the paths $\lambda_{[u,v]}$ and $\lambda_{[w,v]}$ have image in $[s,t]$, the canonical homotopy 
\[g_{[s,v]}\cdot \alpha_{v}\cdot\beta_{v}\cdot g_{[v,t]}\simeq (\lambda_{[v,u]}\ast g_{u})\cdot \alpha_v\cdot \beta_v\cdot (\lambda_{[v,w]}\ast g_{w})\] also has image in $\mcy[s,t]\cup \im(g_v)$.
\end{proof}

Since we plan to recursively apply Lemma \ref{iteratedremark}, it will be helpful to have a notational mechanism for referring to the domains of $n$-loops in the construction. First, let $D[0,1]=I^n$. Suppose $D[s,t]$ is defined where $s,t$ are dyadic rationals in $\ui$. Let $s<u<v<w<t$ be a partition of $[s,t]$ is into four segments of equal length. Certainly, $u,v,w$ are dyadic rationals. We define four $n$-cubes within $D[s,t]$ as follows.

Although the four cubes to be defined do not depend on the map $g_{v}$ itself, the definition is entirely determined by the structure of the concatenation \[(\lambda_{[v,u]}\ast g_{u})\cdot \alpha_v\cdot \beta_v\cdot (\lambda_{[u,w]}\ast g_{w}).\] Indeed, if we regard this concatenation as a map with domain $D[s,t]$, then we will take $D[s,v],R_v,S_v,D[v,t]$ to be the $n$-cubes in $D[s,t]$ which are the respective domains of $g_u,\alpha_v,\beta_v,g_w$ (See Figure \ref{domainfig3}). Formally, these are defined as follows:
\begin{itemize}
\item $D[s,v]=L_{I^n,D[s,t]}\circ L_{I^n,[0,1/4]\times I^{n-1}}([1/3,2/3]^n)$,
\item $R_v=L_{I^n,D[s,t]}([1/4,1/2]\times I^{n-1})$,
\item $S_v=L_{I^n,D[s,t]}([1/2,3/4]\times I^{n-1})$,
\item $D[v,t]=L_{I^n,D[s,t]}\circ L_{I^n,[1/4,1]\times I^{n-1}}([1/3,2/3]^n)$.
\end{itemize}
Note that $diam(D[s,v])=diam(D[v,t])<\frac{1}{2}diam(D[s,t])$.
\begin{figure}[H]
\centering \includegraphics[height=2.2in]{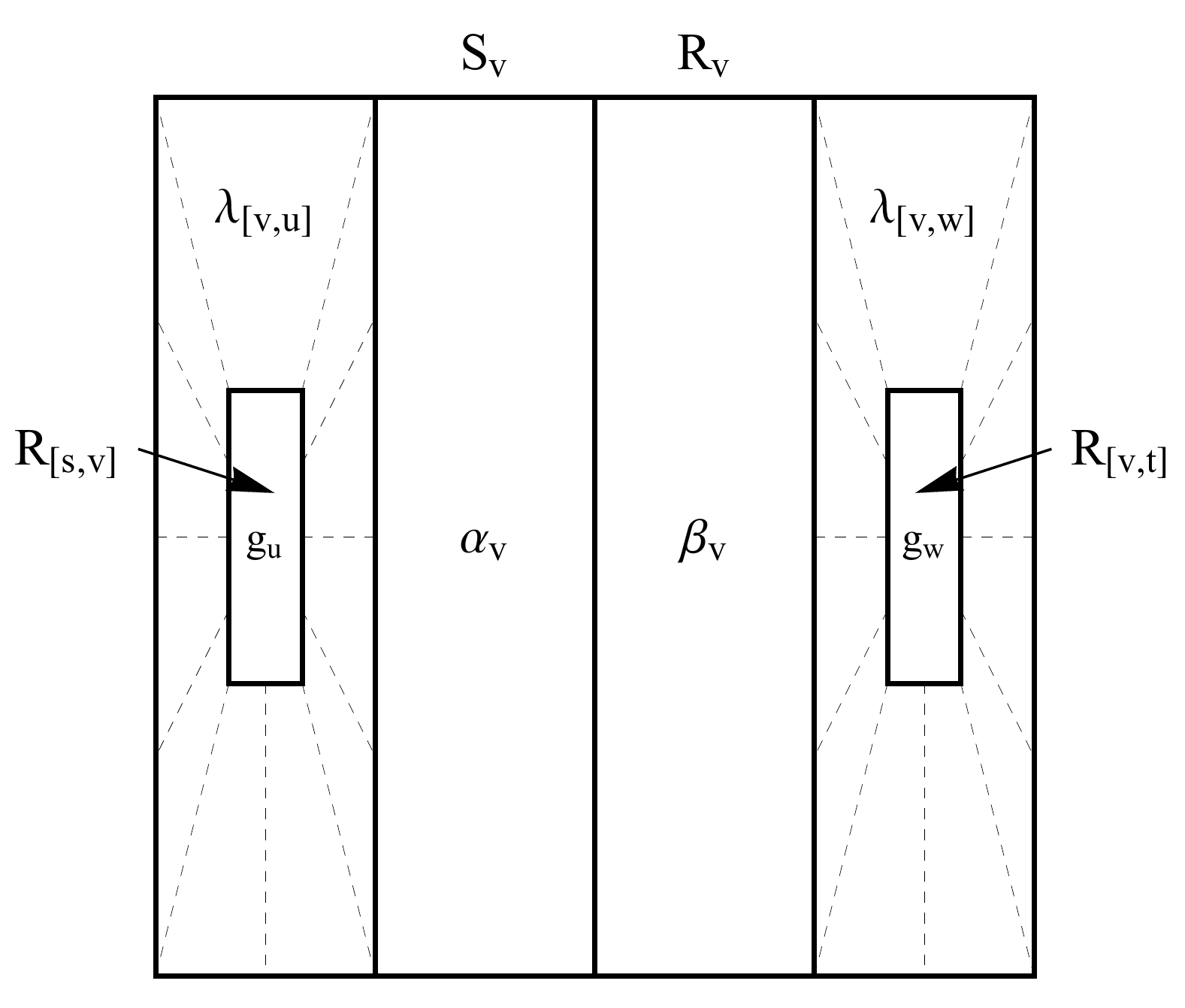}
\caption{\label{domainfig3} A $2$-cube $D[s,t]$, which represents the domain of a map $g_v\in \Omega^n(\mcz[s,t],v)$ where $v=\frac{s+t}{2}$. The domains of the maps $g_u,\alpha_v,\beta_v,g_w$ shown within are $D[s,v],R_v,S_v,D[v,t]$ respectively.}
\end{figure}

For every $p\in\bbn$ and $1\leq j\leq 2^{p-1}$, let $I(p,j)=[\frac{j-1}{2^{p-1}},\frac{j}{2^{p-1}}]$. Note that $I(p,j)$ is subdivided into two segments of equal length by $I(p+1,2j-1)\cup I(p+1,2j)$. Recall that $D[0,1]=I^n$. Given $1\leq j\leq 2^{p-1}$ and $n$-cube $D[\frac{j-1}{2^{p-1}},\frac{j}{2^{p-1}}]$ in $I^n$, we consider the four pre-determined $n$-cubes contained within:
\[D\left[\frac{j-1}{2^{p-1}},\frac{2j-1}{2^{p}}\right],\,R_{\frac{2j-1}{2^{p}}},\,S_{\frac{2j-1}{2^{p}}},\, D\left[\frac{2j-1}{2^{p}},\frac{j}{2^{p-1}}\right]\]
From this recursive definition, we obtain unique $n$-cubes $R_v,S_v$ for each dyadic rational $v\in\scrd$ and an $n$-cube $D[\frac{j-1}{2^{p-1}},\frac{j}{2^{p-1}}]$ for each interval $[\frac{j-1}{2^{p-1}},\frac{j}{2^{p-1}}]$. The $n$-cubes $R_v,S_v$, $v\in\scrd$ are not nested and together form an $n$-domain $\scrr=\{R_v\mid v\in\scrd\}\cup \{S_v\mid v\in\scrd\}$. On the other hand, cubes of the form $D[\frac{j-1}{2^{p-1}},\frac{j}{2^{p-1}}]$ are nested according to inclusion: $D[\frac{j-1}{2^{p-1}},\frac{j}{2^{p-1}}]\subseteq D[\frac{i-1}{2^{q-1}},\frac{i}{2^{q-1}}]$ is proper if and only if $p>q$ and $[\frac{j-1}{2^{p-1}},\frac{j}{2^{p-1}}]\subseteq [\frac{i-1}{2^{q-1}},\frac{i}{2^{q-1}}]$. Moreover, whenever $1\leq j\leq 2^{p-1}$, we have $diam\left(D[\frac{j-1}{2^{p-1}},\frac{j}{2^{p-1}}]\right)\leq \frac{diam(D[0,1])}{2^{p-1}}= \frac{\sqrt{n}}{2^{p-1}}$.

Let \[\scrg_p=\left\{D\left[\frac{j-1}{2^{p-1}},\frac{j}{2^{p-1}}\right]\Big|\, 1\leq j\leq 2^{p-1}\right\}\] so that $C_p=\bigcup\scrg_p$ is a disjoint union $2^{p-1}$-many $n$-cubes of diameter at most $\frac{\sqrt{n}}{2^{p-1}}$. Since \[C_1\supseteq C_2\supseteq C_3\supseteq \cdots\]
where the diameters of the $n$-cubes in $C_p$ approach zero, it must be the case that the intersection $C=\bigcap_{p\in\bbn}C_p=\bigcap_{p\in\bbn}\int(C_p)$ is totally disconnected. Moreover, it is easy to see that $C$ is perfect and therefore homeomorphic to a Cantor set. Recall that this is precisely the set-up from Remark \ref{verticalinfconcatremark} for constructing an infinite vertical concatenations of homotopies.

\begin{lemma}\label{wirefactorizationlemma}
Suppose $g_{1/2}\in \Omega^{n}(\mcz[0,1],1/2)$ is in factored form. Then $g_{1/2}$ is homotopic in $\mcy[0,1]\cup \im(g_{1/2})$ to a map $h_{\infty}\in \Omega^n(\mcz[0,1],1/2)$ such that
\begin{itemize}
\item $h_{\infty}$ is in factored form,
\item $h_{\infty}$ is in single-factor form with the decomposition of $Z$ as the shrinking adjunction space with core $[0,1]$ and attachment spaces $B_d$, $A_{d}^{\ast}$, $d\in \scrd$.
\end{itemize}
\end{lemma}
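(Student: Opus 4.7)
The plan is to recursively apply Lemma \ref{recursionlemma} indexed by the binary tree of dyadic rationals to produce a sequence of maps $h_p$ and homotopies $H_p: h_p \simeq h_{p+1}$, and then form their infinite vertical concatenation via Remark \ref{verticalinfconcatremark}. Setting $h_1 = g_{1/2}$, I proceed by induction on $p$, maintaining the invariant that $h_p$ is in factored form with respect to $Z = \shadj(Y,y_i,B_i,b_i)$, that its restriction to each cube $D[\frac{j-1}{2^{p-1}}, \frac{j}{2^{p-1}}]$ (for $1 \leq j \leq 2^{p-1}$) is an $n$-loop $g^{(p)}_{(2j-1)/2^{p-1}}$ with image in $\mcz[\frac{j-1}{2^{p-1}}, \frac{j}{2^{p-1}}]$ and based at that interval's midpoint, and that outside of $\int(C_p)$ the map $h_p$ already has its final form (each $R_v$ for a previously-treated $v$ mapped into $A_v^{\ast}$ via some $\alpha_v$, each $S_v$ into $B_v$ via some $\beta_v$, and the rest mapped into $[0,1]$). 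At step $p{+}1$, Lemma \ref{recursionlemma} applied to each $g^{(p)}_{(2j-1)/2^{p-1}}$ produces a homotopy supported in $D[\frac{j-1}{2^{p-1}}, \frac{j}{2^{p-1}}]$ with image contained in $\mcy[\frac{j-1}{2^{p-1}}, \frac{j}{2^{p-1}}] \cup \im(g^{(p)}_{(2j-1)/2^{p-1}})$; I glue these into a single homotopy $H_p$ that is constant on $I^n \setminus \int(C_p)$ and has global image in $\mcy[0,1] \cup \im(g_{1/2})$.

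Next, I define $h_\infty: I^n \to Z$. On the open set $I^n \setminus C$, where $C = \bigcap_{p}C_p$ is a Cantor-like set, the sequence $\{h_p(\bfx)\}$ is eventually constant and I assign $h_\infty(\bfx)$ to that value. For $\bfx \in C$, the unique nested sequence of cubes $D[\frac{j_p-1}{2^{p-1}}, \frac{j_p}{2^{p-1}}] \ni \bfx$ corresponds to a nested sequence of intervals in $[0,1]$ of length $2^{-(p-1)}$, which converge to a unique point $t_\bfx \in [0,1]$; I set $h_\infty(\bfx) = t_\bfx$. The key point is that $h_\infty(C) \subseteq [0,1]$, which is what will make the final form genuinely \emph{single-factor} rather than merely factored.

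To show $h_\infty$ is continuous and that the vertical concatenation $G_\infty$ of the $H_p$ exists and is continuous, I verify the four conditions of Remark \ref{verticalinfconcatremark}. Conditions (1) and (2) are built into the construction. Condition (3), the continuity of $h_\infty|_C$, holds because $c \mapsto t_c$ is uniformly continuous: two points of $C$ sharing an ancestor cube at level $p$ have images in a common interval of length $2^{-(p-1)}$. Condition (4) is the technical crux: for any sequence $\{\bfx_p\} \to \bfx$ with $\bfx \in C$ and $\bfx_p \in C_p$, the invariant forces $\bfx_p$ into the same component of $C_p$ as $\bfx$ for large $p$, so $H_p(\{\bfx_p\} \times I) \subseteq \mcz[\frac{j_p-1}{2^{p-1}}, \frac{j_p}{2^{p-1}}]$; since these shrinking adjunction subspaces collapse onto $t_\bfx$ in the topology of $Z$, any neighborhood of $h_\infty(\bfx) = t_\bfx$ in $Z$ eventually contains them. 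The lemma in Remark \ref{verticalinfconcatremark} then delivers the desired continuous homotopy $G_\infty: g_{1/2} \simeq h_\infty$ with image in $\mcy[0,1] \cup \im(g_{1/2})$.

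Finally, I identify the resulting decomposition of $h_\infty$. Off $\bigcup_{v \in \scrd}(R_v \cup S_v)$, the image of $h_\infty$ lies in $[0,1]$; on each $R_v$ the map equals $\alpha_v \in \Omega^n(A_v^{\ast}, v)$ and on each $S_v$ it equals $\beta_v \in \Omega^n(B_v, v)$. This is precisely single-factor form with respect to the decomposition of $Z$ with core $[0,1]$ and attachment spaces $\{A_d^{\ast}, B_d\}_{d\in \scrd}$. Factored form with respect to $Z = \shadj(Y,y_i,B_i,b_i)$ is preserved throughout because each $\alpha_v$ is produced in factored form by Lemma \ref{recursionlemma} and each $\beta_v$ is an $n$-loop in a single $B_v$. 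The main obstacle is the continuity argument at points of $C$, specifically condition (4): one must carefully track how the image-control guaranteed by Lemma \ref{recursionlemma} telescopes through the recursion to force $H_p$'s behavior on arbitrarily small cubes into arbitrarily small adjunction subspaces, matching the topology of $Z$ near the arc core $[0,1]$.
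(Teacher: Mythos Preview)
Your proposal is correct and follows essentially the same scheme as the paper's proof: recursively apply Lemma~\ref{recursionlemma} along the binary tree of dyadic intervals to build $h_p$ and $H_p$, define $h_\infty$ on the Cantor residual $C$ by the limiting point $t_{\bfx}\in[0,1]$, and invoke the four conditions of Remark~\ref{verticalinfconcatremark} to assemble the infinite vertical concatenation. One small imprecision to correct in your verification of condition~(4): the assertion that $\bfx_p$ must lie in the \emph{same} component of $C_p$ as $\bfx$ for large $p$ can fail (e.g.\ $\bfx_p$ may sit in the sibling level-$p$ cube inside $K_{p-1}$ for every $p$); what is actually true, and what the paper uses, is only that the component of $C_p$ containing $\bfx_p$ lies inside the level-$q$ cube of $\bfx$ for some $q=q(p)\to\infty$, so the corresponding interval (and hence $\mcz[\cdot]$) is eventually contained in any neighborhood of $t_{\bfx}$.
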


\begin{remark}
To clarify the statement of Lemma \ref{wirefactorizationlemma}, we take a moment to describe the conclusion to be proved; it asserts that there is an $n$-domain $\scrr=\{R_d\mid\in\scrd\}\cup\{S_d\mid d\in \scrd\}$ such that
\begin{itemize}
\item for each $d\in\scrd$, $h_{\infty}|_{R_d}\in \Omega^n(B_d,b_d)$,
\item for each $d\in\scrd$, $h_{\infty}|_{S_d}\in \Omega^n(A_{d}^{\ast},d)$ is in factored form (with respect to the $B_i$ attached to $A_{d}^{\ast}$),
\item $h_{\infty}\left(I^n\backslash \left(\bigcup_{d\in\scrd}\int(R_d)\cup \int(S_d)\right)\right)\subseteq [0,1]$.
\end{itemize}
\end{remark}

\begin{proof}[Proof of Lemma \ref{wirefactorizationlemma}]
Recall the definition of the $n$-cubes $D[\frac{j-1}{2^{p-1}},\frac{j}{2^{p-1}}]$, $R_{\frac{2j-1}{2^p}}$, $S_{\frac{2j-1}{2^p}}$ for each $p\in\bbn$, $1\leq j\leq 2^{p-1}$. Set $h_1=g_{1/2}\in \Omega^{n}(\mcz[0,1],1/2)$. We apply Lemma \ref{recursionlemma} recursively starting with $v=1/2$ and $u=1/4$, $w=3/4$. For this first step, we have $h_1\simeq h_2$ where 
\[h_2=(\lambda_{[1/2,1/4]}\ast g_{1/4})\cdot \alpha_{1/2}\cdot \beta_{1/2}\cdot (\lambda_{[1/2,3/4]}\ast g_{3/4})\]
where $\alpha_{1/2}\in \Omega^n(A_{1/2}^{\ast},1/2)$ and $\beta_{1/2}\in \Omega^n(B_{1/2},{1/2})$, $g_{1/4}\in \Omega^n(\mcz[0,1/2],1/4)$, and $g_{3/4}\in \Omega^n(\mcz[1/2,1],3/4)$. If $H_1$ is the homotopy from $h_1$ to $h_2$, then we may assume that $\im(H_1)\subseteq \mcy[s,t]\cup \im(g_{1/2})$.

Suppose that the map $h_{p}\in \Omega^n(\mcz[0,1],1/2)$ has been defined so that, for all $1\leq j\leq 2^{p-1}$, if we let $g_{\frac{2j-1}{2^p}}$ be the restriction of $h_p$ to $D[\frac{j-1}{2^{p-1}},\frac{j}{2^{p-1}}]$, then $g_{\frac{2j-1}{2^p}}$ is an $n$-loop in $\mcz[\frac{j-1}{2^{p-1}},\frac{j}{2^{p-1}}]$ based at the midpoint $\frac{2j-1}{2^p}$ and which is in factored form.

We construct $h_{p+1}$ and the homotopy $H_p$ from $h_p$ to $h_{p+1}$ as follows: Define $h_{p+1}$ to agree with $h_p$ on $I^n\backslash \bigcup\{int(D[\frac{j-1}{2^{p-1}},\frac{j}{2^{p-1}}])\mid 1\leq j\leq 2^{p-1}\}$. For each $1\leq j\leq 2^{p-1}$, we apply Lemma \ref{iteratedremark} to $g_{\frac{2j-1}{2^p}}$ to obtain a homotopy $H_{p,j}:D[\frac{j-1}{2^{p-1}},\frac{j}{2^{p-1}}]\times I\to \mcz[\frac{j-1}{2^{p-1}},\frac{j}{2^{p-1}}]$ from $g_{\frac{2j-1}{2^p}}$ to a concatenation \[\Gamma_{p,j}=(\lambda_{[\frac{j-1}{2^{p-1}},\frac{4j-3}{2^{p+1}}]}\ast g_{\frac{4j-3}{2^{p+1}}})\cdot \alpha_{\frac{2j-1}{2^p}}\cdot \beta_{\frac{2j-1}{2^p}}\cdot (\lambda_{[\frac{j-1}{2^{p-1}},\frac{4j-1}{2^{p+1}}]}\ast g_{\frac{4j-1}{2^{p+1}}})\]
viewed as an $n$-loop with domain $D[\frac{j-1}{2^{p-1}},\frac{j}{2^{p-1}}]$ and 
where 
\begin{itemize}
\item $\alpha_{\frac{2j-1}{2^p}}\in \Omega^n(A_{\frac{2j-1}{2^p}}^{\ast},\frac{2j-1}{2^p})$,
\item $\beta_{\frac{2j-1}{2^p}}\in \Omega^n(B_{\frac{2j-1}{2^p}},\frac{2j-1}{2^p})$,
\item $g_{\frac{4j-3}{2^{p+1}}}\in \Omega^n(\mcz[\frac{j-1}{2^{p-1}},\frac{2j-1}{2^p}],\frac{4j-3}{2^{p+1}})$,
\item $g_{\frac{4j-1}{2^{p+1}}}\in \Omega^n(\mcz[\frac{2j-1}{2^p},\frac{j}{2^{p-1}}],\frac{4j-1}{2^{p+1}})$. 
\end{itemize}
Moreover, since $f$ is in factored form, then we may choose $g_{\frac{4j-3}{2^{p+1}}},g_{\frac{4j-1}{2^{p+1}}}$ to be in factored form and we may choose the homotopy $H_{p,j}$ to have image in $\mcy[\frac{j-1}{2^{p-1}},\frac{j}{2^{p-1}}]\cup \im\left(g_{\frac{2j-1}{2^p}}\right)$.

Now, we define the restriction of $h_{p+1}$ to $D[\frac{j-1}{2^{p-1}},\frac{j}{2^{p-1}}]$ to agree with $\Gamma_{p,j}$. We take the homotopy $H_p$ from $h_p$ to $h_{p+1}$ to be the constant homotopy on $I^n\backslash \bigcup\{int(D[\frac{j-1}{2^{p-1}},\frac{j}{2^{p-1}}])\mid 1\leq j\leq 2^{p-1}\}$ and, for each $1\leq j\leqq 2^{p-1}$ to agree with $H_{p,j}$ on $D[\frac{j-1}{2^{p-1}},\frac{j}{2^{p-1}}]\times I$. By the standard Pasting Lemma, $H_p$ is continuous. Note that this definition ensures that 
\begin{itemize}
\item[$(\ast)$] $H_q(D[\frac{j-1}{2^{p-1}},\frac{j}{2^{p-1}}]\times I)\subseteq \mcz[\frac{j-1}{2^{p-1}},\frac{j}{2^{p-1}}]$ for all $q\geq p$.
\end{itemize}
To define the infinite vertical concatenation of the homotopies $\{H_p\}_{p\in\bbn}$, we first define the limit map $h_{\infty}\in \Omega^n(\mcz[0,1],1/2)$. For each $\bfx\in I^n\backslash C$, let $h_{\infty}(\bfx)$ be the limit of the eventually constant sequence $\{h_p(\bfx)\}_{p\in\bbn}$. In particular $h_{\infty}$ agrees with $h_p$ on $I^n\backslash C_p$. If $\bfx \in C$, then for each $p\in\bbn$, there is a unique $1\leq i_p\leq 2^{p-1}$ such that $\bfx\in K_p=D[\frac{i_p-1}{2^{p-1}},\frac{i_p}{2^{p-1}}]$. By construction, these intervals $I_p=[\frac{i_p-1}{2^{p-1}},\frac{i_p}{2^{p-1}}]$ are nested as $I_1\supseteq I_2\supseteq I_3\supseteq \cdots$ and there is a unique real number $t_{\bfx}\in \ui$ such that $\bigcap_{p\in\bbn}I_p=\{t_{\bfx}\}$. We define $h_{\infty}(\bfx)=t_{\bfx}$. The binary decimal representation of any $t\in \ui$ shows that $t$ is the intersection of such a sequence $\{I_p\}_{p\in\bbn}$ (this sequence is unique if $t\in \ui\backslash\scrd$ and there are two if $t\in\scrd$). Since $\mcz[0,1]$ only has spaces attached at dyadic rationals, our choice of definition of the spaces $\mcz[s,t]$ to not include the attachment spaces at the endpoints $s,t$ also ensures that \[\bigcap_{p\in\bbn}\mcz\left[\frac{i_p-1}{2^{p-1}},\frac{i_p}{2^{p-1}}\right]=\{t_{\bfx}\}.\]In this way, $h_{\infty}$ maps $C$ onto $\ui$ in the same manner that the classical Ternary Cantor function maps the Ternary Cantor set onto $\ui$.

Since $\mcz[\frac{j-1}{2^{p-1}},\frac{j}{2^{p-1}}]$ is closed in $\mcz[0,1]$, it follows from $(\ast)$ that $h_{\infty}(D[\frac{j-1}{2^{p-1}},\frac{j}{2^{p-1}}])\subseteq \mcz[\frac{j-1}{2^{p-1}},\frac{j}{2^{p-1}}]$ for all $p\in\bbn$ and $1\leq j\leq 2^{p-1}$.

We check the four conditions for the continuity of $h_{\infty}$ and the infinite vertical concatenation $H_{\infty}$ as stated in Remark \ref{verticalinfconcatremark}.
\begin{enumerate}
\item By construction, $H_p$ is constant on $I^n\backslash \int(C_p)$,
\item By construction, if $\bfx\in I^n\backslash C$, then $h_{\infty}(\bfx)$ is be the eventual value of the limit of the eventually constant sequence $\{h_p(\bfx)\}_{p\in\bbn}$.
\item We check that $h_{\infty}|_{C}:C\to \ui$ is continuous. Let $U$ be an open neighborhood of $h_{\infty}(\bfx)=t_{\bfx}$ and recall that we have $\bfx\in \bigcap_{p\in\bbn}\int(K_p)$ for $K_p=D[\frac{i_p-1}{2^{p-1}},\frac{i_p}{2^{p-1}}]$, $p\in\bbn$. Since $\bigcap_{p\in\bbn}I_p=\{t_{\bfx}\}$, we may find $p\in\bbn$ such that $I_{p}=[\frac{i_p-1}{2^{p-1}},\frac{i_p}{2^{p-1}}]\subseteq U$. As we have already observed,  $h_{\infty}(K_p)\subseteq \mcz [\frac{i_p-1}{2^{p-1}},\frac{i_p}{2^{p-1}}]$. Thus $\bfx\in \int(K_p)$ and $h_{\infty}(C\cap \int(K_p))\subseteq I_p\subseteq U$. 

\item Suppose $\{\bfx_p\}\to \bfx$ where $\bfx_p\in C_p $. Since $\bfx\in C$, we have $h_{\infty}(\bfx)=t_{\bfx}$ as before. Let $U$ be an open neighborhood of $t_{\bfx}$ in $\mcz[0,1]$. We check that $H_p(\{\bfx_p\}\times I)\subseteq U$ for all but finitely many $p$. Suppose to obtain a contradiction that there are natural numbers $p_1<p_2<p_3<\cdots$ and $s_{i}\in \ui$ such that $H_p(\bfx_{p_i},s_i)\notin U$. For each $i\in\bbn$, we have $\bfx_{p_i}\in D[\frac{j_{i}-1}{2^{p_i}},\frac{j_{i}}{2^{p_i-1}}]$ for some $1\leq j_{i}\leq 2^{p_i-1}$ and $\{t_{\bfx}\}= \bigcap_{i\in\bbn}[\frac{j_{i}-1}{2^{p_i}},\frac{j_{i}}{2^{p_i-1}}]$. Find $i_0\in\bbn$ such that $[\frac{j_{i_0}-1}{2^{p_i-1}},\frac{j_{i_0}}{2^{p_i-1}}]\subseteq U\cap \ui$. Since $H_{p_i}(D[\frac{j_i-1}{2^{p_i-1}},\frac{j_i}{2^{p_i-1}}])\subseteq \mcz[\frac{j_{i}-1}{2^{p_i-1}},\frac{j_{i}}{2^{p_i-1}}]$, the points $H_{p_i}(\bfx_{p_i},s_i)$, $i\geq i_0$ must lie an attachment space, that is, there exists a dyadic rational $d_i\in [\frac{j_{i}-1}{2^{p_i-1}},\frac{j_{i}}{2^{p_i-1}}]$ such that $H_{p_i}(\bfx_{p_i},s_i)\in (A_{d_i}^{\ast}\cup B_{d_i})\backslash\{d_i\}$. Then $\{d_i\}_{i\in\bbn}\to t_{\bfx}$. However, $\mcz[0,1]$ has shrinking adjunction decomposition with core $[0,1]$ and attachment spaces $A_{d}^{\ast}\vee B_{d}$, $d\in\scrd$. Therefore, by Lemma \ref{opensetlemma}, we must have $(A_{d_i}^{\ast}\cup B_{d_i})\subseteq U$ for all but finitely many $i\geq i_0$; a contradiction.
\end{enumerate}

For every $d\in\scrd$, we have $h_{\infty}|_{R_d}\equiv \alpha_{d}\in \Omega^n(A_{d}^{\ast},d)$ and $h_{\infty}|_{S_d}\equiv \beta_{d}\in \Omega^n(B_d,d)$. Moreover $h_{\infty}\left(I^n\backslash \left(\bigcup_{d\in\scrd}\int(R_d)\cup \int(S_d)\right)\right)\subseteq [0,1]$. Since $h_{\infty}$ agrees with $h_p$ on $S_d$ and $h_p$ is in factored form, we conclude that $h_{\infty}$. 

Finally, each map $h_p$ and homotopy $H_p$ was constructed so that $\im(h_p)\subseteq \im(H_p)\subseteq Y\cup \im(g_{1/2})$. When $\bfx\notin C$, $h_{\infty}(\bfx)\in \bigcup_{p\in\bbn} h_{p}(I^n\backslash C_p)$ and $h_{\infty}(C)=[0,1]$. Hence, $\im(h_{\infty})\subseteq [0,1]\cup\bigcup_{p\in\bbn} \im(h_p)\subseteq Y\cup \im(g_{1/2})$. This gives the bound $\im(H_{\infty})=\im(h_{\infty})\cup\bigcup_{p\in\bbn} \im(H_p)\subseteq Y\cup \im(g_{1/2})$.
\end{proof}

The next theorem is the combination of Lemmas \ref{zeroonelemma} and \ref{wirefactorizationlemma} and should be considered the main result of this section. Although we allow $A_{j}^{\ast}$ to maintain its established meaning, we release all other fixed notation in order to state the most general version of this result.

\begin{theorem}\label{arctheorem}
Let $Y=\shadj([0,1],x_j,A_j,a_j)$ where each $(A_j,a_j)$ is sequentially $(n-1)$-connected, $Z=\shadj(Y,y_i,B_i,b_i)$, and $z_0\in [0,1]$. If $f\in \Omega^{n}(Z,z_0)$ is in factored form, then $f$ is homotopic in $Y\cup \im(f)$ to a map $g\in \Omega^n(Z,z_0)$ such that
\begin{itemize}
\item $g$ is in factored form with respect to the decomposition $Z=\shadj(Y,y_i,B_i,b_i)$,
\item $g$ is in single-factor form with respect to the decomposition of $Z$ with core $[0,1]$ and attachment spaces $B_i$, $y_i\in [0,1]$ and $A_{j}^{\ast}$, $j\in \bbn$.
\end{itemize}
\end{theorem}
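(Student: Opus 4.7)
The plan is to combine Lemmas \ref{zeroonelemma} and \ref{wirefactorizationlemma} directly, after first reducing to the case $z_0 = 0$. If $z_0 \in [0,1]$ is arbitrary, I will path-conjugate by the linear path $\lambda_{[0,z_0]} \subseteq [0,1] \subseteq Y$ to obtain $\tilde f = \lambda_{[0,z_0]} \ast f \in \Omega^n(Z,0)$. Path-conjugation by a path in the core $[0,1]$ wraps the original loop into a central sub-cube of the domain while mapping the surrounding annular region into $[0,1]$; this preserves factored form with respect to $Z=\shadj(Y,y_i,B_i,b_i)$ and only adds arc material to the image, so that any homotopy of $\tilde f$ inside $Y \cup \im(\tilde f) = Y \cup \im(f)$ path-conjugates back to a homotopy of $f$ inside the same set. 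Hence there is no loss in assuming $z_0=0$.

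With $f \in \Omega^n(Z,0)$ in factored form, Lemma \ref{zeroonelemma} produces a homotopy in $Y \cup \im(f)$ from $f$ to
\[
(\alpha_0 \cdot \beta_0) \cdot (\lambda_{[0,1/2]} \ast g_{1/2}) \cdot (\lambda_{[0,1]} \ast (\alpha_1 \cdot \beta_1)),
\]
where $\alpha_t \in \Omega^n(B_t,t)$, $\beta_t \in \Omega^n(A_t^{\ast},t)$ for $t \in \{0,1\}$, and $g_{1/2}\in \Omega^n(\mcz[0,1],1/2)$ is in factored form. Applying Lemma \ref{wirefactorizationlemma} to $g_{1/2}$ produces a homotopy in $\mcy[0,1] \cup \im(g_{1/2}) \subseteq Y \cup \im(f)$ from $g_{1/2}$ to a map $h_\infty \in \Omega^n(\mcz[0,1],1/2)$ that is simultaneously in factored form and in single-factor form with $n$-domain $\{R_d,S_d \mid d \in \scrd\}$, where $h_\infty|_{R_d}\equiv \alpha_d \in \Omega^n(A_d^{\ast},d)$ and $h_\infty|_{S_d}\equiv \beta_d\in \Omega^n(B_d,d)$. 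Splicing this homotopy into the middle factor of the previous concatenation yields
\[
g' = (\alpha_0\cdot \beta_0)\cdot (\lambda_{[0,1/2]}\ast h_\infty)\cdot (\lambda_{[0,1]}\ast(\alpha_1\cdot\beta_1)),
\]
and path-conjugating once more by $\lambda_{[z_0,0]}$ gives the desired map $g$.

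It remains to verify that $g$ is in single-factor form with respect to the coarser decomposition of $Z$ with core $[0,1]$ and attachment spaces $\{B_i \mid y_i\in[0,1]\} \cup \{A_j^{\ast} \mid j \in \bbn\}$, while remaining in factored form with respect to $Z=\shadj(Y,y_i,B_i,b_i)$. For the coarse decomposition, the outer factors $\alpha_0,\beta_0,\alpha_1,\beta_1$ contribute the required single $n$-cubes at the endpoints $0,1$, while the $n$-cubes $R_d, S_d$ of $h_\infty$ (transported inside the domain of $\lambda_{[0,1/2]}\ast h_\infty$, which again only contributes arc image) contribute the single cubes at the remaining dyadic attachment points; since $\{0,1\}\cup\scrd$ exhausts $\{x_j\}$, every attachment space receives exactly one $n$-cube, and the complement of these cubes maps into $[0,1]$. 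The fine-grained factored form is preserved because Lemma \ref{wirefactorizationlemma} preserves it for $h_\infty$, and because the outer factors $\alpha_t,\beta_t$ are supplied by Lemma \ref{zeroonelemma} (via Lemma \ref{bigtimelemma}) already in factored form. The main obstacle is this simultaneous bookkeeping: one must check that the sub-cubes produced by $h_\infty$, after being squeezed into the central cube of the path-conjugation $\lambda_{[0,1/2]}\ast h_\infty$ and concatenated with $\alpha_0\cdot\beta_0$ and $\lambda_{[0,1]}\ast(\alpha_1\cdot\beta_1)$, remain pairwise disjoint and avoid the cubes carrying the endpoint factors; but this is automatic from the pairwise disjoint sub-cube structure of concatenation and path-conjugation. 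The entire composite homotopy $f \simeq g$ is contained in $Y \cup \im(f)$ since each step and each path-conjugation has image bounded inside this set.
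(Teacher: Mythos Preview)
Your proposal is correct and follows essentially the same approach as the paper: path-conjugate to base at $0$, apply Lemma~\ref{zeroonelemma} followed by Lemma~\ref{wirefactorizationlemma}, and path-conjugate back, with all homotopies bounded in $Y\cup\im(f)$. The only item you leave implicit is the preliminary reduction (Remark~\ref{simplifyingremark}) that arranges the attachment points to be the dyadic rationals so that the notation $\scrd$, $A_d^{\ast}$, $B_d$ used in those lemmas applies; the paper mentions this explicitly at the outset.
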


\begin{proof}
By performing the reduction at the beginning of the section, we may assume that $Z$ has the form as described in Remark \ref{shrinkingwedgesection} and we may employ the relevant established notation. Since $f$ is in factored form $\lambda_{[0,z_0]}\ast f\in \Omega^n(Z,0)$. Together, Lemmas \ref{zeroonelemma} and \ref{wirefactorizationlemma} show that $\lambda_{[0,z_0]}\ast f$ is homotopic to a map $h\in \Omega^n(Z,0)$, which satisfies the two conditions in the statement of the lemma. Moreover if $H$ is the homotopy from $\lambda_{[0,z_0]}\ast f$ to $h$, then we may choose $H$ so that $\im(H)\subseteq Y\cup \im(\lambda_{[0,z_0]}\ast f)=Y\cup \im(f)$. The path-conjugate $g=\lambda_{[z_0,0]}\ast h\in \Omega^n(Z,z_0)$ also satisfies the two conditions in the statement and the homotopy $H$ extends to a homotopy $ \lambda_{[z_0,0]}\ast (\lambda_{[0,z_0]}\ast f)\simeq g$ with image in $\im(H)\cup [0,1]\subseteq Y\cup \im(f)$. Finally, the homotopy $f\simeq \lambda_{[z_0,0]}\ast (\lambda_{[0,z_0]}\ast f)$ excising the path-conjugates has image in $[0,1]\cup \im(f)$. The vertical composition of these homotopies gives the desired homotopy $f\simeq g$.
\end{proof}

The above results assume that $f$ is already in factored form. We must assume the attachments spaces $(B_i,b_i)$ are sequentially $(n-1)$-connected ensures that any $n$-loop in $Z$ can be homotoped to such a form. In the next corollary, we consider the case from above where each space $A_j$ is a one-point space.

\begin{corollary}\label{arcorollary}
Let $Y=\shadj([0,1],y_i,B_i,b_i)$ where each $(B_i,b_i)$ is sequentially $(n-1)$-connected and let $y_0\in [0,1]$. Every $n$-loop $f\in\Omega^n(Y,y_0)$ is homotopic rel. $\partial I^n$ to an $n$-loop $g$ that is in single-factor form. Moreover, the canonical homomorphism $\phi:\pi_n(Y,y_0)\to \prod_{i\in\bbn}\pi_n(B_i,b_i)$ is an isomorphism.
\end{corollary}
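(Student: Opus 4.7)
The plan is to treat the corollary as the special case of Theorem~\ref{arctheorem} in which every attachment space $A_j$ of the theorem is taken to be a one-point space. Since a singleton is trivially sequentially $k$-connected for all $k$, this specialization is admissible. Under it, the theorem's core $Y$ collapses to $[0,1]$, the theorem's $Z$ becomes the current corollary's $Y$, and because the only $B_i$'s present are attached at points of $[0,1]$, single-factor form with respect to the theorem's second decomposition coincides with single-factor form in the decomposition $\shadj([0,1],y_i,B_i,b_i)$.

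To produce the factored-form $g$, I would first apply Lemma~\ref{factoredformsequencelemma1} to replace $f$, up to homotopy rel.\ $\partial I^n$, by an $n$-loop in factored form (this step uses that each $(B_i,b_i)$ is sequentially $(n-1)$-connected). Theorem~\ref{arctheorem} then produces a homotopy rel.\ $\partial I^n$ from this factored-form loop to a loop $g$ in single-factor form, completing the first assertion.

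For the homomorphism $\phi$, surjectivity is immediate from Theorem~\ref{splittheorem}: that theorem yields a split epimorphism
\[
\Psi:\pi_n(Y,y_0)\to \pi_n([0,1],y_0)\oplus \prod_{i\in\bbn}\pi_n(B_i,b_i),
\]
and since $\pi_n([0,1],y_0)=0$, the second component of $\Psi$ is precisely $\phi$ and is a split epimorphism. For injectivity, suppose $\phi([f])=0$. Using the first part, replace $f$ with a single-factor form representative $g$, whose factorization $\{R_i\mid i\in\bbn\}$ satisfies $g|_{R_i}\in \Omega^n(B_i,b_i)$ and $g\!\left(I^n\setminus\bigcup_i\mathrm{int}(R_i)\right)\subseteq [0,1]$. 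Collapsing every other $B_j$ to its attachment point identifies $[g|_{R_i}]$ with the $i$-th coordinate of $\phi([g])=\phi([f])$, so each factor $g|_{R_i}$ is null-homotopic in $B_i$. By Lemma~\ref{singlefactorlemma}, $g$ is then homotopic rel.\ $\partial I^n$ to $r\circ g\in\Omega^n([0,1],y_0)$, where $r:Y\to [0,1]$ is the canonical retraction. Since $[0,1]$ is contractible, $r\circ g$ is null-homotopic, and hence so is $f$.

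The conceptual work has already been carried out in Theorem~\ref{arctheorem} and the supporting lemmas; the only thing to verify carefully is the bookkeeping identifying the two notions of single-factor form in the degenerate case. I do not expect a significant obstacle here, since with all $A_j$'s equal to singletons the sets $T_j$ used in the proof of Theorem~\ref{arctheorem} are empty and the $A_j^{\ast}$ factors disappear, so the output of the theorem is literally a single-factor form as required by Definition~\ref{fullyfactoredef}.
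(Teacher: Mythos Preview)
Your proposal is correct and follows essentially the same route as the paper: apply Lemma~\ref{factoredformsequencelemma1} to reach factored form, specialize Theorem~\ref{arctheorem} (with each $A_j$ a one-point space) to obtain single-factor form, invoke Theorem~\ref{splittheorem} for surjectivity of $\phi$, and for injectivity use Lemma~\ref{singlefactorlemma} together with the contractibility of $[0,1]$. Your added remark that the $A_j^{\ast}$ factors degenerate to points (so that the single-factor form produced by Theorem~\ref{arctheorem} coincides with that of Definition~\ref{fullyfactoredef}) is a helpful clarification the paper leaves implicit.
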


\begin{proof}
By Lemma \ref{factoredformsequencelemma1}, we may assume that $f$ is in factored form. By Theorem \ref{arctheorem}, $f$ is homotopic to a map $g$ in single-factor form. For the second statement, recall that $\phi$ is surjective by Theorem \ref{splittheorem}. Suppose $[g]\in\pi_n(Y,y_0)$ where $\phi([g])=([r_i\circ g])_{j\in \bbn}$ is trivial where $r_i:Y\to B_i$ is the canonical retraction. By the first part, we may assume $g$ is in single-factor form. Let $\{\scrc_i\}_{i\in\bbn}$ be a factorization of $g$ with $\scrc_i=\emptyset$ or $\scrc_i=\{R_i\}$. Since $r_i\circ g\simeq g|_{R_i}$ is null-homotopic in $B_i$ for all $i\in\bbn$, it follows from Lemma \ref{singlefactorlemma} that $g$ is null-homotopic. Thus $\phi$ is injective.
\end{proof}

\subsection{Homotopy factorization for a dendrite core}

Here, we apply the results of the previous subsection to prove analogous results for shrinking adjunction spaces with dendrite core (see Figure \ref{fig6}). Recall from Example \ref{dendriteexample} that a dendrite is a locally arc-wise connected and uniquely arc-wise connected compact metric space. Every dendrite is sequentially $n$-connected for all $n\geq 0$.

\begin{figure}[H]
\centering \includegraphics[height=1.7in]{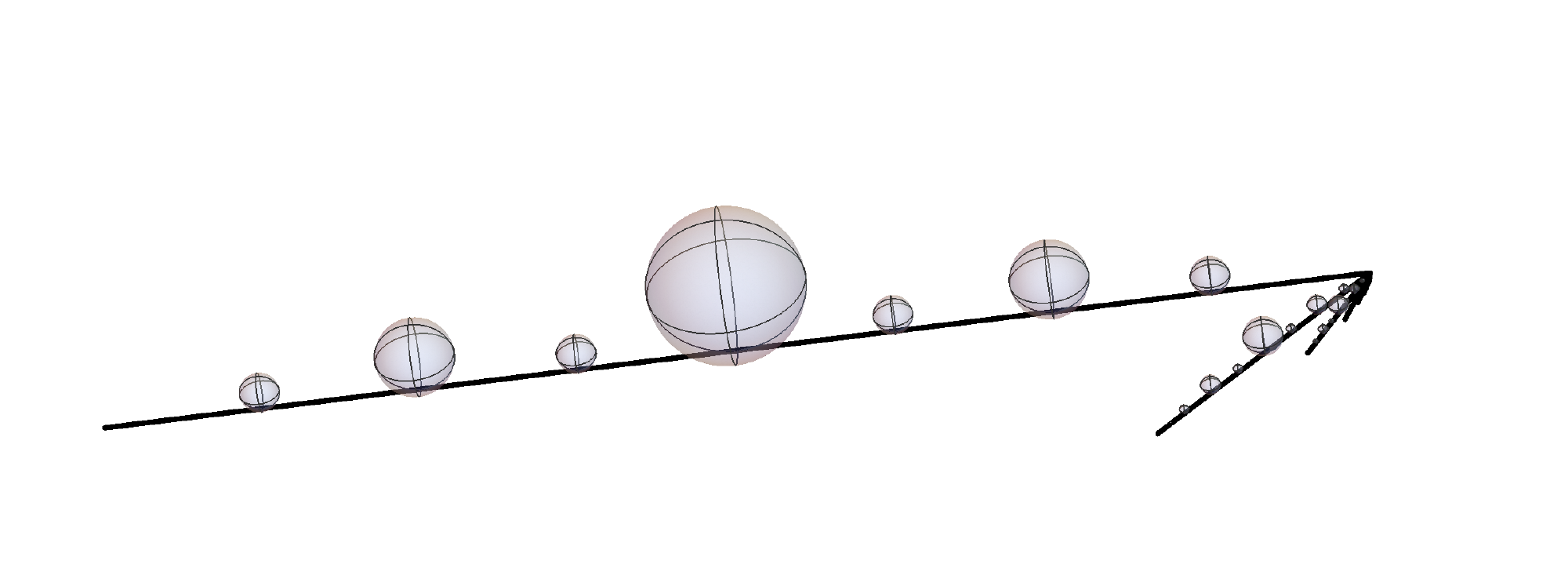}
\caption{\label{fig6} A shrinking adjunction space obtained by attaching $2$-spheres to a shrinking wedge of arcs.}
\end{figure}

Most of our effort will go toward proving the following theorem.

\begin{theorem}\label{dendritetheorem}
Let $Z=\shadj(Y,y_i,B_i,b_i)$ where $Y$ is a dendrite and $z_0\in Y$. If $f\in \Omega^{n}(Z,z_0)$ is in factored form, then $f$ is homotopic in $Y\cup \im(f)$ to an $n$-loop $f_{\infty}\in \Omega^n(Z,z_0)$ that is in single-factor form.
\end{theorem}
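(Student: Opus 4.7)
The plan is to reduce Theorem \ref{dendritetheorem} to iterated application of the arc theorem (Theorem \ref{arctheorem}) together with an infinite vertical concatenation (Remark \ref{verticalinfconcatremark}). First, I would express the dendrite $Y$ as an increasing union of finite subtrees $T_1 \subseteq T_2 \subseteq \cdots$ with $\overline{\bigcup_p T_p} = Y$, where each $T_{p+1} = T_p \cup L_{p+1}$ is obtained by adjoining an arc $L_{p+1}$ meeting $T_p$ at exactly one endpoint. Such a filtration exists because dendrites have countable dense sets and the convex hull in a dendrite of finitely many points is a finite tree; one can take $T_p$ to be the convex hull of $\{z_0, q_2, \ldots, q_p\}$ for a chosen dense sequence $\{q_p\}$.

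Starting from $f_1 = f$, I would inductively build $f_{p+1}$ from $f_p$ by applying Theorem \ref{arctheorem} with the arc $L_{p+1}$ as core. The attachment spaces in $Z$ to $L_{p+1}$ consist of the sub-dendrites of $Y$ meeting $L_{p+1}$ at single points (each sequentially $n$-connected by Example \ref{dendriteexample}, hence $(n-1)$-connected as required) together with the $B_i$'s attached to points of $L_{p+1}$. The application is local to the sub-cubes of $I^n$ whose image under $f_p$ lies in this ``$L_{p+1}$-piece''; since $f_p$ is in factored form globally, so is its restriction. The arc theorem then produces a homotopy $H_p: f_p \simeq f_{p+1}$ with image confined to $Y \cup \im(f_p) \subseteq Y \cup \im(f)$, where $f_{p+1}$ is in single-factor form over the $L_{p+1}$-piece—every $B_i$ attached to $L_{p+1}$ gets a single preimage cube—while each sub-dendrite still attached at $L_{p+1}$ corresponds to a single sub-cube awaiting further processing at later steps.

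I would then take $f_\infty$ and $H_\infty$ to be the infinite vertical concatenation of $\{H_p\}$. The decreasing closed sets $C_p \subseteq I^n$ are the finite unions of sub-cubes at stage $p$ corresponding to unprocessed sub-dendrites; their intersection $C$ is totally disconnected, and $f_\infty$ will send $C$ into $Y$. Single-factor form for $f_\infty$ will then follow because every $y_i$ eventually lies in some $T_p$, after which the $B_i$-cube is stable under all $H_q$ for $q \geq p$.

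The main obstacle will be verifying condition (4) of Remark \ref{verticalinfconcatremark}: for any sequence $\bfx_p \in C_p$ converging to $\bfx \in C$, every neighborhood of $f_\infty(\bfx) \in Y$ in $Z$ must eventually contain $H_p(\{\bfx_p\} \times I)$. The key ingredient here is Lemma \ref{opensetlemma}: in the shrinking adjunction space $Z$, any neighborhood of a point of $Y$ contains all but finitely many attached sub-dendrites and $B_i$'s converging to that point. Combined with the fact that the sub-dendrites still pending at stage $p$ shrink in diameter as $T_p$ becomes dense in $Y$ (so that for large $p$, $H_p$ is supported in very small attached pieces), this gives the required containment. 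This step is the natural analogue of the continuity verification at the Cantor set in the proof of Lemma \ref{wirefactorizationlemma}, but it is more delicate because the combinatorial ``branching'' structure of the dendrite replaces the uniform dyadic subdivision of the arc case.
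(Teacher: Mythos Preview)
Your approach is essentially the paper's: decompose the dendrite into arcs (the paper via the inverse-limit filtration of Remark~\ref{approximationremark}, you via convex hulls of a dense sequence), process one arc at a time by applying Theorem~\ref{arctheorem} to the single pending sub-cube over that arc, and assemble the limit by infinite vertical concatenation, with the residual totally disconnected set $C$ sent to infinite ends of $Y$. One point to make explicit: your claim that ``every $y_i$ eventually lies in some $T_p$'' requires choosing the dense sequence $\{q_p\}$ to include all of the attachment points $y_i$ (otherwise a $y_i$ lying in $End(Y)$ would never be reached and its $B_i$-factor never isolated); the paper handles this instead by a preliminary reduction (Lemma~\ref{universalreduction}) that adjoins a short arc to $Y$ at each $y_i$, guaranteeing $y_i\notin End(Y)$ and hence $y_i\in\bigcup_p T_p$.
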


First, we recall some well-known characterizations of dendrites \cite[Chapter 10]{Nadler}. Fix a non-degenerate dendrite $Y$, that is, a dendrite which contains at least one arc. A point $y\in Y$ is a \textit{cut-point} if $Y\backslash\{y\}$ has more than one connected component. If $Y\backslash\{y\}$ has at least three components, then we call $y$ a \textit{branch-point}. If $y\in Y$ is not a cut-point, then we call it an \textit{end-point} or simply an \textit{end} of $Y$. Let $Cut(Y)$, $Br(Y)$, and $End(Y)=Y\backslash Cut(Y)$ denote the subsets of cut-points, branch-points, and end-points respectively. These sets are related to each other and $Y$ in the following ways:
\begin{itemize}
\item $Br(Y)\subseteq Cut(Y)$ and $Cut(Y)\backslash Br(Y)$ is empty or a disjoint union of open arcs.
\item $End(Y)$ may be uncountable but $Br(Y)$ must be countable. 
\item $End(Y)\cup Br(Y)$ is closed in $Y$.
\end{itemize}

\begin{remark}[Viewing dendrites as shrinking adjunction spaces]\label{dendriteretractionremark}
If $A\subseteq Y$ is closed and connected, then $A$ is a dendrite. Let $\scrw$ be the set of connected components of $Y\backslash A$. For each $W\in\scrw$, $\ov{W}$ is a dendrite and there is a point $w\in A$ such that $\overline{W}\cap A=\{w\}$. If $\scrw=\{W_1,W_2,W_3,\dots\}$ is infinite, then $\lim_{j\to\infty}diam(\overline{W_j})= 0$. Since $Y$ is compact, we have shrinking adjunction decomposition $Y=\shadj(A,w_j,\overline{W_j},w_j)$. We refer to this decomposition of $Y$ as the \textit{$A$-decomposition} of $Y$.

There exists a deformation retraction $R_{A}:Y\times I\to Y$ of $Y$ onto $A$. Such a retraction may be constructed by setting $R_{A}(a,t)=a$ when $a\in A$ and defining the restriction of $R_{A}|_{\overline{W}\times I}:\overline{W}\times I\to \overline{W}$, $W\in\scrw$ to be any choice of basepoint-preserving contraction of $\overline{W}$ to $w_j\in \ov{W}\cap A$.
\end{remark}

\begin{example}[Wazewski universal dendrite]\label{universaldendriteexample}
It is instructive to consider the above decomposition of a dendrite in the ``worst case" scenario, that is, when $Y$ is the \textit{Wazewski universal dendrite} (see \cite[Chapter K]{Whyburn} and \cite[10.37]{Nadler}). This dendrite may be constructed directly in the plane inductively by starting with a homeomorphic copy of $\sw_{\bbn}(I,0)$, attaching smaller copies of this space in a dense fashion, and taking the closure to obtain uncountably many ends. Every dendrite is homeomorphic to a subspace (and, therefore, is a deformation retract) of the universal dendrite.
\end{example}

\begin{figure}[H]
\centering \includegraphics[height=1.4in]{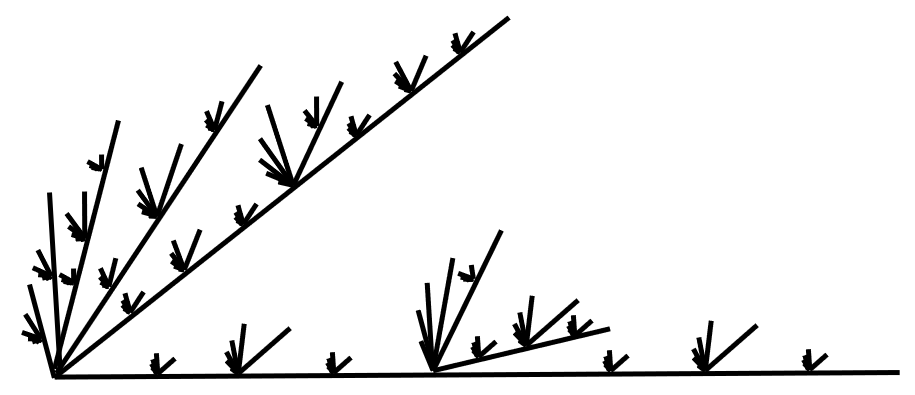}
\caption{\label{fig7}The Wazewski universal dendrite}
\end{figure}

\begin{remark}[Approximation by finite trees]\label{approximationremark}
We will also use a choice of a sequence of arcs in $Y$ to characterize $Y$ as an inverse limit of finite trees. By Theorem 10.27 and Corollary 10.28 of \cite{Nadler}, any non-degenerate dendrite $Y$ may be written as the union of $End(Y)$ and an infinite tree $T=\bigcup_{p=1}^{\infty}X_p$ where
\begin{itemize}
\item for all $p\in\bbn$, $X_p$ is an arc,
\item for all $p\in\bbn$, $T_p=\bigcup_{i=1}^{p}X_i$ is a finite tree such that  $X_{p+1}\cap T_p=\{x_{p+1}\}$ is an end-point of $X_{p+1}$,
\item $\ds\lim_{p\to\infty}diam(X_p)=0$.
\end{itemize}
There is a retraction $\delta_{p+1,p}:T_{p+1}\to T_p$ that collapses $X_{p+1}$ to $x_{p+1}$. We may identify $Y$ with the inverse limit $\varprojlim_{p}(T_p,\delta_{p+1,p})$. Certainly, the choice of approximating trees $T_p$ for $Y$ is not unique.
\end{remark}

\begin{example}
While our proof of Theorem \ref{dendritetheorem} does not depend on our choice of arcs $\{X_p\}_{p\in\bbn}$, it is instructive to realize that (1) the weak topology on the tree $T$ with respect to the arcs $X_p$ will not always agree with the subspace topology inherited from $Y$ and (2) the end of the arc $X_p$ not attached to $T_{p-1}$ will only sometimes end up as an element of $End(Y)$. For instance, if $Y=[0,1]$, then the natural choice is to take $X_p=\left[\frac{p-1}{p},\frac{p}{p+1}\right]$ so that $T=[0,1)$. If $Y=\sw_{j\in\bbn}(I,0)$, then there is a choice of $\{X_p\}_{p\in\bbn}$ that give $T=\sw_{j\in\bbn}([0,1),0)$ and another choice for which $T=Y$.
\end{example} 

In the next lemma, we reduce our proof of Theorem \ref{dendritetheorem} to a special case to which we can readily apply Theorem \ref{arctheorem}. Although it is possible to prove Theorem \ref{dendritetheorem} without this reduction, we find that it allows us to avoid extra casework.

\begin{lemma}\label{universalreduction}
Suppose the conclusion of Theorem \ref{dendritetheorem} holds for shrinking adjunction spaces $Z=\shadj(Y,y_i,B_i,b_i)$ where the set $\{y_i\mid i\in\bbn\}$ of attachment points enumerates a dense in $Y\backslash End(Y)$ and such that there is an inverse limit decomposition $Y=\varprojlim_{p}(T_p,\delta_{p+1,p})$ using arcs $X_p\subseteq Y$ for which $\{y_i\mid i\in\bbn\}$ meets each arc $X_p$ in infinitely many points. Then Theorem \ref{dendritetheorem} holds for all dendrites.
\end{lemma}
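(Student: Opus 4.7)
The plan is to reduce to the special case by adding countably many one-point attachment spaces to $Z$ at auxiliary points chosen so that the enlarged attachment sequence satisfies the two special hypotheses. By Remark \ref{distinctpointsremark}, inserting one-point summands does not alter the homeomorphism type of a shrinking adjunction space, so this reindexing produces a space canonically homeomorphic to $Z$ to which the assumed special case applies.

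Given a general $Z = \shadj(Y, y_i, B_i, b_i)$ and a map $f \in \Omega^n(Z, z_0)$ in factored form, first fix an inverse limit decomposition $Y = \varprojlim_p (T_p, \delta_{p+1,p})$ using arcs $X_p \subseteq Y$ as in Remark \ref{approximationremark}. I would then choose a countable set $D \subseteq Y \setminus End(Y)$ that is dense in $Y \setminus End(Y)$ and meets each arc $X_p$ in an infinite subset; such a $D$ is easily built by combining a countable dense subset of $Y \setminus End(Y)$ with countably many additional points from each interior $\int(X_p) \subseteq Cut(Y)$. Enumerate the distinct points of $\{y_i\}_{i \in \bbn} \cup D$ as a sequence $\{y_i'\}_{i \in \bbn}$, set $B_i' = B_i$ with $b_i' = b_i$ when $y_i'$ is one of the original attachment points, and set $B_i' = \{b_i'\}$ (a one-point space) otherwise. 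Let $Z' = \shadj(Y, y_i', B_i', b_i')$. By Remark \ref{distinctpointsremark}, there is a canonical homeomorphism $\eta : Z' \to Z$ restricting to the identity on $Y$ and on every original $B_i$, and collapsing each auxiliary one-point summand to its attachment point in $Y$. By construction, $Z'$ satisfies the two special hypotheses.

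Next, the map $\widetilde{f} = \eta^{-1} \circ f \in \Omega^n(Z', z_0)$ has the same image as $f$ and is in factored form with respect to the $Z'$-decomposition: the factorization of $f$ transports to one for $\widetilde{f}$ by assigning the empty $n$-domain to every auxiliary summand, whose non-basepoint preimage under $\widetilde{f}$ is empty. Applying the assumed special case of Theorem \ref{dendritetheorem} to $\widetilde{f}$ yields $\widetilde{g} \in \Omega^n(Z', z_0)$ in single-factor form with respect to $Z'$, together with a homotopy $\widetilde{f} \simeq \widetilde{g}$ whose image lies in $Y \cup \im(\widetilde{f}) = Y \cup \im(f)$. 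Pushing forward via $\eta$ produces $g = \eta \circ \widetilde{g}$ and a homotopy $f \simeq g$ in $Z$ with image in $Y \cup \im(f)$; the single-factor form of $\widetilde{g}$ descends to single-factor form for $g$ with respect to $Z$ because $\eta$ is the identity on every original attachment space and each auxiliary summand contributes no $n$-cubes. The main bookkeeping challenge is verifying this preservation of factored and single-factor form under $\eta$, but this reduces to the observation above; once checked, the reduction follows directly from the assumed special case together with the flexibility Remark \ref{distinctpointsremark} provides.
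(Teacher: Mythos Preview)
Your reduction has a genuine gap. The special case in the hypothesis requires the attachment set $\{y_i'\}$ to be contained in $Y\setminus End(Y)$ (it must enumerate a dense subset \emph{of} $Y\setminus End(Y)$), and indeed the subsequent proof of Theorem \ref{dendritetheorem} uses explicitly that no $y_i$ is an end of $Y$. But in a general $Z=\shadj(Y,y_i,B_i,b_i)$ some of the original attachment points $y_i$ may lie in $End(Y)$. Your construction merely adjoins a dense set $D\subseteq Y\setminus End(Y)$ of auxiliary points with one-point attachments; the enlarged sequence $\{y_i'\}=\{y_i\}\cup D$ still contains those original $y_i\in End(Y)$, so the special-case hypothesis fails and you cannot invoke it.

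The paper's proof repairs exactly this: rather than keeping $Y$ fixed, it first attaches a small arc $K_i$ to $Y'$ at each $y_i$, producing a strictly larger dendrite $Y$ in which every $y_i$ is now a cut-point (since $K_i$ sticks out at $y_i$). Only then does one add the auxiliary dense set of one-point attachments. The price is that the homotopy produced by the special case lives in $Y\cup\im(f)$, which is larger than $Y'\cup\im(f)$; one then composes with the retraction $r:Z\to Z'$ collapsing the extra arcs $K_i$ to push the homotopy back into $Y'\cup\im(f)$ and to check that single-factor form is preserved. Your ``add only one-point summands'' idea is the right spirit for the density and $X_p$-hitting conditions, but you also need the arc-attachment step (or some other device) to force every $y_i$ out of $End(Y)$.
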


\begin{proof}
Supposing the hypothesis, let $Y'$ be any dendrite, $z_0\in Y'$, and $Z'=\shadj(Y',y_i,B_i,b_i)$ where each $(B_i,b_i)$ is sequentially $(n-1)$-connected. Let $f\in\Omega^n(Z',z_0)$ be in factored form. As usual, by replacing all spaces $B_i$ attached at a fixed point of $Y$ with their (finite or infinite shrinking) wedge and implicitly using \ref{swissequentiallynconnected}, we may assume that $i\mapsto y_i$ is injective.

For each $i\in\bbn$, attach an arc $K_i$ of diameter $\frac{1}{i}$ to $Y'$ by identifying on one end of $K_i$ with $y_i$. The resulting space $Y$ is a dendrite for which $\{y_i\mid i\in\bbn\}\cap End(Y)=\emptyset$. Finds arcs $X_p$, $p\in\bbn$ in $Y$ such that $Y=\varprojlim_{p}(T_p,\delta_{p+1,p})$ as in Remark \ref{approximationremark}. Since $Y$ is hereditarily separable, we may find a sequence of distinct points $w_2,w_4,w_6,\dots$ in $End(Y)\backslash (Y\cup \{y_i\mid i\in \bbn\})$ such that $\scra=\{w_{2j}\mid j\in\bbn\}\cup \{y_i\mid i\in\bbn\}$ is dense in $Y\backslash End(Y)$ and such that $\scra$ meets $X_p$ in infinitely many points for all $p\in\bbn$. We combine these sequences by defining $w_{2i-1}=y_i$, $i\in\bbn$. For all $i\in\bbn$, let $A_{2i-1}=B_i$, $a_{2i-1}=b_i$, let $A_{2i}$ be a one-point space $\{a_{2i}\}$. Consider the shrinking adjunction space $Z=\shadj(Y,w_j,A_j,a_j)$ and note that $Z\backslash Z'=Y\backslash Y'$.

We now regard $f$ as an $n$-loop in $Z$. By assumption, $f$ is homotopic to a map $g\in \Omega^n(Z,z_0)$, which is in single-factor form, by a homotopy $H:I^n\times I\to Z$ with image in $Y\cap \im(f)$. Since $\bigcup_{j\in\bbn}A_j\subseteq Z'\cup Y$, the canonical retraction $Y\to Y'$ extends to a retraction $r:Z\to Z'$. Set $G=r\circ H$ and $g'=r\circ g\in \Omega^n(Z',z_0)$. Now $G:I^n\times I\to Z'$ is a homotopy from $r\circ f=f$ to $g'$. Since $\im(H)\subseteq Y\cap \im(f)$ and $r(\im(f))=\im(f)$, $\im(G)=r(\im(H))\subseteq Y'\cap \im(f)$. Moreover, since $\bigcup_{i\in\bbn}B_i\subseteq Z'$, the maps $g'$ and $g$ agree on $\bigcup_{i\in\bbn}g^{-1}(B_i\backslash\{b_i\})$. Therefore, if we take $\{\scrc_i\}_{i\in\bbn}$ to be a factorization of $g$ where each $\scrc_i$ contains at most one $n$-cube, then we may define a corresponding factorization $\{\scrd_j\}_{j\in\bbn}$ of $g'$ by $\scrd_{2i-1}=\scrc_i$ and $\scrd_{2i}=\emptyset$. We conclude that $g'$ is in single-factor form. This verifies that the conclusion of Theorem \ref{dendritetheorem} holds for $Y'$.
\end{proof}

In light of Lemma \ref{universalreduction}, we will, from now one, assume that $Z=\shadj(Y,y_i,B_i,b_i)$ where $Y=\varprojlim_{p}(T_p,\delta_{p+1,p})$ is a dendrite and $\{y_i\}_{i\in\bbn}$ is an enumeration of subset of $Y\backslash End(Y)$ that meets each $X_p$ in infinitely many points. Fix a basepoint $z_0=x_1\in X_1\backslash\{y_i\mid i\in\bbn\}$. 

\begin{remark}
As in the previous section, if $R$ as an $n$-cube and $z'\in Z'\subseteq Z$, we simply say that an $n$-loop $f:(R,\partial R)\to (Z',z')$ is in ``factored form" if it is in factored form with respect to $Z=\shadj(Y,y_i,B_i,b_i)$ when viewed as a map $f:(R,\partial R)\to (Z,z')$. 
\end{remark}

Using the inverse limit description of $Y$, we construct a sequence of sub-dendrites of $Y$. The set $\chi=\{X_p\mid p\in\bbn\}$ of arcs inherits a canonical partial ordering: $X_p\leq X_q$ if and only if there is a unique finite sequence $p=p_0< p_1<p_2<\cdots <p_n=q$ where $x_{p_j}\in X_{p_{j-1}}$ for all $1\leq j\leq n$, that is, if $X_{p_1}$ is attached to $X_p$, $X_{p_2}$ is attached to $X_{p_1}$, and so on. Set $X_{\geq p}=\bigcup_{q\geq p}X_q$. Then $Y_p=\ov{X_{\geq p}}$ is a dendrite and $Y_p\subseteq Y_q$ whenever $X_{p}\leq X_q$.

If $\bfs:=X_{p_1}<X_{p_2}<X_{p_3}<\cdots$ is an infinite increasing sequence in $\chi$, then $\bigcap_{j\in\bbn}Y_{p_j}$ consists of a single point $e_{\bfs}\in End(Y)$. In terms of the inverse limit characterization $Y=\varprojlim_{p}T_p$, $e_{\bfs}$ is identified with the unique sequence $(t_1,t_2,t_3,\dots)\in \prod_{p}T_p $ satisfying $t_{p_{j-1}}=x_{p_j}$ for all $j\in\bbn$. We refer to any end of the form $e_{\bfs}\in End(Y)$ as an \textit{infinite end} of $Y$. For any open neighborhood $U$ of $e_{\bfs}$, we have $Y_{p_j}\subseteq U$ for all but finitely many $j$.

Set $J_0=\{1\}$. For $p\geq 1$, let $J_p=\{q>p\mid X_q\geq X_p\text{ and }x_q\in X_p\}$, i.e. the indices $q$ for which the arc $X_q$ is directly attached to $X_p$ in the tree $T_q$. Note that the sets $J_p$, $p\geq 0$ form a partition of $\bbn$ and that, for $p\geq 1$, $J_p$ may be finite or infinite. Using the $X_p$-decomposition of $Y_p$ from Remark \ref{dendriteretractionremark}, each dendrite $Y_p$ has shrinking adjunction space decomposition \[Y_p=\shadj(X_p,\{x_q\}_{q\in J_p},\{Y_q\}_{q\in J_p},\{x_q\}_{q\in J_p}).\] Let $K_p=\{i\in\bbn\mid y_i\in Y_p\backslash\{x_p\}\}$ and $Z_p=Y_p\cup \{B_i\mid i\in K_p\}$. Notice that for each $p\in\bbn$, $Z_p$ has two relevant decompositions:
\begin{enumerate}
\item $\shadj(Y_p,\{y_i\}_{i\in K_p},\{B_i\}_{i\in K_p},\{b_i\}_{i\in K_p})$,
\item the decomposition with core $X_p$ and attachment spaces $\{Z_q\mid q\in J_p\}$ and $\{B_i\mid y_i\in X_{p}\backslash \{x_p\}\}$ (we have chosen $z_0\neq y_i$ for all $i$ to avoid a special case for $p=1$).
\end{enumerate}
Additionally, if $\bfs:=X_{p_1}<X_{p_2}<X_{p_3}<\cdots$ is an infinite increasing sequence in $\chi$, then $\bigcap_{j\in\bbn}Z_{p_j}=\{e_{\bfs}\}$. Since $X_p$ is an arc, the spaces $Z_p$ satisfy the hypotheses of Theorem \ref{arctheorem} and therefore permit its recursive application. To match the notation in the previous section, we let $X_{p}^{\ast}=X_p\cup \bigcup\{B_i\mid y_i\in X_{p}\backslash \{x_p\}\}$.

\begin{lemma}\label{opensetend}
Suppose $\bfs:=X_{p_1}<X_{p_2}<X_{p_3}<\cdots$ is an infinite increasing sequence in $\chi$ corresponding infinite end $e_{\bfs}\in End(Y)$. If $U$ is an open neighborhood of $e_{\bfs}$ in $Z$, then $Z_{p_j}\subseteq U$ for all but finitely many $j$.
\end{lemma}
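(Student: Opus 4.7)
The plan is to work directly from the definition of $Z$ as the inverse limit $\varprojlim_k Z_k$, where $Z_k = Y \cup B_1 \cup \cdots \cup B_k$ is the finite adjunction space, and to exploit the standing reduction from Lemma~\ref{universalreduction} that every attachment point $y_i$ lies in $Y \setminus End(Y)$; in particular $y_i \neq e_{\bfs}$ for every $i$. Given the open neighborhood $U$ of $e_{\bfs}$ in $Z$, I first pass to a basic open set of the form $\rho_k^{-1}(V) \subseteq U$ containing $e_{\bfs}$, where $V$ is open in $Z_k$. It then suffices to show that for all but finitely many $j$ the following two conditions hold: (i) $Y_{p_j} \subseteq V \cap Y$, and (ii) $K_{p_j} \cap \{1, \ldots, k\} = \emptyset$.

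Both (i) and (ii) will follow from one standard compactness fact: if $C_1 \supseteq C_2 \supseteq \cdots$ is a nested sequence of compact sets in a Hausdorff space with $\bigcap_j C_j = \{e\}$, then every open neighborhood of $e$ contains $C_j$ for all but finitely many $j$; otherwise one picks $c_j \in C_j$ outside the neighborhood, extracts a convergent subsequence via compactness of $C_1$, and contradicts $\bigcap_j C_j = \{e\}$. Applied to the nested compact dendrites $Y_{p_1} \supseteq Y_{p_2} \supseteq \cdots$ with $\bigcap_j Y_{p_j} = \{e_{\bfs}\}$ and the open set $V \cap Y \subseteq Y$, this yields (i). Applied to the same chain and the open set $Y \setminus \{y_i\}$ (open since $y_i \neq e_{\bfs}$) for each individual $i \in \{1,\ldots,k\}$, it gives $y_i \notin Y_{p_j}$ for large $j$; taking the maximum over the finitely many such $i$ gives (ii).

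For $j$ large enough to satisfy both (i) and (ii), any $i \in K_{p_j}$ must have $i > k$, so $\rho_k$ collapses $B_i$ onto $\{y_i\}$, and since $y_i \in Y_{p_j} \subseteq V$ we obtain $B_i \subseteq \rho_k^{-1}(V) \subseteq U$. Combined with (i), this yields $Z_{p_j} = Y_{p_j} \cup \bigcup_{i \in K_{p_j}} B_i \subseteq U$, as required. The only mildly delicate point is that, since the $B_i$ need not be compact, Lemma~\ref{opensetlemma} does not apply directly, which is why I work with basic inverse-limit neighborhoods $\rho_k^{-1}(V)$ rather than the subsequence characterization; apart from this bookkeeping, the argument presents no serious obstacle.
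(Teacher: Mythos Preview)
Your proof is correct and takes a genuinely different route from the paper's. The paper argues by contradiction: assuming $Z_{p_j}\setminus U\neq\emptyset$ for infinitely many $j$, it picks points $z_j\in Z_{p_j}\setminus U$, observes that (since eventually $Y_{p_j}\subseteq U$) each such $z_j$ must lie in some $B_{i_j}\setminus\{b_{i_j}\}$ with $y_{i_j}\in Y_{p_j}$, and then uses the convergence $y_{i_j}\to e_{\bfs}$ together with the subsequence property of the shrinking-adjunction topology (condition~(3) of Lemma~\ref{opensetlemma}) to force $B_{i_j}\subseteq U$ eventually, a contradiction. Your argument is direct: you reduce to a basic inverse-limit neighborhood $\rho_k^{-1}(V)$ and show that for large $j$ the finitely many indices $1,\dots,k$ have all left $K_{p_j}$, so every remaining $B_i$ is collapsed by $\rho_k$ into $V\cap Y\supseteq Y_{p_j}$.

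Your observation about compactness is well taken: Lemma~\ref{opensetlemma} is stated under the hypothesis that the core and attachment spaces are compact, and no such assumption is placed on the $B_i$ here. The paper's invocation is nevertheless justified, since the relevant implication---that every inverse-limit-open set satisfies condition~(3)---is established in the proof of Lemma~\ref{opensetlemma} without using compactness (it follows from continuity of the identity $Y_{sub}\to Y_{lim}$). Still, your approach sidesteps this entirely by working with the inverse-limit basis, which is cleaner. The two arguments are of comparable length; the paper's is slightly more conceptual (it isolates exactly which $B_i$ could cause trouble), while yours is more self-contained.
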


\begin{proof}
As noted earlier, we have $Y_{p_j}\subseteq U\cap Y$ for all but finitely many $j$. Suppose to obtain a contradiction, that we can find points $ Z_{p_j}\backslash U\neq\emptyset$ for infinitely many $j$. Replacing $\bfs$ with the subsequence of $X_{p_j}$ for which $ Z_{p_j}\backslash U\neq\emptyset$, we may assume that for every $j\in\bbn$, there exists a point $z_j\in Z_{p_j}\backslash U$. Then there exists $i_j\in\bbn$ such that $z_j\in B_{i_j}\backslash \{b_{i_j}\}$ and $b_{i_j}=y_{i_j}\in Y_{p_j}$. However, since $\{y_{i_j}\}_{j\in\bbn}\to e_{\bfs}$, Lemma \ref{opensetlemma} ensures that we must have $B_{i_j}\subseteq U$ for all but finitely many $j$; a contradiction.
\end{proof}

\begin{proof}[Proof of Theorem \ref{dendritetheorem}]
Suppose $Z=\shadj(Y,y_i,B_i,y_i)$ is a shrinking adjunction space where $Y$ is a non-degenerate dendrite with inverse limit structure $\varprojlim_{p}(T_p(Y),\delta_{p+1,p})$ given by arcs $X_p$, $p\in\bbn$. Suppose $z_0\in X_1$ and let $f\in \Omega^n(Z,z_0)$ be in factored form.

Recall that $Y=Y_1$ and $Z=Z_1$. Set $f_1=f$. Applying Theorem \ref{arctheorem}, we see that $f_1$ is homotopic to a map $f_2\in \Omega^n(Z,z_0)$ such that $f_2$ is in factored form and $f_2$ is in single-factor form with the decomposition of $Z_1$ into core $X_1$ and attachment spaces $\{Z_q\mid q\in J_1\}$ and $\{B_i\mid y_i\in X_{1}\backslash \{z_0\}\}$. We may choose the homotopy $H_1$ from $f_1$ to $f_2$ to have image in $Y\cup \im(f_1)$. Thus $(f_2)\subseteq Y\cup \im(f)$. Let $S_2=J_1$. Then there is an $n$-domain $\scrr_2=\{R_q\mid q\in S_2\}$ such that $(f_2)|_{R_q}\in \Omega^n(Z_q,x_q)$, $q\in J_1$ and $f_2(I^n\backslash \bigcup_{q\in J_1}\int(R_q))\subseteq X_{1}^{\ast}$. Although it is implicit in the proof of Theorem \ref{arctheorem}, we may apply Lemma \ref{shrinkingcubelemma}, if necessary, to ensure that $diam(R_q)<\frac{1}{3}$ for all $q\in S_2$. Since $X_2$ must be attached to $X_1$, we have $2\in S_2$.

Suppose, for $p\geq 2$, we have constructed $f_p\in \Omega^n(Z,z_0)$ with $\im(f_p)\subseteq Y\cup \im(f)$, set $S_{p}\subseteq \bbn$ with $p\in S_p$, and an $n$-domain $\scrr_p=\{R_q\mid q\in S_p\}$ such that $(f_p)|_{R_q}\in \Omega^n(Z_q,x_q)$, $q\in S_p$ and $diam(R_q)<\frac{1}{3^q}$. Since $p\in S_p$, we will construct $f_{p+1}$ from $f_p$ by only altering values in $R_p$. Recall the two shrinking adjunction decompositions of $Z_p$. Theorem \ref{arctheorem} gives that $(f_p)|_{R_p}:(R_p,\partial R_p)\to (Z_p,x_p)$ is homotopic rel. $\partial R_p$ to a map $g_p:(R_p,\partial R_p)\to (Z_p,x_p)$ such that $g_p$ is in factored form and $g_p$ is in single-factor form with the decomposition of $Z_p$ into core $X_p$ and attachment spaces $\{Z_q\mid q\in J_p\}$ and $\{B_i\mid y_i\in X_{p}\backslash \{x_p\}\}$. Let $G_p:R_p\times I\to Z_p$ be the homotopy from $(f_p)|_{R_q}$ to $g_p$ and recall that we may choose $G_p$ to have image in $Y_p\cup f_p(R_p)$. Let $f_{p+1}:(I^n,\partial I^n)\to (Z,z_0)$ be the map, which agrees with $f_p$ on $I^n\backslash \int(R_p)$ and so that $(f_{p+1})|_{R_p}=g_p$. Define homotopy $H_p:I^n\times I\to Z$ from $f_p$ to $f_{p+1}$ to be the constant homotopy on $I^n\backslash \int(R_p)$ and to agree with $G_p$ on $R_p\times I$. Note that $\im(H_p)\subseteq Y\cup \im(f_p)$. 

The single-factor form of $g_p$ ensures that there is an $n$-domain $\{R_q\mid q\in J_{p}\}$ of $n$-cubes in $R_p$ such that $g|_{R_q}\in \Omega^n(Z_q,x_q)$, $q\in J_p$ and $(g_p)(R_p\backslash \bigcup_{q\in J_p}\int(R_q))\subseteq X_{p}^{\ast}$. Applying Lemma \ref{shrinkingcubelemma}, if necessary, we may assume that $diam(R_q)<\frac{1}{3^p}$ for all $q\in J_p$. Set $S_{p+1}=(S_p\backslash\{p\})\cup J_{p}$ and let $\scrr_{p+1}=\{R_q\mid q\in S_{p+1}\}$. Then $(f_{p+1})|_{R_q}\in \Omega^n(Z_q,x_q)$ for all $q\in S_{p+1}$. Since $X_{p+1}$ is attached to some $X_q$, $q\leq p$, we have $p+1\in \bigcup_{q=1}^{p}J_q$ and thus $p+1\in S_{p+1}$. This completes the induction.

Consider the inductively constructed sequence of maps $\{f_p\}_{p\in\bbn}$ in $\Omega^n(Z,z_0)$ and homotopies $H_p$ from $f_p$ to $f_{p+1}$ where, by construction, $\im(H_p)\subseteq Y\cup \im(f)$ for all $p\in\bbn$. We will construct the infinite vertical concatenation of the homotopies $\{H_p\}_{p\in\bbn}$. First, we construct the limit map $f_{\infty}$. Let $C_1=I^n$. Although $H_p$, $p\geq 2$ is the constant homotopy on $I^n\backslash \int(R_p)$, we let $C_p=\bigcup\{R_q\mid q\in S_p\}$. Since $R_p\subseteq C_p$, $H_p$ is the constant homotopy on $I^n\backslash \int(C_p)$. Our construction ensures that $C_{1}\supseteq C_{2}\supseteq C_3\supseteq \cdots$ where $C=\bigcap_{p\in\bbn}C_p=\bigcap_{p\in\bbn}\int(C_p)$ is compact and totally disconnected. It is possible that $C=\emptyset$, for instance, if $\im(f)$ meets at most finitely many $X_p$. It is also possible that $C$ has isolated points. If $\bfx\in I^n\backslash C$, then we let $f_{\infty}(\bfx)$ be the limit of the eventually constant sequence $\{f_p(\bfx)\}_{p\in\bbn}$. To define $f_{\infty}$ on $C$, recall the partially ordered set $\chi$ of arcs $X_p$.

If $\bfx\in C$, then for every $p\in\bbn$, there is a unique $q_p\in S_p$ such that $\bfx\in R_{q_p}$ and such that the intersection of the non-increasing sequence of $n$-cubes $R_{q_1}\supseteq R_{q_2}\supseteq R_{q_3}\supseteq $ is $\bigcap_{p\in\bbn}\int(R_{q_p})=\{\bfx\}$. In $\chi$, this corresponds to a non-decreasing infinite sequence $\bfs(\bfx):=X_{q_1}\leq X_{q_2}\leq X_{q_3}\leq \cdots $ where either $X_{q_{p+1}}=X_{q_p}$ or $X_{q_{p+1}}>X_{q_p}$ are consecutive terms in the partial order $\chi$. The latter case means that $X_{q_{p+1}}$ is attached to $X_{q_p}$ at $x_{q_{p+1}}$. The sequence $s(\bfx)$ may only remain constant at a single value of $X_{q}$ for finitely many terms. Hence, $\bigcap_{p\in\bbn}Y_{q_p}=\{e_{\bfs}\}$ for a unique infinite end $e_{\bfs(\bfx)}\in End(Y)$. Define $f_{\infty}(\bfx)=e_{\bfs(\bfx)}$. 

For fixed $p\in\bbn$, $Z_p$ is closed in $Z$, $e_{\bfs(\bfx)}\in \bigcap_{q\geq p}f_{q}(R_{p})$, and we have $f_{q}(R_p)\subseteq Z_p$ for all $q\geq p$. It follows that for all $p\in\bbn$, $f_{\infty}(R_p)\subseteq Z_p$. Given any $i\in\bbn$, we have ensured by Lemma \ref{universalreduction} that $y_i\notin End(Y)$. Hence, for any given $i\in\bbn$, the preimages $f_{p}^{-1}(B_i\backslash\{b_i\})$ stabilize as $p\to \infty$. In particular, the construction ensures that there is an $n$-domain $\scrs=\{D_i\mid \im(f)\cap B_i\backslash\{b_i\}\neq\emptyset\}$ such that $(f_{\infty})|_{D_i}\in \Omega^n(B_i,b_i)$ for all $D_i\in\scrs$ and $f_{\infty}(I^n\backslash \bigcup\scrs)\subseteq Y$. Therefore, once we show that $f_{\infty}$ is continuous, it follows that $f_{\infty}$ is in single-factor form with respect to $Z=\shadj(Y,y_i,B_i,b_i)$. We check the four criterion in Remark \ref{verticalinfconcatremark} to verify that $f_{\infty}$ is continuous and that the infinite vertical concatenation $H_{\infty}$ from $f=f_1$ to $f_{\infty}$ is continuous. Note that if $C=\emptyset$, $f_{\infty}=f_{p}$ and $H_p$ is the constant homotopy for sufficiently large $p$ and this is trivial. Suppose $C\neq \emptyset$.

\begin{enumerate}
\item By construction, for every $p\in\bbn$, $H_p$ is the constant homotopy on $I^n\backslash \int(C_{p})$,
\item By construction, if $\bfx\in I^n\backslash  C$, then $f_{\infty}(\bfx)$ is the value of the eventually constant sequence $\{f_p(\bfx)\}_{p\in\bbn}$,
\item To check that $f_{\infty}|_{C}:C\to Z$ is continuous, pick $\bfx\in C$ and let $U$ be an open neighborhood of $f_{\infty}(\bfx)=e_{\bfs(\bfx)}$ in $Z$ where, as before, $\bfs(\bfx)$ is the sequence $X_{q_1}\leq X_{q_2}\leq X_{q_3}\leq \cdots $ in $\chi$. By Lemma \ref{opensetend}, we may find $m$ such that $Z_{q_m}\subseteq U$. Now $C\cap \int(R_{q_{m}})$ is an open neighborhood of $\bfx$ in $C$ and, since $f_{\infty}(R_{q_m})\subseteq Z_{q_m}$, we have $f_{\infty}(C\cap \int(R_{q_{P}}))\subseteq U$.
\item Suppose $\bfx_p\in C_p$ and $\{\bfx_p\}\to\bfx$. Let $U$ be an open neighborhood of $f_{\infty}(\bfx)$. We show that $H_p(\{\bfx_p\}\times I)\subseteq U$ all but finitely many $p$. Since $\bfx\in C$, we have $f_{\infty}(\bfx)=e_{\bfs(\bfx)}$ corresponding the sequence $\bfs(\bfx)=X_{q_1}\leq X_{q_2}\leq X_{q_3}\leq \cdots $. As in Criterion 3., find $m\in\bbn$ such that $f_{\infty}(R_{q_m})\subseteq Z_{q_m}\subseteq U$. Since the $n$-cubes $R_{q_p}$ are nested, it follows that $\bfx_p\in C_{q_m}$ for all $p\geq q_m$. Additionally, $H_p(R_{q_m}\times I)\subseteq Z_{q_m}\subseteq U$ for all $p\geq q_m$. Thus $H_p(\{\bfx_p\}\times I)\subseteq U$ for all $p\geq q_m$.
\end{enumerate}
\end{proof}

By replacing Theorem \ref{arctheorem} with its dendrite analogue (Theorem \ref{dendritetheorem}), the proof of Corollary \ref{arcorollary} gives the following conclusion to this section.

\begin{corollary}\label{dendritecorollary}
Let $Z=\shadj(Y,y_i,B_i,b_i)$ where $Y$ is a dendrite and $(B_i,b_i)$ is sequentially $(n-1)$-connected for every $i\in\bbn$. Let $z_0\in Y$. Then every $f\in\Omega^n(Y,y_0)$ is homotopic rel. $\partial I^n$ to some $n$-loop $g\in\Omega^n(Y,y_0)$ that is in single-factor form. Moreover,
\begin{itemize}
\item the canonical homomorphism $\phi:\pi_n(Z,z_0)\to \prod_{i\in\bbn}\pi_n(B_i,b_i)$ is an isomorphism,
\item the quotient map $Z\to \sw_{i\in\bbn}B_i$ collapsing $Y$ to a point induces an isomorphism on $\pi_n$.
\end{itemize}
\end{corollary}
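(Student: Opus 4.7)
The plan is to mirror the proof of Corollary \ref{arcorollary}, substituting the dendrite factorization result (Theorem \ref{dendritetheorem}) for its arc counterpart (Theorem \ref{arctheorem}). Given $f\in\Omega^n(Z,z_0)$, I would first apply Lemma \ref{factoredformsequencelemma1}---whose hypothesis that each $(B_i,b_i)$ be sequentially $(n-1)$-connected is exactly what is assumed---to reduce to the case that $f$ is in factored form with respect to $Z=\shadj(Y,y_i,B_i,b_i)$. Then Theorem \ref{dendritetheorem} produces a homotopy rel.\ $\partial I^n$ from $f$ to an $n$-loop $g$ in single-factor form, which yields the first statement.

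For the surjectivity of $\phi$, I would invoke the splitting theorem (Theorem \ref{splittheorem}): it provides a split epimorphism $\Psi:\pi_n(Z,z_0)\to \pi_n(Y,y_0)\oplus \prod_{i\in\bbn}\pi_n(B_i,b_i)$, and because a dendrite is contractible we have $\pi_n(Y,y_0)=0$, so $\Psi$ coincides with $\phi$. For injectivity, let $[f]\in\ker(\phi)$ and, using the first part, assume $f$ is in single-factor form with factorization $\{\scrc_i\}_{i\in\bbn}$ where each $\scrc_i$ is either empty or a single $n$-cube $\{R_i\}$. For each $i\in\bbn$ the factor $f|_{R_i}\simeq r_i\circ f$ represents the trivial class in $\pi_n(B_i,b_i)$ and is therefore null-homotopic in $B_i$. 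Lemma \ref{singlefactorlemma} then provides a homotopy rel.\ $\partial I^n$ from $f$ to $r\circ f\in\Omega^n(Y,z_0)$, and the latter is null-homotopic because $Y$ is contractible.

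For the final statement, let $q:Z\to \sw_{i\in\bbn}B_i$ denote the quotient collapsing $Y$ to the wedgepoint. By Theorem \ref{wedgeshapeinjectivitythm}, the analogous homomorphism $\phi_{sw}:\pi_n(\sw_{i\in\bbn}B_i,b_0)\to \prod_{i\in\bbn}\pi_n(B_i,b_i)$ is an isomorphism (here I use once again that each $B_i$ is sequentially $(n-1)$-connected). Since the retraction $r_i:Z\to B_i$ factors through $q$ via the corresponding retraction from the wedge, we have $\phi=\phi_{sw}\circ q_{\#}$. With $\phi$ and $\phi_{sw}$ both isomorphisms, it follows that $q_{\#}$ is an isomorphism as well.

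Because Theorem \ref{dendritetheorem} has already absorbed the entire technical difficulty of passing from an arc core to a dendrite core, no genuine obstacle remains: the proof reduces to a string of routine consequences of that theorem, the splitting theorem, Lemma \ref{singlefactorlemma}, and the shrinking wedge computation of Theorem \ref{wedgeshapeinjectivitythm}. The only minor point worth verifying explicitly is that the single-factor form delivered by Theorem \ref{dendritetheorem} interfaces correctly with Lemma \ref{singlefactorlemma} to carry out the factorwise contraction, but this is precisely the situation Lemma \ref{singlefactorlemma} is designed to handle.
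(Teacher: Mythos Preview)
Your proposal is correct and follows precisely the route the paper indicates: the paper's entire proof is the single sentence ``By replacing Theorem \ref{arctheorem} with its dendrite analogue (Theorem \ref{dendritetheorem}), the proof of Corollary \ref{arcorollary} gives the following conclusion,'' and you have faithfully carried out that substitution. Your explicit derivation of the second bullet via $\phi=\phi_{sw}\circ q_{\#}$ and Theorem \ref{wedgeshapeinjectivitythm} is a detail the paper leaves entirely implicit, so if anything your write-up is more complete.
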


\begin{example}
Let $Z=\shadj(Y,y_i,B_i,b_i)$ where $Y$ is any dendrite and $B_i=S^n$ is a $n$-sphere for $i\in \bbn$ (recall Figures \ref{fig1} and \ref{fig7}). Then $Z$ is $(n-1)$-connected and $\pi_n(Z)\cong \prod_{i\in\bbn}\pi_n(S^n)\cong \bbz^{\bbn}$. Recalling Theorem \ref{splittheorem}, the groups $\pi_k(Z)$, $k>n$ will be non-trivial whenever $\pi_k(S^n)$ is non-trivial. If we allow for the core $Y$ to be a non-compact, simply connected, one-dimensional Hausdorff space, then it is possible that some subsequences of $\{y_i\}$ will have no convergence subsequences. For an dendrite $D\subseteq Y$, note that $Z(D)=D\cup \bigcup\{B_i\mid y_i\in D\}$ is either a finite or shrinking adjunction space and so $\pi_n(Z(D))$ is either trivial or isomorphic to $\bbz^n$ or $\bbz^{\bbn}$. Now $\pi_n(Z)\cong\varinjlim_{D}\pi_n(Z(D))$ where the directed limit is directed by all dendrites in $Y$ containing $z_0$.
\end{example}

\section{Generalized Covering Spaces and Higher Homotopy Groups}\label{sectiongencovers}

\subsection{Generalized covering space theory}

In this section, we analyze a particular case of generalized universal covering spaces introduced in \cite{FZ07}. The more general notion that includes intermediate coverings was first given in \cite{Brazcat} under the name ``$\mathbf{lpc_0}$-covering."

\begin{definition}
A map $p:\tX\to X$ is a \textbf{generalized covering map} if $\tX$ is non-empty, path connected, and locally path connected and if for any map $f:(Y,y)\to (X,x)$ from a path-connected, locally path-connected space $Y$ and point $\wt{x}\in p^{-1}(x)$ such that $f_{\#}(\pi_1(Y,y))\leq p_{\#}(\pi_1(\tX,\wt{x}))$, there is a unique map $\wt{f}:(Y,y)\to (\tX,\wt{x})$ such that $p\circ \wt{f}=f$. If $\tX$ is simply connected, we call $p$ a \textit{generalized universal covering map} and $\tX$ a \textit{generalized universal covering space}.
\end{definition}

Note that a generalized universal covering space over $X$, if it exists, is unique up to homeomorphism. If $X$ is path connected, locally path connected, and semilocally simply connected, then the existence and uniqueness of ordinary covering maps guarantees that any generalized covering map $p:\tX\to X$ is a covering map. In general, a space $X$ need not admit a generalized universal covering space. In \cite{BFTestMap} necessary and sufficient conditions for the existence of a generalized universal covering map are given for metrizable $X$.

\begin{theorem} \cite{FZ07}\label{onedgenunivcovspace}
Every path-connected, one-dimensional Hausdorff space $X$ admits a generalized universal covering space $\tX$. If $X$ is metrizable, then $\tX$ is a topological $\bbr$-tree, i.e. a uniquely arc-wise connected, locally arc-wise connected metrizable space.
\end{theorem}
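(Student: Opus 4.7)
The plan is to construct $\tX$ explicitly as the endpoint-projection space of path-homotopy classes, following Fischer--Zastrow. Fix $x_0\in X$ and let $\tX$ be the set of path-homotopy classes $[\alpha]$ of paths $\alpha:(I,0)\to (X,x_0)$, with projection $p:\tX\to X$ given by $p([\alpha])=\alpha(1)$ and basepoint $\wt{x}_0=[c_{x_0}]$. Endow $\tX$ with the \emph{whisker topology} generated by basic neighborhoods $B([\alpha],U)=\{[\alpha\cdot\beta]\mid \beta:(I,0)\to (U,\alpha(1))\}$ for open $U\ni \alpha(1)$. Immediately from the definitions, $\tX$ is path-connected (via $t\mapsto [\alpha|_{[0,t]}]$), locally path-connected (each $B([\alpha],U)$ is path-connected whenever $U$ is), and $p$ is continuous and open.

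The critical analytic input, which is the only place one-dimensionality of $X$ genuinely enters, is the \emph{homotopically Hausdorff} property: for each $x\in X$ and each non-trivial class $g\in\pi_1(X,x)$, some neighborhood of $x$ admits no loop representing $g$. I would derive this from the classical fact that $\pi_1$ of a one-dimensional Hausdorff space injects into its first \v{C}ech homotopy group, itself a consequence of the observation that every loop in a sufficiently small open set factors, up to homotopy, through the nerve of a fine open cover, which is a graph by one-dimensionality. Given this property, distinct classes $[\alpha]\neq[\beta]$ with the same endpoint $y$ are separated by disjoint whisker neighborhoods coming from a neighborhood of $y$ witnessing non-triviality of $[\alpha^{-}\cdot\beta]$, so $\tX$ is Hausdorff; simple connectivity follows because the canonical lift of a loop $\gamma$ at $\wt{x}_0$ ends at $[\gamma]$, which equals $\wt{x}_0$ precisely when $\gamma$ is null-homotopic. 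The universal lifting property of $p$ is then routine: for $f:(Y,y)\to(X,x)$ from a path-connected, locally path-connected $Y$ with $f_{\#}(\pi_1(Y,y))\leq p_{\#}(\pi_1(\tX,\wt{x}))$, define $\wt{f}(y')$ by lifting the image path $f\circ\gamma$ (for any path $\gamma$ from $y$ to $y'$ in $Y$) starting at $\wt{x}$; well-definedness rests on the hypothesis on $\pi_1(Y,y)$ and continuity uses local path-connectedness of $Y$.

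For the metrizable case, fix a metric on $X$ and define a pseudometric on $\tX$ by $\wt{d}(\tx,\tx')=\inf\{\operatorname{diam}(\im(\alpha^{-}\cdot\alpha'))\mid [\alpha]=\tx,\ [\alpha']=\tx'\}$, or a comparable path-length functional built from a gauge on $X$. Showing that $\wt{d}$ is a genuine metric inducing the whisker topology once again relies on the homotopically Hausdorff property, while the topology comparison uses local path-connectedness of $X$ to produce representative concatenations of small diameter. Since $p$ restricts to a homeomorphism onto its image on sufficiently small whisker neighborhoods, $\tX$ inherits topological dimension one from $X$. Finally, a path-connected, simply connected, locally path-connected, one-dimensional, metrizable space is shown to be an $\bbr$-tree by exhausting $\tX$ by compact connected subspaces, each of which is a simply connected, one-dimensional Peano continuum and therefore a dendrite (as in Example \ref{dendriteexample}); unique arc-connectedness of $\tX$ then follows from unique arc-connectedness of these compact dendrites. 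The principal obstacle throughout is establishing the homotopically Hausdorff property: it is the sole step where the geometric hypothesis of one-dimensionality is indispensable rather than formal.
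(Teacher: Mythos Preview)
The paper does not prove this theorem; it is quoted from Fischer--Zastrow \cite{FZ07} and used as a black box. Your outline is broadly the Fischer--Zastrow strategy and is sound through the construction of $\tX$, the homotopically Hausdorff input, and the lifting property.

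There is, however, a genuine error in your argument for the $\bbr$-tree structure. You write that ``$p$ restricts to a homeomorphism onto its image on sufficiently small whisker neighborhoods,'' and deduce that $\tX$ inherits dimension one from $X$. This is false precisely in the wild cases that motivate the theorem. The restriction $p|_{B([\alpha],U)}$ is injective if and only if every loop in $U$ based at $\alpha(1)$ is null-homotopic in $X$, i.e.\ if and only if $X$ is semilocally simply connected at $\alpha(1)$. For $X=\bbh_1$ at the wedge point, no such $U$ exists, so $p$ is nowhere a local homeomorphism near those fibers. (Relatedly, your earlier claim that $p$ is open also presumes local path-connectedness of $X$; cf.\ Remark~\ref{whiskertop}.)

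The route actually taken in \cite{FZ07} to the $\bbr$-tree conclusion does not pass through covering dimension in this way. Instead one exploits the one-dimensional fact that every path in $X$ is path-homotopic to a unique \emph{reduced} representative (one with no cancellable backtracking), and that a null-homotopic loop contracts within its own image. Lifting reduced paths produces arcs in $\tX$, and uniqueness of reduced representatives forces any two arcs with common endpoints to coincide; this yields unique arcwise connectedness directly. Your compact-exhaustion-by-dendrites idea can then finish local arcwise connectedness and metrizability, but it cannot substitute for the reduced-path argument, since the dimension step it rests on is unavailable.
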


Since shrinking adjunction spaces of interest will generally not be one-dimensional, we will also employ the following generalization, which implies part of the previous theorem. Recall that a group is residually free if it homomorphically embeds into a product of free groups. Fundamental groups of path-connected, one-dimensional Hausdorff spaces are residually free (See \cite{CConedim} or \cite{EdaSpatial}).

\begin{theorem}\cite[Prop. 6.4 and Thm. 6.9]{BFTestMap}\label{residuallyfreethm}
Let $X$ be a path-connected metric space. If $\pi_1(X,x_0)$ is residually free or if $X$ is $\pi_1$-residual, then $X$ admits a generalized universal covering space.
\end{theorem}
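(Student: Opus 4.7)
The plan is to construct the generalized universal cover $\tilde X$ explicitly as a quotient of the based path space $P(X, x_0)$ under endpoint-preserving homotopy, equipped with the whisker topology, following the approach of \cite{FZ07}. A basic open neighborhood of $[\alpha] \in \tilde X$ is $B([\alpha], U) = \{[\alpha \cdot \gamma] : \gamma \subseteq U\}$ for an open $U$ containing $\alpha(1)$, and the projection $p: \tilde X \to X$ sends $[\alpha]$ to $\alpha(1)$. By construction, $\tilde X$ is path-connected, locally path-connected, and simply connected, and $p$ is continuous and open. What needs to be checked is that $\tilde X$ is Hausdorff and that $p$ has the unique continuous lifting property for maps from path-connected, locally path-connected spaces.

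By general criteria, both verifications reduce to showing that $\pi_1(X, x_0)$ admits a family of ``detection homomorphisms'' $\{\phi_i: \pi_1(X, x_0) \to G_i\}$ that jointly separate non-trivial elements and are realized by continuous structure on $X$ (the ``test map'' property). I would verify each hypothesis provides such detection. For residually free $\pi_1(X, x_0)$, the embedding $\iota: \pi_1(X, x_0) \hookrightarrow \prod_j F_j$ provides the algebraic part; each $F_j$ can be realized as $\pi_1$ of a one-dimensional space $Y_j$, and since $Y_j$ admits a generalized universal cover by Theorem \ref{onedgenunivcovspace}, I would pull back along a suitable continuous map $X \to Y_j$ realizing $\phi_j$ and then combine the pullbacks via a product or inverse-limit construction. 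The technical heart here is promoting the purely algebraic maps $\phi_j$ to continuous geometric maps, which would proceed via a nerve-theoretic construction using the metrizability of $X$. For the $\pi_1$-residual hypothesis, the detection is by maps from the Hawaiian earring: the injectivity of $\Theta_1$ means that any map $\bbh_1 \to X$ is determined, up to pointed homotopy, by the sequence of $\pi_1$-classes of its constituent loops. This translates into an embedding of $\pi_1(X, x_0)$ into the inverse limit of $\pi_1$-groups of shrinking neighborhoods of $x_0$, which plays the role of the detection family.

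The main obstacle in both cases is producing continuous test maps from the given hypothesis. In the residually free case, the algebraic nature of the assumption makes it nontrivial to realize each $\phi_j$ as arising from an actual continuous map; one likely needs a careful nerve construction together with the metrizability of $X$. In the $\pi_1$-residual case, the main work is verifying that sequential injectivity of $\Theta_1$ actually provides enough open sets in $\tilde X$ to support unique continuous lifting---that is, translating a condition about convergent sequences of loops into the required separation property in the whisker topology. The sequential machinery developed in Section \ref{subsectionsequentialprops} of the present paper is designed precisely to handle such translations, so this case is the more natural one given the paper's overall framework and would be the first I would attempt in detail.
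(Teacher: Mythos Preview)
The paper does not prove this theorem; it is quoted from \cite{BFTestMap} with citation and no argument. So there is no ``paper's own proof'' to compare against, and your proposal must stand on its own.

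Your outline has a genuine gap in the residually free case. You propose to realize each abstract homomorphism $\phi_j:\pi_1(X,x_0)\to F_j$ by a continuous map $X\to Y_j$ into a one-dimensional space, and then pull back generalized covers. But an arbitrary homomorphism out of $\pi_1(X,x_0)$ need not be induced by any continuous map. For wild $X$ (e.g.\ the Hawaiian earring) there are homomorphisms $\pi_1(X)\to\bbz$ that are not induced by any map $X\to S^1$; the obstruction is exactly that such homomorphisms need not respect the infinitary structure that continuous maps preserve. Your ``nerve-theoretic construction using metrizability'' cannot repair this: nerves see only the topology of $X$, not a chosen abstract homomorphism. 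The hypothesis ``$\pi_1(X)$ is residually free'' is purely group-theoretic, and the proof in \cite{BFTestMap} uses it that way, via an algebraic property of free groups (they are \emph{n-slender}, so any homomorphism from a suitable inverse-limit group into a free group factors through a finite stage) to establish the homotopically Hausdorff / unique path-lifting criterion directly, without ever producing continuous realizations of the $\phi_j$.

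A smaller issue: you assert that $\tX$ is simply connected and $p$ is open ``by construction.'' Neither is automatic. Openness of $p$ requires $X$ to be locally path connected (see Remark~\ref{whiskertop}), which is not among the hypotheses. And simple connectivity of $\tX$ is essentially equivalent to unique path lifting for $p$, which is the actual content to be proved; claiming it up front begs the question. Your sketch for the $\pi_1$-residual case is closer to the mark---that hypothesis does translate fairly directly into unique path lifting---but the residually free case needs a different mechanism than the one you propose.
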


\begin{remark}[Whisker Topology]\label{whiskertop}
Generalized universal coverings may be constructed as follows. Fix a point $x_0\in X$ and let $\wt{X}$ be the space of path-homotopy classes $[\alpha]$ of paths $\alpha:(\ui,0)\to (X,x_0)$. An open neighborhood of $[\alpha]$ is a set of the form $N([\alpha],U)=\{[\alpha\cdot\epsilon]\mid \epsilon(\ui)\subseteq U\}$ where $U$ is an open neighborhood of $\alpha(1)$ in $X$. This topology is the so-called \textit{standard topology} or \textit{whisker topology} on $\wt{X}$. The homotopy class of the constant path at $x_0$, which we denote as $\wt{x}_0$, is the basepoint of $\wt{X}$. The endpoint projection map $p:\tX\to X$, $p([\alpha])=\alpha(1)$ is a continuous surjection, which is open if and only if $X$ is locally path connected. 

According to \cite[Prop. 2.14]{FZ07}, the map $p:\tX\to X$ constructed in the previous paragraph is a generalized universal covering over $X$ if and only if $p$ has the unique path-lifting property. Moreover, it is shown in \cite[Section 5]{Brazcat} that if there exists a generalized universal covering $q:\hX\to X$, then there exists a homeomorphism $h:\tX\to \hX$ such that $q\circ h=p$. Therefore, whenever $X$ admits a generalized universal covering map, it is equivalent to $p:\tX\to X$ and so we always assume generalized universal covering spaces are of the form $\tX$.
\end{remark}

\begin{remark}\label{fiberbijectionremark}
Since we assume that a generalized universal covering space $\tX$ has the structure as described in Remark \ref{whiskertop}, each fiber $p^{-1}(x)$, $x\in X$ is in bijective correspondence with $\pionex$. We have $p^{-1}(x_0)=\pionex$ but if $x\neq x_0$, this bijection requires a choice. Namely, a choice of path $\beta:\ui\to X$ from $x$ to $x_0$ determines the bijection $p^{-1}(x)\to \pionex$, $[\alpha]\mapsto [\alpha\cdot\beta]$.
\end{remark}

\begin{definition}[A topology on the fundamental group]
We give the fundamental group $p^{-1}(x_0)=\pionex$ the subspace topology inherited from $\tX$. In particular, a neighborhood of $[\alpha]\in \pionex$ is a set of the form $N([\alpha],U)=\{[\alpha\cdot\beta]\mid \beta(\ui)\subseteq U\}$ where $U$ is an open neighborhood of $x_0$ in $X$.
\end{definition}

\begin{remark}[A Formula for Lifts of Paths]\label{standardliftremark}
Suppose $[\beta]\in\tX$ and $\alpha:(I,0)\to (X,\beta(1))$ is a path. Define paths $\alpha_s(t)=\alpha(st)$, $s\in\ui$. Then the unique lift $\wt{\alpha}:(I,0)\to (\wt{X},[\beta])$ of $\alpha$ starting at $[\beta]$ is given by $\wt{\alpha}(s)=[\beta\cdot\alpha_s]$.
\end{remark}

Since generalized covering maps have the same lifting property as ordinary covering maps, many results related to covering maps also follow for generalized covering maps. Since $S^n$, $\bbh_n$ (for $n\geq 2$), and the reduced cone over $(\bbh_n,b_0)$ (for $n\geq 1$) are locally path connected and simply connected, the unique lifting property gives the following proposition.

\begin{proposition}\label{coveringisomorphism}
If $p:\tX\to X$ is a generalized covering map, then the induced homomorphisms $p_{\#}:\pi_n(\tX,\wt{x}_0)\to \pi_n(X,x_0)$ and $p_{\#}:[(\bbh_n,b_0),(\tX,\wt{x}_0)]\to [(\bbh_n,b_0),(X,x_0)]$ are injections for $n=1$ and isomorphisms for all $n\geq 2$.
\end{proposition}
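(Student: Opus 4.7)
The plan is to derive all four statements directly from the defining lifting criterion of a generalized covering map, combined with unique path lifting. The strategy reduces to producing, for each case, a suitable path-connected, locally path-connected, \emph{simply connected} test space $Y$ whose fundamental group (being trivial) automatically satisfies the inclusion $f_\#(\pi_1(Y,y))\leq p_\#(\pi_1(\tX,\wt{x}_0))$. The relevant simply connected test spaces are: $S^n$ and $S^n\times I$ for $n\geq 2$, $\bbh_n$ and $\bbh_n\times I$ for $n\geq 2$ (simple connectedness follows from Corollary \ref{swissequentiallynconnected} since each $S^n$ is simply connected), the square $I^2$, and the reduced cone $C\bbh_n$ for $n\geq 1$ (contractible, as in Example \ref{coneexample}). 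All of these are locally path connected.

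For injectivity of $p_\#\colon \pi_1(\tX,\wt{x}_0)\to \pi_1(X,x_0)$, I will take a loop $\wt{\alpha}\in\Omega(\tX,\wt{x}_0)$ with $p\circ\wt{\alpha}$ path-homotopic rel.\ $\partial I$ to the constant loop via some $H\colon I^2\to X$. The lifting criterion applied to $I^2$ produces a lift $\wt{H}\colon I^2\to\tX$ sending the lower-left corner to $\wt{x}_0$; unique path lifting then forces $\wt{H}|_{I\times\{0\}}=\wt{\alpha}$ and $\wt{H}$ to be constant at $\wt{x}_0$ on the remaining edges, giving the required null-homotopy of $\wt{\alpha}$. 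The injectivity of $p_\#\colon[(\bbh_1,b_0),(\tX,\wt{x}_0)]\to[(\bbh_1,b_0),(X,x_0)]$ follows by the same argument after observing that a based null-homotopy of $p\circ\wt{f}$ factors through a map from the reduced cone $C\bbh_1\to X$, which lifts because $C\bbh_1$ is contractible and locally path connected; restricting the lift to $\bbh_1$ and using unique path-lifting along arcs issuing from $b_0$ identifies the restriction with $\wt{f}$.

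For $n\geq 2$, surjectivity of both $p_\#$'s is immediate: every based map $f\colon(S^n,e_n)\to(X,x_0)$ (resp.\ $(\bbh_n,b_0)\to(X,x_0)$) satisfies the hypothesis of the lifting criterion and hence has a unique based lift $\wt{f}\colon -\to(\tX,\wt{x}_0)$. For injectivity, given two such lifts $\wt{f}_0,\wt{f}_1$ of $f_0,f_1$ and a based homotopy $H\colon S^n\times I\to X$ (resp.\ $\bbh_n\times I\to X$) between $f_0$ and $f_1$, lift $H$ to $\wt{H}$ starting at $\wt{x}_0$ using simple-connectedness of $S^n\times I$ (resp.\ $\bbh_n\times I$). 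Unique path lifting along $\{e_n\}\times I$ (resp.\ $\{b_0\}\times I$) forces $\wt{H}$ to be constant at $\wt{x}_0$ on the basepoint slice, so $\wt{H}$ is a based homotopy; unique lifting along $S^n\times\{0\}$ and $S^n\times\{1\}$ identifies $\wt{H}|_{S^n\times\{i\}}=\wt{f}_i$.

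The argument is almost entirely formal given the lifting criterion. The only point that requires care is the promotion of a free lift of a homotopy to a \emph{based} lift, which is handled uniformly by unique path lifting along the basepoint slice. The verification that $\bbh_n$ and $\bbh_n\times I$ are simply connected for $n\geq 2$ is the one nontrivial input, but it is already packaged in Corollary \ref{swissequentiallynconnected}.
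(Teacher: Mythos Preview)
Your proposal is correct and takes essentially the same approach as the paper, which simply notes (in the sentence preceding the proposition) that $S^n$, $\bbh_n$ (for $n\geq 2$), and the reduced cone over $(\bbh_n,b_0)$ (for $n\geq 1$) are locally path connected and simply connected, so the unique lifting property yields the result. Your write-up fills in the details the paper leaves implicit; one minor quibble is that ``unique path lifting along $S^n\times\{0\}$'' should really be phrased as ``uniqueness of lifts'' (since $S^n$ is not an interval), and your citation of Example~\ref{coneexample} for contractibility of the \emph{reduced} cone is slightly off (that example treats the unreduced cone), but neither affects the validity of the argument.
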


Analogous to uses of covering spaces in traditional algebraic topology, if a path-connected space $X$ admits a generalized universal covering space, then to analyze $\pi_n(X,x_0)$, $n\geq 2$, it is sufficient to understand the higher homotopy groups of the simply connected space $\wt{X}$.

In the next lemma, we show how the sequential properties introduced in Section \ref{subsectionsequentialprops} are preserved by generalized universal covering maps. We encourage the reader to recall their definitions in terms of the groups $[(\bbh_k,b_0),(X,x)]$ and the homomorphism $\Theta_k:[(\bbh_k,b_0),(X,x)]\to \pi_k(X,x)^{\bbn}$.

\begin{lemma}\label{relativelytriviallemma}
If $p:\tX\to X$ is a generalized universal covering map, then
\begin{enumerate}
\item $X$ is $\pi_1$-residual at $x\in X$ if and only if $\tX$ is sequentially $1$-connected at every point in $p^{-1}(x)$,
\item for $n\geq 2$, $X$ is $\pi_n$-residual (resp. sequentially $n$-connected, $n$-tame) at $x$ if and only if $\tX$ is $\pi_n$-residual (resp. sequentially $n$-connected, $n$-tame) at every point in $p^{-1}(x)$.
\end{enumerate}
\end{lemma}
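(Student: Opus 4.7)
The plan is to combine Proposition \ref{coveringisomorphism} with the naturality of $\Theta_n$ and the explicit whisker-topology lift formula from Remark \ref{standardliftremark}. For each $\wt{x}\in p^{-1}(x)$ and each $0\leq k\leq n$, I would record the commutative square
\[\begin{CD}
[(\bbh_k, b_0), (\tX, \wt{x})] @>{p_\#}>> [(\bbh_k, b_0), (X, x)] \\
@V{\Theta_k^{\tX}}VV @VV{\Theta_k^{X}}V \\
\pi_k(\tX, \wt{x})^\bbn @>{(p_\#)^\bbn}>> \pi_k(X, x)^\bbn,
\end{CD}\]
whose commutativity is immediate since both routes send $[\wt{f}]$ to $([p\circ\wt{f}\circ\ell_j])_j$. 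By Proposition \ref{coveringisomorphism}, the horizontal arrows are bijections for $k\geq 2$ and injections for $k=1$.

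For part (2) with $n\geq 2$, the three $\pi_n$-level properties would be read off the square at $k=n$: bijectivity of $p_\#$ and $(p_\#)^\bbn$ transfers the kernel and image of $\Theta_n$ between $\tX$ and $X$, yielding the $\pi_n$-residual and $\pi_n$-finitary equivalences, and combining gives $n$-tameness. For sequential $n$-connectedness, I would check triviality of $[(\bbh_k,b_0),-]$ at each $0\leq k\leq n$: the case $k\geq 2$ is immediate from the bijections; for $k=1$ I would use the general lifting criterion (a map $g:(\bbh_1, b_0)\to(X, x)$ lifts iff $g_\#(\pi_1(\bbh_1))=0$, and null-homotopies lift under the same condition since $\pi_1(\bbh_1\times I)=\pi_1(\bbh_1)$) combined with the observation that Remark \ref{standardliftremark} ensures that a convergent-to-$x$ sequence, when it lifts, produces a convergent-to-$\wt{x}$ sequence of lifts; for $k=0$ I would produce the required convergent sequence of paths in $\tX$ by a diagonal argument described below.

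Part (1) then falls out of the $k=0,1$ analysis: $X$ is $\pi_1$-residual at $x$ iff $\Theta_1^X$ is injective, which via the commutative square and the injectivity of $p_\#$ in dimension one corresponds (using simple-connectedness of $\tX$) to $[(\bbh_1,b_0),(\tX,\wt{x})]=0$; combining with sequential $0$-connectedness at $\wt{x}$ yields sequential $1$-connectedness there.

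The main obstacle I anticipate is the $k=0$ case. Although $\tX$ is locally path-connected at every point by construction, establishing genuine sequential $0$-connectedness at $\wt{x}$ requires producing a single sequence of paths $\wt{\alpha}_j$ from $\wt{x}$ to $\wt{x}_j$ that converges to $\wt{x}$, rather than path choices that depend on each prescribed neighborhood. My approach is to write each $\wt{x}_j = [\beta_j]$ and use the whisker-topology definition of convergence to replace $\beta_j$ by a representative $\epsilon_j^U$ (rel endpoints) with image in any prescribed open neighborhood $U$ of $x$; the formula in Remark \ref{standardliftremark} then places the lift $\wt{\epsilon}_j^U$ inside the basic whisker neighborhood $N([c_x], U)$ and ensures it ends at $\wt{x}_j$. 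A single convergent sequence is then extracted via a diagonal argument over a countable decreasing neighborhood basis at $x$, exploiting the first-countability of $X$ inherited from the metric context of the paper.
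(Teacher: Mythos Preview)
Your approach is essentially the paper's: the same commutative square disposes of part~(2), and part~(1) is handled by lifting arguments. You are in fact more careful than the paper in explicitly separating out the $k=0$ and $k=1$ cases needed for the ``sequentially $n$-connected'' and ``$n$-tame'' clauses of~(2); the paper's proof of~(2) simply invokes the square and leaves those low-dimensional levels implicit.

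One caution on the converse direction of~(1). Your phrase ``a map $g:(\bbh_1,b_0)\to(X,x)$ lifts iff $g_\#(\pi_1(\bbh_1))=0$'' risks being read as an attempt to lift $g$ wholesale via the general lifting criterion from the hypothesis $\Theta_1([g])=0$. But $\Theta_1([g])=0$ only says $g_\#([\ell_j])=0$ for each $j$, and the classes $[\ell_j]$ do \emph{not} generate $\pi_1(\bbh_1)$, so $g_\#$ need not vanish and the criterion does not apply directly to $g$. The paper's route (which your ``convergent-to-$\wt{x}$ sequence of lifts'' remark is really pointing at) is to lift each null-homotopic loop $g\circ\ell_j$ individually and then use the whisker topology to verify that these lifts assemble into a continuous $\wt{g}:(\bbh_1,b_0)\to(\tX,\wt{x})$; one then contracts $\wt{g}$ in $\tX$ and projects. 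Keep this mechanism clearly separated from the one used in the forward direction of~(1), where the paper lifts the null-homotopy in one shot as a map out of the reduced cone $C_\ast\bbh_1$ (your $\bbh_1\times I$ version works too, since the null-homotopy factors through a simply connected quotient).
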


\begin{proof}
Fix points $p(\tx)=x$ and note that $\tX$ is simply connected. For (1), suppose $X$ is $\pi_1$-residual at $x$ and $\wt{f}:(\bbh_1,b_0)\to (\tX,\tx)$ is a map. Since $\tX$ is simply connected, $f=p\circ \wt{f}:(\bbh_1,b_0)\to (\tX,\tx)$ is a map for which $\Theta([f])=1$. This $[f]=1$. Let $H:C_{\ast}\bbh_1\to X$ be an extension of $f$ to the reduced cone over $\bbh_1$. Since $C_{\ast}\bbh_1$ is contractible and locally path-connected, there exists a lift $\wt{H}:C_{\ast}\bbh_1\to \tX$, which by unique lifting properties of $p$ must be an extension of $\wt{f}$. Therefore $[\wt{f}]=1$. Thus $\tX$ is sequentially $n$-connected. For the converse, suppose $\tX$ is sequentially $n$-connected at $\tx$ and $f:(\bbh_1,b_0)\to (X,x)$ is a map for which $\Theta_1([f])=1$. Since each loop $\alpha_j=f\circ \ell_j:S^1\to X$ is null-homotopic, there is a lift $\wt{\alpha}_j:S^1\to \wt{X}$ based at $\tx$. Since $\tX$ has the Whisker topology and $\{\alpha_j\}_{j\in\bbn}$ converges to $x$, it follows that every basic neighborhood of $\tx$ contains $\im(\wt{\alpha}_j)$ for all but finitely many $j$. Therefore, there is a map $\wt{f}:(\bbh_1,b_0)\to (\tX,\tx)$ such that $\wt{f}\circ\ell_j=\wt{\alpha}_j$. By assumption $[\wt{f}]=1$ and so $\wt{f}$ is null-homotopic rel. basepoint in $\wt{X}$. It follows that $f=p\circ\wt{f}$ is homotopic rel. basepoint, i.e. $[f]=1$.

For (2), consider the following commutative square. By Proposition \ref{coveringisomorphism}, the vertical maps are isomorphisms for any $n\geq 2$.
\[\xymatrix{
[(\bbh_n,b_0),(\wt{X},\wt{x})] \ar[rr]^-{\Theta_n} \ar[d]_-{p_{\#}} && \prod_{j\in\bbn}\pi_n(\tX,\wt{x}) \ar[d]^-{\prod_{j}p_{\#}}\\
[(\bbh_n,b_0),(X,x)] \ar[rr]_-{\Theta_n} && \prod_{j\in\bbn}\pi_n(X,x)
}\]
Using this diagram, all cases of (2) follow from the definitions of the respective properties in terms of $\Theta_n$.
\end{proof}

\subsection{Generalized coverings of shrinking adjunction spaces}\label{coveringsubsection}

Here, we begin to develop a theory of generalized coverings of shrinking adjunction spaces. The conclusions in the following lemma are related to the theory of pullbacks of generalized universal coverings developed by H. Fischer \cite{Fischerpullback}. We include the short proofs since the overlap with Fischer's work is only partial and since our applications are fairly specific.

\begin{lemma}\label{subcoveringlemma1}
Suppose $p:\wt{Y}\to Y$ is a generalized covering map, $X\subseteq Y$ is a path-connected subspace, and $y_0\in X$.
\begin{enumerate}
\item If the inclusion $i:X\to Y$ is $\pi_1$-surjective, then $p^{-1}(X)$ is path connected.
\item If for every $x\in X$ and neighborhood $U$ of $x$ in $Y$, there exists an open set $V$ of $x$ contained in $U$ such that every path in $V$ with endpoints in $V\cap X$ is path-homotopic in $Y$ to a path in $V\cap X$, then $p^{-1}(X)$ is locally path connected.
\item If the hypotheses of (1) and (2) hold, then the restriction $q:p^{-1}(X)\to X$ of $p$ is a generalized covering map.
\item If $p$ is a generalized universal covering map, $i:X\to Y$ induces an isomorphism on $\pi_1$, and the hypothesis of (2) holds, then the restriction $q:p^{-1}(X)\to X$ of $p$ is a generalized universal covering map.
\end{enumerate}
\end{lemma}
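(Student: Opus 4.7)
The plan is to work in the whisker topology description of $\wt{Y}$ from Remark \ref{whiskertop}, since this makes both the fibers $p^{-1}(x)$ and the basic open sets $N([\gamma],U)$ transparent. Fix $\wt{y}_0 \in p^{-1}(y_0)$ (the class of the constant loop). For (1), given any $\wt{x} \in p^{-1}(X)$ lying over $x \in X$, represent $\wt{x}$ by a path-homotopy class $[\gamma]$ of a path $\gamma$ from $y_0$ to $x$ in $Y$. Using path-connectedness of $X$, choose a path $\alpha$ in $X$ from $y_0$ to $x$. Then $\gamma \cdot \alpha^{-}$ is a loop in $Y$ at $y_0$, and by the $\pi_1$-surjectivity hypothesis it is path-homotopic in $Y$ to a loop $\beta$ in $X$. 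Hence $[\gamma] = [\beta \cdot \alpha]$, and the unique lift of $\beta \cdot \alpha$ starting at $\wt{y}_0$ stays in $p^{-1}(X)$ throughout (by the formula in Remark \ref{standardliftremark}) and ends at $\wt{x}$.

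For (2), fix $\wt{x} = [\gamma] \in p^{-1}(X)$ with $p(\wt{x}) = x$. A neighborhood basis of $\wt{x}$ in $p^{-1}(X)$ is given by intersections $N([\gamma], U) \cap p^{-1}(X)$ with $U$ a neighborhood of $x$ in $Y$. Apply the hypothesis of (2) to obtain $V \subseteq U$ with the stated path-homotopy property, and consider $W := N([\gamma], V) \cap p^{-1}(X)$. Every element of $W$ has the form $[\gamma \cdot \epsilon]$ where $\epsilon$ is a path in $V$ with $\epsilon(0) = x$ and $\epsilon(1) \in V \cap X$; the hypothesis then provides a path $\epsilon'$ in $V \cap X$ with $[\gamma \cdot \epsilon] = [\gamma \cdot \epsilon']$. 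Lifting $\epsilon'$ starting at $\wt{x}$ and again invoking Remark \ref{standardliftremark} shows the lift is the continuous path $s \mapsto [\gamma \cdot \epsilon'_s]$, which lies entirely in $W$. Thus every point of $W$ is connected by a path in $W$ to $\wt{x}$, so $W$ is path-connected.

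For (3), parts (1) and (2) already show $p^{-1}(X)$ is non-empty, path-connected, and locally path-connected. For the lifting property of $q$, suppose $f : (Z,z) \to (X,x)$ is a map from a path-connected, locally path-connected space with $f_{\#}(\pi_1(Z,z)) \leq q_{\#}(\pi_1(p^{-1}(X),\wt{x}))$. Composing with the inclusion $X \hookrightarrow Y$ and using the factorization $p \circ (\text{inc}) = i \circ q$ on $\pi_1$, the image $(i \circ f)_{\#}(\pi_1(Z,z))$ is contained in $p_{\#}(\pi_1(\wt{Y},\wt{x}))$. Hence there is a unique lift $g : Z \to \wt{Y}$ of $i \circ f$ with $g(z) = \wt{x}$; since $p \circ g = i \circ f$ has image in $X$, the map $g$ factors through $p^{-1}(X)$, yielding the desired lift $\wt{f}$. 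Uniqueness is inherited from uniqueness of lifts to $\wt{Y}$.

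For (4), the hypotheses subsume those of (1) and (2), so part (3) gives that $q$ is a generalized covering map; in particular Proposition \ref{coveringisomorphism} implies $q_{\#} : \pi_1(p^{-1}(X),\wt{y}_0) \to \pi_1(X,y_0)$ is injective. The commutative square with sides $p_{\#}$, $q_{\#}$, $i_{\#}$, and the inclusion-induced map on $\pi_1$ shows $i_{\#} \circ q_{\#} = p_{\#} \circ (\text{inc})_{\#} = 0$ since $\pi_1(\wt{Y},\wt{y}_0) = 1$. As $i_{\#}$ is assumed to be an isomorphism (in particular injective), $q_{\#} = 0$, and combined with its injectivity this forces $\pi_1(p^{-1}(X),\wt{y}_0) = 1$. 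The main technical step is (2), where the hypothesis on $V$ is exactly what is needed to convert an arbitrary nearby endpoint into one reachable by a path inside $X$, so that the whisker-topology lifting formula produces a continuous path in the subspace.
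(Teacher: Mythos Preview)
Your proof is correct and follows essentially the same approach as the paper: working in the whisker topology, using the standard-lift formula to produce paths in $p^{-1}(X)$ for Parts (1) and (2), and deducing the lifting property of $q$ from that of $p$ via the commuting square $p\circ j = i\circ q$ in Part (3). The only cosmetic difference is in Part (4): the paper argues directly by lifting a null-homotopy (a loop in $p^{-1}(X)$ projects to a loop in $X$ null-homotopic in $Y$, hence in $X$ by $\pi_1$-injectivity of $i$, and this contraction lifts via $q$), whereas you phrase the same implication algebraically through the square $i_\#\circ q_\# = p_\#\circ j_\# = 0$ together with the injectivity of $q_\#$ from Proposition \ref{coveringisomorphism}.
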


\begin{proof}
According to Remark \ref{whiskertop}, we may assume $\wt{Y}$ has the whisker topology construction on basepoint $y_0\in Y$. 

For Part (1), notice that the basepoint $\wt{y}_0=[c_{y_0}]$ of $\wt{Y}$ lies in $p^{-1}(X)$. Suppose $[\alpha]\in p^{-1}(X)$ for some path $\alpha:\ui\to Y$ from $y_0$ to $\alpha(1)\in X$. Since $i_{\#}:\pi_1(X,y_0)\to \pi_1(Y,y_0)$ is surjective, $\alpha$ is path-homotopic in $Y$ to a path $\beta:\ui\to X$. The standard lift $\wt{\beta}_s$ is now a path in $p^{-1}(X)$ from $\wt{y}_0$ to $[\alpha]=[\beta]$ with image in $p^{-1}(X)$. Thus $p^{-1}(X)$ is path connected.

For (2), fix $[\alpha]\in p^{-1}(X)$ and a basic open set $N([\alpha],U)\cap p^{-1}(X)$ in $p^{-1}(X)$. Find an open neighborhood $V$ of $\alpha(1)$ in $Y$ contained in $U$ as described in the hypothesis of Part (2). Certainly, $N([\alpha],V)\cap p^{-1}(X)\subseteq N([\alpha],U)\cap p^{-1}(X)$ and so it suffices to check that $N([\alpha],V)\cap p^{-1}(X)$ is path connected. Let $[\beta]\in N([\alpha],V)\cap p^{-1}(X)$ for some path $\beta:\ui\to Y$ from $y_0$ to a point $\beta(1)\in X$. Then $\beta\simeq \alpha\cdot\gamma$ in $Y$ for some path $\gamma:\ui\to V$. Since $\alpha(1)$ and $\beta(1)$ lie in $X$, $\gamma$ has endpoints in $X$. By assumption, there is a path $\delta:\ui\to V\cap X$ that is path-homotopic in $Y$ to $\gamma$. Let $\wt{\delta}_s:\ui\to \wt{Y}$ be the standard lift of $\delta$ starting at $[\beta]$. Then $\wt{\delta}_s$ is a path in $N([\alpha],V)\cap p^{-1}(X)$ from $[\alpha]$ to $[\beta]$. This proves $N([\alpha],V)\cap p^{-1}(X)$ is path connected.

For (3), suppose the hypotheses of (1) and (2) hold. Based on the previous two parts, it suffices to check that $q=p|_{p^{-1}(X)}$ has the required unique lifting property. Suppose $(Z,z_0)$ is path connected and locally path-connected and $f:(Z,z_0)\to (X,y_0)$ is a map such that $f_{\#}(\pi_1(Z,z_0))\leq q_{\#}(\pi_1(p^{-1}(X),\wt{y}_0))$. If $j:p^{-1}(X)\to Y$ is the inclusion, then 
\begin{eqnarray*}
(i\circ f)_{\#}(\pi_1(Z,z_0)) &\leq& (i\circ q)_{\#}(\pi_1(p^{-1}(X),\wt{y}_0))\\
&=& (p\circ j)_{\#}(\pi_1(p^{-1}(X),\wt{y}_0))\\
&\leq&  p_{\#}(\pi_1(\wt{Y},\wt{y}_0)).
\end{eqnarray*}
Hence, there exists a unique map $\wt{g}:(Z,z_0)\to (\wt{Y},\wt{y}_0)$ such that $p\circ \wt{g}=i\circ f$. Since $p(\wt{g}(Z))=i(f(Z))\subseteq X$, we have $\wt{g}(Z)\subseteq p^{-1}(X)$. Thus $\wt{g}$ is a unique map satisfying $q\circ \wt{g}=g$. We conclude that $q$ is a generalized covering map. 

Finally, suppose the hypotheses of (4) holds. Since the conclusions of Parts (1)-(3) hold, it suffices to show that $p^{-1}(X)$ is simply connected. Let $\wt{\alpha}:\ui\to p^{-1}(X)$ be a loop based at $\wt{y}_0$. Since $\wt{Y}$ is simply connected, $\alpha=p\circ\wt{\alpha}$ is a loop in $X$ that is null-homotopic in $Y$. However, since the inclusion $i:X\to Y$ is $\pi_1$-injective, $\alpha$ must also be null-homotopic in $X$. By the unique lifting properties of $q$, a null-homotopy of $\alpha$ in $X$ lifts to a null-homotopy of $\wt{\alpha}$ in $p^{-1}(X)$.
\end{proof}

\begin{corollary}\label{subcoveringcorollary}
Suppose $X\subseteq Y$ is a retract such that the inclusion $i:X\to Y$ induces an isomorphism on $\pi_1$. Then $X$ admits a generalized universal covering if and only if $Y$ admits a generalized universal covering. Moreover, if $p:\wt{Y}\to Y$ is a generalized universal covering map, then the restriction $p|_{p^{-1}(X)}:p^{-1}(X)\to X$ is a generalized universal covering map.
\end{corollary}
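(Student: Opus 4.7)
My plan is to deduce both directions from Lemma \ref{subcoveringlemma1} using the retraction $r: Y \to X$. I first verify the local hypothesis (2) of that lemma. Given $x \in X$ and a neighborhood $U$ of $x$ in $Y$, I set $V = U \cap r^{-1}(U \cap X)$; this is open in $Y$, contains $x$, and satisfies $V \cap X = U \cap X$ since $r|_X = \mathrm{id}_X$. For any path $\gamma: I \to V$ with endpoints in $V \cap X$, the composite $r \circ \gamma: I \to V \cap X$ has the same endpoints as $\gamma$. The crucial claim is that $\gamma \simeq r \circ \gamma$ rel endpoints in $Y$: the identity $r \circ i = \mathrm{id}_X$ yields $r_\# \circ i_\# = \mathrm{id}$, so under the $\pi_1$-isomorphism hypothesis one also has $i_\# \circ r_\# = \mathrm{id}_{\pi_1(Y,\gamma(0))}$. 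Choosing any path $\beta$ in $X$ from $\gamma(1)$ back to $\gamma(0)$, the loops $\gamma \cdot \beta$ and $i \circ r \circ (\gamma \cdot \beta) = (r \circ \gamma) \cdot \beta$ are therefore path-homotopic in $Y$, and cancelling $\beta$ gives the claim. The ``only if'' direction and the ``moreover'' clause then follow directly from Lemma \ref{subcoveringlemma1}(4).

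For the converse, suppose $q: \wt{X} \to X$ is a generalized universal covering realized as the whisker-topology construction at $y_0 \in X$, and let $p: \wt{Y} \to Y$ denote the analogous whisker construction on $Y$. By Proposition 2.14 of \cite{FZ07} (cited in Remark \ref{whiskertop}), it suffices to show $p$ has unique path-lifting. I would introduce $\Phi: \wt{Y} \to \wt{X}$, $\Phi([\alpha]) = [r \circ \alpha]$, which satisfies $q \circ \Phi = r \circ p$. One checks that $\Phi$ is continuous in the whisker topologies, since $\Phi^{-1}(N([r \circ \alpha], U)) \supseteq N([\alpha], r^{-1}(U))$ for each open $U \subseteq X$ at $r(\alpha(1))$, and bijective with inverse $[\gamma] \mapsto [\gamma]$: one composition uses $r \circ \gamma = \gamma$ for paths in $X$, while the other uses $\alpha \simeq r \circ \alpha$ rel endpoints established above. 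Given two continuous lifts $\wt{\alpha}, \wt{\alpha}': I \to \wt{Y}$ of a path $\alpha$ in $Y$ starting at a common point $[\beta] \in p^{-1}(\alpha(0))$, the compositions $\Phi \circ \wt{\alpha}, \Phi \circ \wt{\alpha}'$ are continuous lifts of $r \circ \alpha$ under $q$ starting at $\Phi([\beta])$, hence equal by unique path-lifting for $q$; bijectivity of $\Phi$ then forces $\wt{\alpha} = \wt{\alpha}'$.

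The main obstacle is securing the global identification $\alpha \simeq r \circ \alpha$ rel endpoints in $Y$, which requires the full $\pi_1$-isomorphism rather than merely $\pi_1$-surjectivity or $\pi_1$-injectivity. Once this identity is in hand, both the local path-replacement hypothesis of Lemma \ref{subcoveringlemma1}(2) and the bijectivity of $\Phi$ are immediate, and the converse direction reduces to a clean transfer of unique path-lifting along $\Phi$.
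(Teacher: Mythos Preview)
Your verification of hypothesis (2) of Lemma \ref{subcoveringlemma1} and the deduction of the ``moreover'' clause are correct and match the paper's approach; your choice $V=U\cap r^{-1}(U\cap X)$ is in fact more careful than the paper, which simply takes $V=U$ without ensuring $r(V)\subseteq V\cap X$.

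For the converse direction, however, there is a genuine gap. Your map $\Phi:\wt{Y}\to\wt{X}$, $[\alpha]\mapsto[r\circ\alpha]$, is \emph{not} a bijection in general: a point $[\alpha]\in\wt{Y}$ has $p([\alpha])=\alpha(1)$, which may lie anywhere in $Y$, whereas $\Phi([\alpha])$ lies over $r(\alpha(1))\in X$. For instance, if $X=\{y_0\}$ and $Y=S^2$, then $\wt{Y}\cong S^2$ and $\wt{X}$ is a point. The identity $\alpha\simeq r\circ\alpha$ rel endpoints that you invoke was only established for paths with \emph{both} endpoints in $X$; if $\alpha(1)\notin X$ then $r\circ\alpha$ does not even have the same terminal point as $\alpha$.

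The fix is simple: you only need $\Phi$ to be injective on each fiber $p^{-1}(y)$. If $[\alpha_1],[\alpha_2]\in p^{-1}(y)$ satisfy $\Phi([\alpha_1])=\Phi([\alpha_2])$, then $r\circ\alpha_1\simeq r\circ\alpha_2$ rel endpoints in $X$, so $r_\#([\alpha_1\cdot\alpha_2^{-}])=0$; since $r_\#$ is injective, $[\alpha_1]=[\alpha_2]$. With this in hand your transfer of unique path-lifting goes through unchanged, since $\wt{\alpha}(t),\wt{\alpha}'(t)$ lie in the common fiber $p^{-1}(\alpha(t))$ for every $t$. By contrast, the paper avoids this direct argument entirely and instead cites \cite[Lemma~2.34]{Brazcat} to pull back $q$ along the retraction $r$. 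Your corrected approach is more self-contained; the paper's is shorter but depends on an external result about pullbacks of generalized coverings.
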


\begin{proof}
First, suppose that $Y$ admits a generalized universal covering $p:\wt{Y}\to Y$. We prove that $X$ admits a generalized universal covering by proving the second statement. The second statement will follow from Part (4) of Proposition \ref{subcoveringlemma1} once the hypotheses are verified. The only non-trivial hypothesis to check is that the hypothesis of Part (2) holds. Suppose $x\in X$ and $V$ is a neighborhood of $x$ in $Y$. Set $U=V$. Let $r:Y\to X$ be a retraction for which $i:X\to Y$ is a section. Suppose $\alpha:\ui\to U$ is any path with endpoints in $U\cap X$. Now $r\circ \alpha:\ui \to U\cap X$ is a path with the same endpoints. Moreover, since $r_{\#}:\pi_1(Y,u_0)\to \pi_1(X,u_0)$ is an injective and $r_{\#}([\alpha])=r_{\#}([r\circ\alpha])$, we have $\alpha\simeq r\circ\alpha$ in $Y$. Thus $\alpha$ is path-homotopic in $Y$ to a path in $U\cap X$. This verifies the hypothesis of (2) and the result follows.

For the other direction, suppose that $X$ admits a generalized universal covering $q:\tX\to X$. It follows from \cite[Lemma 2.34]{Brazcat} if $f:(Z,z_0)\to (X,y_0)$ is any map, then there exists a generalized covering map $p:\wt{Z}\to Z$ such that $p_{\#}(\pi_1(\wt{Z},\wt{z}))=f_{\#}^{-1}(q_{\#}(\pi_1(\tX,\wt{x})))$. We note that $\wt{Z}$ may be the locally path-connected coreflection of a path component of the usual pullback of $q$ by $f$. Applying this fact to $q$ and $r:Y\to X$, we obtain a generalized covering map $p:\wt{Y}\to Y$ such that $p_{\#}(\pi_1(\wt{Y},\wt{y}_0))=r_{\#}(q_{\#}(\pi_1(\tX,\wt{x})))$. Since $q$ is a generalized universal covering map and $r_{\#}$ is an isomorphism, $p_{\#}(\pi_1(\wt{Y},\wt{y}_0))=1$. Since any generalized covering map induces an injection on $\pi_1$, we conclude that $\wt{Y}$ is simply connected, i.e. $p$ is a generalized \textit{universal} covering map.
\end{proof}

Although Corollary \ref{subcoveringcorollary} provides no new information about fundamental groups, it will be useful for our approach to higher homotopy groups. Its greatest utility includes cases where $Y$ is obtained from $X$ by attaching higher dimensional spaces to $X$ in a way that does not affect the fundamental group. We apply the last few results to the case of shrinking adjunction spaces.


\begin{lemma}\label{structurelemma}
Let $Y=\shadj(X,x_j,A_j,a_j)$ where $X$ is path connected and each $(A_j,a_j)$ is sequentially $1$-connected. Let $y_0\in X$. If $X$ admits a generalized universal covering map $q:\tX\to X$, then
\begin{enumerate}
\item the inclusion $i:X\to Y$ and retraction $r:Y\to X$ induce inverse isomorphisms on $\pi_1$.
\item $Y$ admits a generalized universal covering map $p:\wt{Y}\to Y$ such that $\wt{X}$ is a retract of $\wt{Y}$ and $p|_{\wt{X}}=q$.
\item $\wt{Y}\backslash \wt{X}$ is the disjoint union of open subspaces \[N_{j,[\alpha]}=\{[\alpha\cdot\epsilon]\in\wt{Y}\mid \epsilon((0,1])\subseteq A_j\backslash\{a_j\}\}\]
for $j\in\bbn$ and paths $\alpha:(\ui,0,1)\to (X,y_0,x_j)$,

\item $\mathcal{A}_{j,[\alpha]}$ is closed in $\wt{Y}$,
\item If $A_j$ is locally path connected, then $p$ maps $\mathcal{A}_{j,[\alpha]}=N_{j,[\alpha]}\cup\{[\alpha]\}$ homeomorphically onto $A_j$.
\end{enumerate}
\end{lemma}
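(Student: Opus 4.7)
For (1), the equality $r\circ i=\mathrm{id}_X$ already shows $r_\#\circ i_\#=\mathrm{id}$, so $i_\#$ is split injective. For surjectivity, I will apply Theorem~\ref{seqnconnectedtheorem1} with $n=1$ to the constant sequence $\{f\}$ for any $f\in\Omega^1(Y,y_0)$: this yields a sequential homotopy rel.\ basepoint from $f$ to $r\circ f$, hence $[f]=i_\#([r\circ f])$. The same reasoning applied to a path $\alpha':(I,0,1)\to(Y,y_0,y)$ with $y\in X$---using the loop $\alpha'\cdot\overline{r\circ\alpha'}$ together with the just-established injectivity of $r_\#$---will give $\alpha'\simeq r\circ\alpha'$ rel.\ endpoints in $Y$, a fact I plan to reuse below.

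For (2), I will invoke Corollary~\ref{subcoveringcorollary} using (1), producing a generalized universal covering $p:\wt{Y}\to Y$ whose restriction $p|_{p^{-1}(X)}:p^{-1}(X)\to X$ is a generalized universal covering of $X$; by uniqueness I will identify $\tX$ with $p^{-1}(X)\subseteq\wt{Y}$ and $q$ with $p|_{\tX}$. To build the retraction $\tilde{r}:\wt{Y}\to\tX$, I will lift $r\circ p:\wt{Y}\to X$ through $q$ (possible since $\wt{Y}$ is simply connected), normalizing basepoints so that $\tilde{r}(\tilde{y}_0)=\tilde{y}_0$. The restriction $\tilde{r}|_{\tX}$ is then a lift of $q$ through $q$ fixing $\tilde{y}_0$; unique lifting will force $\tilde{r}|_{\tX}=\mathrm{id}_{\tX}$, giving the desired retraction.

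For (3), I would take $[\gamma]\in\wt{Y}\setminus\tX$. Because $Y\setminus X=\coprod_k(A_k\setminus\{a_k\})$ is a disjoint union of open subsets of $Y$, there is a unique $j$ with $\gamma(1)\in A_j\setminus\{a_j\}$. Letting $(t_0,1]$ be the connected component of $\gamma^{-1}(A_j\setminus\{a_j\})$ containing $1$, one has $\gamma(t_0)=a_j=x_j$, and reparameterization yields $\gamma\simeq\alpha'\cdot\epsilon$ rel.\ endpoints with $\epsilon((0,1])\subseteq A_j\setminus\{a_j\}$ and $\alpha'$ ending at $x_j$. Setting $\alpha=r\circ\alpha'$, the path-level conclusion from (1) gives $\alpha'\simeq\alpha$ rel.\ endpoints, hence $[\gamma]=[\alpha\cdot\epsilon]\in N_{j,[\alpha]}$. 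Openness of each $N_{j,[\alpha]}$ is immediate from the whisker topology: any basic neighborhood $N([\alpha\cdot\epsilon],V)$ with $V\subseteq A_j\setminus\{a_j\}$ open in $Y$ sits inside $N_{j,[\alpha]}$. For disjointness, if $[\alpha\cdot\epsilon]=[\beta\cdot\epsilon']$ are two representatives in the prescribed form, the endpoints will force a common attachment index $j$, and the loop $\epsilon\cdot\overline{\epsilon'}$ then lives in the simply connected $A_j$ and is null-homotopic, so $\alpha\simeq\beta$ in $Y$; injectivity of $i_\#$ from (1) then yields $[\alpha]=[\beta]$ in $\tX$.

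For (4), given $[\gamma]\in\wt{Y}\setminus\mathcal{A}_{j,[\alpha]}$, I plan to split into cases. If $[\gamma]\in N_{k,[\beta]}$ with $(k,[\beta])\neq(j,[\alpha])$, then $N_{k,[\beta]}$ itself is an open neighborhood of $[\gamma]$ disjoint from $\mathcal{A}_{j,[\alpha]}$ by (3). If instead $[\gamma]\in\tX$ with $\gamma(1)\neq x_j$, I will choose an open $U\subseteq X$ around $\gamma(1)$ with $x_j\notin\overline{U}$, form the saturated open set $\wt{U}=U\cup\bigcup\{A_k:x_k\in U\}$ in $Y$, and verify (using distinctness of attachment points from Remark~\ref{distinctpointsremark}) that $\wt{U}\cap A_j=\emptyset$; consequently $N([\gamma],\wt{U})$ meets neither $\{[\alpha]\}$ nor $N_{j,[\alpha]}$. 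For (5), assuming $A_j$ is locally path-connected, I will define a two-sided inverse $\phi:A_j\to\mathcal{A}_{j,[\alpha]}$ by $\phi(a)=[\alpha\cdot\epsilon_a]$ for any path $\epsilon_a$ from $a_j$ to $a$ in $A_j$ (the constant path if $a=a_j$, otherwise chosen so $\epsilon_a((0,1])\subseteq A_j\setminus\{a_j\}$); simple-connectivity of $A_j$ will make $\phi$ well-defined, and local path-connectedness will deliver continuity via concatenation of $\epsilon_a$ with paths in a path-connected neighborhood $W\subseteq V\cap A_j$ of $a$. The main obstacle is (3): extracting the canonical form $\alpha\cdot\epsilon$ with $\alpha$ a path in $X$ and then verifying disjointness require the path-level $\pi_1$-isomorphism from (1), the clopen decomposition of $Y\setminus X$, and simple-connectivity of each attachment space to act in concert.
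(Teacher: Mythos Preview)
Your arguments for (1), (2), (3), and (5) are essentially the same as the paper's, with minor stylistic differences (e.g.\ you build an explicit inverse in (5) whereas the paper checks that $p|_{\mathcal{A}_{j,[\alpha]}}$ is an open bijection). These are fine.

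There is, however, a genuine gap in your proof of (4). Your case analysis covers (i) $[\gamma]\in N_{k,[\beta]}$ for some $(k,[\beta])\neq(j,[\alpha])$ and (ii) $[\gamma]\in\tX$ with $\gamma(1)\neq x_j$, but you omit the case (iii) $[\gamma]\in\tX$ with $\gamma(1)=x_j$ and $[\gamma]\neq[\alpha]$. This is the most delicate case: any saturated neighborhood of $[\gamma]$ built from an open set containing $x_j$ will automatically meet $A_j\backslash\{a_j\}$, so the argument you gave in (ii) does not apply. The paper handles this by using that the fibers of a generalized universal covering are Hausdorff in the whisker topology: one finds an open $W\ni x_j$ in $Y$ with $[\alpha]\notin N([\gamma],W)$, and then argues that if $[\gamma\cdot\delta]\in N([\gamma],W)\cap\mathcal{A}_{j,[\alpha]}$, one can strip off the terminal $A_j$-portion of $\delta$ (using simple connectivity of $A_j$) to conclude $[\alpha]\in N([\gamma],W)$, a contradiction. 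Without this case, your proof of (4) is incomplete.
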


\begin{proof}
Part (1) is a special case of Corollary \ref{retractioniso}. With Part (1) established, Corollary \ref{subcoveringcorollary} gives the existence of a generalized universal covering map $p:\wt{Y}\to Y$ whose restriction to $p^{-1}(X)$ is a generalized universal covering map over $X$. The map $r\circ p:\wt{Y} \to X$ induces a map $\wt{r}:\wt{Y}\to \wt{X}$ such that $q\circ\wt{r}=r\circ p$. Consider the diagram below where $i$ and $\wt{i}$ are inclusions. 
\[\xymatrix{
\tX \ar[r]^-{\wt{i}} \ar[d]_-{q} & \wt{Y} \ar@{-->}[r]^-{\wt{r}} \ar[d]^-{p} & \tX \ar[d]^-{q}\\
X \ar[r]_-{i} & Y \ar[r]_-{r} & X
}\]Since $q\circ (\wt{r}\circ \wt{i})=q$, it follows that $\wt{r}\circ \wt{i}=id_{\tX}$. Hence $\wt{r}$ is a retraction, completing the proof of (2).

For (3), since $X$ is closed in $Y$, it follows that $\wt{X}=p^{-1}(X)$ is closed as a subspace of $\wt{Y}$. Certainly, $N_{j,[\alpha]}\subseteq \wt{Y}\backslash\tX$ is clear for any $j\in\bbn$ and path $\alpha:(I,0,1)\to (X,y_0,x_j)$. Suppose $\wt{y}\in \wt{Y}\backslash \tX$. Then $p(\wt{y})\in A_j\backslash \{a_j\}$ for some $j\in \bbn$. Find a path $\wt{\gamma}:\ui\to \wt{Y}$ from the basepoint $\wt{y}_0$ to $\wt{y}$ and notice that $\gamma=p\circ\wt{\gamma}$ is a path in $Y$ from $y_0$ to $p(\wt{y})$. Let $t_0=\sup\{t\in\ui\mid \gamma(t)\in X\}$. Then $\gamma(t_0)=x_j$ and $\gamma((t_0,1])\subseteq A_j\backslash \{a_j\}$. Since the inclusion $i:X\to Y$ induces an isomorphism on $\pi_1$, we may assume that $\gamma([0,t_0])\subseteq X$ without changing the path-homotopy class of $\gamma|_{[0,t_0]}$. Let $\alpha=\gamma|_{[0,t_0]}$ and $\epsilon=\gamma|_{[t_0,1]}$. Now $\wt{y}=\wt{\gamma}(1)=[\alpha\cdot\epsilon]$ where $\alpha:\ui\to X$ is path from $y_0$ to $x_j$ and $\epsilon((0,1])\subseteq A_j\backslash\{a_j\}$. Thus $\wt{Y}\backslash \tX= \bigcup\{N_{j,[\alpha]}\mid j\in\bbn,\alpha:(I,0,1)\to (X,y_0,x_j)\}$.

Fix $j\in\bbn$ and a path $\alpha:(I,0,1)\to (X,y_0,x_j)$. For any path $\epsilon:(I,0)\to (A_j,a_j)$ with $\epsilon(1)\subseteq A_j\backslash \{a_j\}$, the set $B([\alpha\cdot\epsilon],A_{j}\backslash\{a_j\})$ is an open neighborhood of $[\alpha\cdot\epsilon]$ in $\wt{Y}$. Therefore,\[N_{j,[\alpha]}=\bigcup\{B([\alpha\cdot\epsilon],A_{j}\backslash\{a_j\})\mid \epsilon:(I,0)\to (A_j,a_j), \epsilon(1)\in A_{j}\backslash \{a_j\}\}\]is open in $\wt{Y}$.

Finally, suppose $\wt{y}\in N_{j,[\alpha]}\cap N_{j',[\beta]}$. Then $\wt{y}=[\alpha\cdot\epsilon]=[\beta\cdot\delta]$ for paths $\epsilon:(\ui,0)\to (A_j,a_j)$ with $\epsilon(1)\in A_{j}\backslash\{a_j\}$ and $\delta:(\ui,0)\to (A_{j'},a_{j'})$ with $\epsilon(1)\in A_{j'}\backslash\{a_{j'}\}$. Since $\epsilon(1)=\delta(1)$, we must have $j=j'$. Thus $\alpha(1)=\beta(1)=x_j$. Applying the retraction $r_j:Y\to A_j$ to the equality $[\alpha\cdot\epsilon]=[\beta\cdot\delta]$ gives $[\epsilon]=[\delta]$ as path-homotopy classes in $A_j$. It follows that $[\alpha]=[\beta]$ as path-homotopy classes in $X$. We conclude that $\wt{Y}\backslash\tX$ is the disjoint union of the open sets $N_{j,[\alpha]}$ ranging over pairs $(j,[\alpha])$ where $j\in\bbn$ and $[\alpha]$ is a path-homotopy class of a path $\alpha:(I,0,1)\to (X,y_0,x_j)$.

For (4), $\wt{X}\cap \mathcal{A}_{j,[\alpha]}=\{[\alpha]\}$ is clear since $\tX\cap N_{j,[\alpha]}=\emptyset$. To check that $\mathcal{A}_{j,[\alpha]}$ is closed in $\wt{Y}$, Part (3) makes clear that we only need to check that $\mathcal{A}_{j,[\alpha]}$ is closed in $\wt{X}\cup \mathcal{A}_{j,[\alpha]}$. Suppose $[\beta]\in\tX\backslash\{[\alpha]\}$. If $\beta(1)\neq \alpha(1)=x_j$, then $\beta(1)$ lies in the open set $Y\backslash A_j$ of $Y$. Now $U=B([\beta],Y\backslash A_j)\cap (\tX\cup \mathcal{A}_{j,[\alpha]})$ is an open neighborhood of $[\beta]$ in $\wt{X}\cup \mathcal{A}_{j,[\alpha]}$. If $[\beta\cdot\epsilon]\in U$ with $\epsilon:(I,0)\to (Y\backslash A_j,\beta(1))$, then $\epsilon(1)\notin A_j$ and so $[\beta\cdot\epsilon]\notin \mathcal{A}_{j,[\alpha]}$. Therefore, $U\cap \mathcal{A}_{j,[\alpha]}=\emptyset$. On the other hand, suppose $\beta(1)=\alpha(1)=x_j$. Then $[\alpha]$ and $[\beta]$ are distinct points in the fiber $q_j^{-1}(x_j)=p^{-1}(x_j)$. Since the fibers of a generalized universal covering spaces are Hausdorff, we may find an open neighborhood $W$ of $x_j$ in $Y$ such that $[\alpha]\notin B([\beta],W)$. Now $U=B([\beta],W)\cap (\tX\cup \mathcal{A}_{j,[\alpha]})$ is an open neighborhood of $[\beta]$ in $\tX\cup \mathcal{A}_{j,[\alpha]}$. It suffices to show that $U\cap \mathcal{A}_{j,[\alpha]}=\emptyset$. Suppose otherwise. Then we have paths $\epsilon:(I,0)\to (A_j,a_j)$ and $\delta:(I,0)\to (W,x_j)$ such that $[\alpha\cdot\epsilon]=[\beta\cdot\delta]\in B([\beta],W)\cap \mathcal{A}_{j,[\alpha]}$. If $\epsilon(1)=a_j$, then, since $A_j$ is simply connected, we have $[\alpha]=[\beta\cdot\delta]\in B([\beta],W)$ a contradiction. If $\alpha(1)\in A_j\backslash \{a_j\}$, then we may find $t_0=\max\{t\in \ui\mid \delta(t)=a_j\}$. Since $A_j$ is simply connected, we have $[\epsilon]=[\delta|_{[t_0,1]}]$. Thus $[\alpha]=[\beta\cdot\delta|_{[0,t_0]}]\in B([\beta],W)$, which is another contradiction. We conclude that $U\cap \mathcal{A}_{j,[\alpha]}=\emptyset$.

For Part (5), notice that $p$ maps $[\alpha]\in\mca_{j,[\alpha]}$ to $x_j$ and all points of $N_{j,[\alpha]}$ into $A_j\backslash\{a_j\}$. Suppose $[\alpha\cdot\epsilon]$ and $[\alpha\cdot\delta]$ are elements of $N_{j,[\alpha]}$ with $p([\alpha\cdot\epsilon])=\epsilon(1)=\delta(1)=p([\alpha\cdot\delta])$. Now $\epsilon\cdot\delta^{-}$ is a loop in $A_j$ based at $a_j$. However, $A_j$ is simply connected. Thus, $[\epsilon]=[\delta]$, giving $[\alpha\cdot\epsilon]=[\alpha\cdot\delta]$. We conclude that $p$ maps $\mathcal{A}_{j,[\alpha]}$ bijectively onto $A_j$. If $\gamma:(\ui,0)\to (A_j,a_j)$ is any path and $V$ is a path-connected open neighborhood of $\gamma(1)$, then $p$ maps the open set $B([\alpha\cdot\gamma],V)\cap \mathcal{A}_{j,[\alpha]}$ in the subspace $\mathcal{A}_{j,[\alpha]}$ of $\wt{Y}$ onto $V$. Hence, $p$ is a homeomorphism.
\end{proof}

Although $\wt{Y}$ will be a shrinking adjunction space only in trivial cases, Lemma \ref{structurelemma} shows that $\wt{Y}$ is at least an indeterminate adjunction space (Recall Section \ref{subsectionintermediateadjunctionspaces}) with core $\wt{X}$ and attachment spaces $\mca_{j,[\alpha]}$, $[\alpha]\in p^{-1}(x_j)$.

\begin{corollary}\label{indeterminatecorollary}
Let $Y=\shadj(X,x_j,A_j,a_j)$ where $X$ is path connected and each $(A_j,a_j)$ is sequentially $1$-connected. Let $y_0\in X$. If $X$ admits a generalized universal covering map $q:\tX\to X$, then, $\wt{Y}$ is an indeterminate adjunction space with core $\tX$, attachment spaces $\mathcal{A}_{j,[\alpha]}$ and attachment points $[\alpha]\in\tX$ ranging over all path-homotopy classes of paths $\alpha:I\to X$ from $y_0$ to the attachment points $x_j\in X$.
\end{corollary}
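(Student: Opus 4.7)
The plan is to verify the three conditions in Definition \ref{indeterminatedef} one by one, each of which is essentially immediate from the corresponding conclusion in Lemma \ref{structurelemma}. Writing $J = \{(j,[\alpha]) : j\in\bbn,\ \alpha:(I,0,1)\to(X,y_0,x_j)\}$ for the indexing set, I want to show that $\tX$ is closed in $\wt{Y}$, that $\wt{Y}\setminus \tX = \bigsqcup_{(j,[\alpha])\in J}N_{j,[\alpha]}$ as a disjoint union of open sets, and that $\mca_{j,[\alpha]} = \overline{N_{j,[\alpha]}}$ with $\mca_{j,[\alpha]}\cap \tX = \{[\alpha]\}$.

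First, by Lemma \ref{structurelemma}(2), $\tX$ is a retract of $\wt{Y}$, and since $\wt{Y}$ is Hausdorff, this forces $\tX$ to be closed. Next, Lemma \ref{structurelemma}(3) gives directly the disjoint union decomposition of $\wt{Y}\setminus \tX$ into the open sets $N_{j,[\alpha]}$; as one may harmlessly discard those indices for which $A_j = \{a_j\}$ (which contribute empty $N_{j,[\alpha]}$), we may assume every $N_{j,[\alpha]}$ is non-empty, since the path-connectedness of $A_j$ (Lemma \ref{zerolemma}) yields a path $\epsilon:(I,0)\to(A_j,a_j)$ with $\epsilon(1)\neq a_j$, so that $[\alpha\cdot\epsilon]\in N_{j,[\alpha]}$. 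Finally, $\mca_{j,[\alpha]}\cap \tX = \{[\alpha]\}$ holds because $N_{j,[\alpha]}$ is disjoint from $\tX$ by construction.

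The remaining condition to verify is $\mca_{j,[\alpha]}=\overline{N_{j,[\alpha]}}$. Since $\mca_{j,[\alpha]} = N_{j,[\alpha]}\cup\{[\alpha]\}$ is closed in $\wt{Y}$ by Lemma \ref{structurelemma}(4), the inclusion $\overline{N_{j,[\alpha]}}\subseteq \mca_{j,[\alpha]}$ is automatic. For the reverse inclusion, it suffices to prove $[\alpha]\in \overline{N_{j,[\alpha]}}$. Given a basic neighborhood $B([\alpha],U)$ of $[\alpha]$ in $\wt{Y}$ with $U$ an open neighborhood of $x_j$ in $Y$, I would choose a path $\epsilon:(I,0)\to (U\cap A_j, a_j)$ with $\epsilon((0,1])\subseteq A_j\setminus\{a_j\}$ (possible since $A_j$ is sequentially $0$-connected, hence path connected, and meets $U$ in a path-connected neighborhood of $a_j$). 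Then $[\alpha\cdot\epsilon]\in B([\alpha],U)\cap N_{j,[\alpha]}$, witnessing that every neighborhood of $[\alpha]$ meets $N_{j,[\alpha]}$.

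There is no real obstacle here; the entire statement is a bookkeeping exercise that collates the four conclusions of Lemma \ref{structurelemma} and matches them against Definition \ref{indeterminatedef}. The only subtle point is the closure computation, which requires producing small paths into $A_j\setminus\{a_j\}$ near $a_j$, and this is immediate from the path-connectedness of $A_j$ guaranteed by sequential $1$-connectedness.
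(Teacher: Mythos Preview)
Your approach is correct and matches the paper's, which treats the corollary as an immediate consequence of Lemma \ref{structurelemma} without separate proof. One small repair: your justification that $[\alpha]\in\overline{N_{j,[\alpha]}}$ appeals to ``$A_j$ meets $U$ in a path-connected neighborhood of $a_j$,'' but no local path-connectedness of $A_j$ is assumed. Instead, use the sequential $0$-connectedness directly: pick a sequence $z_k\to a_j$ in $A_j\setminus\{a_j\}$ (which exists since $A_j$ is path-connected Hausdorff and non-degenerate), and obtain paths $\beta_k:(I,0,1)\to(A_j,a_j,z_k)$ converging to $a_j$; for large $k$ one has $\im(\beta_k)\subseteq U$, and since $A_j$ is simply connected, $[\alpha\cdot\beta_k]\in N_{j,[\alpha]}\cap B([\alpha],U)$ regardless of whether $\beta_k$ touches $a_j$ in its interior.
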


\begin{remark}
The condition that each $A_j$ is locally path connected in (5) of Lemma \ref{structurelemma} is only needed if one wants to ensure that $p$ maps $\mathcal{A}_{j,[\alpha]}$ to $A_j$ by a homeomorphism. If $A_j$ is not locally path connected, then $\mathcal{A}_{j,[\alpha]}$ is canonically homeomorphic to the locally path-connected coreflection $lpc(A_j)$ and the map $p:\mathcal{A}_{j,[\alpha]}\to A_j$ corresponds to the continuous identity map $lpc(A_j)\to A_j$. Hence, $p$ maps $\mathcal{A}_{j,[\alpha]}$ to $A_j$ by a bijective weak homotopy equivalence.
\end{remark}

\begin{lemma}
If $n\geq 2$ and $A_j$ is sequentially $n$-connected at $a_j$ for all $j\in \bbn$, then the retraction $\wt{r}:\wt{Y}\to\tX$ induces an isomorphism $\wt{r}_{\#}:\pi_m(\wt{Y},\wt{y}_0)\to \pi_m(\tX,\wt{y}_0)$ for all $2\leq m\leq {n}$. 
\end{lemma}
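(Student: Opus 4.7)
The plan is to prove this by a direct diagram chase, bypassing any new homotopy construction in $\wt{Y}$ itself. The key observation is that Lemma \ref{structurelemma} already provides the commutation identity $q\circ\wt{r}=r\circ p$, and it provides the retraction $\wt{r}$ as a genuine section-retraction pair with $\wt{i}:\tX\hookrightarrow\wt{Y}$. Passing this identity to $\pi_m$ for any $m\geq 2$ yields the commutative square
\[
\xymatrix{
\pi_m(\wt{Y},\wt{y}_0) \ar[r]^-{\wt{r}_{\#}} \ar[d]_-{p_{\#}} & \pi_m(\tX,\wt{y}_0) \ar[d]^-{q_{\#}} \\
\pi_m(Y,y_0) \ar[r]_-{r_{\#}} & \pi_m(X,y_0)
}
\]
and surjectivity of $\wt{r}_{\#}$ is immediate from $\wt{r}_{\#}\circ\wt{i}_{\#}=\mathrm{id}$.

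The first step is to identify the three remaining arrows as isomorphisms in the allowed range of dimensions. For the two vertical maps, note that $p:\wt{Y}\to Y$ is a generalized universal covering map by Part~(2) of Lemma \ref{structurelemma}, and $q:\tX\to X$ is one by hypothesis; by Proposition \ref{coveringisomorphism}, both $p_{\#}$ and $q_{\#}$ are isomorphisms for every $m\geq 2$. For the bottom arrow, the hypothesis that each $(A_j,a_j)$ is sequentially $n$-connected places us precisely in the setting of Corollary \ref{retractioniso}, which gives that $r_{\#}:\pi_m(Y,y_0)\to \pi_m(X,y_0)$ is an isomorphism for all $0\leq m\leq n$, and in particular for $2\leq m\leq n$.

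Once these three maps are known to be isomorphisms, commutativity of the square forces $\wt{r}_{\#}=q_{\#}^{-1}\circ r_{\#}\circ p_{\#}$ to be an isomorphism for every $m$ with $2\leq m\leq n$, completing the proof. No obstacle is anticipated here: the indeterminacy of the adjunction-space topology on $\wt{Y}$, which was a source of technical difficulty in earlier sections, is sidestepped entirely because the argument only uses the four continuous maps $p,q,r,\wt{r}$ together with the two input results (Proposition \ref{coveringisomorphism} and Corollary \ref{retractioniso}) already established in full generality. The essential work has all been carried out earlier: Lemma \ref{structurelemma} set up the covering $p$ and the retraction $\wt{r}$, Corollary \ref{retractioniso} handled the downstairs retraction at the level of homotopy groups via the factorization methods of Section \ref{sectionwhitneycover}, and the lifting property of $p$ does the rest.
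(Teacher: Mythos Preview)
Your proof is correct and follows essentially the same approach as the paper: both set up the commutative square relating $\wt{r}_{\#}$, $r_{\#}$, $p_{\#}$, and $q_{\#}$, invoke Proposition~\ref{coveringisomorphism} for the vertical isomorphisms and Corollary~\ref{retractioniso} for the bottom one, and conclude. Your additional remark that $\wt{r}_{\#}\circ\wt{i}_{\#}=\mathrm{id}$ gives surjectivity is true but redundant once the square argument is in place.
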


\begin{proof}
For all $m\geq 1$, the following square commutes. 
\[\xymatrix{
\pi_m(\wt{Y},\wt{y}_0) \ar[r]^-{\wt{r}_{\#}} \ar[d]_-{p} & \pi_m(\tX,\wt{y}_0) \ar[d]^-{q}\\
\pi_m(Y,y_0) \ar[r]_-{r_{\#}} & \pi_m(X,y_0)
}\]
When $m\geq 2$, the vertical maps are isomorphisms by Theorem \ref{coveringisomorphism}. According to Theorem \ref{retractioniso}, since each $(A_j,a_j)$ is sequentially $n$-connected, $r_{\#}$ is an isomorphism for all $1\leq m\leq n$. Therefore, the top map is an isomorphism when $2\leq m\leq n$.
\end{proof}

Recall that $q^{-1}(x_j)\subseteq \tX$ is a convenient way to denote the set of path-homotopy classes in $X$ from $y_0$ to $x_j$. For each pair $(j,[\alpha])$ with $[\alpha]\in q^{-1}(x_j)$, let $r_{j,[\alpha]}:\wt{Y}\to \mathcal{A}_{j,[\alpha]}$ be the canonical retraction.

\begin{lemma}\label{countablelemma}
Let $X\subseteq Y$ and $\tX\subseteq \wt{Y}$ be as in Lemma \ref{structurelemma}. Let $Z\subseteq \wt{Y}$ be a subspace, which is a Peano continuum and let $Z_{j,[\alpha]}=  Z\cap \mca_{j,[\alpha]}$. Let $K$ be the set of pairs $(j,[\alpha])$ such that $Z\cap N_{j,[\alpha]}\neq\emptyset$. Then
\begin{enumerate}
\item $K$ is countable,
\item $Z\cap \tX$ and $Z\cap \mca_{j,[\alpha]}$, $(j,[\alpha])\in K$ are Peano continua,
\item If $K$ is infinite, then $Z$ is homeomorphic to the shrinking adjunction space with core $Z\cap \tX$ and attachment spaces $Z_{j,[\alpha]}$, $(j,[\alpha])\in K$.
\end{enumerate}
\end{lemma}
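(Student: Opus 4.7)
The plan is to handle the three claims in order, leveraging the indeterminate-adjunction-space structure of $\wt{Y}$ from Corollary \ref{indeterminatecorollary}, the canonical retractions onto the core and attachment spaces, and Proposition \ref{indeterminateproppeano}.

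For (1), since the open sets $N_{j,[\alpha]}$ are pairwise disjoint in $\wt{Y}$ by Lemma \ref{structurelemma}(3), the family $\{Z \cap N_{j,[\alpha]} : (j,[\alpha]) \in K\}$ consists of pairwise disjoint, non-empty, open subsets of $Z$. Because $Z$ is a Peano continuum, and hence a separable metric space, any such family is at most countable, so $K$ is countable.

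For (2), I would first establish the auxiliary claim that $[\alpha] \in Z$ whenever $(j,[\alpha]) \in K$ and $Z \not\subseteq \mca_{j,[\alpha]}$. By Lemma \ref{structurelemma}(4), $\mca_{j,[\alpha]} = N_{j,[\alpha]} \cup \{[\alpha]\}$ is the closure of $N_{j,[\alpha]}$ in $\wt{Y}$, so the topological frontier of $N_{j,[\alpha]}$ is $\{[\alpha]\}$. Given $z \in Z \cap N_{j,[\alpha]}$ and $z' \in Z \backslash \mca_{j,[\alpha]}$, any path in $Z$ from $z$ to $z'$ must cross this frontier, giving $[\alpha] \in Z$. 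When $Z \subseteq \mca_{j,[\alpha]}$, $Z \cap \mca_{j,[\alpha]} = Z$ is already a Peano continuum. Otherwise, the auxiliary claim guarantees that the canonical retraction $\wt{r} : \wt{Y} \to \tX$ from Lemma \ref{structurelemma}(2) and the retraction $r_{j,[\alpha]} : \wt{Y} \to \mca_{j,[\alpha]}$ (collapsing the complement of $\mca_{j,[\alpha]}$ to $[\alpha]$) each restrict to continuous retractions $Z \to Z \cap \tX$ and $Z \to Z \cap \mca_{j,[\alpha]}$. Since a retract of a Peano continuum is a Peano continuum, both $Z \cap \tX$ and $Z \cap \mca_{j,[\alpha]}$ are Peano continua.

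For (3), with $K$ infinite, I would first verify $Z \cap \tX \neq \emptyset$ by a compactness argument: selecting distinct pairs $(j_n,[\alpha_n]) \in K$ and points $z_n \in Z \cap N_{j_n,[\alpha_n]}$, a subsequence $z_{n_k}$ converges to some $z \in Z$, and the limit $z$ cannot lie in any $N_{j,[\alpha]}$ (such an open set could contain only finitely many $z_{n_k}$, since the $N_{j,[\alpha]}$ are pairwise disjoint), so $z \in \tX$. Next, I would verify that the subspace $W = \tX \cup \bigcup_{(j,[\alpha]) \in K} \mca_{j,[\alpha]}$ of $\wt{Y}$ is itself an indeterminate adjunction space with core $\tX$, attachment spaces $\mca_{j,[\alpha]}$, and attachment points $[\alpha]$, indexed by $K$; this is immediate from Lemma \ref{structurelemma} (note that $\tX$ is closed in $\wt{Y}$ and each $N_{j,[\alpha]}$ is open in $\wt{Y}$ with closure $\mca_{j,[\alpha]}$). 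Since $Z \subseteq W$ is a Peano continuum meeting $\tX$ and meeting every set $N_{j,[\alpha]} = \mca_{j,[\alpha]} \backslash \{[\alpha]\}$ for $(j,[\alpha]) \in K$, Proposition \ref{indeterminateproppeano} (applied with the countable infinite indexing set $K$, which is permitted by the paper's standing conventions on countable indexing) delivers the desired homeomorphism. The main obstacle throughout is essentially the bookkeeping required to verify $[\alpha] \in Z$ and the adjunction-space structure of $W$; no new homotopy-theoretic input is needed beyond the machinery already established.
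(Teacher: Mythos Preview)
Your proposal is correct and follows essentially the same route as the paper: separability of $Z$ for (1), the retractions $\wt{r}$ and $r_{j,[\alpha]}$ for (2), and Proposition~\ref{indeterminateproppeano} together with Corollary~\ref{indeterminatecorollary} for (3). Your extra bookkeeping (verifying $[\alpha]\in Z$, checking $Z\cap\tX\neq\emptyset$, and restricting to the countably indexed subspace $W$ before invoking Proposition~\ref{indeterminateproppeano}) fills in details the paper leaves implicit, but the strategy is identical.
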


\begin{proof}
Since $Z$ is separable and the subspaces $N_{j,[\alpha]}$ are open and disjoint by (3) of Lemma \ref{structurelemma}, it is only possible for $Z$ to meet $N_{j,[\alpha]}$ for countably many pairs $(j,[\alpha])$. Hence $K$ is countable.

For (2), recall that there is a retraction $\wt{r}:\wt{Y}\to \tX$ such that $\wt{r}(\mathcal{A}_{j,[\alpha]})=[\alpha]$ for all $j\in\bbn$ and $\alpha:(\ui,0,1)\to (X,y_0,x_j)$. Then $\wt{r}(Z)=Z\cap\tX$ is a Peano continuum. Also, for all $(j,[\alpha])\in K$, $\wt{r}_{j,[\alpha]}(Z)=Z\cap \mca_{j,[\alpha]}$ is a Peano continuum.

For (3), recall that Corollary \ref{indeterminatecorollary} gives that $\wt{Y}$ is an indeterminate adjunction space with core $\tX$, attachment spaces $\mathcal{A}_{j,[\alpha]}$. The conclusion of (3) now follows from Proposition \ref{indeterminateproppeano}.
\end{proof}

\subsection{The main result: injectivity of $\Phi$}

Finally, we prove the strongest results of this paper, which imply Theorem \ref{mainthm} in the introduction.

\begin{theorem}\label{maintheorem}
Let $Y=\shadj(X,x_j,A_j,a_j)$ where $X$ is a one-dimensional Peano continuum and each $(A_j,a_j)$ is sequentially $(n-1)$-connected and $\pi_n$-residual. Then
\begin{enumerate}
\item $\pi_m(Y,y_0)=0$ for all $2\leq m\leq n-1$,
\item the canonical homomorphism
\[\wt{\Phi}:\pi_n(\wt{Y},\wt{y}_0)\to \prod_{j\in\bbn}\prod_{[\alpha]\in p^{-1}(x_j)}\pi_n(\mca_{j,[\alpha]},[\alpha])\] induced by the retractions $r_{j,[\alpha]}:(\wt{Y},\wt{y}_0)\to (\mca_{j,[\alpha]},[\alpha])$ is injective.
\end{enumerate}
\end{theorem}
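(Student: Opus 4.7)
Part (1) follows immediately from the machinery already in place. Corollary \ref{retractioniso}, whose hypothesis is precisely that each $(A_j,a_j)$ is sequentially $(n-1)$-connected, gives that the retraction-induced map $r_{\#} : \pi_m(Y,y_0) \to \pi_m(X,y_0)$ is an isomorphism for $0 \le m \le n-1$. Since a one-dimensional Peano continuum is aspherical \cite{CFhigher}, $\pi_m(X,y_0) = 0$ for $m \ge 2$, and combining these two facts yields $\pi_m(Y,y_0) = 0$ for $2 \le m \le n-1$.

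For Part (2), the plan is to show that any $[\wt{f}] \in \ker(\wtphi)$ is trivial. By hypothesis each $r_{j,[\alpha]} \circ \wt{f}$ is null-homotopic in $\mca_{j,[\alpha]}$. First, we adapt the proof of Lemma \ref{factoredformsequencelemma1} to the indeterminate adjunction space $\wt{Y}$---it uses only the finite retractions $\rho_F$ and sequential $(n-1)$-connectedness of the attachment spaces, both of which are available since each $\mca_{j,[\alpha]}$ is bijectively weakly equivalent to $A_j$---so we may assume $\wt{f}$ is in factored form. Its compact image $Z$ is a Peano continuum in $\wt{Y}$, and by Lemma \ref{countablelemma} it is a genuine shrinking adjunction space with dendrite core $Z \cap \wt{X}$ (a compact subtree of the $\bbr$-tree $\wt{X}$ produced by Theorem \ref{onedgenunivcovspace}) and countable index set $K$. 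Theorem \ref{dendritetheorem}, which imposes no connectivity hypothesis on the attachment spaces, then delivers a further homotopy inside $Z$ to a single-factor form $\wt{g}$ with cubes $\{R_{j,[\alpha]}\}_{(j,[\alpha]) \in K}$.

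Each single factor $\wt{g}|_{R_{j,[\alpha]}}$ agrees, up to path-conjugation, with $r_{j,[\alpha]} \circ \wt{g} \simeq r_{j,[\alpha]} \circ \wt{f}$ and is therefore null-homotopic in $\mca_{j,[\alpha]}$. For each fixed $j$, pushing down by $p$ the factors $\{p \circ \wt{g}|_{R_{j,[\alpha]}}\}_{[\alpha] \in K_j}$ form a sequence of null-homotopic $n$-loops in $(A_j,a_j)$ that clusters at $a_j$, by continuity of $r_j \circ p \circ \wt{g}$. The $\pi_n$-residual hypothesis now supplies a sequential null-homotopy in $A_j$ whose component images shrink to $a_j$, and unique path-lifting for $p$ lifts each component to $\wt{G}_{j,[\alpha]} : R_{j,[\alpha]} \times I \to \mca_{j,[\alpha]}$ starting at $\wt{g}|_{R_{j,[\alpha]}}$ and terminating at the constant map $[\alpha]$. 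Assembling these lifts with the constant homotopy on $\wt{g}^{-1}(\wt{X})$ produces a homotopy $H : I^n \times I \to \wt{Y}$ whose terminal slice is $\wt{r} \circ \wt{g}$; this slice lies in the contractible $\bbr$-tree $\wt{X}$ and is therefore null-homotopic, so composing these homotopies yields $[\wt{f}] = 0$.

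The main obstacle we anticipate is the continuity of the assembled map $H$. The lifted null-homotopies live in distinct sheets $\mca_{j,[\alpha]}$, and for $H$ to be continuous at points $(\bfx_0, t_0)$ with $\wt{g}(\bfx_0) \in \wt{X}$ we need the images $\im(\wt{G}_{j,[\alpha]})$ to cluster around the dendrite core uniformly in $(j,[\alpha])$. The per-$j$ shrinking to $a_j$ follows directly from $\pi_n$-residual of $A_j$; coordinating this across all $j$ will require translating the shrinking in $A_j$ into convergence for the whisker topology on $\wt{Y}$ at points of $Z \cap \wt{X}$, together with a Pasting-type argument through the finite retractions $\rho_F$ showing that $\im(H)$ is itself a Peano continuum to which Lemma \ref{countablelemma} applies and identifies its subspace topology as the inverse-limit-like topology of a shrinking adjunction space.
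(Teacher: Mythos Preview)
Your proof of Part (1) and your outline of Part (2) through the application of Theorem~\ref{dendritetheorem} match the paper's approach closely. The one substantive divergence is where you assemble the final null-homotopy: you work upstairs in $\wt{Y}$, lifting the $\pi_n$-residual null-homotopies to the sheets $\mca_{j,[\alpha]}$, whereas the paper projects $\wt{g}$ down to $g = p\circ\wt{g}\in\Omega^n(Y,y_0)$ and assembles the homotopy $H$ in $Y$. Downstairs, $g$ is in factored (not single-factor) form with factorization $\scrs_j=\{R_{j,[\alpha]}\mid [\alpha]\in K_j\}$, and continuity of $H$ can be verified directly using the genuine shrinking-adjunction structure of $Y$ via Lemma~\ref{opensetlemma}. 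The terminal slice $h$ then has image in the aspherical space $X$, so $[f]=0$ in $\pi_n(Y)$, and since $p_{\#}$ is an isomorphism on $\pi_n$ one concludes $[\wt{f}]=0$ without ever lifting $H$.

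Your anticipated obstacle is real, and your proposed resolution has a gap: a Pasting argument through the finite retractions $\rho_F$ only yields continuity of your assembled $H$ into the inverse-limit topology on $\wt{Y}$, which can be strictly coarser than the whisker topology (this is exactly the ambiguity flagged in Section~\ref{subsectionintermediateadjunctionspaces}). Appealing to Lemma~\ref{countablelemma} to identify $\im(H)$ as a shrinking adjunction space is circular, since that lemma presupposes a continuous map with Peano-continuum image. The clean fix is the paper's: once you have $\wt{g}$ in single-factor form, push everything down to $Y$, carry out the $\pi_n$-residual argument and the assembly there, and invoke asphericity of $X$ rather than contractibility of $\wt{X}$.
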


\begin{proof}
By Theorem \ref{retractioniso}, $r_{\#}:\pi_m(Y,y_0)\to\pi_m(X,y_0)$ is an isomorphism for all $1\leq m\leq n-1$. Since $X$ is one-dimensional, $X$ is aspherical \cite{CFhigher}. Hence, $\pi_m(Y,y_0)=0$ for all $2\leq m\leq n-1$.

For (2), suppose $n\geq 2$ and let $\scrf=\{(j,[\alpha])\in \bbn\times \tX\mid \alpha(1)=x_j\}$. Let $\wt{f}:(I^n,\partial I^n)\to (\wt{Y},\wt{y}_0)$ be a map such that $\wt{f}_{j,[\alpha]}=r_{j,[\alpha]}\circ \wt{f}$ is null-homotopic in $\mca_{j,[\alpha]}$ for all $(j,[\alpha])\in\scrf$. We will show that $\wt{f}$ is null-homotopic in $\wt{Y}$. By Lemma \ref{factoredformsequencelemma1}, $f=p\circ \wt{f}$ is homotopic to an $n$-loop $g$ in $Y$ that is in factored form. This homotopy lifts to a homotopy $\wt{f}\simeq \wt{g}$ where $\wt{g}$ is the lift of $g$. Therefore, we may assume from the start that $f$ is in factored form. Choose a Whitney cover $\scrc_j$ of $V_j=f^{-1}(A_j\backslash\{a_j\})$ so that $\{\scrc_j\}_{j\in\bbn}$ forms a factorization of $f$ (where $\scrc_j=\emptyset$ if $V_j=\emptyset$). Since $\im(\wt{f})$, is a Peano continuum, Lemma \ref{countablelemma} ensures that the set $K=\{(j,[\alpha])\in\scrf\mid \im(\wt{f})\cap N_{j,[\alpha]}\neq\emptyset\}$ is countable. Then for all $j\in \bbn$, the set $K_j=\{[\alpha]\in q^{-1}(x_j)\mid (j,[\alpha])\in K\}$ is countable.

Let $U_{j,[\alpha]}=\wt{f}^{-1}(N_{j,[\alpha]})$ and notice that $V_j$ is the disjoint union of the open sets $U_{j,[\alpha]}$, $[\alpha]\in K_j$. Therefore, $\scrc_{j,[\alpha]}=\{C\in\scrc_j\mid C\subseteq U_{j,[\alpha]}\}$ is a Whitney cover of $U_{j,[\alpha]}$ for all $(j,[\alpha])\in K$. Moreover, if $C\in \scrc_{j,[\alpha]}$, then $\wt{f}(C)\subseteq \mca_{j,[\alpha]}$ and $f(\partial C)=a_j$. Therefore, $\wt{f}(\partial C)=[\alpha]$. We conclude that for all $(j,[\alpha])\in K$, $\wt{f}_{j,[\alpha]}$ is a $\scrc_{j,[\alpha]}$-concatenation. By assumption, $\wt{f}_{j,[\alpha]}$ is null-homotopic for all $(j,[\alpha])\in K$ and therefore $f_{j,[\alpha]}=r_j\circ f=p\circ \wt{f}_{j,[\alpha]}$ is null-homotopic in $A_j$ for all $(j,[\alpha])\in K$. 

Let $\mcd=\im(\wt{f})\cap \wt{X}$. Since $\wt{X}$ is a topological $\bbr$-tree, $\mcd$ is a dendrite. For each $(j,[\alpha])\in K$, let $\mcz_{j,[\alpha]}=\im(\wt{f})\cap \mca_{j,[\alpha]}$. Now $\im(\wt{f})=\mcd\cup \bigcup \{\mcz_{j,[\alpha]}\mid (j,[\alpha])\in K\}$ is a Peano continuum in the indeterminate adjunction space $\mcd\cup \bigcup \{\mca_{j,[\alpha]}\mid (j,[\alpha])\in K\}$ with core $\mcd$ and attachments spaces $\mca_{j,[\alpha]}$, $(j,[\alpha])\in K$. When $K$ is finite, $\im(\wt{f})$ is a finite adjunction space. When $K$ is infinite, Proposition \ref{indeterminateproppeano} gives that $\im(\wt{f})$ is a shrinking adjunction space with core $\mcd$ and attachment spaces $\mcz_{j,[\alpha]}$, $(j,[\alpha])\in K$ attached at the corresponding points $[\alpha]\in\mcd$. Either way, $\{\scrc_{j,[\alpha]}\mid (j,[\alpha])\in K\}$ forms a factorization of $\wt{f}$ with respect to given decomposition of $\im(\wt{f})$. Therefore, $\wt{f}$ is in factored form with respect to this decomposition of $\im(\wt{f})$. 

Even though $\mcz_{j,[\alpha]}$ need not be sequentially $(n-1)$-connected at $[\alpha]$, Theorem \ref{dendritetheorem} applies since $\wt{f}$ is in already in factored form. Therefore, $\wt{f}$ is homotopic rel. $\partial I^n$ to an $n$-loop $\wt{g}\in \Omega^n(\im(\wt{f}),\wt{y}_0)$ that is in single factor form with respect to the indicated decomposition of $\im(\wt{f})$. Let $\{\scrr_{j,[\alpha]}\mid (j,[\alpha])\in K\}$ be a factorization of $\wt{g}$ where $\scrr_{j,[\alpha]}$ consists of a single $n$-cube $R_{j,[\alpha]}$ (each $\scrr_{j,[\alpha]}$ cannot be empty since $\im(\wt{f})\cap \mcz_{j,[\alpha]}\backslash\{[\alpha]\}\neq\emptyset$). Let $g=p\circ \wt{g}$. 

Then $\wt{g}_{j,[\alpha]}=\wt{r}_{j,[\alpha]}\circ \wt{g}\simeq \wt{f}_{j,[\alpha]}$ is null-homotopic in $\mca_{j,[\alpha]}$ for all $(j,[\alpha])\in K$. It follows that $g_{j,[\alpha]}=p\circ \wt{g}_{j,[\alpha]}$ is null-homotopic in $A_j$ for all $(j,[\alpha])\in K$.

If $K_j$ is finite, then for each $[\alpha]\in K_j$, choose any null-homotopy $H_{j,[\alpha]}:R_{j,[\alpha]}\times I\to \mca_{j,[\alpha]}$ of $\wt{g}|_{R_{j,[\alpha]}}$, i.e. where $H_{j,[\alpha]}(\bfx,0)=g(\bfx)$ and $H_{j,[\alpha]}(\partial R_{j,[\alpha]}\times I\cup R_{j,[\alpha]}\times \{1\})=[\alpha]$. 

If $K_j$, is infinite, we claim that the set $\{g|_{R_{j,[\alpha]}}\mid [\alpha]\in K_j\}$ clusters at $x_j=a_j$ in $A_j$. Indeed, these maps have image in $A_j$ and if $U$ is an open neighborhood of $a_j$ in $Y$, then $p^{-1}(U)\cap \im(\wt{f})$ is an open neighborhood of the fiber $p^{-1}(x_j)\cap \im(\wt{f})$. Since $K_j\subseteq p^{-1}(x_j)\cap \im(\wt{f})$, $K_j$ lies in the core $\mcd$ of the shrinking adjunction space $\im(\wt{f})$. Recalling Lemma \ref{opensetlemma}, we must have $\wt{g}(R_{j,[\alpha]})\subseteq \mcz_{j,[\alpha]}\subseteq p^{-1}(U)\cap \im(\wt{f})$ for all but finitely many $[\alpha]\in K_j$. Therefore, $g(R_{j,[\alpha]})\subseteq U$ for all but finitely many $[\alpha]\in K_j$. Since the set $\{g|_{R_{j,[\alpha]}}\mid [\alpha]\in K_j\}$ of null-homotopic $n$-loops clusters at $x_j=a_j$ in $A_j$ and since $A_j$ is assumed to be $\pi_n$-residual at $a_j$, we may choose a set $\{H_{j,[\alpha]}\mid [\alpha]\in K_j\}$ of corresponding null-homotopies that cluster at $a_j$, that is, where $H_{j,[\alpha]}:R_{j,[\alpha]}\times I\to A_j$ is a null-homotopy of $g|_{R_{j,[\alpha]}}$.

Although $g$ is not in single factor form in $Y$, the sets $\scrs_j=\{R_{j,[\alpha]}\mid [\alpha]\in K_j\}$ form a factorization of $g$ (define $\scrs_j=\emptyset$ if $K_j=\emptyset$). Define $H:I^n\times I\to Y$ so that $H$ is the constant homotopy on $g^{-1}(X)=\wt{g}^{-1}(\mcd)$ and which agrees with $H_{j,[\alpha]}$ on $R_{j,[\alpha]}\times I$. Certainly, $H$ is well-defined. Since $g$ is not in single factor form, the continuity of $H$ is not completely obvious. However, the only non-trivial situation to consider is a sequence of distinct pairs $(j_i,[\alpha_i])$, $i\in\bbn$ in $K$ and points $(\bfx_i,t_i)\in R_{j_i,[\alpha_i]}\times I$ where $\{(\bfx_i,t_i)\}\to (\bfx,t)$ in $I^n\times I$. We have $H(\bfx_i,t_i)\in A_{j_i}$ and since $(\bfx,t)\in \partial \bigcup\{R_{j_i,[\alpha_i]}\mid (j,[\alpha])\in K\}$, we must have $x=H(\bfx,t)\in X$. Let $U$ be an open neighborhood of $H(\bfx,t)$ in $Y$ and $x_{j_i}=g(\partial R_{j_i,[\alpha_i]})=x_{j_i}$ for each $i\in\bbn$. By the continuity of $g$, we must have $\{x_{j_i}\}\to x$ in $X$ and therefore $x_{j_i}\in U$ for all but finitely many $i$. By replacing $(j_i,[\alpha_i])$ with a cofinal subsequence, if necessary, we may assume that $x_{j_i}\in U$ for all $i\in\bbn$. 

Let $P=\{j\in\bbn\mid x_j=x_{j_i}\text{ for some }i\in\bbn\}$. First, suppose $P$ is finite. If $H(\bfx_i,t_i)\notin U$ for infinitely many $i$, then there is some $j_0\in P$ and $i_1<i_2<i_3<\cdots $ such that $x_{j_0}=x_{j_{i_m}}$ and $H(\bfx_{i_m},t_{i_m})\notin U$. However, we chose the maps $\{H_{j_0,[\alpha]}\mid [\alpha]\in K_{j_0}\}$ to cluster at $a_{j_0}$. Therefore, we must have $\im(H_{j_0,[\alpha]})=H(R_{j_{i_m},[\alpha_{i_m}]}\times I)\subseteq U$ for all but finitely many $m\in\bbn$; a contradiction. Next, suppose $P$ is infinite. In this case, the sequence $\{x_j\}_{j\in P}$ converges to $x$ and so (by Lemma \ref{opensetlemma}) we have $A_{j}\subseteq U$ for all but finitely many $j\in P$. Suppose $j_1<j_2<\cdots <j_q$ are the integers for which $A_{j}\nsubseteq U$. Now, if $H(\bfx_i,t_i)\notin U$ for infinitely many $i$, there must be some $\ell\in\{1,2,\dots,q\}$ and $i_1<i_2<i_3<\cdots $ such that for all $m\in\bbn$, we have $\ell=i_{m}$ and $H(\bfx_{i_m},t_{i_m})\notin U$. Now we are in the same situation as the finite case and we arrive at a contradiction. We conclude that $H(\bfx_i,t_i)\in U$ for all but finitely many $i$, proving the continuity of $H$.

Let $h\in \Omega^n(Y,y_0)$ be the map $h(\bfx)=H(\bfx,1)$. Note that $h$ agrees with $g$ on $g^{-1}(X)$ and maps $R_{j,[\alpha]}$ to $x_j\in X$. Therefore $\im(h)\subseteq X$. Since $X$ is aspherical, $h$ is null-homotopic in $X$. Since $f\simeq g\simeq h$, we conclude that $f$ is null-homotopic in $Y$. Therefore, $\wt{f}$ is null-homotopic in $\wt{Y}$.
\end{proof}

Recall from Remark \ref{fiberbijectionremark} that each fiber $p^{-1}(x_j)$, $x_j\neq y_0$ may be identified with $\pi_1(X,y_0)$ once a choice of paths $\beta_j:(I,0,1)\to (X,x_j,y_0)$ is made. Also, each group $\pi_n(\mca_{j,[\alpha]},[\alpha])$ may be identified canonically with $\pi_n(A_j,a_j)$. Consequently, we have a homomorphism $\Phi:\pi_n(Y)\to \prod_{j\in\bbn}\prod_{\pi_1(X)}\pi_n(A_j)$ which is only canonical relative to the choice of paths $\beta_j$. Since $\Phi$ is not entirely canonical, we now find it acceptable to remove basepoints from the notation. In the following diagram of isomorphisms, the top and left map are canonical and the other two require choice for the square to commute.
\[\xymatrix{
\pi_n(\wt{Y}) \ar[d]_-{p_{\#}}^-{\cong} \ar[rr]^-{\wtphi} &&  \prod_{j\in\bbn}\prod_{[\alpha]\in p^{-1}(x_j)}\pi_n(\mca_{j,[\alpha]}) \ar[d]^-{\cong}\\
\pi_n(Y) \ar[rr]_-{\Phi} && \prod_{j\in\bbn}\prod_{\pi_1(X)}\pi_n(A_j)
}\]

\begin{corollary}\label{maincorollary}
Let $Y=\shadj(X,x_j,A_j,a_j)$ where $X$ is a one-dimensional Peano continuum and each $(A_j,a_j)$ is sequentially $(n-1)$-connected and $\pi_n$-residual. Then there is an injective homomorphism
\[\Phi:\pi_n(Y,y_0)\to \prod_{j\in\bbn}\prod_{\pi_1(X,x_0)}\pi_n(A_j,a_j).\]
\end{corollary}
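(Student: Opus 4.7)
The plan is to derive Corollary \ref{maincorollary} as a bookkeeping consequence of Theorem \ref{maintheorem}(2) together with the structure of the generalized universal cover of $Y$ developed in Subsection \ref{coveringsubsection}. The strategy is to transport the canonical injection $\wtphi$ on $\pi_n(\wt{Y})$ to a (choice-dependent) injection $\Phi$ on $\pi_n(Y)$ by inserting three isomorphisms: one produced by the covering map on $\pi_n$, one identifying each fiber $p^{-1}(x_j)$ with $\pi_1(X,x_0)$ after a choice of path, and one identifying each sheet $\mca_{j,[\alpha]}$ with $A_j$ on $\pi_n$. We tacitly assume $n\geq 2$, which is the only case of interest.

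First I would observe that, since $X$ is a one-dimensional Peano (hence Hausdorff) continuum, Theorem \ref{onedgenunivcovspace} supplies a generalized universal covering $q:\tX\to X$. Because each $(A_j,a_j)$ is sequentially $(n-1)$-connected and $n\ge 2$, each is in particular sequentially $1$-connected, so Lemma \ref{structurelemma} applies to produce a generalized universal covering $p:\wt{Y}\to Y$ extending $q$, with the structure described in Corollary \ref{indeterminatecorollary}. Theorem \ref{maintheorem}(2) then supplies the injectivity of the canonical homomorphism
\[
\wtphi:\pi_n(\wt{Y},\wt{y}_0)\to\prod_{j\in\bbn}\prod_{[\alpha]\in p^{-1}(x_j)}\pi_n(\mca_{j,[\alpha]},[\alpha]).
\]

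Next I would assemble the commutative square displayed immediately before the statement of Corollary \ref{maincorollary}. The left vertical arrow $p_{\#}:\pi_n(\wt{Y},\wt{y}_0)\to\pi_n(Y,y_0)$ is an isomorphism by Proposition \ref{coveringisomorphism} since $n\ge 2$. For the right vertical arrow, I would fix once and for all a sequence of paths $\beta_j:(I,0,1)\to(X,y_0,x_j)$; this produces the bijection $\pi_1(X,x_0)\to p^{-1}(x_j)$, $[\gamma]\mapsto[\gamma\cdot\beta_j]$, described in Remark \ref{fiberbijectionremark}. For each resulting $[\alpha]\in p^{-1}(x_j)$, Lemma \ref{structurelemma}(5) and the remark following it imply that the restriction $p|_{\mca_{j,[\alpha]}}:\mca_{j,[\alpha]}\to A_j$ is a bijective weak homotopy equivalence (a homeomorphism when $A_j$ is locally path-connected). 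In either case it induces an isomorphism $\pi_n(\mca_{j,[\alpha]},[\alpha])\cong\pi_n(A_j,a_j)$, so these assemble into an isomorphism
\[
\prod_{j\in\bbn}\prod_{[\alpha]\in p^{-1}(x_j)}\pi_n(\mca_{j,[\alpha]},[\alpha])\;\xrightarrow{\;\cong\;}\;\prod_{j\in\bbn}\prod_{\pi_1(X,x_0)}\pi_n(A_j,a_j).
\]

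Defining $\Phi$ to be $\wtphi\circ p_{\#}^{-1}$ followed by this final identification yields an injective homomorphism of the desired form, canonical modulo the choice of the paths $\{\beta_j\}$. There are no substantial obstacles at this stage since all the hard work is encoded in Theorem \ref{maintheorem}; the only mild subtlety is that the spaces $A_j$ are not assumed locally path-connected, so one relies on the weak equivalence $p|_{\mca_{j,[\alpha]}}$ rather than a homeomorphism, which is harmless because only the induced map on $\pi_n$ is needed.
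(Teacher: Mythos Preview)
Your proposal is correct and follows essentially the same approach as the paper: the corollary is obtained from Theorem \ref{maintheorem}(2) by composing $\wtphi$ with the isomorphism $p_{\#}^{-1}$ from Proposition \ref{coveringisomorphism} and the identifications of fibers (Remark \ref{fiberbijectionremark}) and of $\pi_n(\mca_{j,[\alpha]})$ with $\pi_n(A_j)$ (Lemma \ref{structurelemma}(5) and the remark following it). Your observation about the non-locally-path-connected case is also consistent with the paper's treatment.
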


\begin{example}	`
To illustrate the need for the $\pi_n$-residual property in Theorem \ref{maintheorem}, recall from Section \ref{subsectionsequentialprops} that $C\bbh_n$ is sequentially $(n-1)$-connected but is not $\pi_n$-residual at the usual basepoint $x_0$ in the base of the cone. Consider the shrinking adjunction space $Y=\bbh_1\vee C\bbh_n$ with core $X=\bbh_1$ is the Hawaiian earring with wedgepoint $y_0$, $A_1=C\bbh_n$ with basepoint $y_0$, and where $A_j=\{y_0\}$ for $j\geq 2$ (See Figure \ref{figx3}). Since each $A_j$ is sequentially $(n-1)$-connected, there is a generalized universal covering $p:\wt{Y}\to Y$ where $\wt{Y}$ consists of the topological $\bbr$-tree $\wt{\bbh_1}$ with a copy $\mca_{1,[\alpha]}$ of $C\bbh_n$ attached at each $[\alpha]\in p^{-1}(y_0)$. We may identify $p^{-1}(y_0)=\pi_1(\bbh_1,y_0)$. Applying the same argument as in Example \ref{coneintermediateexample}, gives $\pi_n(\wt{Y},\wt{y}_0)\neq 0$. Since $\pi_n(\wt{Y},\wt{y}_0)\cong \pi_n(Y,y_0)$, it follows that the trivial homomorphism\[\Phi:\pi_n(Y,y_0)\to \prod_{[\alpha]\in \pi_1(\bbh_1)}\pi_n(C\bbh_n,x_0)\]is not injective. Indeed, if $\ell_k:S^1\to \bbh_1$ is the inclusion of the $k$-th circle and $f_k:S^n\to C\bbh_n$ is the inclusion of the $k$-th sphere in the base of the cone, then $\left[\prod_{k\in\bbn}(\ell_k\ast f_k)\right]\neq 0$ in $\pi_n(Y,y_0)$ but $\Phi(\left[\prod_{k\in\bbn}(\ell_k\ast f_k)\right])=0$.
\end{example}

\begin{figure}[h]
\centering \includegraphics[height=2.5in]{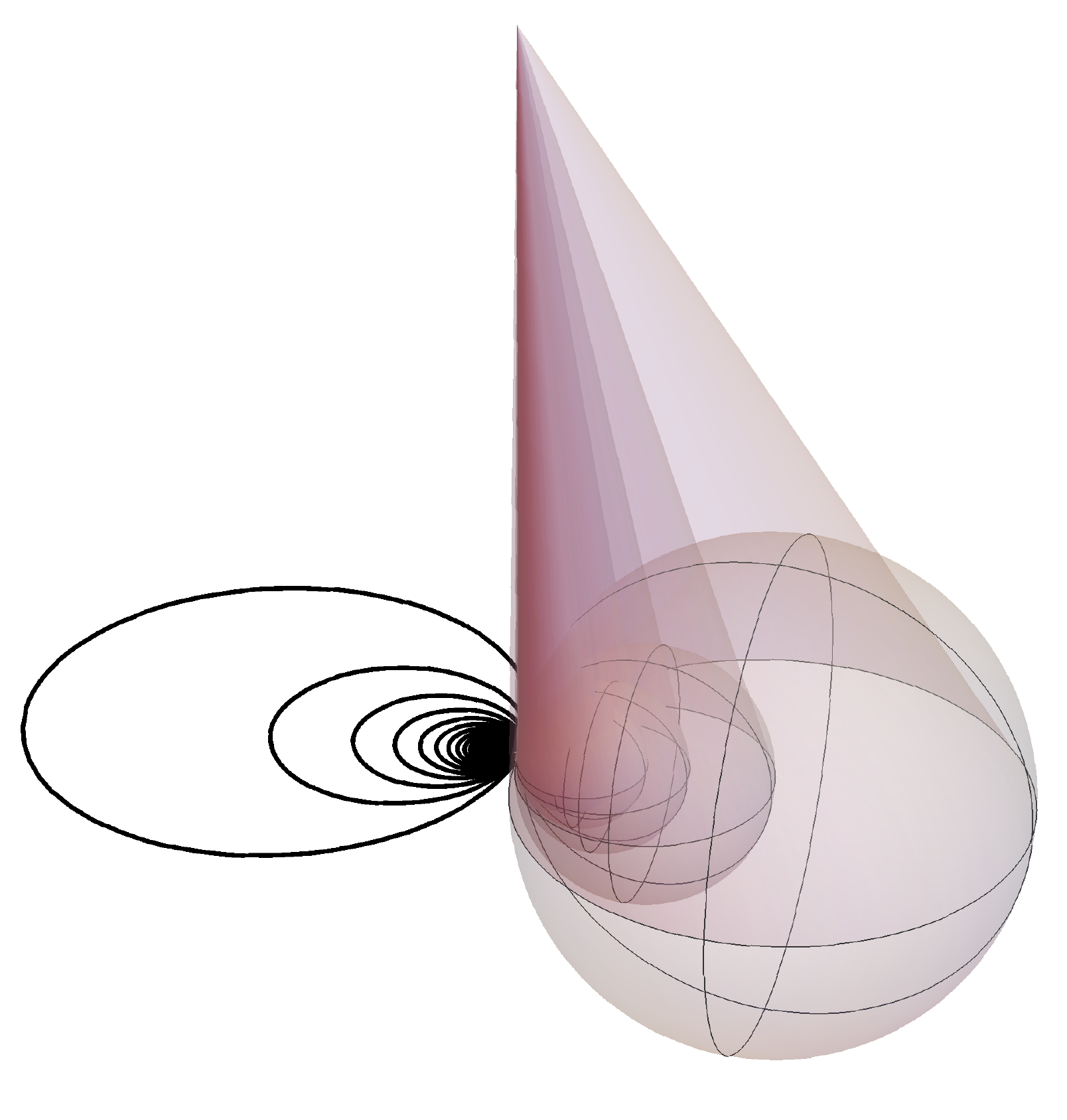}
\caption{\label{figx3}The space $\bbh_1\vee C\bbh_2$.}
\end{figure}

\subsection{The image of $\Phi$ and examples}

In this section, we characterize the image of the canonical injection \[\wtphi:\pi_n(\wt{Y},\wt{y}_0)\to \prod_{j\in\bbn}\prod_{[\alpha]\in p^{-1}(x_j)}\pi_n(\mca_{j,[\alpha]},[\alpha])\] from Theorem \ref{maintheorem}. In particular, we assume $Y=\shadj(X,x_j,A_j,a_j)$ where $X$ is a one-dimensional Peano continuum and each $(A_j,a_j)$ is sequentially $(n-1)$-connected and $\pi_n$-residual. We also re-use the notation for the subspace $\mca_{j,[\alpha]}$ of $\wt{Y}$, which is the homeomorphic copy of $A_j$ attached at $[\alpha]\in p^{-1}(x_j)$.

For convenience, we identify $\pi_n(\mca_{j,[\alpha]},[\alpha])=\pi_n(A_j,a_j)$ which allows us to view \[G=\prod_{j\in\bbn}\prod_{[\alpha]\in p^{-1}(x_j)}\pi_n(\mca_{j,[\alpha]},[\alpha])\] as the group of all functions $g:\tX\to \bigoplus_{j\in\bbn}\pi_n(A_j,a_j)$ such that $g(p^{-1}(x_j))\subseteq \pi_n(A_j,a_j)$ for all $j\in\bbn$ and $g([\alpha])=0$ if $[\alpha]\in\tX\backslash \bigcup_{j\in\bbn}p^{-1}(x_j)$. In other words, if $\supp(g)=\{[\alpha]\in\tX\mid g([\alpha])\neq 0\}$ is the \textit{support} of $g$, then $\supp(g)\subseteq p^{-1}(\{x_j\mid j\in\bbn\})$. For $g\in G$ and $j\in\bbn$, let $g_j\in G$ be the ``$j$th projection" satisfying $\supp(g_j)\subseteq p^{-1}(x_j)$ and $g_j|_{p^{-1}(x_j)}=g|_{p^{-1}(x_j)}$.

Suppose that $C\subseteq p^{-1}(x_j)$ is countably infinite and well-ordered as $C=\{[\alpha_1],[\alpha_2],[\alpha_3],\dots\}$, then there is a canonical homomorphism \[\Theta_{C}:[(\bbh_n,b_0),(A_j,a_j)]\to G\] so that if $\ell_{i}:S^n\to \bbh_n$ is the inclusion of the $i$-th sphere, then $\Theta_{C}([f])([\alpha_i])=[f\circ \ell_{i}]$, $i\in\bbn$ and $\Theta_{C}([f])([\alpha])=0$ for all $[\alpha]\in \tX\backslash C$.

\begin{theorem}\label{imagetheorem}
Consider an element $g\in G$. Then $g\in \im(\wtphi)$ if and only if 
\begin{enumerate}
\item $\ov{\supp(g)}$ is compact,
\item for all $j\in \bbn$, either $\supp(g_j)$ is finite or $\supp(g_j)$ is countably infinite and $g_j\in \im(\Theta_{\supp(g_j)})$.
\end{enumerate}
\end{theorem}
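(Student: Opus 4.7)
The plan is to handle the two implications separately, reducing each to machinery already developed.

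For the forward direction, suppose $g = \wtphi([\wt{f}])$. The first move is to put $\wt{f}$ into single-factor form within its image. By Lemma \ref{factoredformsequencelemma1} I may assume $\wt{f}$ is in factored form, and then Lemma \ref{countablelemma} presents the Peano continuum $\im(\wt{f})$ as a shrinking adjunction space with dendrite core $\mcd = \wt{r}(\im(\wt{f})) \subseteq \wt{X}$ and countably many attachment spaces $\mcz_{j,[\alpha]}$. Applying Theorem \ref{dendritetheorem} to $\im(\wt{f})$ yields a single-factor form for $\wt{f}$, with one $n$-cube $R_{j,[\alpha]}$ per non-trivial attachment. Condition (1) is then immediate since $\supp(g) \subseteq \mcd$ and $\mcd$ is compact. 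For (2), fix $j$ with $\supp(g_j)$ infinite, enumerate it as $\{[\alpha_1],[\alpha_2],\dots\}$, and show $\{p\circ\wt{f}|_{R_{j,[\alpha_i]}}\}_{i\in\bbn}$ clusters at $a_j$ in $A_j$. Given an open $U \ni a_j$, the open set $p^{-1}(r_j^{-1}(U)) \subseteq \wt{Y}$ contains $\wt{X}$; if infinitely many $\mcz_{j,[\alpha_i]}$ met its complement, a convergent subsequence of violating points would, by the shrinking adjunction structure of $\im(\wt{f})$, have limit in $\mcd \subseteq \wt{X}$, a contradiction. Clustering at $a_j$ then assembles (via Remark \ref{mappingspaceremark}) into $F_j : \bbh_n \to A_j$ with $g_j = \Theta_{\supp(g_j)}([F_j])$.

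For the reverse direction, given $g$ satisfying (1) and (2), let $D \subseteq \wt{X}$ be the smallest subtree of $\wt{X}$ containing $\{\wt{y}_0\} \cup \ov{\supp(g)}$; by compactness of $\ov{\supp(g)}$ and the $\bbr$-tree structure of $\wt{X}$ (Theorem \ref{onedgenunivcovspace}), $D$ is a dendrite. Form the intrinsic shrinking adjunction space $Z'$ with core $D$ and attachment spaces $\mca_{j,[\alpha]}$ indexed by the countable set $\supp(g)$. Since each $\mca_{j,[\alpha]}$ inherits sequential $(n-1)$-connectedness from $A_j$ via Lemma \ref{relativelytriviallemma}, Corollary \ref{dendritecorollary} yields an isomorphism $\phi : \pi_n(Z',\wt{y}_0) \cong \prod_{(j,[\alpha])\in\supp(g)} \pi_n(\mca_{j,[\alpha]},[\alpha])$, so there is $[h] \in \pi_n(Z')$ with $\phi([h]) = g|_{\supp(g)}$.

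The main obstacle is that $Z'$ (intrinsic topology) and $Z = D \cup \bigcup \mca_{j,[\alpha]} \subseteq \wt{Y}$ (subspace topology) are a priori different spaces; I need the identity $Z' \to Z \hookrightarrow \wt{Y}$ to be continuous so that $h$ pushes forward. To verify this, I would take an open $U \subseteq \wt{Y}$ and a sequence $[\alpha_i] \to [\alpha_\infty]$ in $D$ with $[\alpha_\infty] \in U$; choosing a basic whisker neighborhood $N([\alpha_\infty], W) \subseteq U$, the projections $x_{j_i} = p([\alpha_i]) \to p([\alpha_\infty]) \in W$ in $X$, and the shrinking adjunction topology of $Y$ (Lemma \ref{opensetlemma}) then forces $A_{j_i} \subseteq W$ eventually, whence $\mca_{j_i,[\alpha_i]} \subseteq N([\alpha_\infty],W) \subseteq U$. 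This verifies the third criterion for $U \cap Z$ to be open in $Z'$, giving the required continuity. Setting $\wt{f} : I^n \to Z' \to \wt{Y}$ via this identity, the equality $\wtphi([\wt{f}]) = g$ follows because the retractions $r_{j,[\alpha]}^{\wt{Y}}$ restrict on $Z$ to the shrinking adjunction retractions of $Z'$, and for $[\alpha] \notin \supp(g)$ the map $r_{j,[\alpha]}^{\wt{Y}}\circ\wt{f}$ is constant by construction.
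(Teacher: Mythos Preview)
Your forward direction is essentially the paper's argument, with slightly more detail on why the maps $p\circ\wt{f}|_{R_{j,[\alpha_i]}}$ cluster at $a_j$.

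Your reverse direction, however, has a real gap in the continuity step. You claim that for any open $U\subseteq\wt{Y}$ containing $[\alpha_\infty]$ and any sequence $[\alpha_i]\to[\alpha_\infty]$ in $D$, Lemma~\ref{opensetlemma} forces $A_{j_i}\subseteq W$ eventually. But Lemma~\ref{opensetlemma} (condition (3)) concerns sequences of attachment points with $j_i\to\infty$; it gives nothing when the indices $j_i$ are constant. Concretely, suppose $\supp(g_j)=\{[\alpha_1],[\alpha_2],\dots\}$ is infinite for some fixed $j$ (exactly the case where condition~(2) has content), with $[\alpha_i]\to[\alpha_\infty]$ in $D\subseteq\wt{X}$. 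Choose a basic whisker neighborhood $N([\alpha_\infty],W)$ with $A_j\not\subseteq W$. Then $\mca_{j,[\alpha_i]}=\{[\alpha_i\cdot\epsilon]:\epsilon((0,1])\subseteq A_j\setminus\{a_j\}\}$ is never contained in $N([\alpha_\infty],W)$, since paths $\epsilon$ leaving $W$ give points outside it. Thus the identity $Z'\to Z$ is \emph{not} continuous in this situation: in $\wt{Y}$ all the copies $\mca_{j,[\alpha_i]}$ have the ``same size'' (each maps homeomorphically to $A_j$) and do not shrink toward $[\alpha_\infty]$, whereas in your intrinsic $Z'$ they do. Consequently the element $[h]$ you extract from Corollary~\ref{dendritecorollary} need not push forward to a continuous map into $\wt{Y}$, and your argument never uses condition~(2) to control the size of the $A_j$-factors.

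The paper's proof avoids this by building $\wt{f}$ by hand rather than invoking Corollary~\ref{dendritecorollary}. It uses condition~(2) directly: when $\supp(g_j)$ is infinite it chooses $h:(\bbh_n,b_0)\to(A_j,a_j)$ with $\Theta_{\supp(g_j)}([h])=g_j$ and sets $f_{j,[\alpha_i]}=h\circ\ell_i$, forcing these representatives to cluster at $a_j$. It then takes a surjective path $\wt{\beta}$ onto $D$ with constant intervals at each $[\alpha]\in\supp(g)$, builds the ``concentric cubes'' $n$-loop $\wt{F}$ from $\wt{\beta}$ (as in Theorem~\ref{splittheorem}), and inserts the lifts $\wt{f}_{j,[\alpha]}$ into cubes inside the constant annuli. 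Continuity of the projection $f=p\circ\wt{f}$ is then verified directly; the clustering coming from condition~(2) is exactly what makes this work. To repair your argument you would essentially have to reproduce this explicit construction.
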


\begin{proof}
Suppose $\wt{f}\in \Omega^{n}(\wt{Y},\wt{y}_0)$ and $g=\wtphi(\wt{f})$. Let \[\scrf=\{[\alpha]\in\tX\mid \alpha(1)=x_j\text{ and }\im(\wt{f})\cap N_{j,[\alpha]}\neq\emptyset\}.\] It follows from Lemma \ref{countablelemma} that $\scrf$ is countable. Since $\supp(g)\subseteq\scrf$, $\supp(g)$ is countable. Therefore, $\supp(g_j)$ is countable for all $j\in\bbn$. Since $\supp(g)$ lies in the dendrite $\im\left(\wt{f}\right)\cap \tX$, the closure $\ov{\supp(g)}$ (taken in $\tX$) is compact. Next, fix $j\in\bbn$ such that $\supp(g_j)=\{[\alpha_1],[\alpha_2],[\alpha_3],\dots\}$ is infinite. According to the proof of Theorem \ref{maintheorem}, we may assume, without altering the homotopy class of $\wt{f}$, that $\wt{f}$ is in single factor form in the shrinking adjunction space $\im\left(\wt{f}\right)$. In particular, we may find disjoint $n$-cubes $R_{j,[\alpha_i]}$, $i\in\bbn$ such that $\wt{f}(\partial  R_{j,[\alpha_i]})=[\alpha_i]$, $\wt{f}(R_{j,[\alpha_i]})\subseteq \mca_{j,[\alpha_i]}$, and $\wt{f}(I^n\backslash \bigcup_{i\in\bbn}\int(R_{j,[\alpha_i]}))\subseteq \im\left(\wt{f}\right)\backslash p^{-1}(A_j\backslash\{a_j\})$. Define $h:(\bbh_n,b_0)\to(A_j,a_j)$ so that if so that $\ell_{i}:S^n\to \bbh_n$ is the inclusion of the $i$-th sphere, then $h\circ \ell_i=p\circ \wt{f}|_{R_{j,[\alpha_i]}}$. We have $\Theta_{\supp(g_j)}([h])=g_j$. Thus $g_j\in \im(\Theta_{\supp(g_j)})$.

For the converse, suppose $g\in G$ satisfies (1) and (2). Let $D\subseteq \tX$ be the union of all arcs connecting the points of the compact set $\ov{\supp(g)}\cup \{\wt{y}_0\}$. An elementary argument shows that $D$ is compact and is therefore a dendrite. Let $K$ be the set of pairs $(j,[\alpha])$ such that $[\alpha]\in p^{-1}(x_j)$ and $[\alpha]\in \supp(g_j)$. For each $(j,[\alpha])\in K$, we choose a map $f_{j,[\alpha]}\in \Omega^n(A_j,a_j)$ as follows: If $\supp(g_j)$ is finite, pick any map $f_{j,[\alpha]}$ representing $g_j([\alpha])\in \pi_n(A_j,a_j)$. If $\supp(g_j)$ is infinite and $g_j\in \im(\Theta_{\supp(g_j)})$, write $\supp(g_j)=\{[\alpha_1],[\alpha_2],[\alpha_3],\dots\}$ and find a map $h:(\bbh_n,b_0)\to (A_j,a_j)$ such that $\Theta_{\supp(g_j)}(h)=g_j$. Set $f_{j,[\alpha_i]}=h\circ \ell_i$ for all $i\in\bbn$. For each $(j,[\alpha])\in K$, let $\wt{f}_{j,[\alpha]}\in\Omega^n(\mca_{j,[\alpha]},[\alpha])$ be the lift of $f_{j,[\alpha_i]}$ based at $[\alpha]$. 

Since $D$ is a Peano continuum, there exists a surjective path $\wt{\beta}:(I,0)\to (D,\wt{y}_0)$. Let $\beta=p\circ\wt{\beta}$. We may assume that for each $(j,[\alpha])\in K$, there exists an open interval $O_{j,[\alpha]}$ in $(0,1)$ such that $\beta(\ov{O_{j,[\alpha]}})=[\alpha]$. As in the proof of Theorem \ref{splittheorem}, let $C_t= [\frac{t}{2},\frac{1-t}{2}]^n$ for $0\leq t<1$ and let $C_1=\{(1/2,1/2,\dots,1/2)\}$. Consider the $n$-loop $\wt{F}\in \Omega^n(\wt{X},\wt{y}_0)$, which maps $\partial C_t$ to $\wt{\beta}(t)$ for $0\leq t<1$ and $F(C_1)=\wt{\beta}(1)$. Then $F=p\circ\wt{F}\in \Omega^n(X,y_0)$ maps $\partial C_t$ to $\beta(t)$ for $0\leq t<1$ and $F(C_1)=\beta(1)$. 

Pick an $n$-cube $R_{j,[\alpha]}\subseteq \bigcup\{\partial C_t\mid t\in O_{j,[\alpha]}\}$. Let $\wt{f}:I^n\to \wt{Y}$ be the function which agrees with $\wt{F}$ on $I^n\backslash \bigcup_{(j,[\alpha])\in K}\int(R_{j,[\alpha]})$ and such that $\wt{f}|_{R_{j,[\alpha]}}\equiv \wt{f}_{j,[\alpha]}$ for all $(j,[\alpha])\in K$. Clearly $\wt{f}$ is well-defined. Set $f=p\circ \wt{f}$. Essentially the same argument used in the proof of Theorem \ref{maintheorem} (to verify the continuity of $H$) gives the continuity of $f$. The lift $\wt{f}$ must then be continuous. By construction, $\wtphi\left([\wt{f}]\right)=g$.
\end{proof}

The subgroup $\prod_{j\in\bbn}\bigoplus_{\pi_1(X,y_0)}\pi_n(A_j,a_j)$ of $G$ consists of all $g\in G$ such that $\supp(g_j)$ is finite for all $j\in\bbn$. We identify some situations where $\im\left(\wtphi\right)$ lies in this subgroup.

\begin{corollary}\label{finiteimagecor1}
Let $g\in \im(\wtphi)$ and fix $j\in\bbn$. If $X$ is $\pi_1$-finitary at $x_j$ or if $A_j$ is $\pi_n$-finitary at $a_j$, then $\supp(g_j)$ is finite.
\end{corollary}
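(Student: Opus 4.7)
The plan is to argue by contraposition: assuming $\supp(g_j)$ is infinite, I will show that neither of the finitary hypotheses can hold. By Theorem \ref{imagetheorem}(2), an infinite support allows me to enumerate $\supp(g_j) = \{[\alpha_1],[\alpha_2],\ldots\}$ and extract a single map $h \in [(\bbh_n,b_0),(A_j,a_j)]$ with $\Theta_{\supp(g_j)}([h]) = g_j$, so in particular $[h \circ \ell_i] = g_j([\alpha_i]) \neq 0$ in $\pi_n(A_j,a_j)$ for every $i \in \bbn$.

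Case 1 (the $A_j$ side) is essentially immediate: if $A_j$ is $\pi_n$-finitary at $a_j$, then by definition $\Theta_n([h])$ lies in the weak direct product, so $[h \circ \ell_i]$ vanishes for all but finitely many $i$, directly contradicting $g_j([\alpha_i]) \neq 0$ for all $i$.

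Case 2 (the $X$ side) is the main obstacle and requires a careful analysis of the whisker topology at $x_j$. First I would use Theorem \ref{imagetheorem}(1) to observe that $\ov{\supp(g_j)} \subseteq \ov{\supp(g)} \cap p^{-1}(x_j)$ is a compact subset of the closed fiber $p^{-1}(x_j)$, so the infinite set $\supp(g_j)$ has a limit point $[\alpha_\infty] \in p^{-1}(x_j)$ and a subsequence $[\alpha_{i_k}] \to [\alpha_\infty]$ in $\tX$. Since $X$ is a metric Peano continuum, it is first countable at $x_j$; combined with the assumption that $X$ is $\pi_1$-finitary at $x_j$, the local characterization proved in Section \ref{subsectionsequentialprops} (the proposition giving the neighborhood criterion under first countability) supplies an open neighborhood $U$ of $x_j$ for which the inclusion-induced homomorphism $\pi_1(U,x_j) \to \pi_1(X,x_j)$ is trivial.

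Finally, I would unpack whisker convergence to close the argument: for all sufficiently large $k$, $[\alpha_{i_k}] \in N([\alpha_\infty], U)$, so $[\alpha_{i_k}] = [\alpha_\infty \cdot \gamma_{i_k}]$ for some path $\gamma_{i_k}$ in $U$, which is in fact a loop at $x_j$ since both $\alpha_\infty$ and $\alpha_{i_k}$ end at $x_j$. Triviality of $\pi_1(U,x_j) \to \pi_1(X,x_j)$ then forces $[\gamma_{i_k}] = 0$ in $\pi_1(X,x_j)$, hence $[\alpha_{i_k}] = [\alpha_\infty]$ in $\tX$ for all large $k$, contradicting the distinctness of the points of $\supp(g_j)$. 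The only genuinely delicate point is translating whisker-convergence in $\tX$ into a statement about representative loops lying in a specific neighborhood of $x_j$, after which the $\pi_1$-finitary hypothesis plugs in directly.
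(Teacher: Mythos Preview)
Your proof is correct. The $A_j$-finitary case (Case~1) is essentially identical to the paper's argument.

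For the $X$-finitary case (Case~2), your route differs from the paper's. The paper proves the stronger intermediate fact that $p^{-1}(x_j)$ is \emph{discrete} in $\tX$: if not, the $\bbr$-tree structure of $\tX$ yields an injective path $\beta:I\to\tX$ hitting $p^{-1}(x_j)$ at $\{1/m\}\cup\{0\}$, and the projected arcs $p\circ\beta|_{[1/(m+1),1/m]}$ form a sequence of non-null-homotopic loops converging to $x_j$, directly contradicting $\pi_1$-finitariness in its sequential formulation. Then $\supp(g_j)$ is a closed discrete subset of the compact set $\ov{\supp(g)}$, hence finite. Your argument instead works entirely inside the compact set $\ov{\supp(g_j)}$, invokes the first-countable neighborhood characterization of $\pi_1$-finitary (Proposition~2.11) to obtain $U$ with $\pi_1(U,x_j)\to\pi_1(X,x_j)$ trivial, and then reads off from the whisker topology that any accumulation of distinct fiber points would force them to be eventually equal. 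Your approach is slightly more elementary in that it avoids the arc-construction in the $\bbr$-tree and uses only the definition of the whisker neighborhoods; the paper's approach buys the reusable fact that the whole fiber $p^{-1}(x_j)$ is discrete, not just that $\supp(g_j)$ has no accumulation points.
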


\begin{proof}
Suppose $X$ is $\pi_1$-finitary at $x_j$. We prove, by contrapositive, that the fiber $p^{-1}(x_j)$ is discrete. Indeed if $p^{-1}(x_j)$ is not discrete, then since $\wt{X}$ is a uniquely arc-wise connected, locally path-connected metric space, there exists an injective path $\beta:\ui\to \wt{X}$ that maps $\{1/m\mid m\in\bbn\}\cup \{0\}$ into $p^{-1}(x_j)$. Setting $\gamma_m=p\circ \beta|_{[\frac{1}{m+1},\frac{1}{m}]}$ gives a sequence $\{\gamma_m\}_{m\in\bbn}$ of non-null-homotopic loops that converges to $x_j$. Therefore $X$ is not $\pi_1$-finitary at $x_j$. We conclude that $p^{-1}(x_j)$ is a closed, discrete subspace of $\tX$. It follows that $\supp(g_j)\subseteq p^{-1}(x_j)$ is a closed, discrete subspace of the compact space $\ov{\supp(g)}$. Thus $\supp(g_j)$ is finite.

Next, suppose $A_j$ is $\pi_n$-finitary at $a_j$. If $\supp(g_j)=\{[\alpha_1],[\alpha_2],[\alpha_3],\dots\}$ is countably infinite, we can find a map $h:(\bbh_n,b_0)\to (A_j,a_j)$ with $\Theta_{\supp(g_j)}([h])=g_j$. In particular, $\Theta_{\supp(g_j)}([h])([\alpha_i])=[h\circ\ell_i]$ for $i\in\bbn$. However, since $A_j$ is $\pi_n$-finitary at $a_j$, there exists an $i_0\in\bbn$ such that $\{h\circ \ell_i\}_{i\geq i_0}$ is sequentially null-homotopic. Thus $\Theta_{\supp(g_j)}([h])([\alpha_i])=0$ for all $i\geq i_0$; a contradiction.
\end{proof}

Applying Corollary \ref{finiteimagecor1} when the hypothesis applies to all attachment points, we obtain the following.

\begin{corollary}\label{finiteimagemaincor}
Suppose that for every $j\in\bbn$, either $X$ is $\pi_1$-finitary at $x_j$ or $A_j$ is $\pi_n$-finitary at $a_j$. Then $\wtphi$ maps $\pi_n(\wt{Y},\wt{y}_0)$ isomorphically onto the subgroup of $ \prod_{j\in\bbn}\bigoplus_{\pi_1(X,y_0)}\pi_n(A_j,a_j)$ consisting of $g$ with $\ov{\supp(g)}$ compact.
\end{corollary}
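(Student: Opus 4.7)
The plan is to deduce this corollary directly from the combination of Theorem~\ref{imagetheorem} (which characterizes $\im(\wtphi)$ inside the large product $G$), Corollary~\ref{finiteimagecor1} (which bounds each $\supp(g_j)$ under the stated local hypotheses), and Theorem~\ref{maintheorem} (which gives the injectivity of $\wtphi$). Since all three of these have already been established, the work is essentially bookkeeping: verify the two inclusions between $\im(\wtphi)$ and the subgroup
\[H=\left\{g\in \prod_{j\in\bbn}\bigoplus_{\pi_1(X,y_0)}\pi_n(A_j,a_j)\;\Big|\; \overline{\supp(g)}\text{ is compact}\right\}.\]

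First I would observe that $\wtphi$ is an injection by Theorem~\ref{maintheorem}, so the only issue is identifying its image with $H$. For the inclusion $\im(\wtphi)\subseteq H$, take $g\in\im(\wtphi)$. Theorem~\ref{imagetheorem}(1) immediately gives that $\overline{\supp(g)}$ is compact. Fix $j\in\bbn$; by hypothesis either $X$ is $\pi_1$-finitary at $x_j$ or $A_j$ is $\pi_n$-finitary at $a_j$, so Corollary~\ref{finiteimagecor1} yields that $\supp(g_j)$ is finite. Since this holds for every $j$, the element $g$ lies in $\prod_{j\in\bbn}\bigoplus_{\pi_1(X,y_0)}\pi_n(A_j,a_j)$ and thus in $H$.

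For the reverse inclusion $H\subseteq \im(\wtphi)$, I would apply the sufficient half of Theorem~\ref{imagetheorem}. Let $g\in H$. Condition (1) of that theorem, namely compactness of $\overline{\supp(g)}$, holds by the definition of $H$. Condition (2) requires, for each $j$, that $\supp(g_j)$ is either finite or that it is countably infinite with $g_j\in\im(\Theta_{\supp(g_j)})$; but $g$ belonging to the weak direct product forces $\supp(g_j)$ to be finite for every $j$, so the first alternative of Condition (2) is automatic. Hence Theorem~\ref{imagetheorem} produces an $n$-loop $\wt{f}\in\Omega^n(\wt{Y},\wt{y}_0)$ with $\wtphi([\wt{f}])=g$, completing the identification $\im(\wtphi)=H$.

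There is no substantive obstacle here beyond invoking the prior theorems in the correct order; the only place where care is needed is in confirming that the hypothesis ``for every $j$, $X$ is $\pi_1$-finitary at $x_j$ or $A_j$ is $\pi_n$-finitary at $a_j$'' is in fact used only for the forward inclusion (through Corollary~\ref{finiteimagecor1}), while the backward inclusion relies solely on the definition of $H$ as sitting inside the weak direct product.
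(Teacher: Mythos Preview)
Your proposal is correct and matches the paper's approach: the paper simply states that the corollary follows by applying Corollary~\ref{finiteimagecor1} to every attachment point, which is exactly your forward inclusion, with the reverse inclusion and injectivity coming from Theorem~\ref{imagetheorem} and Theorem~\ref{maintheorem} as you indicate. You have just spelled out in full what the paper leaves as a one-line remark.
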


\begin{example}
Corollary \ref{finiteimagemaincor} includes the case where the core $X$ is arbitrary and all attachment spaces are $CW$-complexes. For instance, suppose $X$ is any $1$-dimensional Peano continuum such as the Menger Cube and $A_j$ is an $n$-sphere for all $j\in\bbn$ and $Y=\shadj(X,x_j,A_j,a_j)$ for any sequence of attachment points $\{x_j\}_{j\in\bbn}$. All attachment spaces $A_j=S^n$ are sequentially $(n-1)$-connected and $\pi_n$-finitary. Therefore, Corollary \ref{directsumcor} implies that $\wtphi$ maps $\pi_n(\wt{Y},\wt{y}_0)$ isomorphically onto a subgroup of
\[\prod_{j\in\bbn}\bigoplus_{\pi_1(X,y_0)}\pi_n(A_j,a_j)\cong \prod_{j\in\bbn}\bigoplus_{\pi_1(X,y_0)}\bbz.\]
\end{example}

\begin{example}[$\bbh_1\vee\bbh_n$]\label{h1hn}
A motivating example is the space $Y=\bbh_1\vee \bbh_n$ with wedgepoint $b_0$ (See Figure \ref{figx2}). We may regard $Y$ as having core $\bbh_1$ and attachment spaces $(A_j,a_j)=(S^n,e_n)$, $j\in\bbn$. By Corollary \ref{finiteimagemaincor}, $\Phi$ maps $\pi_n(\bbh_1\vee\bbh_n,b_0)$ isomorphically onto a subgroup of $\prod_{j\in\bbn}\bigoplus_{\pi_1(\bbh_1)}\bbz$, which we regards as a subgroup of the group of $\bbz^{\bbn\times\pi_1(\bbh_1)}$. In particular, $\pi_n(\bbh_1\vee\bbh_n)$ is cotorsion-free. It follows from Corollary \ref{finiteimagemaincor} that $g\in \im(\Phi)$ if and only if
\begin{itemize}
\item $\supp(g)=\{[\alpha]\in\pi_1(\bbh_1,b_0)\mid g(j,[\alpha])\neq 0\}$ is countable and has compact closure in $\pi_1(\bbh_1,b_0)$,
\item $g$ has finite support din the second variable in the sense that for each $j\in\bbn$, $\{[\alpha]\mid g(j,[\alpha])\neq 0\}$ is finite.
\end{itemize}
If so desired, one could represent $g$ as a formal sum $\sum_{j,[\alpha]}g(j,[\alpha])$ indexed over $\bbn\times\pi_1(\bbh_1,b_0)$ or as a formal sum $\sum_{[\alpha]}g_{[\alpha]}$ indexed over $\pi_1(\bbh_1,b_0)$ where $g_{[\alpha]}\in\bbz^{\bbn}$ is given by $g_{[\alpha]}(j)=g(j,[\alpha])$. 

Although the above characterization of $\pi_n(\bbh_1\vee\bbh_n)$ as a subgroup of the $\sigma$-product $\prod_{\mathfrak{c}}^{\sigma}\bbz$ is sufficient for carrying out computations in the group, the author does not know if the isomorphism type of $\pi_n(\bbh_1\vee\bbh_n)$ has another description.
\end{example}

The double direct sum $\bigoplus_{j\in\bbn}\bigoplus_{\pi_1(X,y_0)}\pi_n(A_j,a_j)$ consists of all $g\in G$ for which $\supp(g)$ is finite. Certainly, if $\supp(g)$ is finite, then $\supp(g)=\ov{\supp(g)}$ is compact. This case occurs in the following ``tame" situation.

\begin{corollary}\label{directsumcor}
Suppose the following two conditions hold.
\begin{enumerate}
\item All but finitely many attachment space $(A_j,a_j)$ are sequentially $n$-connected.
\item For every $j\in\bbn$, either $X$ is $\pi_1$-finitary at $x_j$ or $A_j$ is $\pi_n$-finitary at $a_j$.
\end{enumerate}
 Then $\Phi$ maps $\pi_n(Y,y_0)$ isomorphically onto $\bigoplus_{j\in\bbn}\bigoplus_{\pi_1(X,y_0)}\pi_n(A_j,a_j)$.
\end{corollary}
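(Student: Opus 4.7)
The plan is to derive Corollary \ref{directsumcor} as a direct consequence of Corollary \ref{finiteimagemaincor} together with the new hypothesis (1). Condition (2) of the corollary is precisely the hypothesis of Corollary \ref{finiteimagemaincor}, so that corollary already gives an isomorphism from $\pi_n(\wt{Y},\wt{y}_0)$ (hence from $\pi_n(Y,y_0)$ via the isomorphism $p_{\#}$ of Proposition \ref{coveringisomorphism}) onto the subgroup
\[H=\left\{g\in \prod_{j\in\bbn}\bigoplus_{\pi_1(X,y_0)}\pi_n(A_j,a_j)\,\Big|\,\ov{\supp(g)}\subseteq \wt{X}\text{ is compact}\right\}.\]
Thus the only thing left is to identify $H$ with the double direct sum.

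Next, I would use hypothesis (1) to eliminate all but finitely many coordinates. If $(A_j,a_j)$ is sequentially $n$-connected, then considering any eventually constant sequence of based maps $S^n\to A_j$ (cf.\ Remark \ref{equivremark1}) shows that every based map $S^n\to A_j$ is null-homotopic rel.\ basepoint; that is, $\pi_n(A_j,a_j)=0$. Let $F\subseteq\bbn$ be the finite set of indices for which $A_j$ fails to be sequentially $n$-connected. For $j\notin F$ we have $\pi_n(A_j,a_j)=0$, so every element $g\in \prod_{j\in\bbn}\bigoplus_{\pi_1(X,y_0)}\pi_n(A_j,a_j)$ automatically satisfies $g_j=0$ for $j\notin F$.

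Consequently, for every such $g$, $\supp(g)=\bigcup_{j\in F}\supp(g_j)$ is a finite union of finite sets (each $\supp(g_j)$ being finite by the defining property of the direct sum in the inner factor), hence finite. This yields
\[\prod_{j\in\bbn}\bigoplus_{\pi_1(X,y_0)}\pi_n(A_j,a_j)=\bigoplus_{j\in\bbn}\bigoplus_{\pi_1(X,y_0)}\pi_n(A_j,a_j),\]
and for any $g$ in this group, $\ov{\supp(g)}=\supp(g)$ is finite, hence compact. Thus $H$ equals the whole double direct sum, and $\Phi$ maps $\pi_n(Y,y_0)$ isomorphically onto it.

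There is no real obstacle here; the content is entirely packaged into Corollary \ref{finiteimagemaincor} and the elementary observation that sequential $n$-connectedness forces $\pi_n$ to vanish. The only point to verify with minor care is that ``sequentially $n$-connected at $a_j$'' gives $\pi_n(A_j,a_j)=0$, which is immediate from the definition together with the fact that constant sequences converge to their common value.
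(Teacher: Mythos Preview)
Your proof is correct and follows exactly the route the paper intends: the sentence preceding the corollary already notes that the double direct sum is the set of $g$ with finite support, and that finite support implies compact closure, so combining Corollary~\ref{finiteimagemaincor} with the observation that sequential $n$-connectedness of all but finitely many $A_j$ forces $\pi_n(A_j,a_j)=0$ (hence $\supp(g)$ finite) is precisely what is needed.
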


\begin{example}
Corollary \ref{directsumcor} includes the classical case where $X$ is a finite graph but also includes case where the core $X$ is wild and finitely many CW-complexes are attached at (possibly wild) points of $X$. For instance, suppose $X=\bbh_1$ and that $Y$ is obtained by attaching finitely many $n$-spheres to $X$. By regarding this space as a shrinking adjunction space with $A_j=\{a_j\}$ for all but finitely many $j$, Corollaries \ref{maincorollary} and \ref{directsumcor} give that $\Phi$ maps $\pi_n(Y,y_0)$ isomorphically onto $\bigoplus_{j\in\bbn}\bigoplus_{\pi_1(\bbh_1,b_0)}\pi_n(A_j,a_j)$, which is isomorphic to \[\bigoplus_{j=1}^{m}\bigoplus_{\pi_1(\bbh_1,b_0)}\bbz\cong \bigoplus_{\mathfrak{c}}\bbz.\]
\end{example}

\subsection{The inverse limit interpretation}\label{subsectionshapetheory}

Although shrinking adjunction spaces have a natural description as an inverse limit, it does not seem possible to prove our main technical results using only an inverse limit framework. For instance, to prove Theorems \ref{arctheorem} and \ref{dendritetheorem}, one must construct homotopies that deforms an $n$-loop non-trivially on an entire cofinal portion of the inverse limit and this requires working with the entire limit at each step. Moreover, it is surprisingly tedious to formalize the sense in which the isomorphisms $\wt{\Phi}$ and $\Phi$ are components of a natural isomorphism. Therefore, now that Theorem \ref{maintheorem} is established, we expose the shape-theoretic interpretation where naturality is immediate. We refer to \cite{MS82} for preliminaries of shape theory. From now on, we assume that $X$ is a one-dimensional Peano continuum and that each attachment space $A_j$ is an $(n-1)$-connected polyhedron.

It is well-known that a one-dimensional Peano continuum $X$ is homeomorphic to the inverse limit $\varprojlim_{m}(X_m,b_{m+1,m})$ of finite graphs $X_m$ \cite{Rodgers}. The bonding maps $b_{m+1,m}:X_{m+1}\to X_m$ are topological retractions which map each edge of $X_{m+1}$ linearly onto an edge of some finite subdivision of $X_{m}$. By choosing (topological) sections $s_{m,m+1}:X_{m}\to X_{m+1}$, we see that each $X_m$ is a retract of $X$. Hence, there are embeddings $s_{m}:X_m\to X$, which are sections to the projections $b_m:X\to X_m$. Identifying $X_m$ with its image in $X$, we choose $y_0\in X_1\subseteq X_2\subseteq X_3\subseteq\cdots \subseteq X$. Each fundamental group $\pi_1(X_m,y_0)$ is a finitely generated free group $F_{i_m}$ of rank $i_m$ and it is known that the canonical homomorphism $\Lambda_1:\pi_1(X,y_0)\to \varprojlim_{m}(F_{i_m},(b_{m+1,m})_{\#})$ is injective \cite{CConedim,EK98onedimfgs}. However, $\Lambda_1$ is not surjective unless $X$ itself is a finite graph. For notational convenience, we may write $X_{\infty}$ for $X=\varprojlim_{m}(X_m,b_{m+1,m})$ and $b_{\infty}:X_{\infty}\to X_{\infty}$ for the identity map. The homomorphism $\Phi$ from Corollary \ref{maincorollary} is not surjective, in part, because $\Lambda$ is not surjective.

For $k,m\in\bbn\cup\{\infty\}$, let $Y_{k,m}$ be the space obtained by attaching $A_j$, $1\leq j\leq k$ to $X_m$ by $a_j\sim b_{m}(x_j)$. Here the first variable $k$ corresponds to the number of spaces $A_j$ attached and the second variable $m$ corresponds to the graph approximation $X_m$ to $X=X_{\infty}$. For example, $Y_{\infty,m}$ is the shrinking adjunction space obtained by attaching all $A_j$ to $X_m$ along the sequence of points $\{b_{m}(x_j)\}_{j\in\bbn}$ and the space $Y_{k,\infty}$ was previously denoted $Y_k$. We have bonding maps:
\begin{enumerate}
\item $\zeta_{(k+1,k),m}:Y_{k+1,m}\to Y_{k,m}$ which is the identity on $X_{m}$, which maps $A_1,A_2,\dots,A_k$ homeomorphically to the corresponding copy in $Y_{k,m}$, and which collapses $A_{k+1}$ to $b_{m}(x_{k+1})\in X_m$.
\item $\zeta_{k,(m+1,m)}:Y_{k,m+1}\to Y_{k,m}$ which agrees with $b_{m+1,m}$ on $X_{m+1}$ and which maps $A_1,A_2,\dots,A_k$ homeomorphically to the corresponding copy in $Y_{k,m}$.
\item $\zeta_{(k+1,k),\infty}:Y_{k+1,\infty}\to Y_{k,\infty}$ agrees with the previously defined map $\rho_{k+1,k}$.
\item $\zeta_{\infty,(m+1,m)}:Y_{\infty,m+1}\to Y_{\infty,m}$, which agrees with $b_{m+1,m}$ on $X_m$ and which maps $A_j$ (attached at $b_{m+1}(x_j)$) homeomorphically to the corresponding copy of $A_j$ in $Y_{\infty,m}$ (attached at $b_{m}(x_j)$).
\end{enumerate}
Using the sections to $b_m$ and $b_{m+1,m}$, one can directly construct sections to each of these maps. Hence, all of these maps are topological retractions. Whenever $k\leq k'\leq \infty$ and $m\leq m'\leq \infty$, there is a map $\zeta_{(k',k),(m'm)}:Y_{k',m'}\to Y_{k,m}$ constructed as the corresponding composition of the above maps. Additionally, for any pair $k,m\in\bbn\cup\{\infty\}$, there is a retraction $\zeta_{k,m}:Y_{\infty,\infty}\to Y_{k,m}$. We may make the following identifications:
\begin{enumerate}
\item $Y_k=Y_{k,\infty}=\varprojlim_{m}(Y_{k,m},\zeta_{k,(m+1,m)})$
\item $Y_{\infty,m}=\varprojlim_{k}(Y_{k,m},\zeta_{(k+1,k),m})$
\item $Y=Y_{\infty,\infty}=\varprojlim_{k}Y_{k,\infty}=\varprojlim_{m}Y_{\infty,m}$
\end{enumerate}

For any pair $k,m\in\bbn\cup\{\infty\}$, the space $Y_{k,m}$ is the shrinking adjunction space obtained by attaching a sequence of spaces $A_j$, $1\leq j\leq k$ to the one-dimensional Peano continuum $X_m$ along the points $\{b_m(x_j)\}_{1\leq j\leq k}$ in $X_m$. Hence, the results of the previous section apply to $Y_{k,m}$.

Specifically, there are generalized universal covering maps $q_{k,m}:\wt{X}_{m}\to X_m$ and $p_{k,m}:\wt{Y}_{k,m}\to Y_{k,m}$ such that the restriction of $p_{k,m}$ to $p_{k,m}^{-1}(X_m)$ agrees with $q_{k,m}$.  When $m<\infty$, $\wt{X}_m$ is a tree and when $m=\infty$, $\wt{X}_{\infty}$ is a topological $\bbr$-tree.

When $k<\infty$, $Y_{k,m}$ is an ordinary adjunction space with the weak topology with respect to $X_m,A_1,A_2,\dots, A_k$. It is straightforward to check that $\wt{Y}_{k,m}$ is the adjunction space (with the weak topology) obtained by attaching a copy of $A_j$, $1\leq j\leq k$ to $\wt{X}_m$ at each point in the fiber of $p_{k,m}^{-1}(b_m(x_j))$. Corollary \ref{directsumcor} gives that \[\pi_n(\wt{Y}_{k,m})\cong \bigoplus_{j=1}^{k}\bigoplus_{[\alpha]\in F_{i_m}}\pi_n(A_j).\]
Under this identification, the bonding maps may be described as follows.
\begin{itemize}
\item $\pi_n(\wt{Y}_{k+1,m})\to \pi_n(\wt{Y}_{k,m})$ collapses the $k+1$-th summand $\bigoplus_{F_{i_m}}\pi_n(A_{k+1})$ to the identity.
\item $\pi_n(\wt{Y}_{k,m+1})\to \pi_n(\wt{Y}_{k,m})$ maps the summand $\pi_n(A_j)$ with index $(j,[\alpha])\in \{1,2,\dots,k\}\times F_{i_{m+1}}$ isomorphically onto the summand $\pi_n(A_j)$ with index $(j,(b_{m+1,m})_{\#}([\alpha]))\in \{1,2,\dots,k\}\times F_{i_{m}}$.
\end{itemize}

The spaces $Y_{k,m}$, $(k,m)\in\bbn^2$, approximate $Y$ and form an $HPol$-expansion of $Y$. Hence the $n$-th \v{C}ech homotopy group of $Y$ is
\[\check{\pi}_n(Y)\cong \varprojlim_{(k,m)\in\bbn^2}\pi_n(Y_{k,m})\]
Here, $\bbn^2$ is endowed the product directed order $(k,m)\leq (k',m')$ if $k\leq k'$ and $m\leq m'$. There are three natural ways one can describe this inverse limit, as a single limit along the diagonal or taking the limit in each variable separately:
\[\check{\pi}_n(Y)\cong \varprojlim_{k\in\bbn}\pi_n(Y_{k,k})\cong \varprojlim_{m}\prod_{k}\pi_n(Y_{k,m})\cong  \prod_{k}\varprojlim_{m}\pi_n(Y_{k,m}).\]

Regardless of how one chooses to represent the $n$-th shape homotopy group, Theorem \ref{maintheorem} implies the following.

\begin{theorem}
Let $Y=\shadj(X,x_j,A_j,a_j)$ where $X$ is a one-dimensional Peano continuum and each $A_j$ is a $(n-1)$-connected polyhedron. Then the natural homomorphism $\Lambda_n:\pi_n(Y)\to \check{\pi}_n(Y)$ is injective.
\end{theorem}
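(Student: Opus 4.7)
The plan is to reduce the injectivity of $\Lambda_n$ to Theorem \ref{maintheorem}, exploiting the fact that the polyhedral hypothesis is strictly stronger than what that theorem requires. A polyhedron is a CW-complex, hence $n$-tame at every point; thus each $A_j$ is both $\pi_n$-residual (so Theorem \ref{maintheorem} applies) and $\pi_n$-finitary (so Corollary \ref{finiteimagecor1} applies). Given $[f]\in\pi_n(Y,y_0)$ with $\Lambda_n([f])=0$, I would lift $f$ through the generalized universal covering map $p:\wt{Y}\to Y$ (existence from Lemma \ref{structurelemma}) to obtain $\wt{f}:S^n\to \wt{Y}$. Since $p_{\#}$ is an isomorphism on $\pi_n$ (Proposition \ref{coveringisomorphism}) and $\wtphi$ is injective (Theorem \ref{maintheorem}), it suffices to show $r_{j,[\alpha]}([\wt{f}])=0$ in $\pi_n(A_j,a_j)$ for every pair $(j,[\alpha])$ with $[\alpha]\in p^{-1}(x_j)$.

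By Corollary \ref{finiteimagecor1}, the support $\supp(g_j)$ of $g=\wtphi([\wt{f}])$ within each fiber $p^{-1}(x_j)$ is finite. Using the dendrite-core methods of Section \ref{sectiondendritecore} (applied to the Peano-continuum subspace of $\wt{Y}$ containing $\im(\wt{f})$), I would replace $\wt{f}$ by a homotopic representative in single-factor form, with disjoint $n$-cubes $R_{j,[\alpha]}\subset I^n$ indexed by a countable set of pairs, on which $\wt{f}$ represents $r_{j,[\alpha]}([\wt{f}])$. Fix $j$, choose a basepoint path $\gamma_j$ from $y_0$ to $x_j$, and identify $p^{-1}(x_j)\cong \pi_1(X,y_0)$ via $[\alpha]\mapsto [\alpha\cdot\gamma_j^{-1}]$. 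Let $S_j\subset \pi_1(X,y_0)$ be the finite image of $\supp(g_j)$. By the injectivity of $\Lambda_1:\pi_1(X,y_0)\to \varprojlim_m F_{i_m}$ for one-dimensional Peano continua (from \cite{CConedim,EK98onedimfgs}), there exists $m\in\bbn$ such that $(b_m)_{\#}$ is injective on $S_j$.

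The hypothesis $\Lambda_n([f])=0$ implies $\zeta_{j,m}\circ f$ is null-homotopic in the finite adjunction space $Y_{j,m}$; lifting through the ordinary universal cover $p_{j,m}:\wt{Y}_{j,m}\to Y_{j,m}$, the map $\wt{\zeta}_{j,m}\circ \wt{f}$ is null-homotopic in $\wt{Y}_{j,m}$, where $\wt{\zeta}_{j,m}:\wt{Y}\to \wt{Y}_{j,m}$ is the canonical lift. Under the decomposition
\[\pi_n(\wt{Y}_{j,m})\cong \bigoplus_{j'\leq j}\bigoplus_{[\beta]\in F_{i_m}}\pi_n(A_{j'})\]
provided by retractions to the attached copies of the $A_{j'}$, the $(j,(b_m)_{\#}[\alpha\cdot\gamma_j^{-1}])$-component of $(\wt{\zeta}_{j,m})_{\#}([\wt{f}])$ equals precisely $r_{j,[\alpha]}([\wt{f}])$ for each $[\alpha]\in\supp(g_j)$. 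Since this component must vanish, $r_{j,[\alpha]}([\wt{f}])=0$ as required (and for $[\alpha]\notin\supp(g_j)$ this vanishing is automatic).

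The main obstacle is the careful bookkeeping needed to verify the final isolation claim: namely, that the injectivity of $(b_m)_{\#}$ on $S_j$ together with the single-factor form of $\wt{f}$ ensures that among all cubes $R_{j',[\alpha']}$, only the single cube $R_{j,[\alpha]}$ contributes non-trivially to the specified component, with cubes $R_{j',[\alpha']}$ for $j'<j$ landing in a different wedge summand, cubes $R_{j',[\alpha']}$ for $j'>j$ collapsed by $\wt{\zeta}_{j,m}$ into the core $\wt{X}_m$, and other cubes in the $j'=j$ column mapped into different fibers over $b_m(x_j)$. These verifications amount to unwinding the explicit description of $\wt{\zeta}_{j,m}$ furnished by applying Lemma \ref{structurelemma} to $Y_{j,m}$ and tracing its action on each copy $\mca_{j',[\alpha']}$.
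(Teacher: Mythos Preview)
Your proposal is correct and follows essentially the same route as the paper: lift through $p:\wt{Y}\to Y$, invoke the injectivity of $\wtphi$ from Theorem~\ref{maintheorem}, and detect each component of $\wtphi([\wt{f}])$ by projecting to a finite approximation $Y_{k,m}$ and reading off the direct-sum decomposition of $\pi_n(\wt{Y}_{k,m})$. The paper argues in the contrapositive direction and is terser---it picks one nonzero $r_{k,[\alpha]}([\wt{f}])$ and asserts $[R\circ\wt{\zeta}_{k,m}\circ\wt{f}]\neq 0$ for \emph{any} $m\geq 1$---whereas your separation step, combining the finiteness of $\supp(g_j)$ from Corollary~\ref{finiteimagecor1} with the injectivity of $\Lambda_1$, makes explicit why no cancellation occurs when several $\mca_{k,[\alpha']}$ collide in the same $B_k$ under $\wt{\zeta}_{k,m}$; this is a point the paper's proof leaves the reader to supply. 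Your passage through single-factor form is convenient bookkeeping but not strictly required: once finiteness of $\supp(g_j)$ is known, the component at $(j,(b_m)_{\#}[\alpha\cdot\gamma_j^{-1}])$ is the sum of the finitely many $r_{j,[\alpha']}([\wt{f}])$ over $[\alpha']$ in that fiber, and your choice of $m$ isolates $[\alpha]$ among those with nonzero contribution.
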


\begin{proof}
Suppose $0\neq[f]\in \pi_n(Y,y_0)$. Let $\wt{f}\in \Omega^n(\wt{Y},\wt{y}_0)$ be the lift to the generalized universal covering space $\wt{Y}$. By Theorem \ref{maintheorem}, there exists $k\in\bbn$ and $[\alpha]\in p^{-1}(x_k)$ such that $r_{k,[\alpha]}\circ \wt{f}$ is not null-homotopic in $\mca_{k,[\alpha]}$. Fix any $m\geq 1$ and let $\wt{\zeta}_{k,m}:\wt{Y}\to \wt{Y}_{k,m}$ be the lift of the retraction $\zeta_{k,m}$, which satisfies $p_{k,m}\circ \wt{\zeta}_{k,m}=\zeta_{k,m}\circ p$. Recall that $Y_{k,m}$ consists of the finite graph $X_m$ with spaces $A_1,A_2,\dots, A_k$ attached. Let $B_k$ be the homeomorphic copy of $A_k$ in $\wt{Y}_{k,m}$ attached at $\wt{\zeta}_{k,m}([\alpha])\in p_{k,m}^{-1}(\zeta_{k,m}(x_k))$. Since $\mca_{k,[\alpha]}$ is the locally path connected coreflection of $B_k$, the restriction $\wt{\zeta}_{k,m}:\mca_{k,[\alpha]}\to B_k$ is a bijective homotopy equivalence. In particular, if $R:\wt{Y}_{k,m}\to B_k$ is the canonical retraction, then $[R\circ\wt{\zeta}_{k,m}\circ \wt{f}]\neq 0$. Since $\pi_n(B_k)$ is one of the summands in $\pi_n(Y_{k,m})\cong \bigoplus_{j=1}^{k}\bigoplus_{\pi_1(X_m)}\pi_n(A_j)$, we conclude that $[\zeta_{m,k}\circ f]\neq 0$.
\[\xymatrix{
& \wt{Y} \ar[d]_-{p} \ar[r]^-{\wt{\zeta}_{k,m}} & \wt{Y}_{k,m} \ar[d]^-{p_{k,m}} \ar[dr]^-{R}  \\
S^n \ar[ur]^-{\wt{f}} \ar[r]_-{f} & Y \ar[r]_-{\zeta_{k,m}} & Y_{k,m} \ar[r] & B_k
}\]
\end{proof}

While it is outside the scope of the current paper, the author expects that further algebraic study of $\check{\pi}_n(Y)$ may provide an interesting alternative presentation of $\pi_n(Y)$.


\end{document}